\documentclass[letter,11pt]{article}

\usepackage[margin=1.2 in, top=1 in, bottom= 1.2 in]{geometry}

\usepackage[normalem]{ulem}

\usepackage{tikz}
\usetikzlibrary{hobby}
\usepackage{tikz-cd}
\usepackage{adjustbox}
\usepackage{fbox}

\usetikzlibrary{matrix, calc, arrows}

\usepackage{bibentry}
\usepackage[linktocpage=true]{hyperref}

\usepackage{lipsum}
\newcommand\blfootnote[1]{%
  \begingroup
  \renewcommand\thefootnote{}\footnote{#1}%
  \addtocounter{footnote}{-1}%
  \endgroup
}

\makeatletter
\g@addto@macro\normalsize{
  \setlength\abovedisplayskip{8pt}
  \setlength\belowdisplayskip{8pt}
  \setlength\abovedisplayshortskip{8pt}
  \setlength\belowdisplayshortskip{8pt}
  }
\makeatother

\usepackage{tocloft}

\usepackage[english]{babel}
\usepackage{amsfonts}
\usepackage{mathrsfs}
\usepackage{bbm}
\usepackage{latexsym}
\usepackage{math dots}
\usepackage{amssymb}
\usepackage{mathtools}

\usepackage{enumitem}
\setlist{nolistsep}
\usepackage{amsthm}
\usepackage[capitalize]{cleveref}
\crefformat{footnote}{#2\footnotemark[#1]#3} 

\newcommand\eqnitem[1][]{%
  \ifx\relax#1\relax  \item \else \item[#1] \fi
  \abovedisplayskip=0pt\abovedisplayshortskip=0pt~\vspace*{-\baselineskip}}

\newtheoremstyle{plain}{3mm}{3mm}{\slshape}{}{\bfseries}{.}{.5em}{}
\newtheoremstyle{definition}{2mm}{2mm}{}{}{\bfseries}{.}{.5em}{}
\theoremstyle{plain}
	
\newtheorem{theorem}{Theorem}

\newtheorem{lemma}[theorem]{Lemma}
\newtheorem{proposition}[theorem]{Proposition}

\newtheorem{question}[theorem]{Question}
\theoremstyle{definition}
\newtheorem{definition}[theorem]{Definition}
\newtheorem{remark}[theorem]{Remark}
\newtheorem{example}[theorem]{Example}

\theoremstyle{plain}
\newcounter{MainTheoremCounter}

\newtheorem{Maintheorem}[MainTheoremCounter]{Theorem}

\theoremstyle{plain}
\newtheorem*{namedthm}{\namedthmname}
\newcounter{namedthm}
	

\usepackage{chngcntr}
\counterwithin{theorem}{section}

\numberwithin{equation}{section}

\allowdisplaybreaks

\usepackage{xcolor}
\definecolor{Color2}{rgb}{0.78, 0.11, 0.0}
\hypersetup{citecolor = black,colorlinks,
			linkcolor = black,
			urlcolor = Color2}

\usepackage{titlesec}
\titleformat{\section}
  {\Large\center\bfseries}
  {\thesection.}{.7em}{}
\titlespacing*{\section}{0pt}{3.5ex plus 0ex minus 0ex}{1.5ex plus 0ex}
\titleformat{\subsection}
  {\large\center\bfseries}
  {\thesubsection.}{.7em}{}
\titlespacing*{\subsection}{0pt}{3.5ex plus 0ex minus 0ex}{1.5ex plus 0ex}
\titleformat{\subsubsection}
  {\center\bfseries}
  {\thesubsubsection.}{.7em}{}
\titlespacing*{\subsubsection}{0pt}{3.5ex plus 0ex minus 0ex}{1.5ex plus 0ex}

\addto\captionsenglish{}

\usepackage{titling}
\usepackage{authblk}

\newcommand{\Cech}{\v{C}ech}

\newcommand{\supp}{{\normalfont\text{supp}}\,}

\newcommand{\eps}{\epsilon}
\newcommand{\N}{\mathbb{N}}
\newcommand{\Z}{\mathbb{Z}}

\newcommand{\C}{\mathbb{C}}
\newcommand{\Q}{\mathbb{Q}}

\newcommand{\defeq}{\vcentcolon=}

\newcommand\restr[2]{{ \left.\kern-\nulldelimiterspace #1 \right|_{#2}}}

\renewcommand{\epsilon}{\varepsilon}
\renewcommand{\leq}{\leqslant}
\renewcommand{\geq}{\geqslant}
\renewcommand{\setminus}{\backslash}

\usepackage[normalem]{ulem}

\DeclareMathOperator{\upclose}{\uparrow}

\newcommand{\syndetic}{\mathcal{S}}
\newcommand{\thick}{\mathcal{T}}

\newcommand{\dsyndetic}{d\mathcal{S}}
\newcommand{\dthick}{d\mathcal{T}}

\newcommand{\dcsyndetic}{dc\mathcal{S}}
\newcommand{\dcthick}{dc\mathcal{T}}

\newcommand{\PS}{\mathcal{PS}}
\newcommand{\dPS}{d\mathcal{PS}}
\newcommand{\dcPS}{dc\mathcal{PS}}

\newcommand{\central}{\mathcal{C}}
\newcommand{\IP}{\mathcal{IP}}

\newcommand{\filter}{\mathscr{F}}
\newcommand{\filtertwo}{\mathscr{G}}

\newcommand{\class}{\mathscr{F}}

\newcommand{\classtwo}{\mathscr{G}}
\newcommand{\classcap}{\sqcap}

\newcommand{\family}{\mathscr{F}}
\newcommand{\familyone}{\mathscr{F}}
\newcommand{\familytwo}{\mathscr{G}}
\newcommand{\familythree}{\mathscr{H}}
\newcommand{\familycap}{\sqcap}

\newcommand{\collection}{\mathscr{C}}

\newcommand{\polyret}{R}

\newcommand{\FS}{\text{FS}}

\begin{document}

\title{On sets of pointwise recurrence and dynamically thick sets}

\author[1]{Daniel Glasscock}
\author[2]{Anh N. Le}

\affil[1]{\small Dept. of Mathematics and Statistics, U. of Massachusetts Lowell, Lowell, MA, USA}
\affil[2]{\small Dept. of Mathematics,
	U. of Denver, Denver, CO, USA}

\date{}

\maketitle


\blfootnote{2020 \emph{Mathematics Subject Classification.}  Primary: 37B20. Secondary: 37B05.}


\blfootnote{\emph{Key words and phrases. }
minimal topological dynamical systems,
sets of pointwise recurrence,
dynamically thick sets,
topological recurrence,
measurable recurrence,
idempotent filters}

\begin{abstract}
A set $A \subseteq \N$ is \emph{set of pointwise recurrence} if for all minimal dynamical systems $(X, T)$, all $x \in X$, and all open neighborhoods $U \subseteq X$ of $x$, there exists a time $n \in A$ such that $T^n x \in U$.
The set $A$ is \emph{dynamically thick} if the same holds for all non-empty, open sets $U \subseteq X$.
Our main results give combinatorial characterizations of sets of pointwise recurrence and dynamically thick sets that allow us to answer questions of Host, Kra, Maass and  Glasner, Tsankov, Weiss, and Zucker.
We also introduce and study a local version of dynamical thickness called \emph{dynamical piecewise syndeticity}.  We show that dynamically piecewise syndetic sets are piecewise syndetic, generalizing results of Dong, Glasner, Huang, Shao, Weiss, and Ye.
The proofs involve the algebra of families of large sets, dynamics on the space of ultrafilters, and our recent characterization of dynamically syndetic sets. 
\end{abstract}

\setcounter{tocdepth}{2}
\tableofcontents

\vspace{0.5cm}

\section{Introduction}

\subsection{Characterizing sets of pointwise recurrence and dynamically thick sets}
\label{sec_intro_part_two}

A subset $A$ of the positive integers $\N$ is \emph{a set of (minimal, topological) pointwise recurrence} if for all minimal topological dynamical systems $(X,T)$, all $x \in X$, and all open neighborhoods $U \subseteq X$ of $x$, there exists $n \in A$ for which $T^n x \in U$.  (For the definition of minimal systems, see \cref{sec_top_dynamics}.) The set $A$ is \emph{dynamically thick} if the same holds for all non-empty, open sets $U \subseteq X$, not just the neighborhoods of $x$.
The collections of sets of pointwise recurrence and dynamically thick sets are denoted $\dcthick$ and $\dthick$, respectively, where the letter ``$c$'' in $\dcthick$ stands for ``centered'' and reflects the fact that $x \in U$ in the definition.
Sets of pointwise recurrence and dynamically thick sets are the main objects of study in this paper.

\subsubsection{Motivation: set and point recurrence in dynamics}
\label{sec_intro_set_and_pt_recurrence}

The phenomenon of recurrence in dynamical systems has been studied extensively in numerous forms.  We will discuss two forms most relevant to this work: set and point recurrence in ergodic theory and topological dynamics.

In a probability space $(X, \mu)$ with a measure-preserving transformation $T: X \to X$, a set $E \subseteq X$ of positive measure is said to recur at time $n \in \N$ if $\mu(E \cap T^{-n} E) > 0$.  A set $A \subseteq \N$ is called a \emph{set of measurable recurrence} if in every measure preserving system, every set of positive measure recurs at some time in $A$.  The question of which times a set of positive measure can be guaranteed to recur -- that is, the question of which sets are sets of measurable recurrence -- is important both conceptually and for applications.  
Szemer\'edi's theorem \cite{Szemeredi-on_the_sets}, a cornerstone of modern additive combinatorics, states that a set of integers of positive upper density contains arbitrarily long arithmetic progressions. Furstenberg \cite{Furstenberg-ErgodicBehavior} famously reproved this theorem by connecting the existence of arithmetic progressions in sets of positive upper density to the existence of sets of multiple measurable recurrence.
Since then, this connection has been explored and exploited to great effect \cite{ward_ergodic_theory_interactions_survey}.

Set recurrence in topological dynamics is also well-studied.
A set $A \subseteq \N$ is a \emph{set of topological recurrence} if for all minimal systems $(X,T)$ and all nonempty, open sets $U \subseteq X$, there exists $n \in A$ such that the set $U \cap T^{-n}U$ is nonempty.  The connection between topological dynamics and partition combinatorics forged by Furstenberg and Weiss \cite{Furstenberg_Weiss-topological-dynamics} parallels the connection between measurable dynamics and density combinatorics described above and has similarly been influential in additive combinatorics and Ramsey Theory.

Questions regarding measurable and topological set recurrence can usually be formulated in terms of the recurrence behavior of generic points: points in a set of full measure or a dense $G_{\delta}$-set return to neighborhoods of themselves at times in some set $A$.
To require, however, recurrence at times in $A$ for \emph{all} points makes no sense in the measurable setting but leads to genuinely new and interesting phenomena in the topological setting.  Despite the fact that the definitions of sets of pointwise recurrence and dynamically thick sets are naturally analogous to those in the topic of set recurrence, far less has been written on them.  Many of our results in this paper are best understood as showing that the nature of pointwise recurrence differs substantially from that of set recurrence.

\subsubsection{Sets of pointwise recurrence in the literature}

It is a consequence of a result of Pavlov \cite{Pavlov-2008} that a set of pointwise recurrence must have positive upper Banach density.  Indeed, he showed that if a set $A \subseteq \N$ has zero upper Banach density ($\limsup_{N \to \infty} \max_{n \in \N} |A \cap \{n, \ldots, n + N\}| / N = 0$) then the set $A$ is not a set of pointwise recurrence: there is a totally minimal, totally uniquely ergodic, and topologically mixing system $(X, T)$ and uncountably many points $x \in X$ for which $x \not\in \overline{\{T^n x \ \big| \ n \in A\}}$.  That dynamically thick sets and sets of pointwise recurrence are piecewise syndetic (see \cref{sec_syndetic_thick_ps_sets} for the definition) are the results of Huang and Ye \cite[Thm. 2.4]{huang_ye_2005} and Dong, Song, Ye \cite[Prop. 4.4]{dong_shao_ye_2012}, respectively.  We improve on both of these results in \cref{thm:relations_C_dPS_PS-intro}, discussed below.

Pointwise recurrence is implicitly studied in \cite{Glasner-Weiss-Interpolation,Koutsogiannis_Le_Moreira_Pavlov_Richter_interpolation_sets} in the context of interpolation sets for minimal systems. A set $I \subseteq \N \cup \{0\}$ is \emph{interpolation set for minimal systems} if for every bounded function $f: I \to \C$, there exists a minimal system $(X, T)$, a point $x \in X$, and a continuous function $F: X \to \C$ such that $f(n) = F(T^n x)$ for all $n \in I$. It was proved in \cite{Glasner-Weiss-Interpolation, Koutsogiannis_Le_Moreira_Pavlov_Richter_interpolation_sets} that $I \subseteq \N \cup \{0\}$ is an interpolation set for minimal systems if and only if $I$ is not piecewise syndetic. If $A \subseteq \N$ is not piecewise syndetic, then $I \defeq A \cup \{0\}$ is not piecewise syndetic, so there is minimal system $(X, T)$, a point $x \in X$, and a continuous function $F: X \to \C$ such that 
\[
    F(T^n x) = \begin{cases}
            1 \text{ if } n = 0 \\
            0 \text{ if } n \in A 
    \end{cases}.
\]
It follows that $A$ is not a set of pointwise recurrence. Contrapositively, a set of pointwise recurrence must be piecewise syndetic, recovering the results in \cite[Prop. 4.4]{dong_shao_ye_2012} and, hence, in \cite[Thm. 2.4]{huang_ye_2005} as well.
    
As far as we know, the term ``set of pointwise recurrence'' was first introduced by Host, Kra, and Maass \cite{HostKraMaass2016}, who compared and contrasted several different types of recurrence: single and multiple topological recurrence, topological recurrence for nilsystems, and pointwise recurrence. They observed that the family of sets of pointwise recurrence is properly contained in the family of sets of topological recurrence. In our recent work \cite{glasscock_le_2024}, we addressed several of the questions they raised. In this paper, we continue to explore the subject and answer some of their other questions.

\subsubsection{Main results: combinatorial characterizations}

For the following discussion, the reader is referred to the definitions of syndetic and thick sets in \cref{sec_syndetic_thick_ps_sets}.

A well-known combinatorial characterization for sets of topological recurrence (cf. \cite[Thm. 2.4]{bergelson_mccutcheon_1998}) reads: \emph{a set $A \subseteq \N$ is a set of topological recurrence if and only if for all syndetic sets $S \subseteq \N$, the set $A \cap (S - S)$ is nonempty}.  It is a short exercise to upgrade this to: \emph{a set $A \subseteq \N$ is a set of topological recurrence if and only if for all syndetic sets $S' \subseteq S \subseteq \N$,}
\begin{align}
\label{eqn_comb_char_top_rec}
    A \cap (S - S') \neq \emptyset.
\end{align}
In this context, Host, Kra, and Maass \cite[Question 2.13]{HostKraMaass2016} asked whether there is a purely combinatorial characterization of sets of pointwise recurrence. 

\begin{question}[{\cite[Question 2.13]{HostKraMaass2016}}]
\label{ques:characterize_sets_of_pointwise_recurrence}
Is there a combinatorial analog of pointwise recurrence? Are there sufficient conditions for being a set of pointwise recurrence?    
\end{question}

Our first result, proved in \cref{sec_dthick_comb_characterizations}, gives an answer to this question.

\begin{Maintheorem}
\label{mainthm_dct_combo_characterizations}
    Let $A \subseteq \N$.  The following are equivalent.
    \begin{enumerate}
        \item
        \label{item_intro_dct_def_char}
        The set $A$ is a set of pointwise recurrence.

        \item \label{item_intro_condition_one_dct}
        For all syndetic $S \subseteq \N$, there exists a finite set $F \subseteq A$ such that for all syndetic $S' \subseteq S$,
        \[
            F \cap (S - S') \neq \emptyset.
        \]

        \item
        \label{item_intro_dct_sif_char}
        For all $B \supseteq A$, there exists a finite set $F \subseteq \N \setminus B$ such that the set $B \cup \big( B-F \big)$ is thick.
    \end{enumerate}
\end{Maintheorem}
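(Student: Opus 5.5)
The plan is to prove (1)$\Rightarrow$(2)$\Rightarrow$(3)$\Rightarrow$(1), working throughout with the standard dictionary between minimal systems and syndetic sets.

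\emph{The dictionary.} If $(X,T)$ is minimal, $x\in X$ and $U\ni x$ is open, then the return set $R(x,U)=\{n\ge 0 : T^nx\in U\}$ is syndetic. Conversely, every syndetic set $S$ gives rise to a minimal system as follows. In the shift $(\{0,1\}^{\N\cup\{0\}},\sigma)$, the orbit closure of $1_S$ contains a minimal subsystem $Y$. Since $S$ is syndetic, some point $y\in Y$ has a support $S''$ that is syndetic and is built from pieces of $S$. More precisely, every finite window of $y$ occurs as a window of $1_S$.

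\emph{(1)$\Rightarrow$(2).} Argue by contradiction. Suppose $S$ is syndetic and that for every finite $F\subseteq A$ there is a syndetic $S'_F\subseteq S$ with $F\cap(S-S'_F)=\emptyset$. Encode the pairs $(S,S'_F)$ as points $(1_S,1_{S'_F})$ of the shift on $(\{0,1\}^2)^{\N\cup\{0\}}$. Use compactness over an exhausting sequence $F_k\uparrow A$, together with a passage to a minimal subsystem via an idempotent or uniform-recurrence argument, to obtain a minimal system $(X,T)$, a point $x\in X$ and a clopen neighbourhood $U$ of $x$ (a cylinder) such that $T^nx\notin U$ for all $n\in A$. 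This contradicts (1).

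The delicate part is that the "bad" condition "$n\in S'$ and $n+a\in S$ for some $a\in A$" must survive the passage to limits and to a minimal subsystem. Syndeticity of $S'$ has to be used uniformly, with bounded gaps, which is why the finite sets $F$ appear and why $S'\subseteq S$ is required. I expect to use the authors' earlier characterization of dynamically syndetic sets and the dual families ($\dcsyndetic$ as the dual of $\dcthick$) to streamline this step. Concretely, (1) says that $A$ meets every dynamically central syndetic set; the complement formulation then gives the finite witnesses by compactness.

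\emph{(2)$\Leftrightarrow$(3).} This is a duality translation. Given $B\supseteq A$, let $S=\N\setminus B$ and argue contrapositively. If $B\cup(B-F)$ fails to be thick for every finite $F\subseteq \N\setminus B$, then for each such $F$ the set
\[
S'_F=\N\setminus\bigl(B\cup(B-F)\bigr)
\]
is syndetic and satisfies $S'_F\subseteq S$. Moreover $F\cap(S-S'_F)$ can be computed: $s-s'=f$ would force $s'+f\in S$, and so on. The exact bookkeeping will need care, in particular the distinction between $F\subseteq A$ and $F\subseteq \N\setminus B$. I would rewrite (3) as the statement that the syndetic sets disjoint from $B\cup(B-F)$ are excluded, and match quantifiers accordingly. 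This is the step where an off-by-complement error is most likely, so I would check it on simple examples such as $A=\N$.

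\emph{(3)$\Rightarrow$(1).} Suppose $(X,T)$ is minimal, $x\in X$, $U\ni x$ is open, and $T^nx\notin U$ for all $n\in A$. Pick an open $V\ni x$ with $\overline V\subseteq U$, and set $B=\{n : T^nx\notin \overline V\}\supseteq A$. By (3) there is a finite $F\subseteq \N\setminus B$ with $B\cup(B-F)$ thick.

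Each $f\in F$ satisfies $T^fx\in\overline V$, so $x\in W=\bigcap_{f\in F}T^{-f}U$, which is open. Hence the return set $R(x,W)$ is syndetic. Now take $n\in R(x,W)$: then $T^{n+f}x\in U$ for every $f\in F$, which means $n+f\notin B'$ where $B'=\{m : T^mx\notin U\}\subseteq B$. The plan is to conclude that $n\notin B\cup(B-F)$, so that $\N\setminus(B\cup(B-F))$ contains a syndetic set and $B\cup(B-F)$ cannot be thick, a contradiction.

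I expect this last inclusion to require a shrinking-neighbourhood argument, replacing $U$ by $V$ at the appropriate places, in order to make $B'$ and $B$ coincide on the relevant times.
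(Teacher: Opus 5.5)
The cycle breaks at (2)$\Rightarrow$(3), which is not a duality translation. Since $F\cap(S-S')\neq\emptyset$ iff $S'\cap(S-F)\neq\emptyset$, condition (2) says, with $C=\N\setminus S$: for every non-thick $C$ there is a finite $F\subseteq A$ such that $(\N\setminus C)\cap\bigcap_{f\in F}(C-f)$ is not syndetic. Condition (3) says, with $B'=\N\setminus B$: for every $B'\subseteq\N\setminus A$ there is a finite $F\subseteq B'$ such that $B'\cap\bigcap_{f\in F}(B'-f)$ is not syndetic. The finite sets come from disjoint places ($A$ versus $\N\setminus A$), and in (3) a single set plays both roles. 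Your $S'_F=S\cap\bigcap_{f\in F}(S-f)$ has $F\subseteq S$ and $S'_F+f\subseteq S$, i.e.\ $F\subseteq S-S'_F$. That is the opposite of what refuting (2) needs, and it says nothing about finite subsets of $A$.

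No bookkeeping will fix this. The implication (1)$\Rightarrow$(3) for all $A$ is equivalent to the assertion that every $B'$ with $B'\cap\bigcap_{f\in F}(B'-f)$ syndetic for all finite $F\subseteq B'$ is dynamically central syndetic. That is the substantive direction of the companion characterization \cref{intro_mainthm_full_chars_of_dcsyndetic}, and the paper gets (1)$\Leftrightarrow$(3) exactly by complementing that theorem.

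Your (3)$\Rightarrow$(1) is the easy half of that equivalence, and it works once you drop $V$. Take $B=\N\setminus R(x,U)\supseteq A$. Then $T^fx\in U$ for $f\in F$, so $W=U\cap\bigcap_{f\in F}T^{-f}U$ is an open neighbourhood of $x$, and the syndetic set $R(x,W)$ misses $B\cup(B-F)$. With $\overline V$ you only get $T^fx\in\overline V$, which yields no open set, and $W$ must also contain the factor $U$ to force $n\notin B$.

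To close the loop you must then prove (2)$\Rightarrow$(1) directly, as the paper does. If $\N\setminus A\in\dcsyndetic$, use the symbolic model in \cref{intro_mainthm_full_chars_of_dcsyndetic} to get a minimal system, a point $x$, and a \emph{clopen} $U\ni x$ with $R(x,U)\subseteq\N\setminus A$. Put $C=R(x,X\setminus U)$, which is not thick. For finite $F\subseteq A\subseteq C$, the set $(\N\setminus C)\cap\bigcap_{f\in F}(C-f)=R\big(x,U\cap\bigcap_{f\in F}T^{-f}(X\setminus U)\big)$ is syndetic, which contradicts (2) with $S=R(x,U)$.

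Your (1)$\Rightarrow$(2) is also not yet an argument, and the route you sketch does not work as stated. The gap bounds of the $S'_F$ need not be uniform in $F$, so a limit of $(1_S,1_{S'_{F_k}})$ can have an empty or non-syndetic second coordinate. A minimal subsystem selected after the limit then has no reason to reflect $S'$.

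The missing idea is to fix the minimal object first, independently of $F$ and $S'$, and apply compactness there. The paper takes a minimal left ideal $L\subseteq\beta\N$. Pointwise recurrence in the non-metrizable minimal system $(L,\lambda_1)$ (\cref{lemma_metrizability_doesnt_matter}) holds at every $p$ in the compact clopen set $\overline S\cap L$. Compactness then gives a finite $F\subseteq A$ with $\overline S\cap L\subseteq\overline{S-F}$. Since every syndetic $S'\subseteq S$ satisfies $\overline{S'}\cap L\neq\emptyset$, one gets $S'\cap(S-F)\neq\emptyset$.

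A metrizable version also works. Let $Y$ be a minimal subsystem of the orbit closure of $1_S$ in $\{0,1\}^{\N_0}$, and cover the compact set $Y\cap[1]_0$ by finitely many $\sigma^{-a}[1]_0$ with $a\in A$. Given a syndetic $S'\subseteq S$, shift a point $\lim_k\sigma^{m_k}1_S\in Y$ by a bounded amount to obtain a point of $Y\cap[1]_0$ of the form $\lim_k\sigma^{n_k}1_S$ with $n_k\in S'$. Then $n_k+f\in S$ for some $f\in F$ and all large $k$.
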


The characterization in \eqref{item_intro_condition_one_dct} can immediately be understood as a quantitative strengthening of the combinatorial characterization of sets of topological recurrence in \eqref{eqn_comb_char_top_rec}.  While we are not aware of any precedent for the characterization in \eqref{item_intro_dct_sif_char}, we demonstrate below in the analogous context of dynamically thick sets how it can be used to verify that a set is a set of pointwise recurrence.\\

Thick sets are easily seen to be dynamically thick. The converse is false: for all $k \in \N \setminus \{1\}$ and pairwise disjoint thick sets $H_0, \ldots, H_{k-1} \subseteq \N$, the set
\begin{align}
\label{eq:example_dynthick_1}
    \bigcup_{i=0}^{k-1} \big((k \N + i) \cap H_i \big)
\end{align}
is not thick; it is, however, dynamically thick, as we will demonstrate in \cref{ex:type2-special}.
In \cite{glasner_tsankov_weiss_zucker_2021}, Glasner, Tsankov, Weiss and Zucker call for an explicit characterization of dynamically thick sets (they call them \emph{dense orbit sets}) in countable, discrete groups.

\begin{question}[{\cite[Question 9.6]{glasner_tsankov_weiss_zucker_2021}}]
\label{ques:characterize_dense_orbit_sets}
Characterize the dynamically thick sets in countable, discrete groups.
\end{question}

Our next theorem -- proven in \cref{sec_dthick_comb_characterizations} -- gives an answer to \cref{ques:characterize_dense_orbit_sets} in $\N$ by providing combinatorial characterizations of dynamically thick sets.  We give another, ``structure-theoretic'' answer in \cref{mainthm_structure_for_dy_thick_sets} below.

\begin{Maintheorem}
\label{mainthm_dt_characterizations}
    Let $A \subseteq \N$.  The following are equivalent.
    \begin{enumerate}
        \item 
        \label{item_intro_dthick_def}
        The set $A$ is dynamically thick.

        \item \label{item:intro_complement_thick_dt_characterization_v2}
        For all piecewise syndetic $S \subseteq \N$, there exists a  finite set $F \subseteq A$ such that $S-F$ is thick.

        \item \label{item:intro_dyn_thick_idempotent}
        For all $\N \supsetneq B \supseteq A$, there exists a finite set $F \subseteq \N \setminus B$ such that the set $B-F$ is thick.
    \end{enumerate}
\end{Maintheorem}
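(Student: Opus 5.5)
The plan is to prove \eqref{item_intro_dthick_def}$\Leftrightarrow$\eqref{item:intro_complement_thick_dt_characterization_v2} with symbolic-dynamics compactness arguments, and \eqref{item_intro_dthick_def}$\Leftrightarrow$\eqref{item:intro_dyn_thick_idempotent} with a return-time argument in one direction and an ultrafilter argument in the other.

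\emph{\eqref{item_intro_dthick_def}$\Rightarrow$\eqref{item:intro_complement_thick_dt_characterization_v2}.} Let $S$ be piecewise syndetic and put $x = 1_S \in \{0,1\}^{\N\cup\{0\}}$ under the shift $T$. Let $U = \{z : z(0)=1\}$, which is clopen. Since $S$ is piecewise syndetic, the orbit closure of $x$ contains a minimal subsystem $X$ with $U \cap X \neq \emptyset$. By dynamical thickness, every $z \in X$ has some $n \in A$ with $T^n z \in U$. Compactness of $X$ then gives a finite $F \subseteq A$ with $X \subseteq V \defeq \bigcup_{f\in F} T^{-f}U$, and $V$ is clopen.

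Because $X$ is $T$-invariant, for each $L$ the clopen set $\bigcap_{j\le L}T^{-j}V$ contains $X$. Since $X$ lies in the orbit closure of $x$, some $T^m x$ lies in this set. Unwinding the definitions, $m+j\in S-F$ for all $0 \le j \le L$. Hence $S-F$ is thick.

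\emph{\eqref{item:intro_complement_thick_dt_characterization_v2}$\Rightarrow$\eqref{item_intro_dthick_def}.} Let $(X,T)$ be minimal, $x\in X$, and $U$ nonempty open. Choose a nonempty open $U'$ with $\overline{U'}\subseteq U$. The set $S=\{n : T^nx\in U'\}$ is syndetic, hence piecewise syndetic. Condition \eqref{item:intro_complement_thick_dt_characterization_v2} gives a finite $F\subseteq A$ with $S-F$ thick. Taking a limit point $z$ of $T^{m_k}x$, where $[m_k,m_k+L_k]\subseteq S-F$ and $L_k\to\infty$, we get that every $n\ge 0$ satisfies $T^{n+f}z\in \overline{U'}$ for some $f\in F$. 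So $X=\overline{\{T^nz\}}\subseteq \bigcup_{f\in F}T^{-f}\overline{U'}$, using minimality of $X$. In particular $T^f x\in U$ for some $f\in F\subseteq A$.

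\emph{\eqref{item:intro_dyn_thick_idempotent}$\Rightarrow$\eqref{item_intro_dthick_def}.} Suppose $A$ misses $R \defeq \{n : T^nx\in U\}$ for some minimal $(X,T)$, point $x$, and nonempty open $U$. Put $B=\N\setminus R$. Then $B\supseteq A$, and $B\ne\N$ because $R$ is nonempty (orbits are dense). Condition \eqref{item:intro_dyn_thick_idempotent} gives a finite $F\subseteq R$ with $B-F$ thick. The set $W=\bigcap_{f\in F}T^{-f}U$ is an open neighbourhood of $x$, so its return set $\{n: T^nx\in W\}$ is syndetic by minimality. A syndetic set meets the thick set $B-F$, so there is $n$ with $T^nx\in W$ and $n\in B-F$. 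The first gives $n+f\in R$ for every $f\in F$, while the second gives $n+f\in B$ for some $f \in F$, a contradiction.

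\emph{\eqref{item_intro_dthick_def}$\Rightarrow$\eqref{item:intro_dyn_thick_idempotent}.} I expect this to be the main obstacle. Let $B\supseteq A$ with $C\defeq\N\setminus B\neq\emptyset$; $B$ is dynamically thick since it contains $A$. Assume for contradiction that $B-F$ is not thick for every finite $F\subseteq C$. This means that every finite intersection $\bigcap_{f\in F}(C-f)$, $F \subseteq C$ finite, is syndetic. Piecewise syndetic sets are exactly those meeting $\overline{K(\beta\N)}$, so compactness gives an ultrafilter $q\in\overline{K(\beta\N)}$ with $C-f\in q$ for all $f\in C$.

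Passing to a minimal idempotent or minimal left ideal below $q$, I would then build a point $y=q\text{-}\lim T^n 1_C$ lying in a minimal subsystem of the orbit closure of $1_C$, with $y(f)=1$ for all $f\in C$. The aim is to exhibit a minimal point and an open set $\{z: z(0)=0\}$-type cylinder that is avoided along $B$, contradicting dynamical thickness of $B$. Arranging minimality of $y$ while retaining $y\ge 1_C$ on $C$ is the delicate step. I would try first to use the theory of idempotent filters, where the relevant family is the filter generated by the sets $C-f$ with $f \in C$. Failing that, I would route through \eqref{item:intro_complement_thick_dt_characterization_v2} applied to a carefully chosen piecewise syndetic subset of $C$.
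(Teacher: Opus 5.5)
\textbf{Verdict: genuine gap.} Your arguments for (i)$\Rightarrow$(ii), (ii)$\Rightarrow$(i) and (iii)$\Rightarrow$(i) are correct. They are also more self-contained than the paper's: you work in the metrizable shift $\{0,1\}^{\N_0}$ instead of on minimal left ideals of $\beta\N$, and you use a direct return-time argument instead of the companion-paper characterizations.

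The gap is (i)$\Rightarrow$(iii), which you do not prove, and this is where all the depth of the theorem lies. Taking complements, ``(i)$\Rightarrow$(iii) for every $A$'' says exactly this: a nonempty $C$ such that $\bigcap_{f\in F}(C-f)$ is syndetic for every finite $F\subseteq C$ is dynamically syndetic. That is the hard direction (iii)$\Rightarrow$(i) of Theorem~\ref{mainthm_ds_equivalents}. The paper handles this step simply by citing that theorem. If (iii) failed for some $\N\supsetneq B\supseteq A$, then $C=\N\setminus B$ is such a set. Hence $C$ is dynamically syndetic, which is impossible since $C\cap A=\emptyset$ and $A$ is dynamically thick.

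Your sketch cannot reach this conclusion as written, because it aims at the wrong inclusion. To contradict dynamical thickness of $B$ you need a minimal system, a point $z$ and an open $V$ with $R(z,V)\subseteq C$. Your point $y=T^q1_C$ instead satisfies $R(y,[1]_0)=C-q\supseteq C$. That set may well meet $B$, so there is no contradiction. The cylinder $\{z: z(0)=0\}$ gives a return set contained in $B$, which is again the wrong way.

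Minimality of $y$ is not the real obstacle. Since all the finite intersections are syndetic, you can choose $q$ in any minimal left ideal from the start, and then $y$ is automatically uniformly recurrent.

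The idempotent-filter route also gets you only partway. For $f_0\in C$ and $D=C-f_0$, the sets $D\cap\bigcap_{g\in G}(D-g)$, with $G\subseteq D$ finite, generate a syndetic, idempotent filter containing $D$. Turning membership in such a filter into an honest return-time set inside $D$ is the content of Theorem~\ref{intro_mainthm_full_chars_of_dcsyndetic}, (v)$\Rightarrow$(i), from the companion paper. This does not follow from idempotent-ultrafilter theory alone. Theorem~\ref{cor_central_syndetic_is_strongly_central} gives only strong centrality, which is strictly weaker, as the remark following it explains.

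So for this direction you must either cite Theorem~\ref{mainthm_ds_equivalents} or supply a proof of its hard implication.
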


While characterizations \eqref{item:intro_complement_thick_dt_characterization_v2} and \eqref{item:intro_dyn_thick_idempotent} given in Theorems \ref{mainthm_dct_combo_characterizations} and \ref{mainthm_dt_characterizations}  are both combinatorial, the proofs of their equivalence to \eqref{item_intro_dthick_def} involve entirely different sets of tools. That \eqref{item_intro_dthick_def} and \eqref{item:intro_complement_thick_dt_characterization_v2} are equivalent proceeds through dynamics on the space of ultrafilters.  That \eqref{item_intro_dthick_def} and \eqref{item:intro_dyn_thick_idempotent} are equivalent, on the other hand, is derived from our recent combinatorial characterizations of dynamically syndetic sets from \cite{glasscock_le_2024}.

\subsection{More on dynamically thick sets}

\subsubsection{Novel examples}
\label{sec:dyn_thick_not_ip_intro}

Theorems \ref{mainthm_dct_combo_characterizations} and \ref{mainthm_dt_characterizations} can be used to produce novel examples of sets of pointwise recurrence and dynamically thick sets.  We show in \cref{rem:combinatorial_primes}, for example, that when $(p_i)_{i = 1}^\infty$ is an enumeration of distinct primes and $(H_i)_{i = 1}^\infty$ is a sequence of thick sets, the set
\begin{align}
\label{eq:example_dct_first}
    A \defeq \bigcup_{i=1}^{\infty} \big((p_i \N + 1) \cap H_i \big)
\end{align}
is dynamically thick.
This example allows us to give a strong negative answer to the following question of Host, Kra, and Maass. For context, note that it is a consequence of a result of Furstenberg \cite{furstenberg_book_1981} that IP sets are sets of pointwise recurrence for distal systems. Host, Kra, and Maass asks whether the converse is true. (See Sections \ref{sec_top_dynamics} and \ref{sec_ip_and_central_sets} for the definitions of distal systems and IP sets, respectively.)

\begin{question}[{\cite[Question 3.11]{HostKraMaass2016}}]
\label{ques:pointwise_recurrence_not_IP}
Is it true that every set of pointwise recurrence for distal systems is an IP set? 
\end{question}

A negative answer to \cref{ques:pointwise_recurrence_not_IP} was first given in \cite{Koutsogiannis_Le_Moreira_Pavlov_Richter_interpolation_sets}, where an example of a set of pointwise recurrence for distal systems that does not contain the configuration $\{x,y,x+y\}$ was constructed.  When the thick sets $H_i$ in \eqref{eq:example_dct_first} are chosen to be sufficiently spaced, the set $A$ is dynamically thick (and, hence, a set of pointwise recurrence for all systems) but is not an IP set.  This is made precise and proven in \cref{rem:combinatorial_primes}, yielding an even stronger negative answer to \cref{ques:pointwise_recurrence_not_IP} than was previously known.

\begin{Maintheorem}
\label{Mainthm:dyn_thick_not_IP}
There exists a dynamically thick set which is not an IP set.
\end{Maintheorem}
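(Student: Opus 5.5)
The plan has two independent halves: show that the set $A$ in \eqref{eq:example_dct_first} is dynamically thick for every choice of thick sets $H_i$, and then choose the $H_i$ sparse enough that $A$ contains no IP set.

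\emph{Dynamical thickness.} I would work from the definition. Fix a minimal system $(X,T)$, a point $x \in X$ and a nonempty open $U \subseteq X$, and put $R = \{n : T^n x \in U\}$. By minimality $R$ is syndetic. It suffices to find one index $i$ such that $R \cap (p_i\N+1)$ is syndetic. Indeed, if its gaps are bounded by $L_i$, then since $H_i$ is thick it contains an interval of length greater than $L_i$, and that interval meets $R\cap(p_i\N+1)$; hence $A \cap R \neq \emptyset$.

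To find such an $i$, I would use the standard structure of $T^p$ on a minimal system. For a prime $p$, either $T^p$ is minimal, or $X$ splits into $p$ clopen $T^p$-minimal sets $X_0,\ldots,X_{p-1}$ permuted cyclically by $T$ (equivalently, $\Z/p$ is a factor).
\begin{itemize}
\item If $T^{p_i}$ is minimal, apply minimality of $T^{p_i}$ to the point $T x$ and the set $U$. This gives that $\{m : T^{p_i m+1}x \in U\}$ is syndetic, which is what we need.
\item In the second case, $R \cap (p_i\N+1)$ is syndetic exactly when $U$ meets the piece $X_{c}$ with $c$ determined by $T x \in X_{c}$.
\end{itemize}
The key claim is that $U$ meets every piece for all but finitely many such primes. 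Here I would use compactness: cover $X$ by finitely many translates $T^{-j}V$, $0\le j<L$, for some open $V\subseteq U$, and compare this with the fine partitions $\{X_c\}$ for large primes. I expect this to be the main obstacle. The cleanest route may be to reduce to the maximal equicontinuous factor: the odometer-like part records the $\Z/p$ factors, and an open set there is determined modulo some integer $N$, so the Chinese remainder theorem applies to every prime $p_i \nmid N$. An alternative is to verify condition \eqref{item:intro_dyn_thick_idempotent} of \cref{mainthm_dt_characterizations} directly. Given $B \supseteq A$ and $m \notin B$, take $F = \{m\}$ and choose $i$ with $p_i \nmid m$. A large interval of $H_i$ then forces $B - F$ to contain long intervals intersected with the single residue class $1 - m \bmod p_i$. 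This alone does not give thickness, so one would need to combine several primes, which is why I would favor the dynamical route.

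\emph{Not IP.} Choose each $H_i$ to be a union of intervals $[a, a+k]$ with arbitrarily large $k$. Arrange all these intervals, over all $i$, so that each begins at more than three times the end of the previous one. Suppose $\FS(x_n) \subseteq A$. Fix $x = x_1$, lying in some interval. For any $y \in \FS(x_n)$ not using $x_1$ and large compared with $x$, both $y$ and $x+y$ lie in $A$. By the lacunarity of the intervals they lie in the same interval, which belongs to some $H_j$. Both are then $\equiv 1 \bmod p_j$, so $p_j \mid x$.

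Since $x$ has finitely many prime divisors, only finitely many indices $j$ occur for large elements of the IP set. Hence, after discarding an initial segment, $\FS(x_n)$ lies in $\bigcup_{j\in J}(p_j\N+1)$ for a finite set $J$. With $P = \prod_{j\in J} p_j$, pigeonhole on the residues of the $x_n$ modulo $P$ gives $P$ of them, all sharing the same residue mod $P$, whose sum is $\equiv 0 \bmod P$. That sum lies in $\FS(x_n)$ but is $\equiv 0$, not $1$, modulo every $p_j$ with $j \in J$, a contradiction. So $A$ is dynamically thick and not IP, proving \cref{Mainthm:dyn_thick_not_IP}.
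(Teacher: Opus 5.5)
The gap is the step you flag yourself. The rest of that half is correct: the reduction to finding one $i$ with $R\cap(p_i\N+1)$ syndetic, and the $T^{p}$ dichotomy. But you never prove that, for all but finitely many primes $p$ with $T^p$ non-minimal, $U$ meets every piece $X^{(p)}_c$, and neither tool you list gives this on its own.

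\begin{itemize}
\item The covering $X=\bigcup_{j<L}T^{-j}V$, applied to a point of $X^{(p)}_c$, only shows that $U$ meets at least one of any $L$ cyclically consecutive pieces. That says nothing about the particular piece containing $Tx$.
\item The remark that an open set in the equicontinuous factor ``is determined modulo $N$'' is useless until you know that the image of $U$ in that factor contains an open set.
\end{itemize}

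That is the missing idea, and it is short. Let $P$ be the set of such primes and $\pi\colon X\to Z\defeq\prod_{p\in P}\Z/p$ the induced equivariant map. It is onto, because rotation by $(1,1,\ldots)$ on $Z$ is minimal. Take $V$ open with $\overline V\subseteq U$ and $X=\bigcup_{j<L}T^{-j}V$. Then $Z=\bigcup_{j<L}R^{-j}\pi(\overline V)$, where $R$ is the rotation, a homeomorphism. Hence the closed set $\pi(\overline V)$ has nonempty interior, so it contains a cylinder fixing finitely many coordinates. For every other $p\in P$ and every $c$, that cylinder meets $\{z_p=c\}$, so $U$ meets $X^{(p)}_c$. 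With this inserted, your argument is a complete and fairly elementary dynamical proof. It is a special case of \cref{lemma_more_general_example_of_dy_thick_not_IP} that avoids both \cref{thm_bessel} and the companion paper's characterization of dynamically syndetic sets.

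The paper instead uses the combinatorial route you abandoned; it works once $F$ is a full residue system rather than a singleton. Apply \eqref{item:intro_dyn_thick_idempotent} of \cref{mainthm_dt_characterizations} to $A\subseteq B\subsetneq\N$. There are two cases:
\begin{itemize}
\item Some complete residue system $F$ modulo some $p_i$ lies in $\N\setminus B$. Then $B-F\supseteq\bigcap_{f\in F}(H_i-f)$, which is thick.
\item Otherwise $B\supseteq p_i\N+a_i$ for every $i$, and the Chinese remainder theorem makes $B$ itself thick.
\end{itemize}

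Your non-IP half is correct and is the paper's argument unpacked. The paper observes that $A\cap(A-n)$ lies in finitely many sets $(p_i\N+c_i)\cap H_i$: equal indices force $p_i\mid n$, and distinct indices are excluded by the spacing. It then invokes partition regularity of $\IP$ and the fact that IP sets contain multiples of every prime. Your pigeonhole modulo $P$ does the same by hand. One small repair: instead of ``discarding an initial segment,'' pass to a sum subsystem of $(x_n)_{n\ge 2}$ all of whose finite sums exceed $x_1$, since the $x_n$ need not grow.
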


Heuristically, the set in \eqref{eq:example_dct_first} is dynamically thick because it contains sufficiently long pieces of return-time sets from a sufficiently rich collection of systems, in this case, rotations on prime many points.
We make this heuristic precise and generalize \eqref{eq:example_dct_first} in \cref{thm_bessel} by showing that no minimal topological dynamical system can be disjoint from an infinite collection of disjoint systems.  We expect this result will be of interest outside the scope of this work.

\subsubsection{The structure of dynamical thick sets}
\label{sec_intro_disjointness}

From the definition, it is easy to see that a set is dynamically thick if and only if it has non-empty intersection with every set of the form 
\[
    R(x, U) = \{n \in \N \ | \  T^n x \in U\},
\]
where $(X, T)$ is a minimal system, $x \in X$, and $U \subseteq X$ is nonempty and open.
A set that contains a set of the form $R(x, U)$ is called \emph{dynamically syndetic}.  If, further, $x \in U$, then the set is \emph{dynamically central syndetic}.

All the examples of dynamically thick sets we have presented so far share a common form: they are comprised of intersections of thick sets and syndetic sets, where the collection of syndetic sets is sufficiently robust as to ``correlate'' with all dynamically syndetic sets.
Our next result shows that this is, in fact, the general form of all dynamically thick sets.  A collection $\collection$ of subsets of $\N$ is \emph{robustly syndetic} if for all dynamically syndetic sets $A \subseteq \N$, there exists $B \in \collection$ such that the set $A \cap B$ is syndetic.

\begin{Maintheorem}
\label{mainthm_structure_for_dy_thick_sets}
\label{cor_weak_structure_result}
    A set $A \subseteq \N$ is dynamically thick if and only if there exists a robustly syndetic collection $\collection$ of subsets of $\N$ and, for each $B \in \collection$, a thick set $H_B \subseteq \N$ such that
\begin{align}
\label{eqn_A_contains_family_cap_thick}
    A = \bigcup_{B \in \collection} \big( B \cap H_B\big).
\end{align}
\end{Maintheorem}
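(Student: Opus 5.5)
Both directions reduce to the dual description of dynamically thick sets noted just before the statement: $A$ is dynamically thick if and only if $A$ meets every dynamically syndetic set. Equivalently, $\N \setminus A$ contains no dynamically syndetic set. I will also use two standard facts. First, a set is thick if and only if it meets every syndetic set. Second, the dynamically syndetic sets form a family that is upward closed and partition regular in the appropriate sense. The second fact follows from the combinatorial characterization in \cite{glasscock_le_2024}, or directly: if $D = R(x,U)$ is dynamically syndetic and $D \cap B$ is syndetic, then $D \cap B$ still contains a dynamically syndetic set, by passing to a suitable subsystem of a product with the orbit closure of $1_B$.

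\emph{Sufficiency.} Suppose $A = \bigcup_{B \in \collection} (B \cap H_B)$ with $\collection$ robustly syndetic and each $H_B$ thick. Let $D$ be dynamically syndetic. By robust syndeticity there is $B \in \collection$ with $D \cap B$ syndetic. Since $H_B$ is thick, it meets the syndetic set $D \cap B$. Hence $D \cap A \supseteq D \cap B \cap H_B \neq \emptyset$. So $A$ meets every dynamically syndetic set, and $A$ is dynamically thick.

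\emph{Necessity.} Suppose $A$ is dynamically thick. The natural choice is to take $\collection$ to consist of all sets $B \subseteq \N$ for which $A \cap B$ is "thickly" present: $B \in \collection$ iff there is a thick $H$ with $B \cap H \subseteq A$, and then set $H_B = H$. Enlarging with singletons (each $\{a\}$ for $a \in A$, paired with any thick $H$) ensures the union equals $A$ rather than being merely contained in it. The whole content is then to show that this $\collection$ is robustly syndetic. That is: given a dynamically syndetic $D$, find $B$ with $D \cap B$ syndetic and $B \cap H \subseteq A$ for some thick $H$. Taking $B = D$ itself, we would need a thick $H$ with $D \cap H \subseteq A$, i.e.\ $D \setminus A$ not thickly covering, i.e.\ $D \setminus A$ not "co-thick inside $D$". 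Precisely, we need $\N \setminus (D \setminus A)$ to contain a thick set, which holds exactly when $D \setminus A$ is not syndetic.

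\emph{Main obstacle.} Since $D$ is arbitrary, $D \setminus A$ may well be syndetic, so we must instead choose $B$ as a suitable dynamically syndetic subset of $D$. Here the plan is as follows. If $D \setminus A$ is not syndetic, take $B = D$ and we are done. Otherwise $D \setminus A$ is syndetic and contained in $D$, and we try to upgrade it to a dynamically syndetic $D' \subseteq D \setminus A$. This should be possible by taking a minimal subsystem of the product of $(X,T)$ with the shift orbit closure of $1_{D\setminus A}$, where $D \supseteq R(x,U)$. That would contradict dynamical thickness of $A$. The upgrade may fail as stated, because syndetic subsets of dynamically syndetic sets need not be dynamically syndetic. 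I would therefore refine the dichotomy to piecewise syndeticity, using Theorem~\ref{mainthm_dt_characterizations}(ii). If $D \setminus A$ fails to be piecewise syndetic, then its complement is thick-like relative to $D$; choose a syndetic piece $B \subseteq D$ and a thick $H$ avoiding $D\setminus A$. If instead $D\setminus A$ is piecewise syndetic, then (ii) gives a finite $F \subseteq A$ with $(D \setminus A) - F$ thick. Translating back via the minimal system then yields a point whose return set to $U$ lies in $A$, which contradicts $x$-orbit considerations. Making this last translation rigorous is the step I expect to require the most care.
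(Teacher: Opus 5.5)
Your sufficiency argument is correct and matches the paper's. Your reduction of necessity is also the right target: show that the collection of all $B$ admitting a thick $H$ with $B\cap H\subseteq A$ is robustly syndetic. The gap is in how you try to reach it.

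The case ``$D\setminus A$ is (piecewise) syndetic'' does not lead to a contradiction; it is the typical situation. Take $D=\N$ and $A=(2\N\cap H_0)\cup((2\N+1)\cap H_1)$ with $H_0=\bigcup_j\{10^{2j},\ldots,10^{2j}+j\}$ and $H_1=\bigcup_j\{10^{2j+1},\ldots,10^{2j+1}+j\}$. By \cref{ex:type2-special}, $A$ is dynamically thick. But $A$ contains no two consecutive integers, so $\N\setminus A$ is syndetic, and the choice $B=D$ would require a thick subset of $A$, which does not exist. The conclusion you aim for at the end is also misdirected: a return-time set contained in $A$ contradicts nothing. Moreover, \cref{mainthm_dt_characterizations}(ii) applied to $D\setminus A$ does not produce a dynamically syndetic set inside $\N\setminus A$.

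The same example refutes your preliminary ``fact'': $D\cap(\N\setminus A)$ is syndetic yet contains no dynamically syndetic set. More basically, $\dsyndetic$ is not partition regular. Indeed, $\N$ splits into two complementary thick sets, and neither is dynamically syndetic, since each is disjoint from a dynamically thick set.

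What is missing is the fact that $A\cap D$ is piecewise syndetic for every $D\in\dsyndetic$, together with a choice of $B$ that is not a subset of $D$. The first is \cref{thm:relations_C_dPS_PS-intro} ($\dPS\subseteq\PS$), and it cannot be bypassed. If $A=\bigcup_{B\in\collection}(B\cap H_B)$ with $\collection$ robustly syndetic, then $A\cap D\supseteq(B\cap D)\cap H_B$ is piecewise syndetic. So the necessity direction is equivalent to the inclusion $\dPS\subseteq\PS$. The paper obtains this by showing $\PS^*\subseteq\dPS^*$, which rests on characterizing dynamically central syndetic sets as members of syndetic, idempotent filters (\cref{intro_mainthm_full_chars_of_dcsyndetic}), not on an argument like your final step.

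Once that fact is available, the construction is elementary. Pick long, well-separated intervals $I_1,I_2,\ldots$ on which $A\cap D$ has gaps bounded by some $\ell$, let $G=\bigcup_i I_i$ (which is thick), and set $B=(A\cap G)\cup(\N\setminus G)$. Then $B\cap G=A\cap G\subseteq A$. Also $B\cap D$ is syndetic, since it agrees with $A\cap D$ on $G$ and with $D$ off $G$.
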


The dynamically thick sets  in \eqref{eq:example_dynthick_1} and \eqref{eq:example_dct_first}  correspond to the robustly syndetic collections $\{k \N + i \ | \ 0 \leq i \leq k - 1\}$ and $\{p_i \N + 1 \ | \ i \in \N\}$, respectively. More elaborate examples of robustly syndetic collections and dynamically thick sets are presented in \cref{sec:structure_dyn_thick}.  \cref{cor_weak_structure_result} provides another answer to \cref{ques:characterize_dense_orbit_sets}, one that focuses on the underlying structure of the sets rather than their combinatorial characterizations. 

\subsubsection{The splitting problem and \texorpdfstring{$\sigma$}{sigma}-compactness}
\label{sec:sigma-compact-intro}

A family $\family$ of subsets of $\N$ has the \emph{splitting property} if for all $A \in \family$, there exists a disjoint union $A = A_1 \cup A_2$ such that $A_1, A_2 \in \family$. 
It is well known to experts in the field that each of the following families has the splitting property:
    \begin{enumerate}
        \item \label{item:top_rec_list} sets of topological recurrence;
        
        \item \label{item:meas_rec_list} sets of measurable recurrence;
        
        \item \label{item:bohr_rec} sets of Bohr recurrence; and
        
        \item \label{item:nil_rec} sets of pointwise recurrence for $k$-step nilsystems.
    \end{enumerate}
Definitions of the first two families were given in \cref{sec_intro_set_and_pt_recurrence} and the last two in \cref{sec:sigma-compact}.
In this context, it is natural to ask whether the families of sets of pointwise recurrence and dynamical thick sets has the splitting property.

\begin{question}
\label{quest_sets_of_pw_rec_pproperty}
    If $A \subseteq \N$ is a set of pointwise recurrence, does there exist a disjoint partition $A = A_1 \cup A_2$ in which both $A_1$ and $A_2$ are sets of pointwise recurrence?  That is, does the family $\dcthick$ have the splitting property?  Relatedly, does the family $\dthick$ have the splitting property?
\end{question}

The established technique for proving the splitting property -- in, eg., \cite{Boshernitzan-Glasner-tworecurrence, Forrest_recurrence-in-dynamical-systems, Le-interpolation-first,Le_interpolation_for_nilsequences} -- goes as follows.  Let $N_1 < N_2 < \cdots$ be a rapidly increasing sequence of positive integers.  Assign the set $A \cap [N_1]$ to $A_1$, the set $A \cap ([N_2] \setminus [N_1])$ to $A_2$, the set $A \cap ([N_3] \setminus [N_2])$ to $A_1$, and so on.  If the family $\family$ is ``suitable'' and the $N_i$'s are chosen appropriately, the sets $A_1$ and $A_2$ will both belong to $\family$. 
Sufficient and necessary conditions for ``suitability'' of a family in the argument above seem never to have been made explicit.

In the process of attempting to answer \cref{quest_sets_of_pw_rec_pproperty}, we discovered that it is $\sigma$-compactness of dual families that underlies the splitting property for (i) -- (iv) above.  The \emph{dual} of a family $\family$ is the family of sets that have non-empty intersection with every member of $\family$.
A family is \emph{$\sigma$-compact} if the set of indicator functions of its members is a $\sigma$-compact set in $\{0,1\}^{\N}$. (See Sections \ref{sec_dual_families} and \ref{sec:compact-sigma-compact-first} for the precise definitions.)
We show in \cref{prop:sigma-compact-implies-partition-to-two} that if the dual family $\family^*$ is $\sigma$-compact, then $\family$ possesses the splitting property.
We proceed by showing that while the duals of the families \eqref{item:top_rec_list} -- \eqref{item:nil_rec} above are $\sigma$-compact, surprisingly, the duals of the families $\dthick$ and $\dcthick$ are not.

\begin{Maintheorem}
\label{theorem_dy_syndetic_is_not_sigma_compact}
    The duals of the families of dynamically thick sets and sets of pointwise recurrence are not $\sigma$-compact.
\end{Maintheorem}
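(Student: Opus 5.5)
The two dual families are easy to identify. A set meets every dynamically thick set if and only if it is dynamically syndetic: one direction is the definition, and conversely, if $A$ is not dynamically syndetic then $\N\setminus A$ meets every $R(x,U)$, so it is dynamically thick. The same argument shows that the dual of $\dcthick$ is the family $\dcsyndetic$ of dynamically central syndetic sets. Since $\dcsyndetic\subseteq\dsyndetic$, it is enough to prove the following. \emph{For every sequence $(K_n)$ of compact families with $\bigcup_n K_n\subseteq\dsyndetic$, there is a set $A\in\dcsyndetic$ lying in no $K_n$.} This single set $A$ then defeats any $\sigma$-compact cover of $\dsyndetic$ (because $A\in\dsyndetic$) and any $\sigma$-compact cover of $\dcsyndetic$.

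First I will reduce to upward-closed compact families. The upward closure $\upclose K$ of a compact $K\subseteq\{0,1\}^{\N}$ is again compact. Indeed, if $B_j\supseteq A_j$ with $A_j\in K$ and $B_j\to B$, pass to a subsequence with $A_j\to A\in K$; then $B\supseteq A$. Since $\dsyndetic$ is upward closed, we may therefore assume each $K_n$ is upward closed. For such $K$, closedness gives a finite-window test: if $A\notin K$, there is $N$ with $(A\cap[N])\cup(N,\infty)\notin K$. Upward closure then yields the following: whenever $A'\cap[N]\subseteq A\cap[N]$, also $A'\notin K$. So it suffices to construct $A\in\dcsyndetic$, integers $N_1<N_2<\cdots$, and sets $A_n\notin K_n$ such that
\[
A\cap[N_n]\subseteq A_n\cap[N_n] \quad\text{for every } n .
\]

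The sets $A_n$ will be chosen inductively, each of the form $R(x_n,U_n)$ for a Toeplitz (odometer-extension) system. I propose to realize $A$ itself as $R(x,U)$ with $x\in U$, where $\mathbf 1_{U}(T^m x)$ is a Toeplitz sequence constructed by a stage-$n$ process. At stage $n$ the sequence is fixed on $[N_n]$ and all remaining positions are left as ``holes'' along a periodic skeleton. The requirement at stage $n$ is a set $A_n\in\dsyndetic\setminus K_n$ whose intersection with $[N_n]$ contains the ones fixed so far and which is compatible with the current skeleton. We then fill the holes up to $N_n$ using zeros wherever $A_n$ has zeros. The limit of a Toeplitz construction in which every hole is eventually filled periodically is a minimal system, and $x\in U$ can be arranged at stage $0$. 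Hence $A\in\dcsyndetic$ and $A\cap[N_n]\subseteq A_n$ for all $n$, which is what was needed.

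The main obstacle is the existence step: given the finite data and skeleton fixed at stage $n-1$, we must find a dynamically syndetic set $A_n$ outside $K_n$ that respects that data. A priori we only know that $K_n$ does not exhaust $\dsyndetic$, which says nothing about sets with a prescribed prefix and periodic structure. To get past this, I would first try to show that each fixed, upward-closed compact $K\subseteq\dsyndetic$ has a uniform syndeticity bound: there is $L$ such that every member of $K$ meets every interval of length $L$ in some structured way, obtained from a compactness argument applied to the thick sets in the dual. Granting such a bound, we would choose $A_n$ to be the return-time set of a sufficiently sparse periodic refinement of the current skeleton, for instance with period a new large prime, as in \eqref{eq:example_dct_first}. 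This set has gaps longer than $L$ inside $(N_{n-1},N_n]$ and so cannot lie in $K_n$, while still agreeing with the previously fixed data. If no such uniform bound is available, this step is where the argument could fail.
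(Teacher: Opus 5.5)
Your preliminary reductions are sound. The duals are $\dsyndetic$ and $\dcsyndetic$, and it suffices to find one $A\in\dcsyndetic$ lying outside every compact $K_n\subseteq\dsyndetic$. Upward closures of compact sets are compact, and the finite-window test is correct. The construction itself, however, cannot work, and the problem goes beyond the step you flag.

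\begin{itemize}
\item If every position of a Toeplitz sequence is eventually filled periodically, then each $m\in A$ comes with a period $q$ such that $m+q\N_0\subseteq A$. Since $x\in U$, in particular $A\supseteq q\N$ for some $q$.
\item Each $\upclose\{q\N\}$ is a compact family contained in $\dcsyndetic\subseteq\dsyndetic$. So for the sequence $K_q\defeq\upclose\{q\N\}$, $q\in\N$, every set your scheme can produce lies in some $K_q$.
\item The same obstruction appears stage by stage. The ones already fixed by the skeleton form a nonempty periodic, hence dynamically syndetic, set $P$. Nothing prevents $P$ from lying in some later $K_m$, and then every $A\supseteq P$ lies there too.
\item The hoped-for uniform syndeticity bound is also false. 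The family $\upclose(\{\N\}\cup\{\N\setminus\{M_N+1,\dots,M_N+N\} : N\in\N\})$ with $M_N\to\infty$ is compact and consists of cofinite, hence dynamically central syndetic, sets. Yet it has members with arbitrarily long gaps, so long gaps never certify non-membership.
\end{itemize}

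The missing idea is that compactness yields uniformity relative to dynamically thick sets, not intervals. For compact $K\subseteq\dsyndetic=\dthick^*$ and $B\in\dthick$, the map $1_C\mapsto\min(C\cap B)$ is locally constant on $X_K$. It is therefore bounded by some $\varphi_K(B)$, so every set disjoint from $B\cap\{1,\dots,\varphi_K(B)\}$ lies outside $K$. In particular, your witnesses $A_n$ may simply be cofinite.

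The real work is to delete such finite pieces for all $n$ at once while remaining dynamically (central) syndetic. The paper does this combinatorially rather than by building a system:
\begin{itemize}
\item It takes pairwise disjoint dynamically thick sets $B_i=\bigcup_{j\ge i}(T_{i,j}\cap(p_j\N+1))$, where the $T_{i,j}$ are well-separated thick sets and the $p_j$ are primes.
\item It shows that every set of pointwise recurrence $B\subseteq\bigcup_i B_i$ meets some $B_i$ in an infinite set. This holds because $B$ is piecewise syndetic, so $B\cap(B-n)$ is infinite for some $n$, while $B_i\cap(B_j-n)=\emptyset$ once $\max(i,j)$ is large.
\item It then checks that $A=(\N\setminus\bigcup_iB_i)\cup\bigcup_i(B_i\setminus\{1,\dots,\varphi_{K_i}(B_i)\})$ meets every set of pointwise recurrence.
\end{itemize}
So $A\in\dcsyndetic$ by duality, and $A$ lies in no $K_i$. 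This argument in fact gives your strengthened claim.
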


\cref{theorem_dy_syndetic_is_not_sigma_compact} demonstrates why previous methods -- which in all known cases give $\sigma$-compactness -- fail at splitting sets of pointwise recurrence and dynamically thick sets into two sets in the same family.  
It also indicates that the families $\dcthick$ and $\dthick$ are more complex than others usually considered in the subject.

\subsection{Dynamically piecewise syndetic sets}
\label{sec_dy_pws_sets}

In this part of the paper, we introduce and study the family of sets formed by intersections of sets of pointwise recurrence (resp. dynamically thick sets) with members of its dual, dynamically syndetic sets (resp. dynamically central syndetic sets).
For subsets $B$ and $C$ of $\N$, the intersection $B \cap C$ is called 
\begin{itemize}
    \item \emph{dynamically piecewise syndetic} if $B$ is dynamically syndetic and $C$ is dynamically thick, and

    \item \emph{dynamically central piecewise syndetic} if $B$ is dynamically central syndetic and $C$ is a set of pointwise recurrence.
\end{itemize}
The families of dynamically piecewise syndetic sets and dynamically central piecewise syndetic sets are denoted by $\dPS$ and $\dcPS$ respectively.
To our knowledge, these families have not appeared before in the literature.

To motivate the study of these families, consider their combinatorial counterparts.  Intersections of syndetic sets and thick sets are called \emph{piecewise syndetic} sets.
Piecewise syndeticity is a ``local'' analogue of syndeticity that has the advantage of being partition regular: if a piecewise syndetic set is partitioned into finitely many subsets, one of the subsets must be piecewise syndetic.
Central sets are combinatorially-rich, dynamically-defined subsets of $\N$ that were introduced by Furstenberg \cite[Def. 8.3]{furstenberg_book_1981} to aid in the topological dynamical approach to problems in Ramsey theory. Central sets are piecewise syndetic and also enjoy partition regularity. 
The families of piecewise syndetic and central sets are important to the connection between combinatorics and dynamics (via correspondence principles) and to describing algebraic structure in the Stone-\Cech{} compactification of $\N$.

\subsubsection{Relations with other established families}

Our first results concern the relationships between the families of dynamically (central) piecewise syndetic sets, piecewise syndetic sets, and central sets.
There are dynamically syndetic sets that are not dynamically central syndetic, and there are sets of pointwise recurrence that are not dynamically thick.  Therefore, from the definitions, there is no apparent relationship between the families of dynamically piecewise syndetic and dynamically central piecewise syndetic sets.  As our terminology suggests, however, we prove that dynamically central piecewise syndetic sets are dynamically piecewise syndetic.

We could not find a proof of this fact that does not also elucidate the relationship between the families of dynamically piecewise syndetic and piecewise syndetic sets.  We show that, perhaps surprisingly, these families are the same.  The next theorem summarizes these relationships and is proved in \cref{sec_dps_sets}. Looking ahead, \cref{fig:containmentdiagram} at the end of \cref{sec_prelims} summarizes the relationships between the families studied in this paper with the established families of subsets of $\N$ in the topic.

\begin{Maintheorem}
\label{thm:relations_C_dPS_PS-intro}
    Denote by $\central$, $\PS$, $\dPS$, and $\dcPS$ the families of central, piecewise syndetic, dynamically piecewise syndetic, and dynamically central piecewise syndetic sets, respectively.  The following inclusions hold:
    \[
        \central \subsetneq \dcPS \subsetneq \dPS = \PS.
    \]
    In particular, a set is dynamically piecewise syndetic if and only if it is piecewise syndetic.
\end{Maintheorem}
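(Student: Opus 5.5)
We prove the chain of inclusions in four steps: $\central \subseteq \dcPS$, $\dcPS \subseteq \dPS$, $\dPS = \PS$, and then separate the strict inclusions by examples.

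\textbf{Step 1: $\central \subseteq \dcPS$.} Recall Furstenberg's definition: a central set contains $R(x,U)$, where $x$ is uniformly recurrent, $U$ is a neighborhood of $x$, and $x$ is proximal to some point $y$, all in a system $(X,T)$. Passing to the orbit closure of $x$, which is minimal, the set $B = R(x,U)$ is dynamically central syndetic. For the other factor we need a set $C$ of pointwise recurrence with $R(x,U) \supseteq B \cap C$. The natural choice is $C = R(y, U')$ for a suitable neighborhood $U'$ of $x$, using proximality to force the return times of $y$ near $x$ to be "thick along" returns. Cleaner is the ultrafilter route: $A$ is central iff $A$ belongs to a minimal idempotent $p$. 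Write $p$-sets as $B \cap C$ with $B \in p$ dynamically central syndetic, using that every member of a minimal idempotent contains a dynamically central syndetic set. For $C$, use that a set $C$ with $C \in q$ for all minimal idempotents $q$... would be too strong. So instead I will use Theorem~\ref{mainthm_dct_combo_characterizations}(iii) to verify that a suitable $C \supseteq A$, namely $C = A \cup (\N \setminus B)$, is a set of pointwise recurrence. Then $A \supseteq B \cap C$.

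\textbf{Step 2: $\dcPS \subseteq \dPS$ and $\dPS = \PS$.} The inclusion $\dPS \subseteq \PS$ follows because dynamically syndetic sets contain $R(x,U)$, which is syndetic in a minimal system, and dynamically thick sets are piecewise syndetic, hence contain some $S \cap H$. Then $R(x,U) \cap C$ is piecewise syndetic, which I would show via Theorem~\ref{mainthm_dt_characterizations}(ii): given the syndetic set $R(x,U)$, there is a finite $F \subseteq C$ with $R(x,U) - F$ thick, so some translate $R(x,U)-f$ meets $C$ on a piecewise syndetic set. Partition regularity of $\PS$ then gives the result.

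For $\PS \subseteq \dPS$, write a piecewise syndetic set as $S \cap H$ and realize it as $B \cap C$ with $B$ dynamically syndetic and $C$ dynamically thick. I would take $B = R(x,U)$ from the orbit closure of $1_{S'}$ in the shift, where $S'$ is chosen so that $S \cap H \supseteq S' \cap H'$ with $S'$ coming from a minimal point. Then take $C = (S\cap H) \cup (\N \setminus B)$ and check dynamical thickness using Theorem~\ref{mainthm_dt_characterizations}(iii).

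For $\dcPS \subseteq \dPS$, this now reduces to showing $\dcPS \subseteq \PS$: a dynamically central syndetic set intersected with a set of pointwise recurrence is piecewise syndetic. Here I use Theorem~\ref{mainthm_dct_combo_characterizations}(ii) with $S = R(x,U)$ and $S'$ ranging over syndetic subsets to get piecewise syndeticity of $B \cap C$. I expect this to be the main obstacle: one must turn the finite set $F$ and the condition on every syndetic $S'$ into thickness of some translate of $B \cap C$, likely via a contradiction argument producing a syndetic $S'$ that avoids all the $F$-differences.

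\textbf{Step 3: strictness.} For $\dcPS \neq \dPS$, take a dynamically thick set that is not a set of pointwise recurrence inside a suitable piecewise syndetic set; for instance, use sets avoiding $0$ modulo $k$, built as in the example \eqref{eq:example_dynthick_1}. For $\central \neq \dcPS$, find a $\dcPS$ set that is not central, for example a set with no IP structure obtained from Theorem~\ref{Mainthm:dyn_thick_not_IP} intersected with $\N$.
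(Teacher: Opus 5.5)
\textbf{The main gap: $\dPS\subseteq\PS$ and $\dcPS\subseteq\PS$.} Your proposal does not establish these inclusions, which are the heart of the theorem. Your argument for $\dPS\subseteq\PS$ is a \emph{non sequitur}. Thickness of $R(x,U)-F$ for some finite $F\subseteq C$ is a property of $R(x,U)$ and $F$ alone; it holds, for instance, whenever $F$ contains an interval longer than the gaps of $R(x,U)$. So it says nothing about $R(x,U)\cap C$. Even if some $(R(x,U)-f)\cap C$ were piecewise syndetic, partition regularity would not transfer this to $R(x,U)\cap C$. For $\dcPS\subseteq\PS$ you leave the key step open, and the finite sets $F$ in \cref{mainthm_dct_combo_characterizations} have the same defect: they encode nothing about $B\cap C$.

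The missing idea is to argue dually. The family $\PS^*$ is a syndetic, translation-invariant (hence idempotent) filter contained in $\central^*$. By \cref{lemma_class_cap_is_pr}, the inclusions $\PS^*\subseteq\dcPS^*$ and $\PS^*\subseteq\dPS^*$ say exactly that $A\cap D$ remains dynamically central syndetic (resp.\ dynamically syndetic) whenever $A$ is and $D\in\PS^*$.
\begin{itemize}
\item For $A\in\dcsyndetic$: \cref{intro_mainthm_full_chars_of_dcsyndetic} places $A$ in a syndetic, idempotent filter $\filter$. Members of such filters are central along every thick set (\cref{cor_central_syndetic_is_strongly_central}). Hence $\filter\classcap\PS^*$ is again a syndetic, idempotent filter (\cref{prop:general_abstract_intersection_filter_nonsense}). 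It contains $A\cap D$, which is therefore in $\dcsyndetic$.
\item For $A\in\dsyndetic$: pass to a translate $A-n\in\dcsyndetic$ (\cref{lemma_translates_of_dsyndetic_sets}) and use translation invariance of $\PS^*$.
\end{itemize}
Dualizing gives both inclusions. This step depends essentially on the idempotent-filter characterization of $\dcsyndetic$ from the companion paper; the authors note they know no proof avoiding it.

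\textbf{Step 1 and $\PS\subseteq\dPS$.} Step 1 misstates Furstenberg's definition: a central set is $R(y,U)$ with $y$ proximal to a uniformly recurrent $x\in U$, not $R(x,U)$. It is also false that every member of a minimal idempotent contains a dynamically central syndetic set, since thick, non-syndetic sets are central. The promised check that $A\cup(\N\setminus B)$ is a set of pointwise recurrence is never given. The clean argument is
\[
\central=\dcsyndetic\classcap\thick\subseteq\dcsyndetic\classcap\dcthick=\dcPS,
\]
using \cref{lem:central_characterization}. The same lemma gives $\PS=\dsyndetic\classcap\thick\subseteq\dsyndetic\classcap\dthick=\dPS$, which is what your sketch of $\PS\subseteq\dPS$ really needs: take $C$ to be the thick set. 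That is a valid, more direct alternative to the paper's argument, which shows $\dPS^*$ is a syndetic, translation-invariant filter and hence lies in $\PS^*$ by \cref{lemma_condition_on_subfamily_of_ps_star}.

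\textbf{Strictness.} Your witness for $\dcPS\neq\dPS$ cannot work. No dynamically thick set fails to be a set of pointwise recurrence, and sets as in \eqref{eq:example_dynthick_1} are dynamically thick, hence lie in $\dcPS$. The paper uses $2\N-1$. It is not in $\dcPS$ because $2\N\in\dcPS^*$ and $2\N$ is disjoint from $2\N-1$. The membership $2\N\in\dcPS^*$ follows from \cref{cor_shifted_class_equalities}\eqref{item_central_star_class_cap} applied to the filter generated by $2\N$, which is idempotent and consists of $\central^*$ sets, since IP sets contain even numbers. Your witness for $\central\neq\dcPS$, a dynamically thick non-IP set, is correct and matches the paper.
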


That dynamically piecewise syndetic sets are piecewise syndetic offers a striking contrast to the analogy with set recurrence given in \cref{sec_intro_set_and_pt_recurrence}.  Indeed, sets of measurable recurrence -- and thus, in particular, the intersection of a set of measurable recurrence with the set of times that a particular set of positive measure recurs -- are not necessarily even of positive Banach density (and, hence, are not piecewise syndetic). For example, by a result of Furstenberg \cite{furstenberg_book_1981} and S\'{a}rk\"{o}zy \cite{sarkozy_1978}, the set $\{n^2 \ | \ n \in \N\}$ is a set of measurable recurrence.

We discussed in \cref{sec_intro_set_and_pt_recurrence} how the results in \cite[Prop. 4.4]{dong_shao_ye_2012}, \cite{Glasner-Weiss-Interpolation}, and \cite[Thm. 2.4]{huang_ye_2005} show that the families of dynamically thick sets and sets of pointwise recurrence are piecewise syndetic.  \cref{thm:relations_C_dPS_PS-intro} generalizes these results by showing that the families of dynamically (central) piecewise syndetic sets are piecewise syndetic.

\subsubsection{Applications of \cref{thm:relations_C_dPS_PS-intro}}

It is an exercise left to the interested reader to show that syndeticity and thickness are stable under changes on a non-piecewise syndetic set.  More specifically, if the symmetric difference of two subsets of $\N$ is not piecewise syndetic, then one set is syndetic/thick if and only if the other set is syndetic/thick, respectively.  Our first application of \cref{thm:relations_C_dPS_PS-intro} shows that the same is true for the dynamical counterparts of these families.

\begin{Maintheorem}\label{thm:remove_nonPS_from_dynthick}
Let $A, B \subseteq \N$. If the symmetric difference $A \triangle B$ is not piecewise syndetic, then $A$ is a set of pointwise recurrence if and only if $B$ is a set of pointwise recurrence. The same is true with ``set of pointwise recurrence'' replaced by ``dynamically thick,'' ``dynamically syndetic,'' and ``dynamically central syndetic.''
\end{Maintheorem}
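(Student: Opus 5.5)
By symmetry of the hypothesis, it suffices to prove one direction for each family: if $A$ belongs to the family and $A \triangle B$ is not piecewise syndetic, then $B$ belongs to it. The key input is \cref{thm:relations_C_dPS_PS-intro}. Since $\dPS = \PS$ and $\dcPS \subseteq \dPS$, intersecting a dynamically thick set with a dynamically syndetic set, or a set of pointwise recurrence with a dynamically central syndetic set, always produces a piecewise syndetic set. We will also use the partition regularity of $\PS$ and the fact that $\PS$ is upward closed. Write $N = A \triangle B$, which is not piecewise syndetic.

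\emph{Dynamically thick and pointwise recurrence.} Suppose $A$ is dynamically thick, and fix a minimal system $(X,T)$, a point $x \in X$, and a nonempty open $U \subseteq X$. We must show $B \cap R(x,U) \neq \emptyset$. The set $R(x,U)$ is dynamically syndetic, so $A \cap R(x,U)$ is dynamically piecewise syndetic, hence piecewise syndetic. Now decompose
\[
A \cap R(x,U) \subseteq \big(B \cap R(x,U)\big) \cup N .
\]
By partition regularity, one of the two pieces on the right is piecewise syndetic. It cannot be $N$, so $B \cap R(x,U)$ is piecewise syndetic and in particular nonempty. The same argument with $x \in U$, so that $R(x,U)$ is dynamically central syndetic and $A \cap R(x,U) \in \dcPS \subseteq \PS$, handles sets of pointwise recurrence. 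In fact this shows slightly more: $B$ meets every such return-time set in a piecewise syndetic set.

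\emph{Dynamically syndetic and dynamically central syndetic.} These families are the duals of $\dthick$ and $\dcthick$ respectively; this holds because the families are upward closed, but I would cite it from \cref{sec_dual_families} and the earlier characterizations. Suppose $A$ is dynamically syndetic. Then for every dynamically thick $C$, the intersection $A \cap C$ is nonempty and, by the same theorem, piecewise syndetic. Running the partition argument again, $A \cap C \subseteq (B \cap C) \cup N$ gives that $B \cap C$ is nonempty for every dynamically thick $C$. Hence $B$ lies in the dual of $\dthick$, so $B$ is dynamically syndetic. The central case is identical, using sets of pointwise recurrence and $\dcPS \subseteq \PS$.

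The main obstacle is the duality step: that a set meeting every dynamically thick set is itself dynamically syndetic, that is, $\dsyndetic = \dthick^*$ and not merely $\dsyndetic \subseteq \dthick^*$. My plan is to show that if $B$ is not dynamically syndetic, then $\N \setminus B$ meets every $R(x,U)$, so $\N \setminus B$ is dynamically thick, which gives $B \notin \dthick^*$. This uses only that $\dsyndetic$ is upward closed and defined by containing some $R(x,U)$, so it is routine double duality. The central version is the same with $x \in U$.
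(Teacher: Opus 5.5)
Your proof is correct and is essentially the paper's. The paper deduces the theorem from \cref{cor:largest_family_satisfies_cap}, which applies \cref{lemma_class_cap_is_pr} to $\N \setminus (A \triangle B) \in \PS^* = \dPS^* \subseteq \dcPS^*$ (the dual form of \cref{thm:relations_C_dPS_PS-intro}); your partition-regularity computation together with the double-duality step is that lemma unwound by hand, and the partition-regularity step could be skipped, since $\N \setminus N \in \PS^*$ already meets the piecewise syndetic set $A \cap C$ and $A \setminus N \subseteq B$.
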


This theorem is part of a more general result, \cref{cor:largest_family_satisfies_cap} below, which shows that, in fact, some of the properties in \cref{thm:remove_nonPS_from_dynthick} characterize non-piecewise syndeticity.\\

As another application of \cref{thm:relations_C_dPS_PS-intro}, we demonstrate how we can upgrade some of the results in our companion paper \cite{glasscock_le_2024} that concern sets of pointwise recurrence.

A set $A \subseteq \N$ is called a \emph{set of multiple topological recurrence} if for all $k \in \N$, all minimal systems $(X, T)$, and all nonempty, open sets $U \subseteq X$, there exists $n \in A$ such that
\[
    U \cap T^{-n} U \cap \cdots \cap T^{-kn} U \neq \emptyset.
\]
Just as with sets of (single) topological recurrence (defined in \cref{sec_intro_set_and_pt_recurrence}), it is important both conceptually and for applications to understand which sets are sets of multiple topological recurrence.
While it is known that a set of (single) topological recurrence is not necessarily a set of multiple topological recurrence \cite{frantzikinakis_lesigne_wierdl_2006, furstenberg_book_1981}, Host, Kra, and Maass \cite[Question 2.14]{HostKraMaass2016} asked whether or not a set of pointwise recurrence is a set of multiple topological recurrence.

A positive answer to this question was given in our recent work \cite{glasscock_le_2024}: sets of pointwise recurrence are, in fact, sets of polynomial multiple measurable recurrence for commuting transformations; see the precise definition in \cref{sec_sets_of_poly_rec_and_brauer}.
We also showed in \cite{glasscock_le_2024} that sets of pointwise recurrence contain Brauer-type polynomial configurations, i.e. patterns of the form
\begin{align}
\label{eqn_Brauer}
    x, y, x + p_1(y), \ldots, x + p_k(y),
\end{align}
where $p_1, \ldots, p_k \in \Q[x]$, $p_i(\Z) \subseteq \Z$, and $p_i(0) = 0$.  The following theorem strengthens both of these results by showing that not only do sets of pointwise recurrence have these properties, but so do the intersections of sets of pointwise recurrence with dynamically central syndetic sets. 

\begin{Maintheorem}
\label{main_thm_brauer}
    Dynamically central piecewise sets are sets of polynomial multiple measurable recurrence for commuting transformations and contain the Brauer-type configurations described in \eqref{eqn_Brauer} above.
\end{Maintheorem}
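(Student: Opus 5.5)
The plan is to reduce both assertions to a single principle: for each configuration of interest, the set of ``good'' times contains a dynamically central syndetic set. Two facts from the companion paper \cite{glasscock_le_2024} drive this. First, the family of dynamically central syndetic sets is closed under finite intersections (it is a filter). Second, the family $\dcthick$ of sets of pointwise recurrence is exactly the dual of that filter. Write $A = B \cap C$ with $B$ dynamically central syndetic and $C$ a set of pointwise recurrence. If $R \subseteq \N$ is any set containing a dynamically central syndetic set, then $R \cap B$ also contains one by the filter property. Duality then gives $C \cap (R \cap B) \neq \emptyset$, so $A \cap R \neq \emptyset$.

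\emph{Measurable recurrence.} Fix commuting measure preserving transformations $T_1,\dots,T_k$ on $(X,\mu)$, a set $E$ with $\mu(E)>0$, and integer polynomials $p_{ij}$ vanishing at $0$. Let $R$ be the set of $n$ for which
\[
    \mu\Big(E \cap \prod_j T_j^{-p_{1j}(n)}E \cap \cdots \cap \prod_j T_j^{-p_{kj}(n)}E\Big) > 0 .
\]
In \cite{glasscock_le_2024}, the statement that sets of pointwise recurrence are sets of polynomial multiple measurable recurrence was proved by showing that $R$ contains a dynamically central syndetic set. That argument ran the Bergelson--Leibman/IP-polynomial Szemer\'edi machinery to get $R \in$ IP$^*$-type families, then passed to a dcS subset. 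I will quote this as a lemma. The paragraph above then gives $A \cap R \neq \emptyset$, which is exactly polynomial multiple measurable recurrence for $A$.

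\emph{Brauer configurations.} We need $x, y \in A$ with $x + p_i(y) \in A$ for all $i$. The steps are as follows.
\begin{enumerate}
\item By \cref{thm:relations_C_dPS_PS-intro}, $A$ is piecewise syndetic. Via the standard correspondence, choose a minimal system $(X,T)$, a point $z$, a nonempty open set $U$, and a thick set $H$ such that $R(z,U) \cap H \subseteq A$, up to a bounded shift that can be absorbed.
\item Let $R'$ be the set of $y$ with $U \cap T^{-p_1(y)}U \cap \cdots \cap T^{-p_k(y)}U \neq \emptyset$. By the topological polynomial recurrence results used in \cite{glasscock_le_2024}, $R'$ contains a dynamically central syndetic set.
\item By the opening paragraph, pick $y \in A \cap R'$.
\item Choose a nonempty open $V \subseteq U \cap T^{-p_1(y)}U \cap \cdots \cap T^{-p_k(y)}U$. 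Since $R(z,V)$ is syndetic, it meets the thick set $H \cap (H - p_1(y)) \cap \cdots \cap (H - p_k(y))$, which is thick because it is a finite intersection of shifts of a thick set. Any $x$ in this intersection satisfies $x, x + p_i(y) \in R(z,U) \cap H \subseteq A$.
\end{enumerate}

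The main obstacle is step 1. Piecewise syndeticity only gives a syndetic $S$ and a thick $H$ with $S \cap H \subseteq A$. To get $S = R(z,U)$ itself, rather than merely some syndetic set, I would first try the $\beta\N$ description: the closure of $A$ meets a minimal left ideal. Taking $z$ to be a minimal idempotent-adjacent ultrafilter in the shift orbit closure of $1_A$ then yields $R(z,U) \cap H \subseteq A$ with $U = [1]$. The remaining concern is that $y$ must be chosen for this particular $U$. The order of steps 1--3 handles this: $U$ is fixed before $y$ is chosen.
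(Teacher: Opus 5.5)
Your opening principle rests on the claim that $\dcsyndetic$ is closed under finite intersections. That claim is false, and it is the gap.

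Take the one-sided Sturmian subshift of an irrational rotation. It is minimal and contains two points $x \neq y$ with $Tx = Ty$: the lower and upper mechanical words of intercept $0$, which differ only in the first coordinate. Take disjoint open sets $U \ni x$ and $V \ni y$. Both $R(x,U)$ and $R(y,V)$ are dynamically central syndetic. Yet $T^n x = T^n y$ for all $n \geq 1$, so $R(x,U) \cap R(y,V) = \emptyset$. The family $\dcsyndetic$ is only the union of all syndetic, idempotent filters (\cref{intro_mainthm_full_chars_of_dcsyndetic}); it is not a filter itself.

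So knowing that the good-times set $R$ is dynamically central syndetic does not give $A \cap R \neq \emptyset$, and that is all you quote from \cite{glasscock_le_2024}. What you need is $R \in \dcPS^*$. By \cref{lemma_class_cap_is_pr}, this means $R \cap B \in \dcsyndetic$ for \emph{every} $B \in \dcsyndetic$. This is a special property of recurrence sets, and proving it is the real content of the paper's argument:
\begin{enumerate}
\item The family $\familytwo$ of polynomial multiple return-time sets is an $\IP^*$, hence $\central^*$, idempotent filter \cite[Lemma 5.2]{glasscock_le_2024}.
\item A given $B \in \dcsyndetic$ lies in some syndetic, idempotent filter $\filter$.
\item Members of syndetic, idempotent filters are central along every thick set (\cref{cor_central_syndetic_is_strongly_central}). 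Hence $\filter \classcap \familytwo$ is again a syndetic, idempotent filter (\cref{prop:general_abstract_intersection_filter_nonsense}).
\item Therefore $R \cap B \in \dcsyndetic$; this is \cref{cor_central_class_equalities}.
\end{enumerate}
This uses the idempotency of the whole family $\familytwo$, not just the largeness of the single set $R$. With this in place of your filter claim, plus a natural-extension step for non-invertible $T_i$, your Part 1 becomes the paper's proof.

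Your Brauer argument is a genuinely different route. The paper applies the Furstenberg correspondence principle to $A$ and then Part 1. Your route avoids correspondence and works once step 3 is repaired:
\begin{itemize}
\item Step 1 is not an obstacle. $\PS = \dsyndetic \classcap \thick$ (\cref{lem:central_characterization}) gives $A \supseteq R(z,U) \cap H$ directly, with no shift.
\item By \cite[Lemma 3.1]{glasscock_le_2024} you may take $(X,T)$ invertible. You need this in step 4 because $p_i(y)$ can be negative.
\item For step 3, apply the corrected Part 1 to $(X,\mu,T)$, with $\mu$ any invariant measure and $E = U$. The measure has full support by minimality, so this yields $y \in A \cap R'$.
\end{itemize}
As written, however, step 3 invokes the same false filter property, so both halves of your proof fail at the same point.
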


The extent to which dynamically central piecewise syndetic sets are combinatorially rich remains an interesting open question that we speculate on in \cref{sec_open_quests_dps_sets}.

\subsection{A companion paper on dynamically syndetic sets}

Dual to the families of sets of pointwise recurrence and dynamically thick sets are the families of dynamically central syndetic and dynamically syndetic sets.  We study these families in depth in a companion paper \cite{glasscock_le_2024}.  Our main results in this paper rely critically on the results from that paper that characterize dynamical syndeticity in terms of membership in a syndetic, idempotent filter. On the other hand, while we upgrade some of the results from \cite{glasscock_le_2024} in this one, none of the results from the companion paper rely logically on the present one.

\subsection{Organization of the paper}

Notation, terminology, and preliminaries are given in \cref{sec_prelims}.
The main body of the work is then divided into four parts:
\begin{itemize}
     \item[--] \cref{sec_dthick}, in which we give combinatorial characterizations of sets of pointwise recurrence  and dynamically thick sets and prove Theorems \ref{mainthm_dct_combo_characterizations} and \ref{mainthm_dt_characterizations};

     \item[--] \cref{sec:structure_dyn_thick}, in which give examples of dynamically thick sets, prove a structure theorem for them, and prove Theorems \ref{Mainthm:dyn_thick_not_IP} and \ref{mainthm_structure_for_dy_thick_sets};

    \item[--] \cref{sec:sigma-compact}, in which we address the splitting problem and $\sigma$-compactness of dual families and prove \cref{theorem_dy_syndetic_is_not_sigma_compact};

    \item[--] \cref{sec_dps_sets}, in which we introduce and study dynamically (central) piecewise syndetic sets and prove Theorems \ref{thm:relations_C_dPS_PS-intro}, \ref{thm:remove_nonPS_from_dynthick}, and \ref{main_thm_brauer}.
\end{itemize}
We conclude the paper with a collection of open problems and directions in \cref{sec:open_questions}.  

\subsection{Acknowledgments}

The authors thank Neil Hindman for his correspondence regarding translates of central sets. 
We are grateful to Andy Zucker and Josh Frisch for helpful discussions at the 2024 Southeastern Logic Symposium. Thanks are also due to Eli Glasner, John Johnson, and Mauro Di Nasso for their assistance in correcting several historical references and claims made in an earlier draft. Finally, we are indebted to Andreas Koutsogiannis, Joel Moreira, Florian Richter, and Donald Robertson, whose collaboration helped sharpen a number of the ideas presented here.

\section{Preliminaries}
\label{sec_prelims}

The set of positive integers, non-negative integers, and integers are denoted by $\N$, $\N_0$, and $\Z$, respectively.  For $N \in \N_0$, we define $[N]$ to be $\{0, 1, \ldots, N - 1\}$, and $\mathcal{P}(\N)$ denotes the power set of $\N$.  In what follows, note that all set operations on subsets of $\N$ result in subsets of $\N$.

For $A, B \subseteq \N$ and $n \in \N$, we define
\begin{align*}
    A+n &\defeq \{a + n \ | \ a \in A \}, & nA & \defeq \{ na \ | \ a \in A\}, \\
    A-n &\defeq \{m \in \N \ | \ m + n \in A \}, & A/n & \defeq \{ m \in \N \ | \ mn \in A\},\\
    A+B &\defeq \bigcup_{b \in B} (A + b), & A-B &\defeq \bigcup_{b \in B} (A-b).
\end{align*}
In this paper, the empty union of subsets of $\N$ is empty while the empty intersection of subsets of $\N$ is equal to $\N$.

\subsection{Families of positive integers}
\label{sec_combinatorics}

Much of the algebra in this subsection originates in the work of Choquet \cite{choquet_1947} and Schmidt \cite{schmidt_1952,schmidt_1953}. A modern overview appears in \cite[Section 2]{christopherson_johnson_2022}, and generalizations to arbitrary semigroups are given in \cite{Burns_Davenport_Frankson_Griffin_Johnson_Kebe_syndetic}.

The \emph{upward closure} of a collection $\class$ of subsets of $\N$ is
\[\upclose \class \defeq \big\{ B \subseteq \N \ \big| \ \exists A \in \class, \ B \supseteq A \big\}.\]
If $\class = \upclose \class$, we say that $\class$ is \emph{upward-closed}.  We call an upward-closed collection of subsets of $\N$ a \emph{family}.  Both $\emptyset$ and $\mathcal{P}(\N)$ are families; all other families are called \emph{proper}.

\subsubsection{Dual families and intersections}
\label{sec_dual_families}

Let $\family$ be a family of subsets of $\N$.  The collection of all those sets which have nonempty intersection with all elements of $\family$, that is,
\[\family^* \defeq \big\{ B \subseteq \N \ \big | \ \forall A \in \family, \ B \cap A \neq \emptyset \big\},\]
is the \emph{dual} of $\family$.
We will have need for the following easy facts:
\begin{enumerate}
    \item $(\family^*)^* = \family$, justifying the term \emph{dual};
    \item $\emptyset$ and $\mathcal{P}(\N)$ are dual families, and the dual of a proper family is a proper family;
    \item $A \in \family$ if and only if $\N \setminus A \not\in \family^*$;
    \item $\family \subseteq \familytwo$ if and only if $\familytwo^* \subseteq \family^*$.\\
\end{enumerate}

For nonempty families $\familyone$ and $\familytwo$, we define
\[\familyone \familycap \familytwo \defeq \big\{ A \cap B \ \big| \ A \in \familyone \text{ and } B \in \familytwo \big\}.\]
We will adopt the convention that $\emptyset \classcap \family = \family \classcap \emptyset = \family$ for all families $\family$.  The following facts are quick to check for all families $\family$ and $\familytwo$:
\begin{enumerate}
\setcounter{enumi}{4}
    \item the collection $\family \classcap \familytwo$ is a family that contains both $\family$ and $\familytwo$ (despite what is suggested by the notation);
    \item if $\familyone$ is a proper family, then $\familyone \classcap \familyone^*$ is a proper family;
    \item if $\family_1 \subseteq \familytwo_1$ and $\family_2 \subseteq \familytwo_2$, then $\family_1 \classcap \family_2 \subseteq \familytwo_1 \classcap \familytwo_2$.
\end{enumerate}

\subsubsection{Filters, partition regularity, and ultrafilters}
\label{sec_filter_pr_and_ufs}

A family $\family$ is a \emph{filter} if for all $A_1, A_2 \in \family$, the set $A_1 \cap A_2 \in \family$.
A family $\family$ is \emph{partition regular} if for all $A \in \family$ and all finite partitions $A = A_1 \cup \cdots \cup A_k$, some piece $A_i$ belongs to $\family$.
These are dual notions (see fact \eqref{item_pr_filter_duality} below), both of which capture a kind of combinatorial richness imporant in this subject.

A family that has some property $P$ and that is also a filter is called a \emph{$P$ filter}. We will consider in the next subsection, for example, translation-invariant filters. Note that $\emptyset$ and $\mathcal{P}(\N)$ are both partition regular filters.

The following facts are quick to check:
\begin{enumerate}
    \item \label{item_pr_filter_duality} a family $\family$ is a filter if and only if the dual family $\family^*$ is partition regular;
    \item if $\family$ is a proper filter, then $\family \subseteq \family^*$;
    \item \label{item_filter_containment} if $\familyone$ and $\familytwo$ are filters, then $\familyone \classcap \familytwo$ is the smallest filter containing both $\familyone$ and $\familytwo$; in particular, if $\family \subseteq \familytwo$, then $\family \classcap \familytwo = \familytwo$.\\ 
\end{enumerate}

Given a family $\family$, we will frequently consider the family $\family \classcap \family^*$.  The follow lemma records some useful facts about it.

\begin{lemma}
\label{lemma_class_cap_is_pr}
    Let $\family$ be a family and define $\familytwo \defeq \family \classcap \family^*$.  The family $\familytwo$ is partition regular and its dual, $\familytwo^*$, is a filter.  Moreover, for $A \subseteq \N$, the following are all equivalent to the set $A$ belonging to $\familytwo^*$:
    \begin{enumerate}
        \item \label{item_a_cap_b_is_in_family} for all $B \in \family$, the set $A \cap B \in \family$;
        \item for all $B \in \family^*$, the set $A \cap B \in \family^*$;
        \item for all $B \in \familytwo$, the set $A \cap B \in \familytwo$;
        \item for all $B \in \familytwo^*$, the set $A \cap B \in \familytwo^*$.
    \end{enumerate}
\end{lemma}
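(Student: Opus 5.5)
The plan is to first dispose of degenerate cases and then show that (i)–(iv) are equivalent to each other and to membership in $\familytwo^*$. If $\family = \emptyset$, the paper's convention gives $\familytwo = \family^* = \mathcal{P}(\N)$, so $\familytwo^* = \emptyset$. Each of (i)–(iv) then either holds vacuously or fails, and I will check these trivial cases by hand, possibly noting that the conventions force a specific reading. The case $\family = \mathcal{P}(\N)$ is symmetric. So assume $\family$ is proper, hence $\family^*$ is proper, and $\familytwo$ is a proper family containing $\family$ and $\family^*$.

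For partition regularity of $\familytwo$, take $A \cap B$ with $A \in \family$, $B \in \family^*$, and a partition $A \cap B = C_1 \cup \cdots \cup C_k$. Suppose no $C_i \in \familytwo$. Then in particular no $C_i \cup (\N\setminus B)$ lies in $\family$, since otherwise $C_i = (C_i \cup (\N\setminus B)) \cap B \in \familytwo$. Hence each complement $D_i := \N \setminus (C_i \cup (\N \setminus B)) = B \setminus C_i$ lies in $\family^*$, using fact (3). Next I will use $A$: the set $\bigcap_i D_i = B \setminus (A\cap B) = B \setminus A$. I want a contradiction with $A \cap B \ne \emptyset$ in a useful way. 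It is cleaner to argue instead via the dual: show $\familytwo^*$ is a filter, and then partition regularity of $\familytwo = (\familytwo^*)^*$ follows from fact \eqref{item_pr_filter_duality}. So I will establish the characterization first.

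Characterization. By definition, $A \in \familytwo^*$ iff $A \cap B \cap C \ne \emptyset$ for all $B \in \family$ and $C \in \family^*$. For fixed $B$, this says $A \cap B$ meets every member of $\family^*$, i.e. $A \cap B \in (\family^*)^* = \family$. This gives equivalence with (i). Symmetrically, fixing $C$ gives $A \cap C \in \family^*$, which is (ii). For (iii), note that if (i) and (ii) hold then $A \cap (B\cap C) = (A\cap B)\cap C \in \familytwo$. Conversely, (iii) implies $A \cap B \in \familytwo$ for every $B \in \family \subseteq \familytwo$. Since $\familytwo \subseteq$ ... here I must be careful. Instead I will derive (i) from (iii) directly: given $B \in \family$ and $C \in \family^*$, we have $A\cap B\cap C \in \familytwo$, and members of a proper $\familytwo$ are nonempty. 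This yields $A\in\familytwo^*$.

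Filter property. If $A_1, A_2 \in \familytwo^*$ and $B \in \family$, then by (i) applied twice, $A_1\cap B \in \family$ and then $A_2 \cap (A_1 \cap B) \in \family$. So $A_1\cap A_2$ satisfies (i), and $\familytwo^*$ is upward closed as a dual, hence a filter. Partition regularity of $\familytwo$ follows from the duality fact.

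For (iv): if $A \in \familytwo^*$, then $A \cap B \in \familytwo^*$ for all $B \in \familytwo^*$ by the filter property. Conversely, assuming (iv), take $B = \N$, which lies in $\familytwo^*$ when $\familytwo$ is proper and does not contain $\emptyset$. This gives $A \in \familytwo^*$.

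The main obstacle I anticipate is only bookkeeping: handling the improper and empty-family conventions so that the equivalences remain literally true, and confirming that $\emptyset \notin \familytwo$ when $\family$ is proper. The latter holds because $B\cap C \ne \emptyset$ for $B \in \family$ and $C\in\family^*$.
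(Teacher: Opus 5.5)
Your argument is correct for proper $\family$, but it obtains partition regularity by a different route from the paper's.

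The paper takes partition regularity of $\familytwo = \family \classcap \family^*$ from \cite[Prop. 2.5 (h)]{christopherson_johnson_2022} and deduces by duality that $\familytwo^*$ is a filter. It then proves only the equivalence with (i), via the two-direction argument with $D = B \cap C$, and leaves (ii)--(iv) to the reader.

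You reverse the order. The characterization
\[
A \in \familytwo^* \iff \text{for all } B \in \family,\ A \cap B \in (\family^*)^* = \family
\]
needs only the definition and the double-dual identity. Applying (i) twice then shows $\familytwo^*$ is closed under intersections. Partition regularity of $\familytwo = (\familytwo^*)^*$ follows from the filter/partition-regular duality. This makes the lemma self-contained and removes the external citation. Nothing is lost, since the paper's own proof of (i) does not use partition regularity either. Your handling of (ii)--(iv), via $\emptyset \notin \familytwo$ and $\N \in \familytwo^*$, is fine. The abandoned direct partition argument can simply be deleted.

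One correction concerns the degenerate cases. You cannot check them by hand so that the equivalences remain literally true, because they are false there. If $\family = \emptyset$ or $\family = \mathcal{P}(\N)$, then $\familytwo = \mathcal{P}(\N)$ and $\familytwo^* = \emptyset$. Yet each of (i)--(iv) holds for every $A$, either vacuously or trivially. Only the first sentence of the lemma survives, since $\mathcal{P}(\N)$ and $\emptyset$ are partition regular filters. So the ``moreover'' part should be stated for proper $\family$, and you should just make that assumption explicit. The paper's proof tacitly assumes this too: its step ``there exist $B \in \family$ and $C \in \family^*$ with $D = B \cap C$'' fails under the convention $\emptyset \classcap \family = \family$.
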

\begin{proof}
    If $\family$ is not proper, then $\familytwo = \mathcal{P}(\N)$, which is partition regular.  Otherwise, the partition regularity of $\familytwo$ is shown in \cite[Prop. 2.5 (h)]{christopherson_johnson_2022}.  That its dual, $\familytwo^*$, is a filter is mentioned as item \eqref{item_pr_filter_duality} before the statement of this lemma.

    We will show that \eqref{item_a_cap_b_is_in_family} is equivalent to $A$ belonging $\familytwo^*$; the other statements follow analogously and are left to the reader.  Suppose $A \in \familytwo^*$ and $B \in \family$.  To see that $A \cap B \in \family$, we will show that for all $C \in \family^*$, the set $A \cap B \cap C$ is nonempty.  Let $C \in \family^*$.  The set $B \cap C \in \familytwo$, and since $A \in \familytwo^*$, we have that $A \cap B \cap C \neq \emptyset$, as desired.
    
    Conversely, suppose that \eqref{item_a_cap_b_is_in_family} holds.  To see that $A \in \familytwo^*$, we will show that for all $D \in \familytwo$, the set $A \cap D \neq \emptyset$.  Let $D \in \familytwo$.  There exists $B \in \family$ and $C \in \family^*$ such that $D = B \cap C$.  Then $A \cap D = A \cap B \cap C$.  By assumption, $A \cap B \in \family$, whereby $ A \cap B \cap C \neq \emptyset$, as desired.
\end{proof}

A proper family is called an \emph{ultrafilter} if it satisfies one (equivalently, all) of the properties in the following lemma.  This lemma is well-known (cf. \cite[Thm. 3.6]{hindman_strauss_book_2012}), so we omit the proof.

\begin{lemma}
\label{lemma_equiv_ultrafilter}
    Let $\family$ be a proper family of subsets of $\N$.  The following are equivalent:
    \begin{enumerate}
        \item
        \label{item_family_is_maximal}
        the family $\family$ is a proper filter and is maximal (by containment) amongst proper filters;
        
        \item
        \label{item_family_is_self_dual}
        the family $\family$ is contained in a proper filter and is self-dual, ie., $\family = \family^*$;
        
        \item
        \label{item_family_is_pr_and_filter}
        the family $\family$ is partition regular and a filter.
    \end{enumerate}
\end{lemma}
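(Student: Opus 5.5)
We will prove the cycle of implications (i) $\Rightarrow$ (iii) $\Rightarrow$ (ii) $\Rightarrow$ (i), using only the definitions and the facts about dual families and filters collected in \cref{sec_dual_families} and \cref{sec_filter_pr_and_ufs}.

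\emph{(i) $\Rightarrow$ (iii).} Suppose $\family$ is a proper filter that is maximal among proper filters. Since $\family$ is a proper filter, $\family \subseteq \family^*$, so it remains to show $\family^* \subseteq \family$. Let $B \in \family^*$ and form the filter $\familytwo$ generated by $\family$ and $B$, namely $\upclose\{A \cap B \ | \ A \in \family\}$. This is a filter because $\family$ is closed under intersections. It is proper because each $A \cap B$ is nonempty, which holds since $B \in \family^*$; also $\emptyset \notin \familytwo$ and $\familytwo \neq \emptyset$. Moreover $\familytwo$ contains $\family$ (as $A \supseteq A \cap B$) and contains $B$. Maximality of $\family$ then forces $\familytwo = \family$, so $B \in \family$. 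Hence $\family = \family^*$, and $\family$ is contained in the proper filter $\family$ itself.

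\emph{(iii) $\Rightarrow$ (ii).} Suppose $\family = \family^*$ and $\family \subseteq \filtertwo$ for some proper filter $\filtertwo$.
\begin{itemize}
\item \emph{$\family$ is a filter.} We have $\filtertwo \subseteq \filtertwo^*$, and since $\family \subseteq \filtertwo$, fact (4) of \cref{sec_dual_families} gives $\filtertwo^* \subseteq \family^* = \family$. Thus $\filtertwo \subseteq \family \subseteq \filtertwo$, so $\family = \filtertwo$ is a filter.
\item \emph{$\family$ is partition regular.} Since $\family$ is a filter, fact \eqref{item_pr_filter_duality} says $\family^*$ is partition regular, and $\family^* = \family$.
\end{itemize}

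\emph{(ii) $\Rightarrow$ (i).} Suppose $\family$ is a partition regular filter. It is a proper filter because $\family$ is a proper family.

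We first show that for every $A \subseteq \N$, exactly one of $A$ and $\N \setminus A$ lies in $\family$.
\begin{itemize}
\item \emph{At least one.} Properness gives some member $C$ of $\family$, and upward closure gives $\N \in \family$. Partition regularity applied to $\N = A \cup (\N \setminus A)$ puts one of the two pieces in $\family$.
\item \emph{Not both.} If both belonged to $\family$, the filter property would give $\emptyset \in \family$. Upward closure would then make $\family = \mathcal{P}(\N)$, contradicting properness.
\end{itemize}

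Now let $\familytwo \supseteq \family$ be a proper filter and suppose $A \in \familytwo \setminus \family$. By the dichotomy just shown, $\N \setminus A \in \family \subseteq \familytwo$. Then $\emptyset = A \cap (\N \setminus A) \in \familytwo$, so $\familytwo$ is not proper, a contradiction. Hence $\familytwo = \family$, and $\family$ is maximal among proper filters.

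The only step needing real care is the maximality argument in (i) $\Rightarrow$ (iii). There one must check that adjoining a dual element $B$ to $\family$ keeps the generated filter proper, and this is exactly where the hypothesis $B \in \family^*$ is used.
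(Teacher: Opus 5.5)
Your proof is correct and complete. The paper gives no proof of its own (it calls the lemma well known and cites Hindman--Strauss, Thm.~3.6), and your argument is the standard one: enlarge $\family$ by a dual element $B$ to get a bigger proper filter; squeeze $\family$ between $\familytwo \subseteq \familytwo^*$ and $\familytwo$; and use the dichotomy ``exactly one of $A$, $\N\setminus A$ lies in $\family$'' to get maximality.

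There is one cosmetic issue to fix. Your step headings swap the labels (ii) and (iii) relative to the statement.
\begin{itemize}
\item The step headed ``(i)~$\Rightarrow$~(iii)'' concludes self-duality, which is the statement's (ii).
\item The step headed ``(iii)~$\Rightarrow$~(ii)'' assumes (ii) and proves (iii).
\item The step headed ``(ii)~$\Rightarrow$~(i)'' assumes (iii) and proves (i).
\end{itemize}
The content therefore establishes the cycle (i)~$\Rightarrow$~(ii)~$\Rightarrow$~(iii)~$\Rightarrow$~(i) in the statement's numbering, and the headings should be relabeled to match.
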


Ultrafilters exist by a standard application of Zorn's Lemma.  The space of ultrafilters on $\N$ is an important object that we will make use of in this paper.  We delay further discussion until \cref{sec_topology_and_ultrafilters}.

\subsubsection{The algebra of family translations}
\label{sec_family_algebra}

For families $\family$ and $\familytwo$ of subsets of $\N$, $A \subseteq \N$, and $n \in \N$, define
\begin{align*}
    A - \family &\defeq \big\{ n \in \N \ \big| \ A-n \in \family \big\}, \text{ that is, } n \in A - \family \text{ if and only if } A - n \in \family, \\
    \familyone + \familytwo &\defeq \big\{ B \subseteq \N \ \big| \ B - \familytwo \in \family\big\}, \text{ that is, $B \in \family + \familytwo$ if and only $B - \familytwo \in \family$}.
\end{align*}

The reader will quickly verify that the sum of two families is a family. This definition appears in \cite{berglund_hindman_1984,papazyan_1989} for filters and agrees with the usual definition of sums of ultrafilters (cf. \cite[Thm. 4.12]{hindman_strauss_book_2012}).
Roughly, the containment $\family \subseteq \familytwo + \familythree$ means: for all $A \in \family$, there are $\familytwo$ many positive integers $n$ for which the set $A-n$ belongs to $\familythree$.

\begin{lemma}
    \label{lemma_filter_algebra}
    \label{lemma_fam_dual_sums}
    \label{lemma_filter_sums_containment}
    Let $n \in \N$, $A, B \subseteq \N$, and $\filter, \filtertwo, \filter_1, \filter_2, \filtertwo_1, \filtertwo_2$ be families.
    \begin{enumerate}
        \item
        \label{item_family_prop_zero}
        If $\filter$ and $\filtertwo$ are proper, then $\filter + \filtertwo$ is proper.
        
        \item
        \label{item_family_prop_one_new}
        $(A - \filter) \cap (B - \filtertwo) \subseteq (A \cap B) - (\filter \classcap \filtertwo)$, with equality if $\filter = \filtertwo$ is a filter.

        \item \label{item_family_prop_two}
        If $\filter$ and $\filtertwo$ are filters, then $\filter + \filtertwo$ is a filter.

        \item \label{item_family_prop_three}
        $(A -n) - \filter = (A - \filter) - n$.

        \item \label{item_family_prop_four}
        $A - (\filter + \filtertwo) = (A - \filtertwo) - \filter$.

        \item \label{item_family_diff_containment}
        If $\family \subseteq \familytwo$, then $A - \family \subseteq A - \familytwo$.
        
        \item \label{item_filter_sums_containment}
        If $\filter_1 \subseteq \filtertwo_1$ and $\filter_2 \subseteq \filtertwo_2$, then $\filter_1 + \filter_2 \subseteq \filtertwo_1 + \filtertwo_2$.

        \item \label{item_sum_dual}
        $(\familyone + \familytwo)^* = \familyone^* + \familytwo^*$.

        \item \label{item_cap_sums}
        $(\familyone_1 + \familyone_2) \classcap (\familytwo_1 + \familytwo_2) \subseteq (\familyone_1 \classcap \familytwo_1) + (\familyone_2 \classcap \familytwo_2)$.
    \end{enumerate}
\end{lemma}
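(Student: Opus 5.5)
Each of the nine items follows by unwinding the definitions of $A - \family$ and $\family + \familytwo$. I will verify them in an order where later items can use earlier ones.

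\textbf{The elementary identities first.}
\begin{itemize}
\item Item \eqref{item_family_prop_three} is the observation that $n \in (A-m) - \filter$ iff $A - m - n \in \filter$ iff $m \in (A - \filter) - n$. Here we use $(A-m)-n = (A-n)-m$; with $n$ and $m$ relabelled this gives the stated equality.
\item Item \eqref{item_family_prop_four} follows similarly. We have $n \in A - (\filter+\filtertwo)$ iff $A-n \in \filter + \filtertwo$ iff $(A-n)-\filtertwo \in \filter$. By item \eqref{item_family_prop_three} this is $((A-\filtertwo)-n) \in \filter$, i.e. $n \in (A-\filtertwo)-\filter$.
\item Items \eqref{item_family_diff_containment} and \eqref{item_filter_sums_containment} are monotonicity statements. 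For \eqref{item_filter_sums_containment}: if $B - \filter_2 \in \filter_1$, then $B - \filter_2 \subseteq B - \filtertwo_2$ by \eqref{item_family_diff_containment}. Upward closure gives $B - \filtertwo_2 \in \filter_1 \subseteq \filtertwo_1$.
\item For item \eqref{item_family_prop_one_new}: if $A - n \in \filter$ and $B-n \in \filtertwo$, then $(A\cap B) - n = (A-n)\cap(B-n) \in \filter \classcap \filtertwo$. Equality when $\filter=\filtertwo$ is a filter holds because $\filter\classcap\filter = \filter$ and $\filter$ is upward closed, so $(A\cap B)-n\in\filter$ forces both $A-n, B-n \in \filter$.
\item Item \eqref{item_family_prop_two} follows from \eqref{item_family_prop_one_new}. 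If $A,B\in\filter+\filtertwo$, then $(A\cap B)-\filtertwo = (A-\filtertwo)\cap(B-\filtertwo) \in \filter$.
\end{itemize}

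\textbf{The duality item \eqref{item_sum_dual}.} This is the step needing care. The key computation is that $(\N\setminus A) - \familytwo = \N \setminus (A - \familytwo^*)$. This holds because $(\N\setminus A) - n = \N \setminus (A-n)$, and $C\in\familytwo$ iff $\N\setminus C\notin\familytwo^*$ (fact (3)). Then
\[
A \in (\family+\familytwo)^* \iff \N\setminus A \notin \family+\familytwo \iff \N\setminus(A-\familytwo^*) \notin \family \iff A - \familytwo^* \in \family^*,
\]
and the last condition says $A \in \family^*+\familytwo^*$. I also need the equivalence $A\in\mathscr{H}^*$ iff $\N\setminus A\notin\mathscr{H}$ for a family $\mathscr{H}$, applied twice. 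This uses upward closure and is just fact (3) read for $\mathscr{H}^*$.

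\textbf{Properness and \eqref{item_cap_sums}.}
\begin{itemize}
\item For item \eqref{item_family_prop_zero}, $\filter+\filtertwo$ must be neither empty nor everything.
 \begin{itemize}
 \item Since $\filtertwo$ is proper, $\N \in \filtertwo$ and $\emptyset\notin\filtertwo$. Hence $\N - \filtertwo = \N \in \filter$, so $\N \in \filter+\filtertwo$.
 \item Also $\emptyset - \filtertwo = \emptyset \notin \filter$, so $\emptyset \notin \filter+\filtertwo$.
 \end{itemize}
 This uses that a proper family contains $\N$ and omits $\emptyset$, by upward closure. The edge conventions for $\N-n$ are harmless because $\N - n = \N$.
\item For item \eqref{item_cap_sums}, take $A\in \familyone_1+\familyone_2$ and $B\in\familytwo_1+\familytwo_2$. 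By \eqref{item_family_prop_one_new},
\[
(A\cap B) - (\familyone_2\classcap\familytwo_2) \supseteq (A-\familyone_2)\cap(B-\familytwo_2) \in \familyone_1\classcap\familytwo_1 .
\]
Upward closure of $\familyone_1\classcap\familytwo_1$ then finishes.
\end{itemize}

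I must also handle the convention $\emptyset\classcap\family=\family$ in the degenerate cases separately. This is the only real obstacle besides the complement bookkeeping in \eqref{item_sum_dual}.
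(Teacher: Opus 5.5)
Your proof is correct and follows the paper's argument for \eqref{item_family_prop_zero}, \eqref{item_family_prop_one_new}, \eqref{item_family_prop_two}, \eqref{item_family_diff_containment}, \eqref{item_filter_sums_containment}, and the main case of \eqref{item_cap_sums}. For \eqref{item_family_prop_three}, \eqref{item_family_prop_four}, and \eqref{item_sum_dual} the paper only cites its companion paper. Your direct verifications of these are correct, in particular the identity $(\N\setminus A)-\familytwo=\N\setminus(A-\familytwo^*)$ that drives the duality.

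The one loose end is the degenerate case of \eqref{item_cap_sums}, which you flag but do not carry out. It is quick:
\begin{itemize}
\item[(a)] If $\familyone_1+\familyone_2=\emptyset$, then $\familyone_1=\emptyset$ or $\familyone_2=\emptyset$, since otherwise $\N$ lies in the sum.
\item[(b)] By the convention $\emptyset\classcap\family=\family$, the inclusion then reads $\familytwo_1+\familytwo_2\subseteq\familytwo_1+(\familyone_2\classcap\familytwo_2)$ or $\familytwo_1+\familytwo_2\subseteq(\familyone_1\classcap\familytwo_1)+\familytwo_2$.
\item[(c)] Both follow from \eqref{item_filter_sums_containment}, because $\familytwo_i\subseteq\familyone_i\classcap\familytwo_i$.
\end{itemize}
The case $\familytwo_1+\familytwo_2=\emptyset$ is symmetric, exactly as in the paper.
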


\begin{proof}
    The statements \eqref{item_family_prop_three}, \eqref{item_family_prop_four}, and \eqref{item_sum_dual} are proved in \cite[Lemma 2.1]{glasscock_le_2024}. We now prove the other statements. 

    \eqref{item_family_prop_zero} \ Suppose $\family$ and $\familytwo$ are proper.  Since $\emptyset \not\in \familytwo$, we have $\emptyset - \familytwo = \emptyset$. Since $\emptyset \not\in \family$, we have $\emptyset \not\in \family + \familytwo$. Since $\family$ and $\familytwo$ both contain $\N$, we see that $\N \in \family + \familytwo$.  Since $\family + \familytwo$ does not contain $\emptyset$ but contains $\N$, it is proper.
    
    \eqref{item_family_prop_one_new} \  Suppose $n \in (A - \filter) \cap (B - \filtertwo)$.  Since $A - n \in \filter$ and $B - n \in \filtertwo$, we have that $(A \cap B) - n = (A-n) \cap (B-n) \in \filter \classcap \filtertwo$.  Thus, $n \in (A \cap B) - (\filter \classcap \filtertwo)$, as desired.

    If $\family = \familytwo$ is a filter, then $\family \classcap \familytwo = \family$.  We wish to show in this case that $(A \cap B) - \family \subseteq (A- \family) \cap (B-\family)$.  Suppose $n \in (A \cap B) - \family$ so that $(A - n) \cap (B-n) \in \family$.  Since $\family$ is upward closed, we have that $A -n \in \family$ and $B-n \in \family$.  Therefore, $n \in (A - \family) \cap (B-\family)$, as desired.

    \eqref{item_family_prop_two} \ Suppose $\filter$ and $\filtertwo$ are filters.  Since $\filter + \filtertwo$ is a family, we need only to show that for all $A, B \in \filter + \filtertwo$, the set $A \cap B \in \filter + \filtertwo$.  Let $A, B \in \filter + \filtertwo$.  Thus, $A - \filtertwo, B - \filtertwo \in \filter$, whereby $(A-\filtertwo) \cap (B-\filtertwo) \in \filter$.  By \eqref{item_family_prop_one_new}, it follows that $(A \cap B) - \filtertwo \in \filter$, whereby $A \cap B \in \filter + \filtertwo$, as desired.

    \eqref{item_family_diff_containment} \ If $n \in A - \family$, then $A - n \in \family \subseteq \familytwo$, whereby $n \in A - \familytwo$.
    
    \eqref{item_filter_sums_containment} \ Suppose $A \in \filter_1 + \filter_2$.  Then $A-\filter_2 \in \filter_1 \subseteq \filtertwo_1$.  Since $\filter_2 \subseteq \filtertwo_2$, we have that $A - \filter_2 \subseteq A - \filtertwo_2$.  Since $\filtertwo_1$ is upward closed and $A- \filter_2 \in \filtertwo_1$, we see that $A - \filtertwo_2 \in \filtertwo_1$.  Therefore, $A \in \filtertwo_1 + \filtertwo_2$, as desired.

    \eqref{item_cap_sums} \ Suppose $\family_1 + \family_2 = \emptyset$ or $\familytwo_1 + \familytwo_2 = \emptyset$.  If $\family_1 + \family_2 = \emptyset$, then either $\family_1 = \emptyset$ or $\family_2 = \emptyset$.  In the first case, the desired inclusion simplifies to $\familytwo_1 + \familytwo_2 \subseteq \familytwo_1 + (\family_2 \classcap \familytwo_2)$, which follows from \eqref{item_filter_sums_containment}.  The second case is shown similarly.  The case that $\familytwo_1 + \familytwo_2 = \emptyset$ is handled analogously.

    Otherwise, we have that $\family_1 + \family_2 \neq \emptyset$ and $\familytwo_1 + \familytwo_2 \neq \emptyset$.  To see the desired inclusion, let $C \in (\familyone_1 + \familyone_2) \classcap (\familytwo_1 + \familytwo_2)$. There exists $A \in \familyone_1 + \familyone_2$ and $B \in \familytwo_1 + \familytwo_2$ such that $C = A \cap B$. It follows from point \eqref{item_family_prop_one_new} that $(A-\family_2) \cap (B-\familytwo_2) \subseteq (A \cap B) - (\family_2 \classcap \familytwo_2)$.  Since $A-\family_2 \in \family_1$ and $B-\familytwo_2 \in \familytwo_1$, we see that that $(A \cap B) - (\family_2 \classcap \familytwo_2) \in \family_1 \classcap \familytwo_1$, whereby $A \cap B \in (\family_1 \classcap \familytwo_1) + (\family_2 \classcap \familytwo_2)$, as desired.
\end{proof}

\subsubsection{Translation-invariance and idempotency}
\label{sec_trans_inv_and_idempotent_families}

For a family $\family$ of subsets of $\N$ and $m \in \N$, define
\[\family - m \defeq \big\{ B - m \ \big| \ B \in \family \big\}.\]
If $\family - n \subseteq \family$ for all $n \in \N$, that is, if $\family \subseteq \{\N\} + \family$, we say that $\family$ is \emph{translation invariant}.
If $A - \filter \in \filter$ for all $A \in \filter$, that is, if $\filter \subseteq \filter + \filter$, we say that $\filter$ is \emph{idempotent}.
It is immediate that translation-invariant families are idempotent.

\begin{lemma}
\label{lemma_idempotent_classcap_is_idempotent}
    Let $\family$ and $\familytwo$ be families.
    \begin{enumerate}
        \item
        \label{item_class_cap_of_idempotent_is_idempotent}
        If $\family$ and $\familytwo$ are idempotent, then $\family \classcap \familytwo$ is idempotent.

        \item
        \label{item_class_cap_of_trans_inv}
        If $\family$ and $\familytwo$ are translation invariant, then $\family \classcap \familytwo$ is translation invariant.
    \end{enumerate}
\end{lemma}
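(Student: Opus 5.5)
Both parts follow directly from the definitions together with \cref{lemma_filter_algebra}, and we handle them in turn, after first disposing of the degenerate cases. If $\family = \emptyset$ or $\familytwo = \emptyset$, the convention $\emptyset \classcap \family = \family$ makes $\family \classcap \familytwo$ equal to the other family, which is idempotent (respectively, translation invariant) by hypothesis. So we assume both families are nonempty.

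For \eqref{item_class_cap_of_idempotent_is_idempotent}, the goal is $\family \classcap \familytwo \subseteq (\family \classcap \familytwo) + (\family \classcap \familytwo)$. The plan is to use \cref{lemma_filter_algebra} \eqref{item_cap_sums} with $\family_1 = \family_2 = \family$ and $\familytwo_1 = \familytwo_2 = \familytwo$, which gives
\[
(\family + \family) \classcap (\familytwo + \familytwo) \subseteq (\family \classcap \familytwo) + (\family \classcap \familytwo).
\]
Idempotency gives $\family \subseteq \family + \family$ and $\familytwo \subseteq \familytwo + \familytwo$. By monotonicity of $\classcap$ (fact (7) in \cref{sec_dual_families}), it follows that $\family \classcap \familytwo \subseteq (\family + \family) \classcap (\familytwo + \familytwo)$, and chaining the two inclusions finishes the proof. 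Concretely, for $A \in \family$ and $B \in \familytwo$, the argument shows that $(A \cap B) - (\family \classcap \familytwo)$ contains $(A - \family) \cap (B - \familytwo)$, which lies in $\family \classcap \familytwo$.

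For \eqref{item_class_cap_of_trans_inv}, the argument is elementwise. Take $A \cap B$ with $A \in \family$, $B \in \familytwo$, and $n \in \N$. Then $(A \cap B) - n = (A - n) \cap (B - n)$, and $A - n \in \family$ and $B - n \in \familytwo$ by translation invariance. Hence $(A \cap B) - n \in \family \classcap \familytwo$. Since $\family \classcap \familytwo$ is upward closed, this suffices.

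There is no real obstacle here. The only care needed is checking the empty-family conventions, so that \eqref{item_cap_sums} and fact (7) apply as stated.
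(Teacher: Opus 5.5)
Your proof is correct and follows the paper's: part (i) is exactly the chain $\family \classcap \familytwo \subseteq (\family + \family) \classcap (\familytwo + \familytwo) \subseteq (\family \classcap \familytwo) + (\family \classcap \familytwo)$ obtained from \cref{lemma_filter_algebra} \eqref{item_cap_sums}. For part (ii) the paper only says it ``follows similarly'' (the same inclusion with $\{\N\}$ as the first summands, using $\{\N\} \classcap \{\N\} = \{\N\}$), and your elementwise check via $(A \cap B) - n = (A - n) \cap (B - n)$ is that argument written out directly.
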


\begin{proof}
    \eqref{item_class_cap_of_idempotent_is_idempotent} \ We appeal to \cref{lemma_fam_dual_sums} \eqref{item_cap_sums} with $\familyone_1 = \familyone_2 = \familyone$ and $\familytwo_1 = \familytwo_2 = \familytwo$.  Thus,
    \[\familyone \classcap \familytwo \subseteq (\familyone + \familyone) \classcap (\familytwo + \familytwo) \subseteq (\familyone \classcap \familytwo) + (\familyone \classcap \familytwo),\]
    as desired.  Statement \eqref{item_class_cap_of_trans_inv} follows similarly and is left to the reader.
\end{proof}

Idempotent ultrafilters are important objects with a deep history in the subject (cf. \cite{bergelson_2010}).  It appears that there are two different definitions of ``idempotent ultrafilter.''  In the literature, a family $\family$ is an idempotent ultrafilter if it is an ultrafilter and satisfies $\family = \family + \family$.  According to our definitions, an idempotent ultrafilter is an idempotent family (ie. $\family \subseteq \family + \family$) that is also an ultrafilter. We show in the following lemma these two definition are, in fact, the same.  We will use facts from \cref{sec_topology_and_ultrafilters}.

\begin{theorem}
\label{thm_equiv_forms_of_max_idempot_filter}
    Let $\family$ be a family.  The following are equivalent:
    \begin{enumerate}
        \item
        \label{item_max_prop_idemp_filter}
        the family $\family$ is a proper, idempotent filter and is maximal (by containment) in the set of proper, idempotent filters;

        \item
        \label{item_ultra_idemp}
        the family $\family$ is an ultrafilter that satisfies $\family \subseteq \family + \family$;

        \item
        \label{item_ultra_strong_idemp}
        the family $\family$ is an ultrafilter that satisfies $\family = \family + \family$.
    \end{enumerate}
    In particular, every proper, idempotent filter is contained in an idempotent ultrafilter.
\end{theorem}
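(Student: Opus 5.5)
We prove the cycle (iii) $\Rightarrow$ (ii) $\Rightarrow$ (i) $\Rightarrow$ (iii), and then deduce the final sentence from the argument for (i) $\Rightarrow$ (iii). Throughout we use the standard identification of ultrafilters with points of $\beta\N$. In that picture, for ultrafilters $p,q$ the family $p+q$ of \cref{sec_family_algebra} is the usual sum of ultrafilters, and the map $p \mapsto p+q$ is continuous. These are the facts from \cref{sec_topology_and_ultrafilters} that the paper says it will use.

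\emph{(iii) $\Rightarrow$ (ii).} This is trivial.

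\emph{(ii) $\Rightarrow$ (i).} Let $\family$ be an ultrafilter with $\family \subseteq \family+\family$. Then $\family$ is a proper idempotent filter. It is maximal among proper filters by \cref{lemma_equiv_ultrafilter}, so it is a fortiori maximal among proper idempotent filters.

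\emph{(ii) $\Rightarrow$ (iii).} By \cref{lemma_filter_algebra} \eqref{item_family_prop_zero} and \eqref{item_family_prop_two}, $\family+\family$ is a proper filter. It contains the maximal proper filter $\family$, so maximality forces $\family=\family+\family$.

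\emph{(i) $\Rightarrow$ (iii).} This is the substantive direction, and it also yields the final claim. Let $\filter$ be a proper idempotent filter, and let $K\subseteq \beta\N$ be the set of ultrafilters containing $\filter$. Then $K$ is nonempty (Zorn), closed, and compact. The key step is to show that $K$ is a subsemigroup.

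\begin{itemize}
\item Take $p,q \in K$ and $A \in \filter$.
\item Idempotency gives $A-\filter \in \filter \subseteq p$.
\item For $n \in A-\filter$ we have $A-n \in \filter \subseteq q$, so $A-\filter \subseteq A-q$.
\item Hence $A-q\in p$, that is, $A \in p+q$. So $p+q \in K$.
\end{itemize}

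Ellis's lemma (compact right-topological semigroup) then gives an idempotent ultrafilter $u\in K$ with $u=u+u$, and $u \supseteq \filter$. This proves the ``in particular'' statement. If $\filter$ is maximal among proper idempotent filters, then $\filter = u$, so $\filter$ is an ultrafilter with $\filter=\filter+\filter$.

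\textbf{Expected obstacle.} The main point is the semigroup computation for $K$. That is where the weak idempotency $\filter\subseteq\filter+\filter$ of a mere filter is converted into closure of $K$. The computation needs the monotonicity in \cref{lemma_filter_algebra} \eqref{item_family_diff_containment}. The rest is the standard appeal to Ellis's lemma, using continuity of $p\mapsto p+q$ and compactness of the closed set $K$.
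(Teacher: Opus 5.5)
Your proposal is correct and follows essentially the paper's argument. The key step is the same: the closed set of ultrafilters containing $\filter$ is a compact subsemigroup of $\beta\N$. Your unpacked computation is exactly $\filter\subseteq\filter+\filter\subseteq p+q$ via \cref{lemma_filter_sums_containment}\eqref{item_filter_sums_containment}. This is then followed by Ellis's lemma and maximality, with the other implications handled by maximality of ultrafilters as in the paper. The only difference is that you route the cycle as (iii)$\Rightarrow$(ii)$\Rightarrow$(i)$\Rightarrow$(iii), so your separate (ii)$\Rightarrow$(iii) argument is redundant, though harmless.
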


\begin{proof}
    \eqref{item_max_prop_idemp_filter} $\implies$ \eqref{item_ultra_idemp} \ We will prove first that every proper, idempotent filter is contained in an idempotent family that is an ultrafilter.  Supposing this is so, if $\family$ is a maximal proper, idempotent filter, then it is contained in an idempotent family $\familytwo$ that is an ultrafilter.  By the maximality of $\family$ as a proper, idempotent filter, we have that $\family = \familytwo$, giving \eqref{item_ultra_idemp}, as desired.

    Suppose $\family$ is a proper, idempotent filter and is maximal as such. We claim that the set $\overline{\filter} \defeq \bigcap_{A \in \family} \overline{A}$ is a nonempty, compact subsemigroup of $(\beta \N, +)$.  Because ultrafilters are maximal filters, there is an ultrafilter containing $\family$, and so $\overline{\filter}$ is nonempty.   The set $\overline{\filter}$ is compact because it is closed and $\beta \N$ is compact.  Suppose $q_1, q_2 \in \overline{\filter}$.  Since $\filter \subseteq q_1$ and $\filter \subseteq q_2$, it follows from \cref{lemma_filter_sums_containment} \eqref{item_filter_sums_containment} and the idempotency of $\filter$ that
    \[\filter \subseteq \filter + \filter \subseteq q_1 + q_2.\]
    Therefore, we have that $q_1 + q_2 \in \overline{\filter}$.

    Finally, since $\overline{\filter}$ is a nonempty, compact subsemigroup of $(\beta \N, +)$, by \cite[Thm. 2.5]{hindman_strauss_book_2012}, there exists an idempotent ultrafilter containing $\family$.

    \eqref{item_ultra_idemp} $\implies$ \eqref{item_ultra_strong_idemp} \ Suppose $\family$ is an ultrafilter that satisfies $\family \subseteq \family + \family$.  By \cref{lemma_filter_algebra} \eqref{item_family_prop_zero} and \eqref{item_family_prop_two}, the family $\family + \family$ is a proper filter.  Since $\family$ is a maximal proper filter, we have that $\family = \family + \family$, as desired.

    \eqref{item_ultra_strong_idemp} $\implies$ \eqref{item_max_prop_idemp_filter} \ If $\family$ is an ultrafilter that satisfies $\family = \family + \family$, then it is a proper, idempotent filter.  Since it is maximal amongst proper filters, it is maximal amongst proper, idempotent filters.
\end{proof}

\begin{remark}
\label{rmk_idempotents_without_zorn}
    A routine application of Zorn's lemma gives the existence of maximal proper, idempotent filters.  It follows from \cref{thm_equiv_forms_of_max_idempot_filter} that these are ultrafilters.  The proof given here relies on the fact that a compact, Hausdorff, right-topological semigroup contains an idempotent element (see \cite[Thm. 2.5]{hindman_strauss_book_2012} and the end-of-chapter notes). A short, topology-free proof of this was given by Papazyan \cite{papazyan_1989}, and a proof without the full strength of Zorn's Lemma was given by Di Nasso and Tachtsis \cite{dinasso_tachtsis_2018}.

    Another routine application of Zorn's lemma gives the existence of maximal proper, translation-invariant filters.  It is an easy exercise to show that there are no translation-invariant ultrafilters on $\N$.  Therefore, in contrast to maximal proper, idempotent filters, maximal proper, translation-invariant filters are not ultrafilters.
\end{remark}

\subsection{Topology and dynamics}

Let $X$ be a topological space.  The closure and interior of $U \subseteq X$ are denoted $\overline U$ and $U^\circ$, respectively. If $X$ is a metric space, the open ball centered around $x \in X$ with radius $\eps > 0$ is denoted $B_\eps(x)$.

\subsubsection{Topological dynamics}
\label{sec_top_dynamics}

A nonempty, compact metric space $(X,d_X)$ together with a continuous self-map $T: X \to X$ is called a \emph{(topological dynamical) system}.  
A system of the form $(Y,T)$, where $Y \subseteq X$ is nonempty, closed, and \emph{$T$-invariant}, meaning $TY \subseteq Y$, is a \emph{subsystem of $(X,T)$}.
A system is \emph{minimal} if its only subsystem is itself.
Given two systems $(X,T)$ and $(Y,S)$, a continuous surjection $\pi: X \to Y$ that satisfies $S \circ \pi = \pi \circ T$ is called a \emph{factor map} of systems.  We write $\pi: (X,T) \to (Y,S)$ to indicate that $\pi$ is a factor map.

All systems in this paper will be considered as actions of the semigroup $(\N, +)$ by continuous maps on a compact metric space.  Thus, even in the event that $T$ is a homeomorphism (in which case, the system $(X,T)$ is called \emph{invertible}), the emphasis will be on non-negative iterates of the map $T$.

We will occasionally have need to consider non-metrizable systems.  A \emph{not-necessarily-metrizable system} is a nonempty, compact Hausdorff space $X$ together with a continuous self-map $T: X \to X$.  We will always specify when a system need not be metrizable.  All terminology and notation will be shared without confusion between the metric and non-metric settings.

For a system $(X,T)$, a point $x \in X$, and a set $A \subseteq \N$, define
\[T^A x \defeq \big\{ T^a x \ \big| \ a \in A \big\}.\]
The set $T^{\N} x$ is called \emph{the orbit of $x$} and its closure, $\overline{T^{\N}x}$, is \emph{the orbit closure of $x$}. Note that $(\overline{T^{\N}x}, T)$ is a subsystem of $(X,T)$, whereby a system $(X,T)$ is minimal if and only if every point $x \in X$ has a dense orbit.

For a system $(X,T)$, a set $U \subseteq X$, and a point $x \in X$, write
\begin{align*}
    R(x, U) &\defeq \big\{n \in \N \ \big | \ T^n x \in U \big\}
\end{align*}
for the set of times at which the point $x$ visits $U$. The reader will quickly verify that $n \in \N_0$,
\[R(x,U) - n = R(T^n x, U) = R(x, T^{-n}U).\]

Syndetic sets -- defined in \cref{sec_syndetic_thick_ps_sets} -- are closely related to minimal dynamics.  We will make use of two facts that can be quickly verified using the results in \cite[Ch. 1, Sec. 4]{furstenberg_book_1981}.  Let $(X,T)$ be a not-necessarily-metrizable system.
\begin{enumerate}
    \item Let $x \in X$.  The subsystem $(\overline{T^{\N_0}x}, T)$ is minimal if and only if for all open $U \subseteq X$ containing $x$, the set $R(x,U)$ is syndetic. Such a point $x$ is called \emph{uniformly recurrent.}

    \item The system $(X,T)$ is minimal if and only if for all nonempty, open $U \subseteq X$ and all $x \in X$, the set $R(x,U)$ is syndetic.
\end{enumerate}

The dual notions of distality and proximality arise a few times in this paper.  Let $(X,T)$ be a system and $x, y \in X$.  The points $x$ and $y$ are \emph{proximal} if $\inf_{n \in \N} d_X(T^n x, T^n y) = 0$.  The point $x$ is called \emph{distal} if it is proximal only to itself.  The system $(X,T)$ is called \emph{distal} if every point is distal.

\subsubsection{Symbolic dynamics}
\label{sec:symbolic_prelim}

We denote by $\{0,1\}^{\N}$ the set of 0--1 valued functions on $\N$. This is a compact Hausdorff space under the product topology.
Instead of specifying one of the many metrics that generate this topology, let us note only that two functions in $\{0,1\}^{\N}$ are near if and only if they agree when restricted to a long initial interval $\{1, \ldots, N\}$ of $\N$.

The value of the function $\omega \in \{0,1\}^{\N}$ at $i \in \N$ is denoted by $\omega(i)$.
The \emph{full (left) shift} is the system $(\{0,1\}^{\N}, \sigma)$, where the (left) shift $\sigma: \{0,1\}^{\N} \to \{0,1\}^{\N}$ at $\omega$ is defined by $(\sigma \omega)(i) = \omega(i+1)$, $i \in \N$.  Starting at $0$ instead of $1$, everything above applies to the space $\{0,1\}^{\N_0}$ and the \emph{full (left) shift} $(\{0,1\}^{\N_0}, \sigma)$.

\subsubsection{Dynamics on the space of ultrafilters}
\label{sec_topology_and_ultrafilters}

The Stone-\Cech{} compactification of $\N$, denoted $\beta \N$, is a compact Hausdorff, right-topological semigroup that provides a convenient universal object in the category of minimal systems.  We summarize here just what we will need; the interested reader is directed to \cite[Ch. 19]{hindman_strauss_book_2012} and the references therein for more information.

As a set, we realize $\beta \N$ as the set of all ultrafilters on $\N$ (cf. \cref{sec_filter_pr_and_ufs}). Sets of the form
\[\overline{A} \defeq \big\{p \in \beta \N \ \big| \ A \in p \big\}, \quad A \subseteq \N,\]
form a base for a non-metrizable, compact Hausdorff topology on $\beta \N$.  Since ultrafilters are families, addition of two ultrafilters is as defined in \cref{sec_family_algebra}.  It follows by combining \cref{lemma_equiv_ultrafilter} \eqref{item_family_is_self_dual} and \cref{lemma_filter_algebra} \eqref{item_family_prop_zero}, \eqref{item_family_prop_two}, and \eqref{item_sum_dual} that the sum of two ultrafilters is an ultrafilter.  Thus, $(\beta \N, +)$ is a compact Hausdorff semigroup.

It is not hard to check that for all $n \in \N$ and $q \in \beta \N$, the maps $\beta \N \to \beta \N$ defined by $p \mapsto n + p$ and $p \mapsto p + q$ are continuous.  Thus, $(\beta \N, +)$ is called a \emph{right-topological} semigroup.  A nonempty subset $L \subseteq \beta \N$ with the property that $p + L \subseteq L$ for all $p \in \beta \N$ is called a \emph{left ideal}.  A left ideal that is minimal amongst all left ideals is called a \emph{minimal left ideal}.  By a routine application of Zorn's lemma, minimal left ideals exist in $(\beta \N, +)$.  If $L \subseteq \beta \N$ is a minimal left ideal, for all $p \in L$, the set $\beta \N + p$ is a left ideal contained in $L$, and hence is equal to $L$.  Since addition on the right by $p$ is continuous and $\beta \N$ is compact, we see that minimal left ideals are compact.

Denoting addition by 1 on the left by $\lambda_1: \beta \N \to \beta \N$, we see that $(\beta \N, \lambda_1)$ is a non-metrizable topological dynamical system.  Fix $q \in \beta \N$.  Since addition on the right by $q$ is continuous and $\N$ is dense in $\beta \N$, we have that the orbit closure of $q$ under $\lambda_1$ is
\[\overline{\lambda_1^{\N} q} = \overline{ \N + q } = \overline{\N} + q = \beta \N + q,\]
a left ideal.  It is not hard to see that $\beta \N + q$ is a minimal left ideal if and only if the system $(\beta \N + q, \lambda_1)$ is minimal. (Indeed, the system $(\beta \N + q, \lambda_1)$ is minimal if and only if for all $p \in \beta \N + q$, the orbit closure $\beta \N + p$ -- a left ideal -- is equal to $\beta \N + q$.)  In this case, the ultrafilter $q$ is called a \emph{minimal ultrafilter}.  We see, then, that the minimal left ideals of the semigroup $(\beta \N, +)$ are precisely the minimal subsystems of $(\beta \N, \lambda_1)$.

Let $(X,T)$ be a not-necessarily-metrizable system, and let $x \in X$.  Consider the map $T^{\displaystyle \cdot} x: \N \to X$ given by $n \mapsto T^n x$.  By the universal property of the Stone-\Cech{} compactification, the map $T^{\displaystyle \cdot} x$ lifts to a continuous map $T^{\displaystyle \cdot} x: \beta \N \to X$.  Concretely, for $p \in \beta \N$, the point $T^p x \in X$ is defined uniquely by the property that for all open $U \subseteq X$ containing $T^p x$, the set $R(x,U)$ is a member of $p$.  The following fact will be useful for us later on: if $U \subseteq X$ is clopen and $p \in \beta \N$, then
\begin{align}
    \label{eqn_ultrafilter_shift_of_return_set}
    R(x,U) - p = R(T^p x, U).
\end{align}
Indeed, for $n \in \N$, we see that $n \in R(x,U) - p$ if and only if $R(T^n x, U) = R(x,U) - n \in p$.  If $R(T^n x, U) \in p$, then
\[T^n T^p x = T^{n + p} x = T^{p + n} x = T^p T^n x \in \overline{U} = U,\]
whereby $n \in R(T^p x, U)$.  On the other hand, if $n \in R(T^p x, U)$, then $T^p T^n x = T^n T^p x \in U$, whereby $R(T^n x, U) \in p$.

Minimal left ideals of $\beta \N$ under addition by 1 are universal minimal dynamical systems.  More precisely, let $L \subseteq \beta \N$ be a minimal left ideal, and let $(X,T)$ be a not-necessarily-metrizable, minimal dynamical system.  Fix $x \in X$.  The map $T^{\displaystyle \cdot} x: \beta \N \to X$ described in the previous paragraph is continuous and satisfies $T^{\lambda_1 p} x = T^{1 + p} x = T (T^p x)$ and hence is a factor map $(\beta \N,\lambda_1) \to (X,T)$ of systems.  Restricted to $L$, by the minimality of $(X,T)$, we see that $T^{\displaystyle \cdot} x: (L, \lambda_1) \to (X,T)$ is surjective and hence is a factor map of minimal systems.

\subsection{Families of sets from dynamics and combinatorics}
\label{sec_families_of_sets}

In this subsection, we collect the families of subsets of $\N$ from dynamics and combinatorics relevant to this work.  If $\familytwo$ is a family, a set $A \subseteq \N$ is called a \emph{$\familytwo$ set} if it is a member of $\familytwo$, and a family $\familyone$ is called a \emph{$\familytwo$ family} if all members of $\familyone$ are $\familytwo$ sets, ie., $\familyone \subseteq \familytwo$. A syndetic filter, for example, is a filter every member of which is syndetic.

\subsubsection{Syndetic, thick, and piecewise syndetic sets}
\label{sec_syndetic_thick_ps_sets}

A set $A \subseteq \N$ is \dots
\begin{enumerate}
    \item \dots \emph{syndetic} if there exists $N \in \N$ such that
    \[
        A \cup (A-1) \cup \cdots \cup (A-N) = \N;
    \]

    \item \dots \emph{thick} if for all finite $F \subseteq \N$, there exists $n \in \N$ such that $F + n \subseteq A$;

    \item \dots \emph{piecewise syndetic} if there exists $N \in \N$ such that the set
    \[A \cup (A-1) \cup \cdots \cup (A-N) \text{ is thick}.\]
\end{enumerate}
The families of syndetic, thick, and piecewise syndetic subsets of $\N$ are denoted by $\syndetic$, $\thick$, and $\PS$, respectively. All three families are translation invariant, and
\[\PS = \syndetic \classcap \thick.\]
By \cref{lemma_class_cap_is_pr}, the family $\PS$ is piecewise syndetic and its dual, $\PS^*$, is a filter. 
Members of $\PS^*$ are frequently called \emph{thickly syndetic}: it is a short exercise to verify that $A \in \PS^*$ if and only if for all finite $F \subseteq \N$, there exists a syndetic set $S \subseteq \N$ such that $F + S \subseteq A$. In fact, the family of thickly syndetic sets is the largest syndetic, translation-invariant filter, as the following lemma shows.

\begin{lemma}[{\cite[Lemma 2.5]{glasscock_le_2024}}]
\label{lemma_condition_on_subfamily_of_ps_star}
    The family $\PS^*$ is a syndetic, translation-invariant filter.  If $\family$ is a syndetic, translation-invariant filter, then $\family \subseteq \PS^*$.
\end{lemma}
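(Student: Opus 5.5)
We must prove three things: $\PS^*$ is a filter, it is translation invariant, and every member is syndetic; then maximality.

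The filter property is already available. Since $\PS = \syndetic \classcap \thick$, \cref{lemma_class_cap_is_pr} applied to $\family = \syndetic$ shows that $\PS$ is partition regular. Here we use $\syndetic^* = \thick$, which is the standard duality: a set is thick if and only if it meets every syndetic set. By fact \eqref{item_pr_filter_duality}, $\PS^*$ is then a filter.

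Translation invariance follows from the translation invariance of $\PS$. Let $A \in \PS^*$ and $n \in \N$, and suppose $(A-n) \cap P = \emptyset$ for some $P \in \PS$. Then $A \cap (P+n) = \emptyset$. But $P + n$ is piecewise syndetic, since shifting right preserves both syndeticity, after adding finitely many points, and thickness. This contradicts $A \in \PS^*$.

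Syndeticity holds because $\N \setminus A \notin \PS$ whenever $A \in \PS^*$, and in particular $\N \setminus A$ is not thick. A set whose complement is not thick is syndetic: if $\N \setminus A$ contains no interval of length $N$, then $A \cup (A-1) \cup \cdots \cup (A-N) = \N$.

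For maximality, let $\family$ be a syndetic, translation-invariant filter and $A \in \family$. We must show that $A$ meets every piecewise syndetic set $S \cap H$ with $S$ syndetic and $H$ thick. Suppose instead that $A \cap S \cap H = \emptyset$. Pick $N$ with $S \cup (S-1) \cup \cdots \cup (S-N) = \N$. By translation invariance and the filter property, the set
\[
B \defeq A \cap (A-1) \cap \cdots \cap (A-N)
\]
lies in $\family$, so $B$ is syndetic. Since $H$ is thick, it contains arbitrarily long intervals, and since $B$ is syndetic, some $b \in B$ has $b, b+1, \ldots, b+N$ all in an interval $I \subseteq H$ (choose $I$ longer than the gap bound of $B$ plus $N$). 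Some $b+i$ with $0 \le i \le N$ lies in $S$, and $b + i \in A$ because $b \in A - i$. Hence $b+i \in A \cap S \cap H$, a contradiction.

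The only delicate point is this last step: combining the finitely many translates in the filter to get a syndetic set that lands entirely inside a long interval of $H$. The rest is bookkeeping. One also needs the harmless convention that $A-n$ is computed inside $\N$.
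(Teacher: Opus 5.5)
Your proof is correct and follows essentially the route the paper indicates (the paper defers the proof to \cite{glasscock_le_2024}): the filter property comes from \cref{lemma_class_cap_is_pr} with $\syndetic^* = \thick$, and translation invariance and syndeticity come from the corresponding properties of $\PS$ together with $\thick \subseteq \PS$. Your maximality step is exactly the ``thickly syndetic'' characterization quoted just before the lemma, since $S' \defeq \bigcap_{i=0}^{N}(A-i) \in \family$ is syndetic and satisfies $S' + \{0, 1, \ldots, N\} \subseteq A$.
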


\subsubsection{Finite sums sets and central sets}
\label{sec_ip_and_central_sets}

A set $A \subseteq \N$ is called an \emph{IP set} if there exists a sequence $(x_i)_{i=1}^\infty \subseteq \N$ such that $A$ contains
\[
    \FS(x_i)_{i=1}^\infty \defeq \left\{ \sum_{f \in F} x_f \ \middle | \ F \subseteq \N \text{ is finite and nonempty} \right\}.
\]
Here ``FS'' stands for ``finite sums.'' 
Denote by $\IP$ the family of IP subsets of $\N$.
It is a consequence of Hindman's theorem \cite{hindman_1974} that the family $\IP$ is partition regular (see \cite[Lemma 2.1]{bergelson_hindman_1993}), and it is quick to show that thick sets are IP sets.
Thus, the dual family $\IP^*$ is a syndetic filter.

It is well known \cite[Thm. 5.12]{hindman_strauss_book_2012} that a set is an IP set if and only if it is member of an idempotent ultrafilter.  The following lemma offers a characterization in terms of idempotent filters, showing that the assumption of maximality (recall, ultrafilters are maximal filters) is superfluous. Since we do not need \cref{lemma_ip_set_iff_contained_in_prop_idemp_filter} in this paper, we omit its proof. 

\begin{lemma}
\label{lemma_ip_set_iff_contained_in_prop_idemp_filter}
    A set $A \subseteq \N$ is an IP set if and only if it is a member of a proper, idempotent filter.
\end{lemma}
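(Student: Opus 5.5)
Both directions will go through the known characterization that a set is an IP set if and only if it belongs to an idempotent ultrafilter \cite[Thm. 5.12]{hindman_strauss_book_2012}, together with \cref{thm_equiv_forms_of_max_idempot_filter}.

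For the forward direction, suppose $A$ is an IP set. By the cited characterization there is an idempotent ultrafilter $p$ with $A \in p$. By \cref{thm_equiv_forms_of_max_idempot_filter}, $p$ is an ultrafilter with $p = p + p$. In particular $p$ is proper, it is a filter, and $p \subseteq p + p$, so it is idempotent in the sense of \cref{sec_trans_inv_and_idempotent_families}. Hence $A$ lies in a proper, idempotent filter.

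For the converse, suppose $A \in \family$, where $\family$ is a proper, idempotent filter. The last sentence of \cref{thm_equiv_forms_of_max_idempot_filter} gives an idempotent ultrafilter $p \supseteq \family$. Its proof uses that $\bigcap_{B \in \family} \overline{B}$ is a nonempty compact subsemigroup of $\beta\N$, which therefore contains an idempotent. Then $A \in p$, and $p = p + p$ by that theorem. The cited characterization now shows that $A$ is an IP set.

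If a self-contained argument is wanted instead of quoting \cite[Thm. 5.12]{hindman_strauss_book_2012}, the converse can be done by the Galvin–Glazer recursion run directly with $\family$. Set $A_1 = A$ and $A_1^\star = A_1 \cap (A_1 - \family)$. This set lies in $\family$ because $A_1 - \family \in \family$ by idempotency and $\family$ is a filter. Since $\family$ is proper, $A_1^\star$ is nonempty, so we may pick $x_1 \in A_1^\star$. Next put $A_2 = A_1^\star \cap (A_1^\star - x_1)$, which is in $\family$ because $x_1 \in A_1 - \family$ gives $A_1 - x_1 \in \family$. One must also check that $A^\star - x_1$ remains in $\family$, which uses $(B \cap C) - \family = (B - \family)\cap(C-\family)$ from \cref{lemma_filter_algebra} \eqref{item_family_prop_one_new}. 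Iterating this construction, with $x_{n+1} > x_n$ chosen in $A_{n+1}^\star$, produces a sequence with $\FS(x_i) \subseteq A$. The only mildly delicate step is the bookkeeping showing that each $A_n^\star - x_n$ lies in $\family$. The forward direction is immediate.
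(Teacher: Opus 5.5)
Your argument is correct and follows the route the paper has in mind. The paper omits the proof, but it places the lemma right after citing \cite[Thm. 5.12]{hindman_strauss_book_2012}, and the final sentence of \cref{thm_equiv_forms_of_max_idempot_filter} supplies exactly the converse direction you use.

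One small correction to your optional Galvin--Glazer sketch concerns the claim that $A_n^\star - x_n = (A_n - x_n) \cap \big((A_n - \family) - x_n\big)$ lies in $\family$. This rests on \cref{lemma_filter_algebra} \eqref{item_family_prop_three}, namely $(A_n - \family) - x_n = (A_n - x_n) - \family$, together with idempotency, not on item \eqref{item_family_prop_one_new}.
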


We will consider the family of central sets for its importance to the subject and for its connection to the families of dynamically defined sets at the heart of this work.  Furstenberg \cite[Def. 8.3]{furstenberg_book_1981} defined a set $A \subseteq \N$ to be \emph{central} if there exists a system $(X,T)$, a uniformly recurrent point $x \in X$, a point $y \in X$ proximal to $x$, and an open set $U \subseteq X$ containing $x$ such that $A = R(y,U)$.  We denote the family of central sets by $\central$.

Bergelson, Hindman, and Weiss \cite{bergelson_hindman_1990} showed that a subset of $\N$ is central if and only if it is a member of a minimal idempotent ultrafilter on $\N$.  (This characterization of central sets fits in a series of related characterizations in more general settings: by Glasner \cite{glasner_1980} in countable, abelian groups; Bergelson-Hindman-Weiss \cite{bergelson_hindman_1990} in countable semigroups; and Shi-Yang \cite{shi_yang_1996} in all semigroups.) In analogy to IP sets and \cref{lemma_ip_set_iff_contained_in_prop_idemp_filter}, central sets can be characterized in terms of a special class of idempotent filters.  Defining this class -- ``collectionwise piecewise syndetic,'' idempotent filters -- would require too large a diversion, so we leave it to the interested reader. (See \cite[Def. 14.19]{hindman_strauss_book_2012} for a definition of collectionwise piecewise syndetic.)

It was stated by Furstenberg and follows immediately from the ultrafilter characterization of central sets that the family $\central$ is partition regular. It is well known that thick sets are central and that central sets are piecewise syndetic and IP.  Dual to the family of central sets is the family $\central^*$, a syndetic filter that will appear prominently in \cref{sec_dps_sets}. Figure \ref{fig:containmentdiagram} below depicts the relationships between the families of subsets of $\N$ introduced so far and those introduced later in the paper.

\begin{figure}[ht]
\centering
\begin{tikzpicture}[>=triangle 60]
  \matrix[matrix of math nodes,column sep={30pt,between origins},row sep={20pt,between origins}](m)
  {
    & & & & & & & & |[name=psstar]|\PS^* = \dPS^* & & & & & & \\
    \\
    & & & & & & |[name=thick]|\thick & &  \\
    & & & & & & & & |[name=dcpsstar]|\dcPS^* \\
    & & & & |[name=dthick]|\dthick & |[name=ipstar]|\IP^* & & & & & \\
    & & & & & & |[name=centralstar]|\central^* & & & & |[name=dcsyndetic]| \dcsyndetic \\
    \\
    & & & & |[name=dcthick]|\dcthick & & & & |[name=central]|\central \\
    & & & & & & & & & |[name=ip]|\IP & |[name=dsyndetic]| \dsyndetic \\
    & & & & & & |[name=dcps]| \dcPS & & & & \\
    & & & & & & & & |[name=syndetic]|\syndetic \\
    \\
    & & & & & & |[name=ps]|\PS = \dPS \\
 };
 
   \draw[-angle 90]
            (psstar) edge (thick)
            (psstar) edge (dcpsstar)
            (thick) edge (dthick)
            (dthick) edge (dcthick)
            (ipstar) edge (centralstar)
            (dcpsstar) edge (centralstar)
            (dcpsstar) edge (dcsyndetic)
            (centralstar) edge (dcthick)
            (thick) edge (central)
            (centralstar) edge (central)
            (dcsyndetic) edge (central)
            (dcsyndetic) edge (dsyndetic)
            (dcthick) edge (dcps)
            (central) edge (dcps)
            (central) edge (ip)
            (centralstar) edge (syndetic)
            (dsyndetic) edge (syndetic)
            (syndetic) edge (ps)
            (dcps) edge (ps)
  ;
\end{tikzpicture}
\caption{Containment amongst the families appearing in this paper.  An arrow $\family \to \familytwo$ indicates that $\family \subsetneq \familytwo$. 
There are eight families and their duals, yielding sixteen families in all, two pairs of which coincide. The diagram's symmetry is explained by the family duality described in \cref{sec_dual_families}.}
\label{fig:containmentdiagram}
\end{figure}
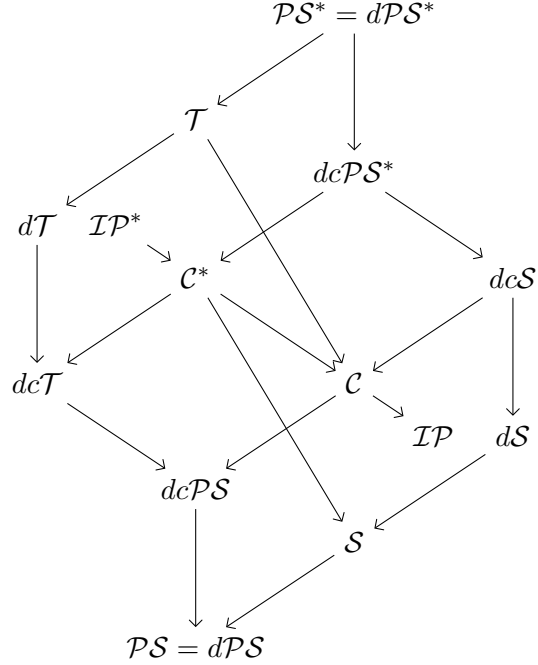

\subsection{Dynamically syndetic sets}
\label{sec_dyn_synd_facts}

In this subsection, we collect a number of results on the families of dynamically syndetic and dynamically central syndetic sets (recall their definitions from \cref{sec_intro_disjointness}). We denote these families by $\dsyndetic$ and $\dcsyndetic$, respectively. Note that these families are dual to the families of dynamically thick sets and sets of pointwise recurrence. 

\subsubsection{First results}

The first two results are quoted verbatim from our companion paper \cite{glasscock_le_2024}.  Since we use them several times, we give the full statements.

\begin{lemma}[{\cite[Lemma 3.2]{glasscock_le_2024}}]
    \label{lemma_ds_dcs_modifyable_on_a_finite_set}
    Let $A \subseteq \N$ be dynamically (central) syndetic.  If $B \subseteq \N$ is cofinite, then $A \cap B$ is dynamically (central) syndetic.
\end{lemma}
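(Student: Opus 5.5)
It suffices to treat a set $B$ whose complement is a finite set $F = \{f_1, \ldots, f_k\}$, since any cofinite set has this form. Suppose first that $A$ is dynamically syndetic, so $A \supseteq R(x,U)$ for some minimal system $(X,T)$, a point $x \in X$, and a nonempty open set $U \subseteq X$. The plan is to shrink $U$ to a nonempty open set $V \subseteq U$ that avoids the finitely many points $T^{f_1}x, \ldots, T^{f_k}x$. Then $R(x,V) \subseteq R(x,U) \setminus F \subseteq A \cap B$, which shows that $A \cap B$ is dynamically syndetic.

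Such a $V$ exists whenever $U$ is not contained in the finite set $\{T^{f_i}x\}$. In that case some point $y \in U$ differs from every $T^{f_i}x$. Since $X$ is metric, hence Hausdorff, we can take $V = U \setminus \{T^{f_1}x, \ldots, T^{f_k}x\}$, which is open because finite sets are closed, and it contains $y$, so it is nonempty.

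The remaining case is $U \subseteq \{T^{f_i}x\}$, so that $U$ is a finite nonempty open set and $X$ has an isolated point. A minimal system with an isolated point is finite. Indeed, if $\{z\}$ is open, then minimality forces every orbit to hit $z$, so $X = \bigcup_n T^{-n}\{z\}$ is a cover of the compact space $X$ by open sets. Extracting a finite subcover, every point reaches $z$ within $N$ steps, and the forward orbit of $z$ itself is dense and returns to $z$. This makes $X$ a single periodic orbit. In that situation $R(x,U)$ is a finite union of sets of the form $p\N + c$ with $p$ the period. Removing $F$ from such a union yields a set containing $R(x, U)$ shifted: concretely, $R(x,U) \cap (\N + m) = R(x,U) - m$ shifted back, and for $m$ a multiple of $p$ larger than $\max F$ we have $R(x,U) \setminus F \supseteq R(x,U) \cap [m,\infty)$. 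I would realize this last set as a return-time set by passing to the product of $(X,T)$ with a suitable minimal system. The simplest choice is to note that $R(T^m x, U) = R(x,U) - m = R(x,U)$ by periodicity; to get the tail I would instead replace $(X,T)$ by an infinite minimal system factoring onto it, namely the product with an irrational rotation. In the product system $U \times \mathbb{T}$ has no isolated points, so the first argument applies.

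For the central case, $x \in U$, and I run the same argument taking $V \ni x$. In the infinite case this requires $x \ne T^{f_i}x$ for every $i$, which holds if $x$ is not periodic, that is, if $X$ is infinite. If $X$ is finite, I pass to the product with an irrational rotation as above, using the point $(x,0)$ and the open set $U \times \mathbb{T}$. This preserves centrality, and the product is minimal because a periodic orbit is disjoint from an irrational rotation. The main obstacle is this finite/periodic case; the product trick is what I would try first.
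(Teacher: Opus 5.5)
Your proof is correct. Deleting the finitely many points $T^{f}x$, $f \in F$, from $U$ settles every case in which $X$ is infinite: nonempty open sets are then infinite, and $x$ is not periodic, so in the central case $x$ survives the deletion. In the periodic case, the product with an irrational rotation $R_\alpha$ does the job for three reasons:
\begin{itemize}
\item it is minimal, because $R_\alpha^p = R_{p\alpha}$ is minimal;
\item the return set of $(x,0)$ to $U \times \mathbb{T}$ is exactly $R_T(x,U)$, and $(x,0) \in U \times \mathbb{T}$ whenever $x \in U$;
\item it is infinite, so you are back in the first case.
\end{itemize}

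The paper does not reprove this lemma; it quotes it from the companion paper. So there is no in-text argument to compare your route against.

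Two passages in your finite case can be cut, because the product trick makes them unnecessary:
\begin{itemize}
\item The exploratory remarks about $R(x,U) \cap (\N + m)$ and $R(T^m x, U)$ are a dead end. They are not wrong, since $(R(x,U)-m)+m = R(x,U) \cap (\N+m)$, but they lead nowhere.
\item The finite-subcover step is superfluous. Once $\{z\}$ is open and the forward orbit of $z$ is dense, $z$ is periodic, and its orbit is a closed invariant set, hence all of $X$.
\end{itemize}
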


\begin{lemma}[{\cite[Lemma 3.3]{glasscock_le_2024}}]
\label{lemma_translates_of_dsyndetic_sets}
    Let $A \subseteq \N$ be dynamically syndetic.
    \begin{enumerate}
        \item
        \label{item_all_translates_are_ds}
        For all $n \in \N$, the set $A-n$ is dynamically syndetic.
        
        \item
        \label{item_some_translates_are_dcs}
        The set
        \begin{align}
            \label{eqn_translates_of_ds_set}
            \big\{ n \in \N \ \big| \ A-n \text{ is dynamically central syndetic} \big\}
        \end{align}
        is dynamically syndetic.
        
        \item
        \label{item_dcs_translates_of_dcs_are_dcs}
        If $A$ is dynamically central syndetic, then the set in \eqref{eqn_translates_of_ds_set} is dynamically central syndetic.
    \end{enumerate}
    Moreover, \eqref{item_all_translates_are_ds}, \eqref{item_some_translates_are_dcs}, and \eqref{item_dcs_translates_of_dcs_are_dcs} hold with $A-n$ replaced by $A+n$. Finally,
    \begin{align}
        \label{eqn_relationship_between_ds_and_dcs_classes}
        \dsyndetic = \bigcup_{n \in \N} \big(\dcsyndetic - n \big) = \bigcup_{n \in \N} \big(\dcsyndetic + n \big).
    \end{align}
\end{lemma}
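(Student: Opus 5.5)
The plan is to reduce everything to return-time sets in a minimal system, after first making the system invertible. Suppose $A \supseteq R(x,U)$, where $(X,T)$ is minimal, $x \in X$, and $U \subseteq X$ is nonempty and open (with $x \in U$ in the central case). Let $(\tilde X, \tilde T)$ be the natural extension of $(X,T)$, with projection $\pi : \tilde X \to X$. It is a minimal, invertible system, and for any lift $\tilde x$ of $x$ we have $R(x,U) = R(\tilde x, \pi^{-1}U)$, where $\tilde x \in \pi^{-1}U$ whenever $x \in U$. So we may assume throughout that $T$ is a homeomorphism. The one fact we use about inverses is that $(X,T^{-1})$ is again minimal. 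Hence for nonempty open $V$, the set $\{n \in \N \mid T^{-n}x \in V\}$ is syndetic, and it is dynamically syndetic, being the return-time set $R_{T^{-1}}(x,V)$ in the system $(X,T^{-1})$. It is dynamically central syndetic when $x \in V$.

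For \eqref{item_all_translates_are_ds} with $A-n$, the identity $R(x,U)-n = R(T^n x, U)$ gives the claim at once. For $A+n$, a direct check shows
\[
R(T^{-n}x,U) \cap (n,\infty) \subseteq R(x,U)+n \subseteq A+n ,
\]
and \cref{lemma_ds_dcs_modifyable_on_a_finite_set} handles the finite truncation.

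For \eqref{item_some_translates_are_dcs} and \eqref{item_dcs_translates_of_dcs_are_dcs} with $A-n$, note that if $T^n x \in U$ then $A-n \supseteq R(T^n x,U)$ with $T^n x \in U$, so $A-n$ is dynamically central syndetic. Thus the set in \eqref{eqn_translates_of_ds_set} contains $R(x,U)$, which is dynamically syndetic, and dynamically central syndetic when $x \in U$. For $A+n$, the displayed inclusion shows that $A+n$ is dynamically central syndetic whenever $T^{-n}x \in U$. So the relevant set contains $\{n \mid T^{-n}x \in U\} = R_{T^{-1}}(x,U)$, which is dynamically (central) syndetic by the observation above.

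For \eqref{eqn_relationship_between_ds_and_dcs_classes}, the inclusions $\supseteq$ follow from \eqref{item_all_translates_are_ds}, its $+n$ version, and $\dcsyndetic \subseteq \dsyndetic$. For $\subseteq$, take $A \in \dsyndetic$. By \eqref{item_some_translates_are_dcs} in both versions there are $n,m$ with $A-n$ and $A+m$ dynamically central syndetic. Then $A = (A+m)-m$ exhibits $A$ as a member of $\dcsyndetic - m$. Also $(A-n)+n = A \cap (n,\infty)$, which lies in $\dcsyndetic+n$. If the collection $\dcsyndetic+n$ is read up to upward closure, or if one notes that $A \cap (n,\infty)$ and $A$ differ by a finite set, this gives $A \in \bigcup_n (\dcsyndetic+n)$. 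This bookkeeping with finite sets is the only delicate point of the equality.

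The step I expect to be the main obstacle is the $A+n$ statements, since they require pulling points backward under a non-invertible $T$. The natural extension is what resolves this, and I would verify carefully that it preserves minimality and represents $R(x,U)$ exactly, as claimed above.
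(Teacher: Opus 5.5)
Your proposal is correct and follows the route the paper itself uses for the analogous statements. The lemma is quoted here from the companion paper without proof, but the proofs of \cref{lemma_translates_of_dthick_sets} and \cref{lemma_translates_of_dps_sets} use exactly your ingredients: pass to an invertible minimal system (your natural extension stands in for \cite[Lemma 3.1]{glasscock_le_2024}), use minimality of $(X,T^{-1})$, use the inclusion $R(T^{-n}x,U)\cap\{n+1,n+2,\ldots\}\subseteq R(x,U)+n$, and apply \cref{lemma_ds_dcs_modifyable_on_a_finite_set}.

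One precision on the final equality: every set $B+n$ misses $\{1,\ldots,n\}$, so $\dsyndetic=\bigcup_n(\dcsyndetic+n)$ is false for the literal collection $\{B+n \mid B\in\dcsyndetic\}$ (take $A=\N$). The upward-closure reading is therefore the one that must be intended, and under it your observation $A\supseteq (A-n)+n$ finishes the proof; the remark about differing by a finite set does not by itself do that.
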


The following lemma, well known to experts, describes the families of piecewise syndetic and central sets in dynamical terms.  Both equalities can be derived from the result of Auslander \cite{auslander_1960} and Ellis \cite{ellis_1957} that any point in any system is proximal to a uniformly recurrent point.  We opt for a short derivation of the first from the second, proven in \cite{Huang_Shao_Ye_2019}.

\begin{lemma}
\label{lem:central_characterization}
We have that $\PS = \dsyndetic \classcap \thick$ and $\central = \dcsyndetic \classcap \thick$.
\end{lemma}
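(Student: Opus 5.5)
The plan is to take the second equality, $\central = \dcsyndetic \classcap \thick$, as the known input (this is the result of Huang, Shao, and Ye, ultimately resting on the Auslander--Ellis theorem). From it I will derive the first equality, $\PS = \dsyndetic \classcap \thick$, using the translation relation \eqref{eqn_relationship_between_ds_and_dcs_classes} from \cref{lemma_translates_of_dsyndetic_sets}.

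For the inclusion $\dsyndetic \classcap \thick \subseteq \PS$, let $B \cap H$ be given with $B$ dynamically syndetic and $H$ thick. Dynamically syndetic sets are syndetic: if $B \supseteq R(x,U)$ with $(X,T)$ minimal, then $R(x,U)$ is syndetic by the second minimality fact in \cref{sec_top_dynamics}. Hence $B \cap H \in \syndetic \classcap \thick = \PS$.

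For the reverse inclusion $\PS \subseteq \dsyndetic \classcap \thick$, let $A$ be piecewise syndetic.
\begin{enumerate}
    \item Piecewise syndetic sets have a central translate: there is $n \in \N$ with $A - n$ central. To see this, take a minimal ultrafilter $p$ with $A \in p$; such $p$ exists because $\overline{A}$ meets a minimal left ideal when $A$ is piecewise syndetic (\cite[Thm. 4.40]{hindman_strauss_book_2012}). Then $\beta\N + p$ is a minimal left ideal and contains an idempotent $e$ with $p = p + e$. From $A \in p + e$ we get $A - e \in p$; in particular $A - e \ne \emptyset$. For any $n \in A - e$, we have $A - n \in e$, and $e$ is a minimal idempotent, so $A - n$ is central.
    \item By the second equality, write $A - n = C \cap H$ with $C \in \dcsyndetic$ and $H$ thick.
    \item Translate back. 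We have $A \supseteq (C + n) \cap (H + n)$. Here $H + n$ is thick, and $C + n$ is dynamically syndetic by \eqref{eqn_relationship_between_ds_and_dcs_classes}. Since $\dsyndetic \classcap \thick$ is upward closed, $A \in \dsyndetic \classcap \thick$.
\end{enumerate}

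The one point needing care is the convention in this paper that $C + n \subseteq \N$ and $A - n = \{m : m + n \in A\}$. We need $(C + n) \cap (H + n) \subseteq A$, and this holds because $m + n \in A$ for every $m \in C \cap H = A - n$.

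The main obstacle is step 1, getting a translate in a minimal idempotent. Alternatively, one can argue dynamically, avoiding ultrafilters: write $A \supseteq S \cap H$ and encode $1_A$ in the full shift. A uniformly recurrent point proximal to $1_A$ (Auslander--Ellis) then directly gives a minimal system $(X,T)$, a point $x$, and an open set $U$ with $R(x,U) \cap H' \subseteq A$ for some thick $H'$. I would fall back on this route if the ultrafilter citation is deemed outside what the paper has set up.
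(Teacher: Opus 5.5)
Your proof is correct and has the same skeleton as the paper's: take $\central = \dcsyndetic \classcap \thick$ as the input, find a central translate of $A$, and shift back using \cref{lemma_translates_of_dsyndetic_sets}. The differences are in how you produce the translate and in which direction you shift.

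\textbf{The paper's route.} It notes that $A \cup (A+1) \cup \cdots \cup (A+N)$ is thick, hence central. By partition regularity of $\central$, some $A+i$ is central. From $A+i \supseteq B \cap H$ it gets $A \supseteq (B-i) \cap (H-i)$. This needs only the elementary identity $R(x,U) - i = R(T^i x, U)$.

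\textbf{Your route.} You use the ultrafilter picture: take a minimal $p$ containing $A$ and an idempotent $e \in \beta\N + p$. Such an $e$ is automatically a right identity for that minimal left ideal, so $p = p+e$. Hence $A - e \in p$, and $A - n \in e$ is central for any $n \in A - e$. This is slick. It even shows that $\{n \mid A-n \text{ is central}\}$ belongs to the minimal ultrafilter $p$.

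\textbf{The cost of your route.} Your translate is negative, so you must shift back by $+n$. That uses the positive-shift half of \cref{lemma_translates_of_dsyndetic_sets}: $C \in \dcsyndetic \Rightarrow C+n \in \dsyndetic$. This half is less elementary, because $R(x,U)+n$ is not itself a return-time set. One has to pass to an invertible system and trim a finite set, as in the proof of \cref{lemma_translates_of_dps_sets}. The paper does state this half, so your argument is complete.

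You also make explicit the easy inclusion $\dsyndetic \classcap \thick \subseteq \PS$, which the paper leaves implicit. Your dynamical fallback is not needed.
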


\begin{proof}
    The second equality is proven in \cite[Thm. 3.7]{Huang_Shao_Ye_2019}.  We will derive the first from the second.  Suppose $A \subseteq \N$ is piecewise syndetic; it is not hard to see from the definition that there exists $N \in \N$ such that $A \cup (A+1) \cup \cdots \cup (A+N)$ is thick.  Since thick sets are central and the family of central sets is partition regular, some $A+i$ is central.  By the second equality, there exists $B \in \dcsyndetic$ and $H \in \thick$ such that $A+i \supseteq B \cap H$.  Therefore, we have that $A \supseteq (B-i) \cap (H-i)$.  By \cref{lemma_translates_of_dsyndetic_sets}, the set $B-i$ is dynamically syndetic and the set $H-i$ is thick, as was to be shown.
\end{proof}

\subsubsection{Characterizations of dynamical syndeticity}

The following theorems are the main results from our companion paper  \cite{glasscock_le_2024}.  They give characterizations of dynamical (central) syndeticity that will be key to several of the main results in this paper.

\begin{theorem}[{\cite[Theorem A, Theorem 3.7]{glasscock_le_2024}}]
\label{mainthm_ds_equivalents}
\label{thm:simple_equivalent_dS}
    Let $A \subseteq \N$.  The following are equivalent.
    \begin{enumerate}
        \item
        \label{item_intro_ds_def_condition}
        The set $A$ is dynamically syndetic.

        \item \label{item:1_A_uniformly_recurrent} There exists a nonempty subset $B \subseteq A$ for which the point $1_B$ is uniformly recurrent in the full shift $(\{0, 1\}^{\N},\sigma)$.  Moreover, when $B$ is considered as a subset of $\N_0$, the point $1_B$ is uniformly recurrent in the full shift $(\{0, 1\}^{\N_0},\sigma)$, the system $(X \defeq \overline{\sigma^{\N_0} 1_B}, \sigma)$ is minimal, the set $X \cap [1]_0$ is non-empty and clopen, and $B = R(1_{B}, X \cap [1]_0) \subseteq A$.

        \item
        \label{item_intro_ds_combo_condition}
        There exists a nonempty subset $B \subseteq A$ that satisfies: for all finite $F \subseteq B$, the set $\bigcap_{f \in F} (B-f)$ is syndetic.

        \item
        \label{item_intro_translate_belongs_to_sif}
        There exists $n \in \N$ for which the set $A - n$ belongs to a syndetic, idempotent filter on $\N$.
    \end{enumerate}
\end{theorem}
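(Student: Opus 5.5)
The plan is to prove the cycle of implications $\eqref{item_intro_ds_def_condition} \Rightarrow \eqref{item:1_A_uniformly_recurrent} \Rightarrow \eqref{item_intro_ds_combo_condition} \Rightarrow \eqref{item_intro_translate_belongs_to_sif} \Rightarrow \eqref{item_intro_ds_def_condition}$, working throughout with symbolic dynamics and the algebra of families from \cref{sec_family_algebra}.

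For $\eqref{item_intro_ds_def_condition} \Rightarrow \eqref{item:1_A_uniformly_recurrent}$, suppose $A \supseteq R(x,U)$ with $(X,T)$ minimal and $U$ nonempty and open. First I will replace $U$ by a smaller ball $V$ with $\overline{V} \subseteq U$, chosen so that it is a continuity set. Using $\N_0$-indexing, I consider the coding $\omega = 1_{R_0(x,V)} \in \{0,1\}^{\N_0}$. The issue is that this coding need not be uniformly recurrent, because boundary effects break continuity of the coding map. To fix this, I pass to a minimal subsystem of the orbit closure of $(x, \omega)$ in $X \times \{0,1\}^{\N_0}$ that projects onto $X$, and pick a point $(x', \omega')$ in it. Then $\omega'$ is uniformly recurrent, and its $1$'s occur only at times $n$ with $T^n x' \in \overline{V} \subseteq U$. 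The catch is that $x'$ need not equal $x$. I will handle this by choosing the minimal subsystem inside the fiber over $x$, which is possible because the projection onto the minimal system $X$ is surjective and uniformly recurrent points exist in every fiber by Auslander--Ellis. Once this is in place, $B = \{n : \omega'(n) = 1\}$ works, and the clopen-cylinder statements are routine.

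For $\eqref{item:1_A_uniformly_recurrent} \Rightarrow \eqref{item_intro_ds_combo_condition}$, take $B$ as given and a finite $F \subseteq B$. The set $\bigcap_{f \in F}(B - f)$ is exactly the set of return times of $1_B$ to the cylinder $W = \bigcap_{f \in F} \sigma^{-f}[1]_0$. This cylinder is clopen and contains $1_B$, so uniform recurrence makes the return-time set syndetic.

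For $\eqref{item_intro_ds_combo_condition} \Rightarrow \eqref{item_intro_translate_belongs_to_sif}$, fix $b_0 \in B$ and let $\mathscr{F}$ be the filter generated by the sets $\bigcap_{f \in F}(B - f)$ over finite $F \subseteq B$. It is proper and syndetic by hypothesis, and it contains $B - b_0$ upon taking $F = \{b_0\}$. For idempotency, given a generator $C = \bigcap_{f \in F}(B-f)$, every $m \in C$ has $F + m \subseteq B$, so $C - m \supseteq \bigcap_{g \in F \cup (F+m)}(B-g) \in \mathscr{F}$. Hence $C \subseteq C - \mathscr{F}$, giving $C - \mathscr{F} \in \mathscr{F}$, and \cref{lemma_filter_algebra} extends this to finite intersections. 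Then $A - b_0 \supseteq B - b_0$ lies in the filter.

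For $\eqref{item_intro_translate_belongs_to_sif} \Rightarrow \eqref{item_intro_ds_def_condition}$, suppose $A - n \in \mathscr{F}$ with $\mathscr{F}$ a syndetic idempotent filter. By \cref{lemma_translates_of_dsyndetic_sets} it suffices to show that members of $\mathscr{F}$ are dynamically syndetic. I expect this to be the main obstacle. Given $C \in \mathscr{F}$, I will build $B \subseteq C$ in $\mathscr{F}$ iteratively by setting $C_{k+1} = C_k \cap (C_k - \mathscr{F})$, so that $C_k$ is closed under shifts by its own elements modulo the filter, and then take $B$ with $B - b \in \mathscr{F}$ for all $b \in B$, in the manner of the Galvin--Glazer construction. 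Property $\eqref{item_intro_ds_combo_condition}$ then follows from syndeticity of the filter. To close the loop I need $\eqref{item_intro_ds_combo_condition} \Rightarrow \eqref{item_intro_ds_def_condition}$, namely that $1_B$ is uniformly recurrent: every cylinder neighborhood of $1_B$ is determined by finitely many $1$'s (the set $F$) and $0$'s, and syndetic returns matching the $1$'s are immediate. Matching the $0$'s is the delicate part. I would try to handle it by shrinking $B$ further, for instance by removing a thin set, or by using the minimal left ideal correspondence from \cref{sec_topology_and_ultrafilters}, choosing a minimal idempotent containing the filter via \cref{thm_equiv_forms_of_max_idempot_filter}.
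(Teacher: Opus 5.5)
Your chain establishes \eqref{item_intro_ds_def_condition}$\Rightarrow$\eqref{item:1_A_uniformly_recurrent}$\Rightarrow$\eqref{item_intro_ds_combo_condition}$\Rightarrow$\eqref{item_intro_translate_belongs_to_sif}. It also shows that members of a syndetic, idempotent filter $\mathscr{F}$ satisfy \eqref{item_intro_ds_combo_condition}; no iteration is needed there, since $C^\star = C\cap(C-\mathscr{F})$ already has $C^\star - c\in\mathscr{F}$ for all $c\in C^\star$, by \cref{lemma_filter_algebra}\eqref{item_family_prop_three} and idempotency.

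The gap is the step that closes the cycle, \eqref{item_intro_ds_combo_condition} (or \eqref{item_intro_translate_belongs_to_sif}) $\Rightarrow$ \eqref{item_intro_ds_def_condition}. You leave it as ``the delicate part'', but it is the entire substance of the theorem. Condition \eqref{item_intro_ds_combo_condition} only says that the $1$'s of any finite window of $1_B$ recur syndetically. To get a return-time set $R(x,U)\subseteq B$, you must pass to a subset of $B$ in which the $0$'s of each window recur as well. That means removing elements of $B$ at syndetically many places to create the right zeros. It must be done while preserving the combinatorial property (equivalently, membership in a syndetic, idempotent filter), so that the step can be repeated on longer windows and the limiting set has a uniformly recurrent indicator function. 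In the source \cite{glasscock_le_2024}, this direction rests on the ``shift-punch'' machinery mentioned at the end of \cref{sec_open_quests_dps_sets}. That machinery proves that members of syndetic, idempotent filters are dynamically central syndetic (\cref{intro_mainthm_full_chars_of_dcsyndetic}), and \eqref{item_intro_translate_belongs_to_sif}$\Rightarrow$\eqref{item_intro_ds_def_condition} then follows by \cref{lemma_translates_of_dsyndetic_sets}. Your proposal has no substitute for this construction.

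Neither of your suggested repairs supplies it. First, \cref{thm_equiv_forms_of_max_idempot_filter} gives an idempotent ultrafilter containing $\mathscr{F}$, but not a minimal one; for minimality you need syndeticity, via \cref{thm_sifs_are_contained_in_idempotents}. Second, even with a minimal idempotent $p\supseteq\mathscr{F}$, the uniformly recurrent point you obtain is $\sigma^p 1_C$. Its return-time set to $[1]_0$ is $C-p$, and the filter only gives $C-p\supseteq C-\mathscr{F}\supseteq C\cap(C-\mathscr{F})$, which is an inclusion in the wrong direction. Membership in minimal idempotents amounts to strong centrality (\cref{cor_central_syndetic_is_strongly_central}). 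The remark after that theorem notes this is strictly weaker than dynamical central syndeticity, so it does not by itself encode control of the zeros. Removing a thin set does not create zeros at prescribed positions either.

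The problems in your proof of \eqref{item_intro_ds_def_condition}$\Rightarrow$\eqref{item:1_A_uniformly_recurrent} are minor and easily repaired:
\begin{itemize}
\item The uniformly recurrent point over $x$ does not come from Auslander--Ellis. It comes from the fact that any minimal subsystem of the orbit closure of $(x,\omega)$ projects onto the minimal system $X$, and hence meets the fiber over $x$.
\item You must arrange $0\notin B$, since $B\subseteq A\subseteq\N$ and \eqref{item:1_A_uniformly_recurrent} requires $1_B(0)=0$. For example, take $\overline{V}\subseteq U\setminus\{x\}$ when $X$ is infinite, and treat periodic $X$ by hand.
\end{itemize}
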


\begin{theorem}[{\cite[Theorem B, Theorem 3.6]{glasscock_le_2024}}]
    \label{intro_mainthm_full_chars_of_dcsyndetic}
    \label{thm:simple_equivalent_dcS}
    Let $A \subseteq \N$.  The following are equivalent.
    \begin{enumerate}
        \item \label{intro_item_def_of_dcs}
        The set $A$ is dynamically central syndetic.

        \item \label{item:very_strongly_central}
        There exists a minimal system $(X,T)$, a nonempty, open set $U \subseteq X$, and a point $x \in \overline{U}$ such that $R(x,U) \subseteq A$.

        \item \label{item:1_A_0_uniformly_recurrent} There exists a subset $B \subseteq A$ such that the point $1_{B \cup \{0\}}$ is uniformly recurrent in the full shift $(\{0, 1\}^{\N_0}, \sigma)$. In particular, the system $(X \defeq \overline{\sigma^{\N_0} 1_{B \cup \{0\}}}, \sigma)$ is minimal, the set $X \cap [1]_0$ is a clopen neighborhood of $1_{B \cup \{0\}}$, and $B = R(1_{B \cup \{0\}}, X \cap [1]_0) \subseteq A$.

        \item \label{intro_item_dcs_combo_condition}
        There exists a subset $B \subseteq A$ that satisfies: for all finite $F \subseteq B$, the set $B \cap \bigcap_{f \in F} (B-f)$ is syndetic.

        \item \label{intro_item_member_of_sif}
        The set $A$ belongs to a syndetic, idempotent filter on $\N$.
    \end{enumerate}
\end{theorem}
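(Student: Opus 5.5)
We prove the equivalences via the chain \eqref{intro_item_def_of_dcs} $\Rightarrow$ \eqref{item:very_strongly_central} $\Rightarrow$ \eqref{intro_item_dcs_combo_condition} $\Rightarrow$ \eqref{intro_item_member_of_sif} $\Rightarrow$ \eqref{item:1_A_0_uniformly_recurrent} $\Rightarrow$ \eqref{intro_item_def_of_dcs}. Two of the links are immediate. For \eqref{intro_item_def_of_dcs} $\Rightarrow$ \eqref{item:very_strongly_central}, a dynamically central syndetic set contains some $R(x,U)$ with $x \in U \subseteq \overline{U}$. For \eqref{item:1_A_0_uniformly_recurrent} $\Rightarrow$ \eqref{intro_item_def_of_dcs}, the cylinder $X \cap [1]_0$ is a clopen neighborhood of $1_{B \cup \{0\}}$ in the minimal system $X$, and $B$ is exactly its return-time set.

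For \eqref{item:very_strongly_central} $\Rightarrow$ \eqref{intro_item_dcs_combo_condition} we take $B \defeq R(x,U)$. For finite $F \subseteq B$ we have
\[ B \cap \bigcap_{f \in F}(B-f) = R\Big(x, U \cap \bigcap_{f \in F} T^{-f}U\Big). \]
Call the open set on the right $W$. Since $T^f x \in U$ for each $f \in F$ and $U$ is open, continuity gives a neighborhood of $x$ mapped into $U$ by every $T^f$, $f \in F$. Because $x \in \overline{U}$, this neighborhood meets $U$, so $W \neq \emptyset$. Minimality of $(X,T)$ then makes $R(x,W)$ syndetic.

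For \eqref{intro_item_dcs_combo_condition} $\Rightarrow$ \eqref{intro_item_member_of_sif}, let $\family$ be the upward closure of the sets $B \cap \bigcap_{f \in F}(B-f)$ over finite $F \subseteq B$. It is a filter, since taking $F_1 \cup F_2$ handles intersections, and it is syndetic by hypothesis. For idempotency, let $C \supseteq B \cap \bigcap_{f\in F}(B-f)$ and take $n$ in this set. Then $n \in B$ and $F+n \subseteq B$, and a direct check gives
\[ C-n \supseteq B \cap \bigcap_{f \in F \cup (F+n) \cup \{n\}} (B-f) \in \family. \]
Hence $C - \family \supseteq B \cap \bigcap_{f\in F}(B-f)$, so $C - \family \in \family$.

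The main obstacle is \eqref{intro_item_member_of_sif} $\Rightarrow$ \eqref{item:1_A_0_uniformly_recurrent}, and I would pass through $\beta\N$. Let $\family$ be a syndetic, idempotent filter containing $A$.

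First I would show that $\family$ is contained in a \emph{minimal} idempotent ultrafilter. As in the proof of \cref{thm_equiv_forms_of_max_idempot_filter}, $\overline{\family} = \bigcap_{C \in \family}\overline{C}$ is a compact subsemigroup. Fix a minimal left ideal $L$. Every member of $\family$ is syndetic, so each $\overline{C}$ meets $L$. Since $\family$ is a filter, compactness gives $\overline{\family} \cap L \neq \emptyset$, and this is a compact semigroup. It therefore contains an idempotent $p$, which is minimal.

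Next I would replace $A$ by $A^\star \defeq A \cap (A - \family)$. By \cref{lemma_filter_algebra} \eqref{item_family_prop_three}, \eqref{item_family_prop_four} and idempotency, $A^\star \in \family$ and $A^\star - a \in \family$ for every $a \in A^\star$.

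Then I would set $z \defeq \sigma^p 1_{A^\star \cup \{0\}}$, the $p$-limit of the shifts in $\{0,1\}^{\N_0}$. Since $p$ is minimal, $z$ lies in the minimal system $\beta\N + p$ pushed forward, so $z$ is uniformly recurrent. Since $p$ is idempotent, $\sigma^p z = z$, and $A^\star \in p$ gives $z(0)=1$. Moreover $z = 1_{B \cup \{0\}}$ with $B = A^\star - p$.

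The delicate point is to arrange $B \subseteq A$. The set $A^\star - p$ need not lie inside $A$. I would first try to fix this by intersecting with the filter information: if needed, replace the point by the pair $(1_{A\cup\{0\}}, z)$ and use the cylinder $[1]_0 \times [1]_0$. Alternatively, I would run the limit along $n \in A^\star$ only and use $A^\star - a \in \family \subseteq p$ for $a \in A^\star$. Either way, the aim is to produce a uniformly recurrent indicator whose support outside $0$ lies in $A$, and I expect the bookkeeping here to be the crux of the theorem.
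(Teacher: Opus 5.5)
Your implications \eqref{intro_item_def_of_dcs}$\Rightarrow$\eqref{item:very_strongly_central}$\Rightarrow$\eqref{intro_item_dcs_combo_condition}$\Rightarrow$\eqref{intro_item_member_of_sif} and \eqref{item:1_A_0_uniformly_recurrent}$\Rightarrow$\eqref{intro_item_def_of_dcs} are correct. Your set $A^\star$ also gives \eqref{intro_item_member_of_sif}$\Rightarrow$\eqref{intro_item_dcs_combo_condition} directly: for $B = A^\star$ and finite $F \subseteq B$, the set $B \cap \bigcap_{f \in F}(B-f)$ lies in $\family$. So the whole theorem comes down to the step you leave open, and your proposal does not prove it. That step is to produce a subset of $A$ whose indicator, with $0$ added, is uniformly recurrent.

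Your construction does not achieve this. The point $z = \sigma^p 1_{A^\star \cup \{0\}}$ has support $\{0\} \cup (A^\star - p)$. Because $A^\star - a \in \family \subseteq p$ for every $a \in A^\star$, you get $A^\star \subseteq A^\star - p$. So the filter information enlarges the support, and nothing keeps $A^\star - p$ inside $A$. Your two suggested fixes also fail:
\begin{itemize}
\item Running the limit only along $n \in A^\star$ is the same $p$-limit, since $A^\star \in p$.
\item Pairing with $1_{A \cup \{0\}}$ fails because both coordinates of a uniformly recurrent pair are uniformly recurrent, and $1_{A\cup\{0\}}$ need not be. Take $A = 2\N \cup \{1\}$, which contains the return set $2\N$ of $0$ to $\{0\}$ under rotation on $\Z/2\Z$; the initial block $1\,1\,1$ of $1_{A\cup\{0\}}$ never reappears. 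Replacing $A$ by a subset $D$ with $1_{D\cup\{0\}}$ uniformly recurrent is just the conclusion you want.
\end{itemize}

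This is a missing idea, not bookkeeping. A $p$-limit $\sigma^p 1_{D\cup\{0\}}$ yields a witness for $A$ exactly when $p$ is a minimal idempotent, $D \in p$, and $D - p \subseteq A$. The existence of such $p$ and $D$ is already equivalent to $A$ being dynamically central syndetic. Knowing only that $A$ belongs to minimal idempotents cannot supply them: strongly central sets need not satisfy \eqref{item:very_strongly_central}, by the example of \cite{bergelson_hindman_strauss_2012} cited in the remark after \cref{cor_central_syndetic_is_strongly_central}. So the filter has to enter more essentially than through $\family \subseteq p$.

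What is needed is a combinatorial construction that uses \eqref{intro_item_dcs_combo_condition} at every finite scale. Roughly, you thin $B$ recursively:
\begin{itemize}
\item At each stage, copy an initial block of the current set to a syndetic set of positions where it fits inside $B$. Such positions exist because $B \cap \bigcap_{f\in F}(B-f)$ is syndetic.
\item Delete everything else.
\item Control the interaction between stages so that every initial block of the limiting indicator $1_{B'\cup\{0\}}$ recurs with bounded gaps.
\end{itemize}
This is the ``shift-punch'' machinery of \cite{glasscock_le_2024}, the source of this quoted theorem; the paper mentions it at the end of \cref{sec_open_quests_dps_sets}. Without it or some substitute, your chain of implications does not close.
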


\section{Characterizations of dynamically thick sets and sets of pointwise recurrence}
\label{sec_dthick}

In this section, we study the families of dynamically thick sets and sets of pointwise recurrence.  The main results are the combinatorial characterizations of these families in \cref{sec_dthick_comb_characterizations} (Theorems \ref{mainthm_dct_combo_characterizations} and \ref{mainthm_dt_characterizations}); an example of a dynamically thick set which is not an IP set in \cref{sec_examples_using_combinatorics} (\cref{Mainthm:dyn_thick_not_IP}); a  structural result for dynamically thick sets in \cref{sec_general_form_of_dthick} (\cref{mainthm_structure_for_dy_thick_sets}); and that the family of dynamically syndetic sets are not $\sigma$-compact in \cref{sec:sigma-compact} (\cref{theorem_dy_syndetic_is_not_sigma_compact}).

\subsection{Definitional robustness}
\label{sec:definitional_robustness}

The following lemma demonstrates that metrizability is not an essential feature in the definitions of dynamical syndeticity or dynamical thickness.

\begin{lemma}
\label{lemma_metrizability_doesnt_matter}
Let $(X,T)$ be a not-necessarily-metrizable, minimal topological dynamical system.  For all $x \in X$ and all nonempty, open $U \subseteq X$ (containing $x$), the set $R(x,U)$ is dynamically (central) syndetic.
\end{lemma}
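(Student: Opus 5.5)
The plan is to reduce to the combinatorial characterizations in \cref{thm:simple_equivalent_dS} \eqref{item_intro_ds_combo_condition} and \cref{thm:simple_equivalent_dcS} \eqref{intro_item_dcs_combo_condition}. Neither mentions metrizability; both only ask for a subset $B \subseteq R(x,U)$ whose finite "intersections of translates" are syndetic. In a not-necessarily-metrizable minimal system we still have the fact from \cref{sec_top_dynamics} that $R(y,V)$ is syndetic for every $y \in X$ and every nonempty open $V$. So the combinatorial condition can be checked directly, without building any metric system.

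First take the central case, $x \in U$, and set $B \defeq R(x,U)$. For finite $F \subseteq B$ we have
\[
B \cap \bigcap_{f \in F} (B - f) = R(x,U) \cap \bigcap_{f \in F} R(x, T^{-f}U) = R\Big(x, U \cap \bigcap_{f\in F} T^{-f}U\Big).
\]
The set $V \defeq U \cap \bigcap_{f \in F} T^{-f}U$ is open by continuity of $T$. It contains $x$, because $x \in U$ and $T^f x \in U$ for each $f \in F \subseteq R(x,U)$. By minimality, $R(x,V)$ is syndetic. Hence $B$ satisfies \cref{thm:simple_equivalent_dcS} \eqref{intro_item_dcs_combo_condition}, and $R(x,U)$ is dynamically central syndetic.

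For general nonempty open $U$, I would use minimality to pick $m \in \N$ with $T^m x \in U$. Then $R(x,U) - m = R(T^m x, U)$, and this set is dynamically central syndetic by the previous paragraph applied to the point $T^m x \in U$. Now \cref{lemma_translates_of_dsyndetic_sets}, in its version with $A+n$, together with \eqref{eqn_relationship_between_ds_and_dcs_classes}, gives that $R(T^m x,U) + m \in \dsyndetic$. Finally, $R(x,U) \supseteq R(T^m x, U) + m$ and $\dsyndetic$ is upward closed, so $R(x,U)$ is dynamically syndetic.

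Alternatively, one can verify \cref{thm:simple_equivalent_dS} \eqref{intro_item_dcs_combo_condition} directly with $B = R(x,U)$. For nonempty finite $F \subseteq B$, the set $\bigcap_{f \in F}(B-f)$ equals $R(x, \bigcap_f T^{-f}U)$. This open set contains the point $x$ only when $F \subseteq B$ is used cleverly, so the translate argument above is cleaner.

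The main obstacle is minor: making sure that the syndeticity of return times $R(y,V)$ for minimal systems holds without metrizability. It does, since the standard argument (cover $X$ by finitely many $T^{-i}V$ using compactness and minimality) uses only compactness.
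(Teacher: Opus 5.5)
Your proof is correct, but it follows a different route from the paper's. You check the combinatorial criterion of \cref{thm:simple_equivalent_dcS} \eqref{intro_item_dcs_combo_condition} directly for $B = R(x,U)$. The identity $B \cap \bigcap_{f \in F}(B-f) = R\big(x, U \cap \bigcap_{f\in F} T^{-f}U\big)$, together with syndeticity of return times in arbitrary compact Hausdorff minimal systems, settles the central case, and you get the non-central case by translating.

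The paper never invokes the companion characterizations here. It lifts to a minimal left ideal $L \subseteq \beta\N$ via the factor map $\pi(p) = T^p x$. It chooses an idempotent $p \in L$ with $\pi(p) = x$, shrinks $\pi^{-1}U$ to a basic set $\overline{A} \cap L$, and shows $R(x,U) \supseteq A - p = R_\sigma(\sigma^p 1_A, [1]_0)$. Here $\sigma^p 1_A$ is uniformly recurrent in the full shift. So the paper exhibits an explicit metrizable (symbolic) minimal system witnessing the conclusion, using only standard facts about $\beta\N$ and the definitions.

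Your argument is a two-line check, but it outsources all of the metrization, i.e. building a metric minimal system from combinatorial data, to the hard direction of \cref{thm:simple_equivalent_dS,thm:simple_equivalent_dcS}. That is legitimate here, since those theorems are quoted from the companion paper, which does not depend on this one. Note that the paper's stated motivation for elementary arguments appears elsewhere: the remark before \cref{thm_sifs_are_contained_in_idempotents} and the closing remarks about avoiding the heavy machinery.

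One small correction concerns your ``alternative'' paragraph. It works with no cleverness at all: for finite $F \subseteq B = R(x,U)$, one has $T^f x \in U$ for each $f \in F$, so $x \in \bigcap_{f \in F} T^{-f}U$ automatically. Hence \cref{thm:simple_equivalent_dS} \eqref{item_intro_ds_combo_condition} holds directly for $B = R(x,U)$, and the translation step is unnecessary. That is the correct label; the one you cited, \eqref{intro_item_dcs_combo_condition}, belongs to the other theorem.
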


\begin{proof}
    Let $x \in X$ and $U \subseteq X$ be nonempty and open.  Fix $L \subseteq \beta \N$ a minimal left ideal.  As described in \cref{sec_topology_and_ultrafilters}, the map $\pi: (L, \lambda_1) \to (X,T)$ given by $\pi(p) = T^p x$ is a factor map of minimal systems.  By \cite[Thm. 19.23]{hindman_strauss_book_2012}, since $x$ is uniformly recurrent, there exists an idempotent $p \in L$ such that $\pi(p) = x$. It is quick to check that
    \[R_T(x, U) = R_{\lambda_1}(p, \pi^{-1} U).\]
    We will show that the set $R_{\lambda_1}(p, \pi^{-1} U)$ is dynamically syndetic.

    By the definition of the topology on $\beta \N$, since $\pi^{-1} U$ is nonempty and open, there exists $A \subseteq \N$ such that $\emptyset \neq \overline{A} \cap L \subseteq \pi^{-1} U$.  In the notation defined in \cref{sec_family_algebra}, we see that
    \begin{align}
    \label{eqn_requisite_containment_of_non_metrizable_returns}
        R_{\lambda_1}(p, \pi^{-1} U) \supseteq R_{\lambda_1}(p, \overline{A}) = A - p.
    \end{align}
    Since $\overline{A} \cap L \neq \emptyset$ and $(L,\lambda_1)$ is minimal, we see that the set $A-p$ is nonempty (indeed, syndetic).

    Considering $1_A$ as an element of $\{0,1\}^{\N_0}$, we see in the full shift $(\{0,1\}^{\N_0}, \sigma)$ that $A = R_{\sigma}(1_A,[1]_0)$.  It follows by \eqref{eqn_ultrafilter_shift_of_return_set} in \cref{sec_topology_and_ultrafilters} that
    \[A - p = R_{\sigma}(1_A,[1]_0) - p = R_{\sigma}(\sigma^p 1_A, [1]_0).\]
    Since $p$ is a minimal idempotent ultrafilter, we have by \cite[Thm. 19.23]{hindman_strauss_book_2012} that the point $T^p 1_A$ is uniformly recurrent under the shift.  Since $R_{\sigma}(\sigma^p 1_A, [1]_0)$ is nonempty, we see that the set $R_{\sigma}(\sigma^p 1_A, [1]_0)$, and hence the set $A - p = R_{\lambda_1}(p, \overline{A})$, is dynamically syndetic.  Because these families are upward closed, by \eqref{eqn_requisite_containment_of_non_metrizable_returns}, the same can be said about the set $R_{\lambda_1}(p, \pi^{-1} U) = R_T(x, U)$, as was to be shown.

    In the case that $x \in U$, we wish to show that the set $R_T(x,U)$ is dynamically central syndetic.  The argument given above can be followed with some additional information.  First, since $x \in U$, the point $p \in \pi^{-1} U$.  The set $A \subseteq \N$ can then be assumed to be such that $p \in \overline{A} \cap L$. Considering $1_A$ as an element of $\{0,1\}^{\N_0}$, we see
    \[R_\sigma(1_A,[1]_0) = A \in p,\]
    whereby $\sigma^p 1_A \in [1]_0$.  Thus, the set $A-p = R_{\sigma}(\sigma^p 1_A, [1]_0)$ is dynamically central syndetic, as desired.
\end{proof}

We now gather results showing that the families of dynamically thick sets and sets of pointwise recurrence are robust under minor changes to the combinatorial, topological, and dynamical requirements in their definitions.  A similar robustness for dynamically (central) syndetic sets was discussed in \cite[Section 3]{glasscock_le_2024}.  Thus, the families of sets of pointwise recurrence and dynamically thick subsets of $\N$ remain unchanged if, in their definitions, \dots
\begin{enumerate}
    \item \dots \ ``for all minimal systems'' is changed to ``for all invertible minimal systems,'' ``for all zero dimensional minimal systems ,'' or ``for all (not-necessarily-metrizable) minimal systems.'' By duality, these follow immediately from \cite[Lemma 3.1]{glasscock_le_2024}, \cref{thm:simple_equivalent_dS} \eqref{item:1_A_uniformly_recurrent}, \cref{thm:simple_equivalent_dcS} \eqref{item:1_A_0_uniformly_recurrent}, and \cref{lemma_metrizability_doesnt_matter} below, respectively.

    \item \dots \ (for sets of pointwise recurrence) ``all open $U \subseteq X$ with $x \in U$'' is changed to ``all open $U \subseteq X$ with $x \in \overline{U}$.'' This is a consequence of \cref{thm:simple_equivalent_dcS} \eqref{item:very_strongly_central}.

    \item \dots \ sets are considered equivalent up to a non-piecewise syndetic set.  This is a consequence of \cref{thm:remove_nonPS_from_dynthick}: a set $A \subseteq \N$ is a set of pointwise recurrence (dynamically thick) if and only if for all non-piecewise syndetic set $B \subseteq \N$, the set $A \setminus B$ is a set of pointwise recurrence (dynamically thick).
\end{enumerate}

\subsection{First results}
\label{sec_dt_first_results}

In this subsection, we collect some preliminary results concerning dynamical thickness.  As discussed in \cref{sec_intro_set_and_pt_recurrence}, that sets of pointwise recurrence are piecewise syndetic has been shown several times in the literature. We record it here for ease of reference.

\begin{lemma}[\cite{dong_shao_ye_2012, Glasner-Weiss-Interpolation, huang_ye_2005}]
\label{thm_dy_c_thick_is_ps}
    Sets of pointwise recurrence (and, hence, dynamically thick sets) are piecewise syndetic.
\end{lemma}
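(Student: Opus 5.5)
We argue by duality, establishing the contrapositive: if $A \subseteq \N$ is not piecewise syndetic, then $A$ is not a set of pointwise recurrence. By the duality facts in \cref{sec_dual_families}, $A \notin \PS$ means $\N \setminus A \in \PS^*$, i.e. the complement is thickly syndetic. Since $\dcthick = \dcsyndetic^*$, it is enough to show that $\N \setminus A$ contains a dynamically central syndetic set. Dynamically thick sets are in particular sets of pointwise recurrence, because $\dsyndetic \supseteq \dcsyndetic$ gives $\dthick \subseteq \dcthick$; so the parenthetical claim follows at once.

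The central step is to prove $\PS^* \subseteq \dcsyndetic$. By \cref{lemma_condition_on_subfamily_of_ps_star}, $\PS^*$ is a syndetic, translation-invariant filter, and translation-invariant families are idempotent. Hence $\PS^*$ is a syndetic, idempotent filter. Every member of $\PS^*$ then belongs to a syndetic, idempotent filter, namely $\PS^*$ itself, and so is dynamically central syndetic by \cref{thm:simple_equivalent_dcS} \eqref{intro_item_member_of_sif}. Dualizing with fact (4) of \cref{sec_dual_families} gives
\[
    \dcthick = \dcsyndetic^* \subseteq (\PS^*)^* = \PS,
\]
which is the claim.

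There is also a direct route that avoids the heavy theorem. If $B \in \PS^*$, the set $B$ itself satisfies the combinatorial condition of \cref{thm:simple_equivalent_dcS} \eqref{intro_item_dcs_combo_condition}. Indeed, for each finite $F \subseteq B$, the set $B \cap \bigcap_{f \in F}(B-f)$ is a finite intersection of translates of $B$. It therefore lies in the translation-invariant filter $\PS^*$, and so it is syndetic.

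The only point that needs care is making sure the filter and translation-invariance conventions match the hypotheses of \cref{thm:simple_equivalent_dcS}. Translation invariance is defined by $\family - n \subseteq \family$ for $n \in \N$, and this is exactly what is needed both for idempotency and for the combinatorial condition. I therefore expect no genuine obstacle: once the cited characterization of dynamically central syndetic sets is available, the argument is short.
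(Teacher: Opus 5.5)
Your proof is correct, but it is not how the paper handles this lemma. The paper gives no proof in the body. It cites the literature and, in the introduction, derives the statement from the interpolation theorem of Glasner--Weiss and Koutsogiannis--Le--Moreira--Pavlov--Richter. That derivation runs as follows: if $A$ is not piecewise syndetic, neither is $A \cup \{0\}$, so some minimal $(X,T)$, point $x \in X$ and continuous $F$ satisfy $F(x) = 1$ and $F(T^n x) = 0$ for $n \in A$; the neighborhood $\{F > 1/2\}$ of $x$ is then never visited at times in $A$.

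You instead dualize to $\PS^* \subseteq \dcsyndetic$. You get this from \cref{lemma_condition_on_subfamily_of_ps_star} together with the implication \eqref{intro_item_member_of_sif}$\Rightarrow$\eqref{intro_item_def_of_dcs} of \cref{thm:simple_equivalent_dcS}. This is the same mechanism the paper uses later to prove $\dcPS \subseteq \PS$ in \cref{thm:relations_C_dPS_PS-intro}. Since $\N \in \dcsyndetic$ gives $\dcthick \subseteq \dcPS$, that result contains the lemma. Your argument is the special case where one only needs $\PS^*$ itself to be a syndetic, idempotent filter, without the intersection statement of \cref{prop:general_abstract_intersection_filter_nonsense}.

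The interpolation route is independent of the companion paper and yields more: arbitrary bounded values can be prescribed along $A \cup \{0\}$. Yours is a two-line argument inside the family algebra, with all the depth concentrated in the companion characterization.

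One correction: your ``direct route'' does not avoid the heavy theorem. Checking condition \eqref{intro_item_dcs_combo_condition} for $B \in \PS^*$ is trivial, but the implication \eqref{intro_item_dcs_combo_condition}$\Rightarrow$\eqref{intro_item_def_of_dcs} is as deep as \eqref{intro_item_member_of_sif}$\Rightarrow$\eqref{intro_item_def_of_dcs}. The step \eqref{intro_item_dcs_combo_condition}$\Rightarrow$\eqref{intro_item_member_of_sif} is elementary: the filter generated by the sets $B \cap \bigcap_{f \in F}(B-f)$, for finite $F \subseteq B$, is syndetic and idempotent. So in both cases the real work is passing from a syndetic, idempotent filter to an actual minimal system. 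That paragraph restates your main argument rather than giving an alternative.
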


\begin{lemma}
    \label{lemma_intersection_of_ds_and_dt_is_infinite}
    The intersection of a dynamically (central) syndetic set and a dynamically thick set (set of pointwise recurrence) is infinite.
\end{lemma}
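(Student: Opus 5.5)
The plan is to combine duality with the fact, from \cref{lemma_ds_dcs_modifyable_on_a_finite_set}, that removing finitely many elements from a dynamically (central) syndetic set does not change its membership in the family. The families $\dsyndetic$ and $\dthick$ are dual to one another, as are $\dcsyndetic$ and $\dcthick$. Membership in the dual family means exactly that the intersection with every member of the original family is nonempty, so the only task is to upgrade \emph{nonempty} to \emph{infinite}.

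Let $A$ be dynamically syndetic and $C$ dynamically thick; the argument for dynamically central syndetic $A$ and $C$ a set of pointwise recurrence is the same. Suppose, for contradiction, that $A \cap C$ is finite, and put $F \defeq A \cap C$. The set $B \defeq \N \setminus F$ is cofinite. By \cref{lemma_ds_dcs_modifyable_on_a_finite_set}, the set $A \cap B = A \setminus F$ is again dynamically (central) syndetic.

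Since $C$ lies in the dual family $\dthick = \dsyndetic^*$ (respectively $\dcthick = \dcsyndetic^*$), it must meet $A \setminus F$. But $(A \setminus F) \cap C = (A \cap C) \setminus F = \emptyset$, which is the contradiction. Hence $A \cap C$ is infinite.

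There is essentially no obstacle here. The only point to check is that the duality $\dthick = \dsyndetic^*$ and $\dcthick = \dcsyndetic^*$ is exactly as defined. This holds because $R(x,U)$ ranges over the generators of $\dsyndetic$ (respectively, with $x \in U$, over the generators of $\dcsyndetic$), and meeting every $R(x,U)$ is the same as meeting every set in the upward closure of these sets.
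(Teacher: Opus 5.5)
Your proof is correct and is essentially the paper's argument: both combine the cofinite-modification lemma (\cref{lemma_ds_dcs_modifyable_on_a_finite_set}) with the duality $\dthick = \dsyndetic^*$ (resp.\ $\dcthick = \dcsyndetic^*$). The only difference is cosmetic: the paper intersects $A$ with each tail $\{n+1, n+2, \ldots\}$ to get $\max(A \cap B) > n$ for every $n$, whereas you remove the finite set $A \cap C$ itself and reach a contradiction.
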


\begin{proof}
    Let $A \in \dsyndetic$, $B \in \dthick$, and $n \in \N$.  By \cref{lemma_ds_dcs_modifyable_on_a_finite_set}, we see that $A \cap \{n+1, n+2, \ldots\} \in \dsyndetic$.  Since $B \in \dthick$, we have that $A \cap \{n+1, n+2, \ldots\} \cap B \neq \emptyset$.  Therefore, $\max (A \cap B) > n$.  Since $n \in \N$ was arbitrary, we see that $A \cap B$ is infinite.  The same argument works to show that $A \cap B$ is infinite when $A \in \dcsyndetic$ and $B \in \dcthick$.
\end{proof}

\cref{thm:relations_C_dPS_PS-intro} in the next section simultaneously strengthens both \cref{thm_dy_c_thick_is_ps} and \cref{lemma_intersection_of_ds_and_dt_is_infinite} by showing that the intersection of a dynamically (central) syndetic set and a dynamically thick set (set of pointwise recurrence) is, in fact, piecewise syndetic.

\begin{lemma}
\label{lemma_translates_of_dthick_sets}
    Let $A \subseteq \N$.
    \begin{enumerate}
        \item
        \label{item_all_translates_are_dt}
        If $A$ is dynamically thick, then for all $n \in \N$, the set $A-n$ is dynamically thick.
        
        \item
        \label{item_some_translates_are_dct}
        If the set
        \begin{align}
            \label{eqn_translates_of_dt_set}
            \big\{ n \in \N \ \big| \ A-n \text{ is a set of pointwise recurrence} \big\}
        \end{align}
        is dynamically thick, then $A$ is dynamically thick.
        
        \item
        \label{item_dcs_translates_of_dct_are_dct}
        If the set in \eqref{eqn_translates_of_dt_set} is a set of pointwise recurrence, then $A$ is a set of pointwise recurrence.
    \end{enumerate}
    Moreover, \eqref{item_all_translates_are_dt}, \eqref{item_some_translates_are_dct}, and \eqref{item_dcs_translates_of_dct_are_dct} hold with $A-n$ replaced by $A+n$. Finally,
    \begin{align}
        \label{eqn_relationship_between_dt_and_dct_classes}
        \dthick = \bigcap_{n \in \N} \big(\dcthick - n \big) = \bigcap_{n \in \N} \big(\dcthick + n \big).
    \end{align}
\end{lemma}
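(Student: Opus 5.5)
The plan is to deduce every item by duality from \cref{lemma_translates_of_dsyndetic_sets}, using that $\dthick = \dsyndetic^*$ and $\dcthick = \dcsyndetic^*$. The basic identity is that for $A, B \subseteq \N$ and $n \in \N$, $(A-n) \cap B \neq \emptyset$ if and only if $A \cap (B+n) \neq \emptyset$. One caveat: $B+n \subseteq \{n+1, n+2, \ldots\}$, so $B + n$ is not literally a preimage under subtraction, but $(B+n) - n = B$.

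For \eqref{item_all_translates_are_dt}, I would let $A \in \dthick$, $n \in \N$ and $B \in \dsyndetic$. By the "$A+n$" version of \cref{lemma_translates_of_dsyndetic_sets} \eqref{item_all_translates_are_ds}, the set $B+n$ is dynamically syndetic. Hence $A \cap (B+n) \neq \emptyset$, so $(A-n) \cap B \neq \emptyset$. The $A+n$ version is symmetric: $(A+n) \cap B \neq \emptyset$ if and only if $A \cap (B-n) \neq \emptyset$, and $B - n \in \dsyndetic$.

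For \eqref{item_some_translates_are_dct}, let $E$ be the set in \eqref{eqn_translates_of_dt_set} and assume $E \in \dthick$. Given $B \in \dsyndetic$, \cref{lemma_translates_of_dsyndetic_sets} \eqref{item_some_translates_are_dcs}, in its $B+n$ form, says that $D \defeq \{ n \mid B+n \in \dcsyndetic \}$ is dynamically syndetic. Then $E \cap D \neq \emptyset$, so we can pick $n$ with $A-n \in \dcthick$ and $B+n \in \dcsyndetic$. This gives $(A-n) \cap (B+n) \neq \emptyset$. To get from here to $A \cap B \neq \emptyset$, I think the right pairing is $A - n$ against $B - n$: we have $(A-n)\cap(B-n) = (A\cap B)-n$, so nonemptiness of that intersection yields an element of $A \cap B$. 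So I will instead use \eqref{item_some_translates_are_dcs} in its stated form, $D = \{n \mid B-n \in \dcsyndetic\} \in \dsyndetic$. Intersecting with $E$ produces $n$ with $A-n \in \dcthick$ and $B-n \in \dcsyndetic$, hence $(A\cap B)-n \neq \emptyset$, hence $A \cap B \neq \emptyset$. Item \eqref{item_dcs_translates_of_dct_are_dct} is the same argument, starting from $B \in \dcsyndetic$ and $E \in \dcthick$ and using \eqref{item_dcs_translates_of_dcs_are_dcs}. The $A+n$ versions use $(A+n)\cap(B+n) = (A\cap B)+n$ together with the $B+n$ forms of the lemma.

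For \eqref{eqn_relationship_between_dt_and_dct_classes}, I would dualize \eqref{eqn_relationship_between_ds_and_dcs_classes}. The condition $A \in \dcthick - n$ should be read as $A+n \in \dcthick$; I need to check whether the paper's notation $\family - m = \{B - m\}$ matches this, or whether it gives $A = C - n$ with $C \in \dcthick$. With either reading, the approach is the following.
\begin{itemize}
\item[--] For $\subseteq$: if $A \in \dthick$, then $A \pm n \in \dthick \subseteq \dcthick$ by \eqref{item_all_translates_are_dt}. If the notation requires writing $A = C - n$, take $C = A + n$ and note $(A+n)-n = A$.
\item[--] For $\supseteq$: if every relevant translate lies in $\dcthick$, then for each $B \in \dsyndetic$, \eqref{eqn_relationship_between_ds_and_dcs_classes} gives $B = C \mp n$ with $C \in \dcsyndetic$. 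Transporting the translate across the intersection, as above, gives $A \cap B \neq \emptyset$.
\end{itemize}

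The main obstacle is bookkeeping. Because we work in $\N$ rather than $\Z$, $(A+n)-n = A$ holds but $(A-n)+n = A \cap \{n+1,\dots\}$. So I must pair each translate direction correctly, and I will use \cref{lemma_ds_dcs_modifyable_on_a_finite_set} to discard finitely many elements wherever a truncation appears.
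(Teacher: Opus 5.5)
Your proposal is correct. For \eqref{item_all_translates_are_dt}, and for \eqref{item_some_translates_are_dct}--\eqref{item_dcs_translates_of_dct_are_dct} with $A-n$, it is the paper's argument written out elementwise. The paper dualizes $\dsyndetic \subseteq \dsyndetic + \dcsyndetic$ and $\dcsyndetic \subseteq \dcsyndetic + \dcsyndetic$ via $(\family+\familytwo)^* = \family^*+\familytwo^*$, which is exactly your pairing of $A-n$ with $B-n$. Your remark that $A\cap(B+n)\neq\emptyset$ already suffices in \eqref{item_all_translates_are_dt} also shows that the paper's appeal to the infinitude of that intersection is unnecessary.

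The genuine difference is in the $A+n$ versions of \eqref{item_some_translates_are_dct} and \eqref{item_dcs_translates_of_dct_are_dct}. The paper says these need different reasoning, since the family-sum formalism only encodes negative translates, and it argues dynamically. It suffices to test invertible minimal $(X,T)$. Since $(X,T^{-1})$ is minimal, dynamical thickness of $\{n : A+n\in\dcthick\}$ gives $n$ with $T^{-n}x\in U$ and $A+n\in\dcthick$. Recurrence of $T^{-n}x$ to $U$ along $A+n$ then gives $m\in A+n$ with $T^{m-n}x\in U$. You instead stay purely combinatorial, pairing $A+n$ with $B+n$ through the $+n$ clause of \cref{lemma_translates_of_dsyndetic_sets} and the identity $(A+n)\cap(B+n)=(A\cap B)+n$. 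Your route treats both signs uniformly and shows the duality argument does extend to positive translates. The cost is that the dynamical input is moved into that cited $+n$ clause, whereas the paper's route is self-contained here, using only the definition and minimality of $(X,T^{-1})$.

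For \eqref{eqn_relationship_between_dt_and_dct_classes}, the paper reads both equalities off \eqref{item_all_translates_are_dt}--\eqref{item_some_translates_are_dct}, taking the set in \eqref{eqn_translates_of_dt_set} equal to $\N$. You dualize \eqref{eqn_relationship_between_ds_and_dcs_classes} instead; both work.

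On your notational worry, the two readings agree. With $\family - m=\{B-m : B\in\family\}$, one has $A\in\dcthick-n$ if and only if $A+n\in\dcthick$. This holds because $(C-n)+n=C\cap\{n+1,n+2,\ldots\}$, and removing finitely many elements preserves $\dcthick$ (dually to \cref{lemma_ds_dcs_modifyable_on_a_finite_set}). Separately, $\dcthick+n$ must be understood up to upward closure, that is, as $\{A : A-n\in\dcthick\}$.
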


\begin{proof}
    \eqref{item_all_translates_are_dt} \ Suppose $A \in \dthick$ and $n \in \N$. To see that $A - n \in \dthick$, it suffices to show that $A-n \in \dsyndetic^*$.  Let $B \in \dsyndetic$.  By \cref{lemma_translates_of_dsyndetic_sets}, the set $B + n \in \dsyndetic$.  By \cref{lemma_intersection_of_ds_and_dt_is_infinite}, the set $A \cap (B+n)$ is infinite, so $(A-n) \cap B \supseteq (A-n) \cap (B+n - n) \neq \emptyset$, as desired.

    \eqref{item_some_translates_are_dct} \ In the language of the family algebra developed in \cref{sec_combinatorics}, this statement is equivalent to $\dthick + \dcthick \subseteq \dthick$.  Recall from \cref{lemma_translates_of_dsyndetic_sets} that $\dsyndetic \subseteq \dsyndetic + \dcsyndetic$. Taking the dual and simplifying using the algebra in \cref{lemma_fam_dual_sums}, we see that $\dthick + \dcthick \subseteq \dthick$.

    \eqref{item_dcs_translates_of_dct_are_dct} \ This statement is equivalent to $\dcthick + \dcthick \subseteq \dcthick$. Taking the dual of $\dcsyndetic \subseteq \dcsyndetic + \dcsyndetic$ from \cref{lemma_translates_of_dsyndetic_sets}, we see that $\dcthick + \dcthick \subseteq \dcthick$.
    
    That \eqref{item_all_translates_are_dt} holds with $A-n$ replaced with $A+n$ follows the same argument used in \eqref{item_all_translates_are_dt}.  That \eqref{item_some_translates_are_dct} and \eqref{item_dcs_translates_of_dct_are_dct} hold with $A-n$ replaced by $A+n$ requires different reasoning, since the set algebra explanation only treats negative translates.

    Suppose that $A$ is such that the set in \eqref{eqn_translates_of_dt_set} (with $A+n$ instead of $A-n$) is dynamically thick.  We wish to show that $A$ is dynamically thick. It suffices by the definitional robustness discussed in the previous subsection to show: for all minimal, invertible $(X,T)$, all $x \in X$, and all nonempty, open $U \subseteq X$, there exists $a \in A$ such that $T^a x \in U$.  Thus, let $(X,T)$ be a minimal, invertible system, $x \in X$, and $U \subseteq X$ open and nonempty.  Since the system $(X,T^{-1})$ is minimal (see, for example, \cite[Lemma 2.7]{glasscock_koutsogiannis_richter_2019}) and the set in \eqref{eqn_translates_of_dt_set} is dynamically thick, there exists $n \in \N$ such that $T^{-n} x \in U$ and $A + n \in \dcthick$.  By the definition of set of pointwise recurrence, there exists $m \in A+n$ (so $m > n$) such that $T^m T^{-n} x \in U$.  We see that $m - n \in A$ and $T^{m-n} x \in U$, as desired.  The argument that \eqref{item_dcs_translates_of_dct_are_dct} holds with $A-n$ replaced by $A+n$ is very similar and is left to the interested reader.

    Finally, that \eqref{eqn_relationship_between_dt_and_dct_classes} holds follows immediately from \eqref{item_all_translates_are_dt} and \eqref{item_some_translates_are_dct} for $A-n$ and $A+n$.
\end{proof}

As demonstrated in the proof of \cref{lemma_translates_of_dthick_sets}, it is interesting and useful to formulate the lemma's conclusions in terms of the family algebra developed in \cref{sec_combinatorics}. Thus,
\begin{enumerate}
    \item the family $\dthick$ is translation invariant, that is, $\dthick \subseteq \{\N\} + \dthick$;
    \item $\dthick + \dcthick \subseteq \dthick$;
    \item $\dcthick + \dcthick \subseteq \dcthick$.
\end{enumerate}

\begin{lemma}
\label{lem:dilates_of_dT_dcT}
    Let $A \subseteq \N$ and $k \in \N$. If $A$ is a set of pointwise recurrence, then so are the sets $kA$ and $A/k$.  If $A$ is dynamically thick, then so is the set $A/k$.
\end{lemma}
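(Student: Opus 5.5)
We argue directly from the dynamical definitions, using two auxiliary systems built from a given minimal system $(X,T)$: the power system $(X,T^k)$ and the product system $(X \times \Z/k\Z, S)$ with $S(x,i) = (Tx, i+1)$.

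\emph{The set $kA$.} Suppose $A \in \dcthick$, and let $(X,T)$ be minimal, $x \in X$, and $U \ni x$ open. We need some $n \in A$ with $T^{kn}x \in U$, that is, $(T^k)^n x \in U$. The system $(X,T^k)$ need not be minimal. However, $X$ splits into finitely many $T^k$-minimal components (at most $k$ of them), and every point of $X$ lies in one of them; this is a standard fact, provable via $\beta\N$ or directly. So $x$ is a uniformly recurrent point for $T^k$, and $Y \defeq \overline{\{T^{kn}x : n \in \N_0\}}$ is a minimal subsystem for $T^k$. Applying the definition of $A \in \dcthick$ to $(Y,T^k)$, the point $x$, and the neighborhood $U \cap Y$ gives the required $n$. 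Alternatively, uniform recurrence of $x$ under $T^k$ follows from the fact that $R(x,U)$ is dynamically central syndetic, combined with Theorem \ref{thm:simple_equivalent_dcS}: a set $B$ with $1_{B\cup\{0\}}$ uniformly recurrent yields that $(B\cap k\N)/k$ has the analogous property. I expect this step (minimality, or at least uniform recurrence, of $x$ under $T^k$) to be the only real obstacle, and I would use the finitely-many-minimal-components fact first.

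\emph{The set $A/k$ for pointwise recurrence.} Let $A \in \dcthick$, let $(X,T)$ be minimal, $x \in X$, and $U \ni x$ open. We need $m$ with $km \in A$ and $T^m x \in U$. Consider $Z = X \times \Z/k\Z$ with $S(x,i) = (Tx,i+1)$. Since $(x,0)$ returns to $U \times \{0\}$ along $k\N \cap R(x,U)$, we get uniform recurrence of $(x,0)$ under $S$, so its orbit closure $Z_0$ is minimal. Applying $A \in \dcthick$ to $(Z_0,S)$, the point $(x,0)$, and the open set $(U\times\{0\}) \cap Z_0$ gives $a \in A$ with $k \mid a$ and $T^a x \in U$. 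This is the wrong scale, however: we need $T^{a/k}x \in U$, not $T^{a}x\in U$.

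Here is the fix. Use $\tilde S(x,i) = (x, i+1)$ for $i \neq k-1$ and $\tilde S(x,k-1) = (Tx,0)$. This is the tower (suspension) over $T$ with constant height $k$, so $\tilde S^{km}(x,0) = (T^m x, 0)$. The system $(X\times\Z/k\Z,\tilde S)$ is minimal whenever $T$ is. Apply $A\in\dcthick$ to $\tilde S$, the point $(x,0)$, and the neighborhood $U\times\{0\}$. This gives $a \in A$ with $\tilde S^a(x,0)\in U\times\{0\}$. Hence $a = km$ and $T^m x\in U$, so $m \in A/k$, as required.

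\emph{The set $A/k$ for dynamical thickness.} If $A \in \dthick$, run the same tower argument with an arbitrary nonempty open $U$ in place of a neighborhood of $x$, using the open set $U \times \{0\}$ in the tower. This gives $A/k \in \dthick$. The only thing to check is minimality of the tower, which is immediate: the orbit closure of any point meets every level $X \times \{i\}$ and projects onto a $T$-orbit closure.
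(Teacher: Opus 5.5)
Your proof is correct, but it takes a different route from the paper. The paper argues purely by duality. It quotes the dilation lemma for dynamically (central) syndetic sets from the companion paper \cite{glasscock_le_2024}: if $B\in\dcsyndetic$ then $kB, B/k\in\dcsyndetic$, and if $B\in\dsyndetic$ then $kB\in\dsyndetic$. It then transfers these facts using $(kB)/k=B$ and $k(B/k)\subseteq B$: $A\cap kB\neq\emptyset$ gives $(A/k)\cap B\neq\emptyset$, and $A\cap (B/k)\neq\emptyset$ gives $kA\cap B\neq\emptyset$.

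You instead work directly with systems. For $kA$ you restrict $T^k$ to the $T^k$-minimal component containing $x$. For $A/k$ you pass to the height-$k$ tower over $T$, in which the return times of $(x,0)$ to $U\times\{0\}$ are exactly $k\,R(x,U)$. These two constructions are precisely the dual counterparts of the two halves of the companion lemma. The tower shows that $k\,R(x,U)$ is dynamically (central) syndetic, and the $T^k$-component argument shows that $R(x,U)/k=R_{T^k}(x,U)$ is dynamically central syndetic. So in effect you reprove that lemma inside your argument.

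What each approach buys: the paper's version is two lines and makes the duality transparent. Yours is self-contained, and it also shows clearly why $kA$ can fail to be dynamically thick: the $T^k$-orbit closure of $x$ need not meet an arbitrary open set.

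You cite, rather than prove, the fact that every point of a minimal system is uniformly recurrent for $T^k$; include its short proof for completeness. Take a $T^k$-minimal $Y\subseteq X$. Each $T^jY$ is again $T^k$-minimal: if $Z\subseteq T^jY$ is closed and $T^k$-invariant, then $T^{k-j}Z=Y$, so $T^jY=T^kZ\subseteq Z$. Moreover, $Y\cup TY\cup\cdots\cup T^{k-1}Y$ is closed and $T$-invariant, hence equals $X$; no invertibility is needed. Finally, you can simply drop the discarded first attempt with $S(x,i)=(Tx,i+1)$.
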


\begin{proof}
    We will make use of the analogous result for dynamically (central) syndetic sets from \cite[Lemma 3.4]{glasscock_le_2024}: \emph{Let $k \in \N$. If $B \subseteq \N$ is dynamically central syndetic, then so are the sets $kB$ and $B/k$.  If $B$ is dynamically syndetic, then so is the set $kB$.}
    
    Suppose $A \in \dcthick$.  If $B \in \dcsyndetic$, then the sets $kB$ and $B/k$ are $\dcsyndetic$ sets, whereby $A \cap (kB) \neq \emptyset$ and $A \cap (B/k) \neq \emptyset$.  Since $(kB)/k = B$ and $k(B/k) \subseteq B$, it follows that $A/k \cap B \neq \emptyset$ and $kA \cap B \neq \emptyset$. Since $B \in \dcsyndetic$ was arbitrary, we have that $A/k$ and $kA$ are sets of pointwise recurrence.

    When $A \in \dthick$, the same argument shows that $A/k \in \dthick$.
\end{proof}

\subsection{Combinatorial characterizations: proofs of Theorems \ref{mainthm_dct_combo_characterizations} and \ref{mainthm_dt_characterizations}}
\label{sec_dthick_comb_characterizations}

In this subsection, we give combinatorial characterizations of dynamical thickness and pointwise recurrence.  
Theorems \ref{mainthm_dct_combo_characterizations} and \ref{mainthm_dt_characterizations} will follow immediately from Theorems \ref{thm_dct_combo_characterizations} and \ref{lemma_our_fav_dy_thick} below, respectively.  In each, the second statement comes from interpreting ``has nonempty intersection with all dynamically (central) syndetic sets'' in combinatorial terms by appealing to the main results on dynamically syndetic sets from \cite{glasscock_le_2024}, while the third, fourth, and fifth statements arise from ultrafilter dynamics, appealing ultimately to \cref{lemma_metrizability_doesnt_matter}.

\begin{theorem}
\label{thm_dct_combo_characterizations}
    Let $A \subseteq \N$.  The following are equivalent.
    \begin{enumerate}
        \item
        \label{item_dct_def_char}
        The set $A$ is a set of pointwise recurrence.

        \item
        \label{item_dct_sif_char}
        For all $B \supseteq A$, there exists a finite set $F \subseteq \N \setminus B$ such that the set $B \cup \big( B-F \big)$ is thick.

        \item
        \label{item_condition_one_dct}
        For all syndetic $S \subseteq \N$, there exists a finite set $F \subseteq A$ such that for all syndetic $S' \subseteq S$, the set $F \cap (S - S')$ is nonempty.

        \item \label{item:complement_thick_dct_characterization_v2}
        For all syndetic $S \subseteq \N$ and all thick $H \subseteq \N$, the piecewise syndetic set $P \defeq S \cap H$ satisfies the following.  There exists a finite set $F \subseteq A$ such that for all syndetic $S' \subseteq S$, the piecewise syndetic set $P' \defeq S' \cap H$ is such that the set $P' \cap (P - F)$ is nonempty.
        
        \item
        \label{item:complement_psstar_dct_characterization_star_fixed}
        For all piecewise syndetic $S \subseteq \N$ and all minimal left ideals $L \subseteq \beta \N$ with $\overline{S} \cap L \neq \emptyset$, there exists a finite set $F \subseteq A$ such that $\overline{S} \cap L \subseteq \overline{S-F}$.
    \end{enumerate}
\end{theorem}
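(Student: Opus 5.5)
The plan is to prove (1)$\Leftrightarrow$(2) separately by duality, and then run the cycle (1)$\Rightarrow$(5)$\Rightarrow$(4)$\Rightarrow$(3)$\Rightarrow$(1).

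\emph{(1)$\Leftrightarrow$(2).} Since $\dcthick = \dcsyndetic^*$ and $\dcsyndetic$ is upward closed, $A$ fails to be a set of pointwise recurrence if and only if there is $C \in \dcsyndetic$ with $C \cap A = \emptyset$. By \cref{thm:simple_equivalent_dcS} \eqref{intro_item_dcs_combo_condition}, this happens if and only if there is a set $C$ disjoint from $A$ such that $C \cap \bigcap_{f \in F}(C-f)$ is syndetic for every finite $F \subseteq C$. Now write $B = \N \setminus C \supseteq A$. The complement of $B \cup (B-F)$ is exactly $C \cap \bigcap_{f\in F}(C-f)$. A set is non-thick precisely when its complement is syndetic. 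So the last condition is the literal negation of (2), and (1)$\Leftrightarrow$(2) follows.

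\emph{(1)$\Rightarrow$(5).} Fix a piecewise syndetic $S$ and a minimal left ideal $L$ with $\overline{S}\cap L \neq \emptyset$, and let $p \in \overline S \cap L$. In the minimal (non-metrizable) system $(L,\lambda_1)$, the set $\overline S \cap L$ is an open neighborhood of $p$. Its return set is $R(p,\overline S) = \{n : S-n \in p\}$. By \cref{lemma_metrizability_doesnt_matter} this return set is dynamically central syndetic, so it meets $A$: there is $a \in A$ with $p \in \overline{S-a}$. The sets $\overline{S-a}$, $a \in A$, therefore form an open cover of the compact set $\overline S \cap L$. A finite subcover gives the required finite $F$.

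\emph{(5)$\Rightarrow$(4)$\Rightarrow$(3).} Let $P = S\cap H$ and choose a minimal left ideal $L \subseteq \overline H$, which exists because $H$ is thick. Since $S$ is syndetic, $\overline{P}\cap L = \overline S \cap L \neq \emptyset$. Apply (5) to $P$ and $L$, obtaining a finite $F \subseteq A$ with $\overline P \cap L \subseteq \overline{P-F}$; this $F$ depends only on $S$ and $H$. Now let $S' \subseteq S$ be syndetic. Then $\overline{S'}$ meets $L$, say at $q$. Because $q \in \overline{S} \cap L = \overline P \cap L$, we get $P - f \in q$ for some $f \in F$. Also $P' = S'\cap H \in q$ since $L \subseteq \overline H$. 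Hence $P' \cap (P-f) \neq \emptyset$, which is (4). (I expected this step to be the main obstacle, namely getting $F$ independent of $S'$; choosing $L$ inside $\overline H$ is the device that resolves it.) Finally, (4)$\Rightarrow$(3) follows by taking $H = \N$.

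\emph{(3)$\Rightarrow$(1), by contrapositive.} Suppose $C \in \dcsyndetic$ is disjoint from $A$. By \cref{thm:simple_equivalent_dcS} \eqref{item:1_A_0_uniformly_recurrent} we may take $C \supseteq R(y,V)$, where $(X,\sigma)$ is a minimal subshift and $V$ is a clopen neighborhood of $y$. Set $S = R(y,V)$, which is syndetic. Given any finite $F \subseteq A$, we have $F \cap C = \emptyset$, so $\sigma^f y \notin V$ for every $f\in F$. Hence
\[
W = V \cap \bigcap_{f\in F}\sigma^{-f}(X\setminus V)
\]
is an open neighborhood of $y$, using that $V$ is clopen. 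Put $S' = R(y,W)$. This set is syndetic by minimality and satisfies $S' \subseteq S$. For $s' \in S'$ and $f \in F$ we have $\sigma^{s'+f}y \notin V$, so $s'+f \notin S$. Thus $F \cap (S - S') = \emptyset$, and (3) fails.
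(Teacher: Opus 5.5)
Your proposal is correct and follows the paper's proof essentially step for step. You prove (1)$\Leftrightarrow$(2) by complementing the combinatorial characterization of dynamically central syndetic sets, and you run the cycle (1)$\Rightarrow$(5)$\Rightarrow$(4)$\Rightarrow$(3)$\Rightarrow$(1) using compactness of $\overline{S} \cap L$ together with \cref{lemma_metrizability_doesnt_matter}, a minimal left ideal $L \subseteq \overline{H}$, specialization to $H = \N$, and a clopen return-time construction in a minimal subshift. The only difference is cosmetic: in (3)$\Rightarrow$(1) you exhibit $S = R(y,V)$ and $S' = R(y,W)$ directly, whereas the paper passes to complements and checks the equivalent statement that $(\N \setminus C) \cap \bigcap_{f \in F}(C - f)$ is syndetic.
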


\begin{proof}
    \eqref{item_dct_def_char} $\iff$ \eqref{item_dct_sif_char} \ Because $\dcsyndetic$ and $\dcthick$ are dual, the set $A$ is a set of pointwise recurrence if and only if the set $A' \defeq \N \setminus A$ is not dynamically central syndetic.  By \cref{intro_mainthm_full_chars_of_dcsyndetic}, the set $A'$ is not dynamically central syndetic if and only if for all $B' \subseteq A'$, there exists a finite set $F \subseteq B'$ such that the set
    \[B' \cap \bigcap_{f \in F} (B' - f)\]
    is not syndetic.  Taking the complement of this set in $\N$, we see that this happens if and only if the set
    \[\big( \N \setminus B' \big) \cup \bigcup_{f \in F} \big( (\N \setminus B') - f \big) = (\N \setminus B') \cup \big( (\N \setminus B')  - F \big)\]
    is thick.  Set $B \defeq \N \setminus B'$. Note that $B' \subseteq A'$ if and only if $A \subseteq B$.
    
    Summarizing the previous paragraph, we have shown that the set $A$ is a set of pointwise recurrence if and only if for all $B \supseteq A$, there exists a finite set $F \subseteq \N \setminus B$ such that the set $B \cup (B - F)$ is thick, as was to be shown.

    \eqref{item_dct_def_char} $\implies$ \eqref{item:complement_psstar_dct_characterization_star_fixed} \ Suppose that $A \in \dcthick$.  Let $S \subseteq \N$ be piecewise syndetic and $L \subseteq \beta \N$ be a minimal left ideal such that $\overline{S} \cap L \neq \emptyset$.  Note that $\overline{S} \cap L$ is nonempty and clopen in $L$.  By the definitional robustness of $\dcthick$ sets regarding non-metrizable systems (see \cref{lemma_metrizability_doesnt_matter}), for all $p \in \overline{S} \cap L$, there exists $a \in A$ such that $a + p \in \overline{S}$. Therefore,
    \[\overline{S} \cap L \subseteq \bigcup_{a \in A} \overline{S - a}.\]
    Because $\overline{S} \cap L$ is compact, there exists a finite set $F \subseteq A$ such that
    \[\overline{S} \cap L \subseteq \bigcup_{f \in F} \overline{S - f} \subseteq \overline{S-F},\]
    as was to be shown.

    \eqref{item:complement_psstar_dct_characterization_star_fixed} $\implies$ \eqref{item:complement_thick_dct_characterization_v2} \ Let $S \subseteq \N$ be syndetic and $H \subseteq \N$ be thick, and put $P \defeq S \cap H$.  It follows by \cite[Thm. 4.48]{hindman_strauss_book_2012} that there exists a minimal left ideal $L \subseteq \overline{H}$.  Note that since $S$ is syndetic, we have that $\emptyset \neq \overline{S} \cap L = \overline{S} \cap L \cap \overline{H} = \overline{P} \cap L$.
    
    Let $F \subseteq A$ be the finite set guaranteed by \eqref{item:complement_psstar_dct_characterization_star_fixed} for the piecewise syndetic set $P$.  Let $S' \subseteq S$ be syndetic, and put $P' \defeq S' \cap H$. Since $S'$ is syndetic and $S' \subseteq S$, we have that $\emptyset \neq \overline{S'} \cap L = \overline{P'} \cap L$.  Since $\emptyset \neq \overline{P'} \cap L \subseteq \overline{P} \cap L \subseteq \overline{P-F}$, we see that $\overline{P'} \cap \overline{P-F} \neq \emptyset$, whereby $P' \cap (P-F) \neq \emptyset$, as desired.
    
    \eqref{item:complement_thick_dct_characterization_v2} $\implies$ \eqref{item_condition_one_dct} \ Let $S \subseteq \N$ be syndetic.  Get $F \subseteq A$ finite from \eqref{item:complement_psstar_dct_characterization_star_fixed} for $H = \N$.  Then for all $S' \subseteq S$ syndetic,
    \[S' \cap (S-F) \neq \emptyset,\]
    whereby $F \cap (S-S') \neq \emptyset$, as desired.

    \eqref{item_condition_one_dct} $\implies$ \eqref{item_dct_def_char} \ We will prove the contrapositive: if $A$ is not a set of pointwise recurrence, then there exists a syndetic set $S \subseteq \N$ such that for all finite $F \subseteq A$, there exists a syndetic set $S' \subseteq S$ such that $F \cap (S - S') = \emptyset$. Since $F \cap (S - S') = \emptyset$ if and only if $S' \cap (S - F) = \emptyset$, it is equivalent to show: there exists a syndetic set $S \subseteq \N$ such that for all finite $F \subseteq A$, the set $S \cap (\N \setminus (S-F))$ is syndetic.  Taking complements and considering $B$ as $\N \setminus A$ and $C$ as $\N \setminus S$, we must show: if a set $B \subseteq \N$ is dynamically central syndetic, then there exists a set $C \subseteq \N$ that is not thick such that for all finite $F \subseteq \N \setminus B$, the set $(\N \setminus C) \cap \bigcap_{f \in F} (C - f)$ is syndetic.

    Suppose $B \subseteq \N$ is dynamically central syndetic.  If $B = \N$, put $C = \emptyset$ and note that the conclusion holds since the empty intersection is $\N$. Suppose $B \neq \N$. By \cref{thm:simple_equivalent_dcS}, there exists a minimal system $(X,T)$, a point $x \in X$, and a clopen set $U \subseteq X$ with $x \in U$ such that $R(x,U) \subseteq B$.  Define $V \defeq X \setminus U$ and $C \defeq \N \setminus R(x,U) = R(x, V)$. Since $U \neq X$ and the set $R(x,U)$ is syndetic, the set $C$ is nonempty but not thick.  Let $F \subseteq \N \setminus B$ be finite.  Since $F \subseteq C$, we have that for all $f \in F$, the set $T^{-f}V$ is an open neighborhood of $x$.  We see that
    \[\big(\N \setminus C\big) \cap \bigcap_{f \in F} (C - f) = R\bigg( x, U \cap \bigcap_{f \in F}T^{-f}V \bigg),\]
    which is syndetic since the set $U \cap \bigcap_{f \in F}T^{-f}V$ is an open neighborhood of $x$, as desired.
\end{proof}

\begin{theorem}
\label{lemma_our_fav_dy_thick}
    Let $A \subseteq \N$.  The following are equivalent.
    \begin{enumerate}
        \item 
        \label{item_dthick_def}
        The set $A$ is dynamically thick.

        \item 
        \label{item:dyn_thick_idempotent}
        For all $\N \supsetneq B \supseteq A$, there exists a finite set $F \subseteq \N \setminus B$ such that the set $B-F$ is thick.

        \item \label{item_condition_one}
        For all syndetic $S \subseteq \N$, there exists a finite set $F \subseteq A$ such that for all syndetic $S' \subseteq \N$, the set $F \cap (S - S')$ is nonempty.

        \item \label{item:complement_thick_dt_characterization_v2}
        For all piecewise syndetic $S \subseteq \N$, there exists a finite set $F \subseteq A$ such that the set $S-F$ is thick.

        \item
        \label{item:complement_psstar_dt_characterization_star_fixed}
        For all piecewise syndetic $S \subseteq \N$ and all minimal left ideals $L \subseteq \beta \N$ with $\overline{S} \cap L \neq \emptyset$, there exists a finite set $F \subseteq A$ such that $L \subseteq \overline{S-F}$.
    \end{enumerate}
\end{theorem}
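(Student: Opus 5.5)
I will mirror the proof of \cref{thm_dct_combo_characterizations}, replacing $\dcsyndetic$ by $\dsyndetic$ and using \cref{thm:simple_equivalent_dS} in place of \cref{thm:simple_equivalent_dcS}. The cycle of implications will be \eqref{item_dthick_def} $\iff$ \eqref{item:dyn_thick_idempotent} and \eqref{item_dthick_def} $\implies$ \eqref{item:complement_psstar_dt_characterization_star_fixed} $\implies$ \eqref{item:complement_thick_dt_characterization_v2} $\implies$ \eqref{item_condition_one} $\implies$ \eqref{item_dthick_def}.

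\emph{Step 1: \eqref{item_dthick_def} $\iff$ \eqref{item:dyn_thick_idempotent}.} The set $A$ is dynamically thick if and only if $A' \defeq \N \setminus A$ is not dynamically syndetic. By \cref{thm:simple_equivalent_dS}\eqref{item_intro_ds_combo_condition}, this holds if and only if every \emph{nonempty} $B' \subseteq A'$ admits a finite $F \subseteq B'$ with $\bigcap_{f\in F}(B'-f)$ not syndetic. Taking complements with $B \defeq \N\setminus B'$ turns this into: $(\N\setminus B) - F = \N \setminus \bigcap_f (B'-f)$ is thick. Nonemptiness of $B'$ is exactly the condition $B \neq \N$. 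There is no extra ``$B \cup$'' term because condition \eqref{item_intro_ds_combo_condition} lacks the intersection with $B'$ itself.

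\emph{Step 2: \eqref{item_dthick_def} $\implies$ \eqref{item:complement_psstar_dt_characterization_star_fixed}.} Fix piecewise syndetic $S$ and a minimal left ideal $L$ with $\overline S\cap L \neq \emptyset$. For each $p\in L$, the set $\overline S \cap L$ is a nonempty open subset of the minimal system $(L,\lambda_1)$. By \cref{lemma_metrizability_doesnt_matter}, the set $R(p,\overline S\cap L)$ is dynamically syndetic, so it meets $A$: there is $a\in A$ with $a+p\in\overline S$, that is, $p\in\overline{S-a}$. Thus $L\subseteq\bigcup_{a\in A}\overline{S-a}$, and compactness of $L$ yields a finite $F$.

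\emph{Step 3: \eqref{item:complement_psstar_dt_characterization_star_fixed} $\implies$ \eqref{item:complement_thick_dt_characterization_v2}.} Since $S$ is piecewise syndetic, some minimal left ideal $L$ meets $\overline S$ (\cite[Thm.~4.40]{hindman_strauss_book_2012}). Then $L\subseteq\overline{S-F}$. A set whose closure contains a minimal left ideal is thick (\cite[Thm.~4.48]{hindman_strauss_book_2012}), so $S-F$ is thick.

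\emph{Step 4: \eqref{item:complement_thick_dt_characterization_v2} $\implies$ \eqref{item_condition_one}.} A syndetic $S$ is piecewise syndetic, so $S-F$ is thick and therefore meets every syndetic $S'$. This gives $F\cap(S-S')\neq\emptyset$.

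\emph{Step 5: \eqref{item_condition_one} $\implies$ \eqref{item_dthick_def}.} Suppose $B\defeq\N\setminus A$ is dynamically syndetic. By \cref{thm:simple_equivalent_dS}\eqref{item:1_A_uniformly_recurrent}, there are a minimal system, a point $x$, and a clopen set $U\neq\emptyset$ with $R(x,U)\subseteq B$. The case $U = X$ gives $B = \N$ and $A=\emptyset$, which trivially fails \eqref{item_condition_one} for $S = \N$. Otherwise, set $V\defeq X\setminus U$ and $S\defeq R(x,V)$, which is syndetic. For finite $F\subseteq A$, each $T^fx\in V$. Then $S'\defeq R\bigl(x,U\cap\bigcap_{f\in F}T^{-f}V\bigr)$ is syndetic, since the intersection is an open neighborhood of a point, namely a suitable shift of $x$ lying in it. Here we need $\bigcap_f T^{-f}V\cap U$ nonempty, and this is the main obstacle. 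Since $x$ need not lie in $U$, I would instead take $S' \defeq R(x,W)$ with $W \defeq \bigcap_{f\in F} T^{-f}V$, which is an open set containing $x$, and hence $S'$ is syndetic. Then $S'\cap (S-F)$ must be empty, and I will have to verify this carefully. The expectation is that choosing $S=R(x,U)$ instead, which is syndetic, gives $S-F\subseteq R(x,\bigcup T^{-f}U)$, disjoint from $R(x,W)$. So $F\cap(S-S')=\emptyset$, contradicting \eqref{item_condition_one}.
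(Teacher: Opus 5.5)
Your proposal is correct and follows the paper's proof essentially step for step: the same cycle of implications, the same use of the companion-paper characterization of $\dsyndetic$, and the same compactness argument on $(L,\lambda_1)$. In Step 5, the construction you settle on, $S = R(x,U)$ and $S' = R\bigl(x,\bigcap_{f\in F}T^{-f}V\bigr) = \N\setminus(S-F)$, is exactly the paper's, so you should delete the false start with $S=R(x,V)$ and $U\cap\bigcap_f T^{-f}V$ (that is the centered version's construction); also, in Step 1, $(\N\setminus B)-F$ should read $(\N\setminus B')-F = B-F$.
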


\begin{proof}
    \eqref{item_dthick_def} $\iff$ \eqref{item:dyn_thick_idempotent}
    Because $\dsyndetic$ and $\dthick$ are dual, the set $A$ is dynamically thick if and only if the set $A' \defeq \N \setminus A$ is not dynamically syndetic.  By \cref{mainthm_ds_equivalents}, the set $A'$ is not dynamically syndetic if and only if for all nonempty subsets $B' \subseteq A'$, there exists a finite set $F \subseteq B'$ such that the set $\bigcap_{f \in F} (B' - f)$ is not syndetic.  Taking the complement of this set in $\N$, we see that this happens if and only if the set
    \[ \bigcup_{f \in F} \big( (\N \setminus B') - f \big) = \big( \N \setminus B' \big) - F\]
    is thick.  Set $B \defeq \N \setminus B'$. Note that $B' \subseteq A'$ if and only if $A \subseteq B$ and that $B' \neq \emptyset$ if and only if $B \neq \N$.
    
    Summarizing the previous paragraph, we have shown that the set $A$ is dynamically thick if and only if for all $A \subseteq B \subsetneq \N$, there exists a finite set $F \subseteq \N \setminus B$ such that the set $B - F$ is thick, as was to be shown.

    \eqref{item_dthick_def} $\implies$ \eqref{item:complement_psstar_dt_characterization_star_fixed} \ Suppose that $A \in \dthick$.  Let $S \subseteq \N$ be piecewise syndetic and $L \subseteq \beta \N$ be a minimal left ideal such that $\overline{S} \cap L \neq \emptyset$.  Note that $\overline{S} \cap L$ is nonempty and open in $L$. By the definitional robustness of $\dthick$ sets regarding non-metrizable systems (see \cref{lemma_metrizability_doesnt_matter}), for all $p \in L$, there exists $a \in A$ such that $a + p \in \overline{S}$.  Therefore,
    \[L \subseteq \bigcup_{a \in A} \overline{S - a}.\]
    Because $L$ is compact, there exists a finite set $F \subseteq A$ such that
    \[L \subseteq \bigcup_{f \in F} \overline{S - f}  \subseteq \overline{S-F},\]
    as was to be shown.

    \eqref{item:complement_psstar_dt_characterization_star_fixed} $\implies$ \eqref{item:complement_thick_dt_characterization_v2} \ Let $S \subseteq \N$ be a piecewise syndetic set.  It follows by \cite[Thm. 4.40]{hindman_strauss_book_2012} that there exists a minimal left ideal $L \subseteq \beta \N$ such that $\overline{S} \cap L \neq \emptyset$.  Let $F \subseteq A$ be the finite set guaranteed by \eqref{item:complement_psstar_dt_characterization_star_fixed}.  Since $\overline{S-F}$ contains a minimal left ideal, by \cite[Thm. 4.48]{hindman_strauss_book_2012}, the set $S-F$ is thick, as desired.

    \eqref{item:complement_thick_dt_characterization_v2} $\implies$ \eqref{item_condition_one} \ Let $S \subseteq \N$ be syndetic.  Let $F \subseteq A$ be the finite set guaranteed by \eqref{item:complement_thick_dt_characterization_v2}.  Since $S-F$ is thick, for all $S' \subseteq \N$ syndetic, the set $S' \cap (S-F)$ is nonempty, whereby $F \cap (S-S') \neq \emptyset$, as desired.

    \eqref{item_condition_one} $\implies$ \eqref{item_dthick_def} \ We will prove the contrapositive: if $A$ is not dynamically thick, then there exists a syndetic set $S \subseteq \N$ such that for all finite $F \subseteq A$, the set $S - F$ is not thick. Taking complements and considering $B$ as $\N \setminus A$ and $C$ as $\N \setminus S$, we must show: if a set $B \subseteq \N$ is dynamically syndetic, then there exists a set $C \subseteq \N$ that is not thick such that for all finite $F \subseteq \N \setminus B$, the set $\bigcap_{f \in F} (C - f)$ is syndetic.

    Suppose $B \subseteq \N$ is dynamically syndetic. If $B = \N$, put $C = \emptyset$ and note that the conclusion holds since the empty intersection is $\N$. Suppose $B \neq \N$. By \cref{thm:simple_equivalent_dS}, there exists a minimal system $(X,T)$, a point $x \in X$, and a nonempty, clopen set $U \subseteq X$ such that $R(x,U) \subseteq B$.  Define $V \defeq X \setminus U$ and $C \defeq \N \setminus R(x,U) = R(x, V)$. Since $U \neq X$ and the set $R(x,U)$ is syndetic, the set $C$ is nonempty but not thick.  Let $F \subseteq \N \setminus B$ be finite.  Since $F \subseteq C$, we have that for all $f \in F$, the set $T^{-f}V$ is an open neighborhood of $x$.  We see that
    \[\bigcap_{f \in F} (C - f) = R\bigg( x, \bigcap_{f \in F}T^{-f}V \bigg),\]
    which is syndetic since the set $\bigcap_{f \in F}T^{-f}V$ is an open neighborhood of $x$, as desired.
\end{proof}

\section{The structure of dynamically thick sets}

\label{sec:structure_dyn_thick}

We begin this section with some examples of dynamically thick sets and a proof of \cref{Mainthm:dyn_thick_not_IP}. In \cref{sec_dy_thick_from_disjointness} and \cref{sec_dy_thick_from_distal}, we develop more sophisticated dynamical tools to generalize the examples in Propositions \ref{ex:type2-special} and \ref{rem:combinatorial_primes}.  We conclude with a proof of \cref{mainthm_structure_for_dy_thick_sets} in \cref{sec_general_form_of_dthick}, which shows that every dynamically thick set takes the form exhibited in these examples.

\subsection{Examples of dynamically thick sets via the combinatorial characterizations and proof of \texorpdfstring{\cref{Mainthm:dyn_thick_not_IP}}{Theorem C}}

\label{sec_examples_using_combinatorics}

In this subsection, we will use the combinatorial characterizations from \cref{mainthm_dt_characterizations} to give two concrete examples of dynamically thick sets.

\begin{proposition}
\label{ex:type2-special}
    Let $k \in \N$ and $(H_i)_{i \in \N}$ be a sequence of thick subsets of $\N$. The set
    \[A \defeq \bigcup_{i=0}^{k-1} \big ((k \N  + i) \cap H_i \big)\]
    is dynamically thick.
\end{proposition}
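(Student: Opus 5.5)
The plan is to verify condition \eqref{item:complement_thick_dt_characterization_v2} of \cref{lemma_our_fav_dy_thick}: for every piecewise syndetic $S \subseteq \N$ we will exhibit a finite $F \subseteq A$ with $S - F$ thick. The underlying idea is that a piecewise syndetic set must meet some residue class mod $k$ in a piecewise syndetic way. On the corresponding thick set $H_i$ we can then choose elements of the right residue that cover a long block of $S$ by translation.

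First we reduce to a single residue class. Since $\PS$ is partition regular and $S = \bigcup_{j=0}^{k-1} \big(S \cap (k\N + j)\big)$, we may assume $S \subseteq k\N + j$ for some $j$ (shrinking $S$ only makes $S - F$ smaller). For this $S$, fix $N$ such that $S \cup (S-1) \cup \cdots \cup (S-N)$ is thick. Since $S \subseteq k\N + j$, we can take $N$ to be a multiple of $k$, and then the $k$-separated structure gives the following. The set $T \defeq \bigcup_{t=0}^{N/k} (S - kt)$ contains all elements $\equiv -j' \pmod k$ of arbitrarily long intervals, for the appropriate residue. Concretely, $T \cap (k\N + j)$ contains $(k\N + j) \cap I$ for arbitrarily long intervals $I$.

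Next we pick $F$. Each residue $r \in \{0,\dots,k-1\}$ corresponds to the index $i = r$, so elements of $H_r \cap (k\N + r)$ lie in $A$. Because $H_r$ is thick, it contains an interval of length $N + 2k$. That interval contains a block of numbers $f_{r,0} < f_{r,0} + k < \cdots < f_{r,0} + N$, all $\equiv r \pmod k$. Let $F$ be the union of these $k$ blocks; then $F$ is finite and $F \subseteq A$. Now take $m$ in a long interval $I$ on which $T$ has full coverage of the class $k\N + j$, and write $r$ for the residue with $m + r \equiv j$ shifted suitably. By the covering property, one of $m + f_{r,0} + kt$, $t \le N/k$, lies in $S$ as long as $m + f_{r,0}$ lies in the good interval and has residue $j$. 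That residue is achieved by choosing $r \equiv j - m \pmod k$. Hence $m \in S - F$ for all $m$ in a slightly shrunken interval, with the shrinkage bounded by $\max F$. Since the intervals $I$ are arbitrarily long, $S - F$ is thick.

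The main obstacle is the bookkeeping in the middle step. We must show that restricting $S$ to one residue class still yields a covering by translates that are multiples of $k$, i.e. that $\bigcup_{t \le M}(S - kt)$ contains the full residue class $j$ on arbitrarily long intervals. This follows from the choice of $N$: for $n \equiv j$ in a good interval, some $n + s \in S$ with $0 \le s \le N$, and since both $n$ and $n + s$ are $\equiv j$, we get $k \mid s$. Once this is in place, the rest is routine translation.
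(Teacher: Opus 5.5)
Your proposal is correct and follows essentially the same route as the paper. Both arguments verify the $S - F$ thick characterization, use partition regularity of $\PS$ to place $S$ in a single residue class, and take $F$ to be the correctly-residued elements of a sufficiently long interval in each $H_i$, so that $S - F$ covers arbitrarily long intervals. The only cosmetic differences are that the paper shifts $S$ into $k\N$ and uses the gap bound $\ell$ from a decomposition $S = B \cap C$, while you keep the residue $j$ and use the bound $N$ from the definition of piecewise syndeticity.
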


\begin{proof}
    We will apply \cref{mainthm_dt_characterizations} \eqref{item:intro_complement_thick_dt_characterization_v2}.  Let $S \subseteq \N$ be piecewise syndetic.  Since piecewise syndeticity is partition regular and $S = \bigcup_{i=0}^{k-1} \big(S \cap (k\N - i)\big)$, there exists $i \in \{0, \ldots, k-1\}$ such that $S \cap (k \N - i)$ is piecewise syndetic.  Since demonstrating \eqref{item:intro_complement_thick_dt_characterization_v2} for a subset of $S+i$ suffices to demonstrate it for $S$, by replacing $S$ with $(S+i) \cap k\N$, we can proceed under the assumption that $S \subseteq k \N$.

    Write $S = B \cap C$ where $B$ is syndetic with maximum gap length less than $\ell \in \N$ and $C$ is thick. For $i \in \{0, \ldots, k-1\}$, choose an interval $I_i$ of $H_i$ such that $|I_i| > \ell$ and $\max I_i < \min I_{i+1}$. The set 
    \[
        F \defeq \bigcup_{i=0}^{k-1} \big( (k\N + i) \cap I_i \big)
    \]
    is a finite subset of $A$. 
    We will show that for all $M \in \N$, there is a finite subset $S' \subseteq S$ for which the set $S' - F$ contains an interval of length greater than $M$. It will follow that the set $S - F$ is thick.

    Let $M \in \N$.  Choose $S' \subseteq S$ to be the set $S$ intersected with a long interval (whose length will be specified later) on which the distance between consecutive elements of $S$ is less than $\ell$.  Since $\ell < |I_i|$ and $S' \subseteq k\N$, the set $S' - \big((k\N + i) \cap I_i \big)$ contains the set $(k \N - i) \cap J_i$, where
    \[
        J_i \defeq \big\{\min S' - \max I_i, \ \ldots, \max S' - \min I_i \big\}.
    \]
    It follows that
    \[
        S' - F \supseteq \bigcup_{i=0}^{k-1} \big((k \N - i) \cap J_i\big) \supseteq \bigcap_{i=0}^{k-1} J_i = \big\{\min S' - \max I_0, \ \ldots, \max S' - \min I_{k-1} \big\},
    \]
    provided $\min S' - \max I_0 \leq \max S' - \min I_{k-1}$.  This interval has length $\max S' - \min S' - (\min I_{k-1} - \max I_0)$ which is greater than $M$ if $\max S' - \min S' > M + \min I_{k-1} - \max I_0$.
\end{proof}

The second example uses \cref{mainthm_dt_characterizations} \eqref{item:intro_dyn_thick_idempotent} and gives a proof of  \cref{Mainthm:dyn_thick_not_IP}.

\begin{proposition}
\label{rem:combinatorial_primes}
    Let $(p_i)_{i \in \N} \subseteq \N$ be a sequence of distinct primes, $(c_i)_{i \in \N} \subseteq \Z$ be a sequence of integers, and $(H_i)_{i \in \N}$ be a sequence of thick subsets of $\N$. The set
    \[
        A \defeq \bigcup_{i=1}^\infty \big ((p_i \N  + c_i) \cap H_i \big)
    \]
    is dynamically thick.  
    
    Moreover, if for all $n \in \N$, we have that $c_n \not\equiv 0 \bmod {p_n}$ and, for all but finitely many distinct pairs $i, j$ of positive integers, the set $H_i \cap (H_j - n)$ is empty, then the set $A$ is not an IP set.
\end{proposition}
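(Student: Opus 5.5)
Rather than use the combinatorial characterizations of \cref{mainthm_dt_characterizations}, I would prove dynamical thickness directly from the definition. The plan is a Chinese remainder argument that finds one good prime, followed by the observation that syndetic sets meet thick sets.

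\textbf{Step 1: find a good prime.} Let $(X,T)$ be minimal, $x \in X$, and $U \subseteq X$ nonempty and open. The set $R \defeq R(x,U)$ is syndetic, so every interval of $N$ consecutive positive integers meets $R$, for some $N$. I claim there is $i$ with $R \cap (p_i\N + c_i) \neq \emptyset$. Suppose not. Since $p_1, \ldots, p_N$ are distinct primes, the Chinese remainder theorem gives a (large) $n \in \N$ with
\[
    n + j \equiv c_j \bmod p_j \qquad \text{for } j = 1, \ldots, N.
\]
Then each $n+j$ lies in $p_j\N + c_j$, hence avoids $R$. So the $N$ consecutive integers $n+1, \ldots, n+N$ all miss $R$, a contradiction.

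\textbf{Step 2: upgrade to syndetic and intersect with $H_i$.} Fix such an $i$ and some $m \in R \cap (p_i\N + c_i)$. Then
\[
    R \cap (p_i\N + c_i) \supseteq m + p_i \, R_{T^{p_i}}(T^m x, U).
\]
The point $T^m x$ lies in $U$ and is uniformly recurrent for $T$, hence also for $T^{p_i}$ (the standard fact that uniform recurrence passes to powers). So $R_{T^{p_i}}(T^m x, U)$ is syndetic, and therefore $R \cap (p_i\N + c_i)$ is syndetic. A syndetic set meets every thick set, so it meets $H_i$. This gives $a \in A$ with $T^a x \in U$, so $A$ is dynamically thick. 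The only point needing care is the uniform recurrence under $T^{p_i}$, which I will cite rather than reprove.

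\textbf{Step 3: $A$ is not an IP set.} Suppose $\FS(x_j)_{j=1}^\infty \subseteq A$, and assign each $x_j$ an index $i(j)$ with $x_j \in (p_{i(j)}\N + c_{i(j)}) \cap H_{i(j)}$.

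If infinitely many $x_j$ share one index $i$, pick two of them, $x_j$ and $x_k$, whose sum also has index $i$. This is possible because finitely many of these sums landing in other pieces can be avoided by taking a further sub-IP structure; this pigeonhole is the step I expect to need the most care, and I would pass to a sum subsystem to arrange it. Then $x_j + x_k \equiv 2c_i$ and $x_j + x_k \equiv c_i \pmod{p_i}$, forcing $c_i \equiv 0 \pmod{p_i}$, which is excluded.

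So, after passing to a subsequence, the indices $i(j)$ are pairwise distinct. Let $l(j)$ be an index for $x_1 + x_j$. Then $x_j \in H_{i(j)} \cap (H_{l(j)} - x_1)$, so this set is nonempty. There are two cases.
\begin{enumerate}
    \item If $l(j) \neq i(j)$ for infinitely many $j$, these give infinitely many distinct pairs of indices for which $H_{i(j)} \cap (H_{l(j)} - x_1) \neq \emptyset$. This contradicts the hypothesis with $n = x_1$.
    \item Otherwise $l(j) = i(j)$ for all large $j$. Then $x_j$ and $x_1 + x_j$ are both $\equiv c_{i(j)} \pmod{p_{i(j)}}$, so $p_{i(j)} \mid x_1$ for infinitely many distinct primes, which is impossible.
\end{enumerate}
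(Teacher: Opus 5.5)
Your Steps 1 and 2 are correct, and they take a different route from the paper. The paper verifies criterion (iii) of Theorem B: for every $\N \supsetneq B \supseteq A$ there is a finite $F \subseteq \N \setminus B$ with $B-F$ thick. It splits on whether $\N\setminus B$ contains a complete residue system modulo some $p_i$, so it ultimately rests on the companion paper's characterization of dynamically syndetic sets. You argue directly from the definition, using only syndeticity of return times, the Chinese remainder theorem (which also drives the paper's Case 2), and uniform recurrence under $T^{p_i}$. That is more elementary and self-contained. Your Case B of Step 3 is also correct.

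\textbf{The gap is Case A of Step 3.} Suppose infinitely many $x_j$ lie in the $i$-th piece. Then any two of them satisfy $x_j + x_k \equiv 2c_i \not\equiv c_i \pmod{p_i}$, because $c_i \not\equiv 0$. So \emph{no} such sum lies in $p_i\N + c_i$. It is not that finitely many sums land in other pieces and can be avoided by passing to a sub-IP structure: every one of them does. The pair you want to select never exists, so claiming it can be found restates the contradiction rather than deriving it. The hypothesis with $n = x_k$ only tells you that, for fixed $k$, the sums $x_j + x_k$ land in finitely many pieces. Two-term sums alone give no contradiction.

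\textbf{The repair} is to drop the case split and run your Case B reasoning on every $m \in \FS(x_j)_{j \geq 2}$, not just on the $x_j$. Say $m$ lies in piece $a$ and $x_1 + m \in A$ lies in piece $b$. Either $a = b$, which forces $p_a \mid x_1$ and so allows only finitely many $a$. Or $a \neq b$ and $H_a \cap (H_b - x_1) \neq \emptyset$, which allows only finitely many pairs by the hypothesis with $n = x_1$. Hence $\FS(x_j)_{j\geq 2}$ is covered by finitely many pieces $(p_a\N + c_a)\cap H_a$, $a \in S$. This is exactly the paper's observation that $A \cap (A-n)$ lies in a finite union of such sets.

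The paper then finishes with Hindman's theorem (partition regularity of $\IP$) and the fact that every IP set contains multiples of $p$. An elementary alternative is to pigeonhole the partial sums $x_2 + \cdots + x_k$ modulo $P = \prod_{a\in S} p_a$. Two of them agree mod $P$, and their difference is an element of $\FS(x_j)_{j\geq 2}$ divisible by every $p_a$. It cannot lie in any piece, since $c_a \not\equiv 0 \pmod{p_a}$.
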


\begin{proof}
    To apply \cref{mainthm_dt_characterizations} \eqref{item:intro_dyn_thick_idempotent}, let $\N \supsetneq B \supseteq A$. We want to show there exists a finite set $F \subseteq \N \setminus B$ such that $B - F$ is thick. Consider two cases.

    Case 1: There exists $i \in \N$ for which the set $\N \setminus B$ contains a complete residue system modulo $p_i$. Let $F = \{f_0, \ldots, f_{p_i - 1}\} \subseteq \N \setminus B$ be a complete system of modulo $p_i$ residues. We see that
    \[B - F \supseteq A - F \supseteq \big((p_i \N + c_i) \cap H_i \big) - F \supseteq \bigcap_{\ell = 0}^{p_i - 1} (H_i - f_{\ell}),\]
    which is thick, as desired. 
    
    Case 2: For all $i \in \N$, the set $\N \setminus B$ avoids some modulo $p_i$ congruence class. In this case, for all $i \in \N$, there exists $a_i \in \N$ such that $B \supseteq p_i \N + a_i$. We will show that $B$ is thick. Indeed, let $k \in \N$. It suffices to show there exists $n \in \N$ such that $n + i \in p_i \N + a_i$ for all $1 \leq i \leq k$. This is equivalent to showing that there exists $n \in \N$ such that $n \equiv a_i - i \bmod p_i$ for $1 \leq i \leq k$. This is an immediate consequence of the Chinese Remainder Theorem since the $p_i$'s are distinct primes.

    To see that the set $A$ is not an IP set under the stipulated conditions, note that for all $n \in \N$, the set $A \cap (A-n)$ is contained in a finite union of sets of the form $(p \N + c ) \cap H$, where $c \not \equiv 0 \bmod p$ and $H$ is thick.  If $A$ was an IP set, there would exist $n \in \N$ for which $A \cap (A-n)$ is an IP set.  Since the family $\IP$ is partition regular, it would follow that a set of the form $(p \N + c ) \cap H$ is an IP set, which is false as every IP set contains infinitely many multiples of $p$.
\end{proof}

\subsection{Dynamically thick sets from disjointness}
\label{sec_dy_thick_from_disjointness}

In this subsection, we generalize the example in \cref{rem:combinatorial_primes} by showing that every minimal system must be nearly disjoint from almost all systems in a disjoint collection of minimal systems.  This is reminiscent of the fact that in a Hilbert space, according to Bessel's inequality, every vector must be nearly orthogonal to almost all vectors in an orthonormal sequence.

We call a collection of minimal systems $(X_i, T_i)$, $i \in I$, \emph{disjoint} if the product system $(\Pi_i X_i, \Pi_i T_i)$ is minimal.  Note that by the definition of the product topology, this is equivalent to having that the product system of any finite subcollection of the systems is minimal. 

For our purposes below, a \emph{joining} of the systems $(X,T)$ and $(Y,S)$ is a subsystem $(Z \subseteq X \times Y, T \times S)$ of the product system for which $\pi_X Z = X$ and $\pi_Y Z = Y$. Thus, two minimal systems are disjoint if their only joining is the product system.  It is easy to see that if $(X,T)$ and $(Y,S)$ are both minimal, then every subsystem of $(X \times Y, T \times S)$ is a joining.  In this case, every joining of $(X,T)$ and $(Y,S)$ has a minimal subsystem that is a joining.

\begin{theorem}
\label{thm_bessel}
    Let $(Z_i,R_i)$, $i \in \N$, be a disjoint collection of minimal systems.  For all minimal $(X,T)$, all $\eps > 0$, and all sufficiently large $i$ (depending on the $(Z_i,R_i)$'s, $(X,T)$, and $\eps$), all joinings $(J,T \times R_i)$ of $(X,T)$ and $(Z_i,R_i)$ satisfy: for all $z \in Z_i$, the set $\pi_1(J \cap (X \times \{z\}))$ is $\eps$-dense in $X$.
\end{theorem}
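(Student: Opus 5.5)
The plan is to argue by contradiction and exploit minimality of finite subproducts of the $Z_i$'s. Suppose the conclusion fails for some minimal $(X,T)$ and $\eps > 0$. Then there are infinitely many indices $i$, joinings $J_i$ of $(X,T)$ and $(Z_i,R_i)$, points $z_i \in Z_i$, and points $x_i \in X$ such that the fiber $\pi_1(J_i \cap (X \times \{z_i\}))$ misses the ball $B_\eps(x_i)$. By compactness, pass to a subsequence with $x_i \to x$. Set $K \defeq \overline{B_{\eps/2}(x)}$. Then for all large $i$ in the subsequence we have $K \subseteq B_\eps(x_i)$, so the fiber of $J_i$ over $z_i$ misses $K$.

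For each such $i$, define $C_i \defeq \pi_2\big(J_i \cap (K \times Z_i)\big)$. This is closed because $J_i$ and $K$ are compact, and $z_i \notin C_i$. Hence $V_i \defeq Z_i \setminus C_i$ is a nonempty open set. Since $(X,T)$ is minimal and $B_{\eps/2}(x)$ is nonempty and open, there is $N$ (depending only on $X$, $x$, and $\eps$, not on $i$) such that every $y \in X$ has $T^n y \in K$ for some $0 \leq n \leq N$. Consequently, whenever $(y,z) \in J_i$ and $T^n y \in K$, invariance of $J_i$ gives $(T^n y, R_i^n z) \in J_i \cap (K \times Z_i)$, so $R_i^n z \in C_i$.

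Now choose $N+1$ distinct indices $i_0, \ldots, i_N$ from the subsequence. Form the fibered product
\[J \defeq \big\{(y, w_0, \ldots, w_N) \ \big| \ (y, w_j) \in J_{i_j} \text{ for all } j\big\} \subseteq X \times Z_{i_0} \times \cdots \times Z_{i_N}.\]
It is closed and invariant under $T \times R_{i_0} \times \cdots \times R_{i_N}$. It is nonempty because each $J_{i_j}$ projects onto $X$. Its projection to $\prod_j Z_{i_j}$ is therefore a nonempty, closed, invariant set. By disjointness, the finite product $\prod_j Z_{i_j}$ is minimal, so this projection is all of $\prod_j Z_{i_j}$.

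Next consider the nonempty open set $\prod_{n=0}^N R_{i_n}^{-n} V_{i_n}$. Each factor is nonempty because $R_{i_n}$ is onto (a minimal system is surjective). By the previous paragraph, there is $(y, w_0, \ldots, w_N) \in J$ with $R_{i_n}^n w_n \in V_{i_n}$ for every $n \leq N$. Pick $n \leq N$ with $T^n y \in K$. Since $(y, w_n) \in J_{i_n}$, the second paragraph gives $R_{i_n}^n w_n \in C_{i_n}$. This contradicts $R_{i_n}^n w_n \in V_{i_n}$.

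The main point needing care is the uniformity step: $K$ and $N$ must be chosen independently of $i$, which is exactly why we pass to the convergent subsequence $x_i \to x$ first. Beyond that, the only other step to check is surjectivity of the fibered-product projection, which uses minimality of the finite product.
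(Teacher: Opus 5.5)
Your argument is correct and is essentially the paper's proof. Both argue by contradiction and make the missed ball uniform over infinitely many indices (you via a convergent subsequence $x_i \to x$, the paper by pigeonholing over a finite $\eps/2$-net). Both then use minimality of a finite product of distinct $Z_i$'s, with the $n$-th coordinate shifted by $R_{i_n}^{n}$, to force the $X$-coordinate to avoid a fixed ball at many consecutive times, contradicting minimality of $(X,T)$. The differences are only in packaging: you share the $X$-coordinate through the fibered product over $X$ rather than passing to minimal joinings generated by a common base point $x_0$, and you contradict a uniform return bound $N$ rather than showing $\bigcup_i R(z_i,V_i)$ is thick. Your use of the closed ball $K$ to make $V_i$ open is handled carefully.
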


\begin{proof}
    Let $(X,T)$ be minimal and $\eps > 0$. Suppose for a contradiction that there are infinitely many $i$'s for which there exists a joining $(J_i, T \times R_i)$ of $(X,T)$ and $(Z_i,R_i)$ and a point $y_i \in Z_i$ for which $\pi_1(J_i \cap (X \times \{y_i\}))$ is not $\eps$ dense in $X$.   To save on notation, by relabeling, we will ignore those $(Z_i,R_i)$'s which do not fall into this infinite set. By passing to minimal subsystems, we may assume without loss of generality that the joinings $(J_i, T \times R_i)$ are minimal. Since $J_i$ is closed, there exists an $\eps$-ball $U_i \subseteq X$ and an open set $V_i \subseteq Z_i$ containing $y_i$ such that $J_i \cap (U_i \times V_i) = \emptyset$.

    Fix $x_0 \in X$.  Since $(J_i,T \times R_i)$ is a joining of $(X,T)$ and $(Z_i,R_i)$ and is minimal, there exists $z_i \in Z_i$ such that $J_i = \overline{(T \times R_i)^{\N}(x_0,z_i)}$. Since $J_i \cap (U_i \times V_i) = \emptyset$, we have that
    \begin{align}
    \label{eqn_main_return-time_set_containment}
        R_{R_i}(z_i,V_i) \subseteq R_T(x_0, X \setminus U_i).
    \end{align}
    Let $c_1, c_2, \ldots, c_m \in X$ be an $\eps/2$ dense subset of $X$.  For $\ell \in \{1, \ldots, m\}$, define
    \[
        E_\ell = \{i \in \N \ \big| \ B_{\eps/2}(c_\ell) \subseteq U_i \}.
    \]
    Note that $\N = \bigcup_{\ell=1}^m E_\ell$. 
    There exists $\ell \in \{1, \ldots, m\}$ for which $E_\ell$ is infinite.  Put $U = B_{\eps/2}(c_\ell)$. It follows now from \eqref{eqn_main_return-time_set_containment} that
    \begin{align}
    \label{eqn_main_return-time_set_containment_two}
        H \coloneqq \bigcup_{i \in E_\ell} R_{R_i}(z_i,V_i) \subseteq  \bigcup_{i \in E_\ell} R_T(x_0, X \setminus U_i) \subseteq R_T(x_0, X \setminus U).
    \end{align}
    We will show that the set $H$ is thickly syndetic and, hence, thick.  This will yield a contradiction since, by the minimality of $(X,T)$, the set $R_T(x_0, X \setminus U)$ cannot be thick.

    Let $k \in \N$, and let $e_1, \ldots, e_k \in E_\ell$. We see that
    \[\bigcap_{i=1}^k (H-i) \supseteq \bigcap_{i=1}^k R_{R_{e_i}}(R_{e_i}^{i}z_{e_i},V_{e_i}) = R_{R_{e_1} \times \cdots \times R_{e_k}}\big((R_{e_1}^1z_{e_1}, \ldots, R_{e_k}^{k}z_{e_k}),V_{e_1} \times \cdots \times V_{e_k}\big),\]
    which is syndetic by the minimality of the product system $(Z_{e_1} \times \cdots \times Z_{e_k},R_{e_1} \times \cdots \times R_{e_k})$. Therefore, intervals of length $k$ appear in $H$ syndetically. Since $k \in \N$ was arbitrary, this shows that the set $H$ is thickly syndetic, as desired.
\end{proof}

\begin{proposition}
\label{thm_quantitative_strengthening_of_type_1_example}
    Let $(X_i,T_i)$, $i \in \N$, be a disjoint collection of minimal systems.  For all minimal systems $(Y,S)$ and all nonempty, open $V \subseteq Y$, there exists $i \in \N$ such that the following holds.  For all nonempty, open $U \subseteq X_i$, there exists $s \in \N$ such that for all $(x,y) \in X_i \times Y$, the set $R_{T_i \times S}((x,y), U \times V) = R_{T_i}(x,U) \cap R_S(y,V)$ is syndetic with gap size bounded by $s$.
\end{proposition}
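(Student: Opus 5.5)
The plan is to apply \cref{thm_bessel} with the roles of the systems swapped. There, the fixed minimal system will be $(Y,S)$ and the disjoint collection will be $(Z_i,R_i) \defeq (X_i,T_i)$. Then I will turn the resulting fiber-density statement into a uniform syndeticity statement by a compactness argument in the product $X_i \times Y$.

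First, since $V \subseteq Y$ is nonempty and open, choose $c \in Y$ and $\eps > 0$ with $B_{2\eps}(c) \subseteq V$. Apply \cref{thm_bessel} to $(Y,S)$, this $\eps$, and the collection $(X_i,T_i)$, and fix one sufficiently large $i$. Every joining $J \subseteq Y \times X_i$ of $(Y,S)$ and $(X_i,T_i)$ then satisfies the following: for all $z \in X_i$, the set $\pi_1(J \cap (Y \times \{z\}))$ is $\eps$-dense in $Y$. Because $B_{2\eps}(c) \subseteq V$, such a set must contain a point of $V$. Indeed, an $\eps$-dense set has a point within $\eps$ of $c$, and that point lies in $B_{2\eps}(c)$. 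After swapping coordinates, this says: for every joining $J \subseteq X_i \times Y$ and every $z \in X_i$, there is $y' \in V$ with $(z,y') \in J$.

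Now fix a nonempty, open $U \subseteq X_i$ and put $W \defeq U \times V$ and $P \defeq T_i \times S$. Since both factors are minimal, every subsystem of $(X_i \times Y, P)$ is a joining, as noted before \cref{thm_bessel}. Taking $z \in U$ in the previous paragraph, every subsystem of the product, and in particular every minimal one $M$, meets $W$. I then claim that every $p \in X_i \times Y$ satisfies $P^n p \in W$ for some $n \in \N$. To see this, let $M$ be a minimal subsystem of $\overline{P^{\N} p}$. Since $M \cap W \neq \emptyset$ and $M$ is minimal, every point of $M$ enters $W$. By compactness of $M$, there is $N$ such that $O \defeq \bigcup_{n=1}^{N} P^{-n} W$ is an open set containing $M$. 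The orbit of $p$ accumulates on $M$, so some $P^m p \in O$, which gives some $n$ with $P^n p \in W$.

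Consequently, $X_i \times Y = \bigcup_{n \in \N} P^{-n} W$ is an open cover of a compact space, so there is $s$ with $X_i \times Y = \bigcup_{n=1}^{s} P^{-n} W$. Applying this to $P^m(x,y)$ for every $m \in \N_0$ shows that $R_{T_i \times S}((x,y), W) = R_{T_i}(x,U) \cap R_S(y,V)$ meets every interval of length $s$. This gives syndeticity with gap bounded by $s$, uniformly in $(x,y)$. The step I expect to need the most care is the first one: making sure that $\eps$-density of the fibers, in the exact sense of \cref{thm_bessel}, really forces the fiber to meet $V$. This is why I shrink to a $2\eps$-ball inside $V$. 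The choice of $i$ depends only on $(Y,S)$, $V$, and the collection, not on $U$, which matches the order of quantifiers in the statement.
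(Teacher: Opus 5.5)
Your proof is correct. Its skeleton matches the paper's: apply \cref{thm_bessel} with $(Y,S)$ as the fixed system and the $(X_i,T_i)$ as the disjoint collection, show that every point of $X_i \times Y$ eventually enters $U \times V$, and extract the uniform bound $s$ from the open cover $\bigcup_{n \geq 1} (T_i \times S)^{-n}(U \times V)$.

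The difference is in the middle step. The paper uses the Auslander--Ellis theorem to find a uniformly recurrent point $(x_0,y_0)$ proximal to $(x,y)$. It notes that the orbit closure of $(x_0,y_0)$ is a minimal joining and therefore meets $U \times V$. It then transfers the syndetic return set of $(x_0,y_0)$ to a piecewise syndetic return set for $(x,y)$ via proximality. You instead take any minimal subset $M$ of the orbit closure of $p$, which meets $W$ because it is a joining. You cover $M$ by finitely many preimages $P^{-n}W$ and observe that the orbit of $p$ must enter that open set.

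Your route is more elementary. It needs only the existence of minimal subsets, not Auslander--Ellis. It also sidesteps the proximality transfer, which, as written in the paper, implicitly requires shrinking $U \times V$ to an open set whose small neighborhood still lies inside $U \times V$. The paper's route yields the stronger intermediate fact that the return set is piecewise syndetic. That extra strength is never used, since the final compactness step gives full syndeticity with a uniform gap in either case.

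Your $2\eps$-ball buffer is a harmless safeguard against ambiguity in the meaning of ``$\eps$-dense''; the paper simply takes an $\eps$-ball inside $V$.
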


\begin{proof}
    Let $(Y,S)$ be minimal and $V \subseteq Y$ be nonempty and open.  Let $\eps > 0$ be such that $V$ contains an $\eps$-ball.  According to \cref{thm_bessel}, there exists $i \in \N$ such that all joinings $(J,T_i \times S)$ of $(X_i,T_i)$ and $(Y,S)$ satisfy: for all $x \in X_i$, the set $\pi_2(J \cap (\{x\} \times Y))$ is $\eps$-dense in $Y$.

    Let $U \subseteq X$ be nonempty and open.  We claim that for all $(x,y) \in X_i \times Y$, there exists $n \in \N$ such that $(T_i \times S)^n (x,y) \in U \times V$.  Indeed, let $(x,y) \in X_i \times Y$. By a theorem of Auslander \cite{auslander_1960} and Ellis \cite{ellis_1957}, the point $(x,y)$ is proximal to a point $(x_0,y_0) \in X_i \times Y$ that is uniformly recurrent under $T_i \times S$.  Since $J \defeq \overline{(T_i \times S)^{\N_0}(x_0,y_0)}$ is a (minimal) joining of $(X_i,T_i)$ and $(Y,S)$, it is $\eps$-dense in every fiber over $X_i$.  Since $V$ contains an $\eps$-ball, we see that $J \cap (U \times V) \neq \emptyset$.  Because $(x,y)$ is proximal to $(x_0,y_0)$ and $R_{T_i \times S}((x_0,y_0), U \times V)$ is syndetic, we see that $R_{T_i \times S}((x,y),U \times V)$ is piecewise syndetic, hence nonempty.
    
    We have shown that for all $(x,y) \in X_i \times Y$, there exists $n \in \N$ such that $(T_i \times S)^n (x,y) \in U \times V$. It follows that $\bigcup_{n=1}^\infty (T_i \times S)^{-n} (U \times V) = X_i \times Y$.  By compactness, there exists $s \in \N$ such that $\bigcup_{n=1}^s (T_i \times S)^{-n} (U \times V) = X_i \times Y$. It follows that for all $(x,y) \in X_i \times Y$, there exists $n \in \{1, \ldots, s\}$ such that $(T_i \times S)^n(x,y) \in U \times V$, whereby for all $(x,y) \in X_i \times Y$, the set $R_{T_i \times S}((x,y), U\times V)$ is syndetic with gap size bounded by $s$, as was to be shown.
\end{proof}

Taking $(X_i, T_i)$ to be a rotation on $p_i$-many points, the following theorem generalizes \cref{rem:combinatorial_primes}.

\begin{theorem}
\label{lemma_more_general_example_of_dy_thick_not_IP}
    Let $(X_i,T_i)$, $i \in \N$, be a disjoint collection of minimal systems.  For each $i \in \N$, let $x_i \in X_i$, $U_i \subseteq X_i$ be nonempty and open, and $H_i \subseteq \N$ be thick.   The set
    \[
        A \defeq \bigcup_{i=1}^\infty \big( R_{T_i}(x_i, U_i) \cap H_i \big)
    \]
    is dynamically thick.
\end{theorem}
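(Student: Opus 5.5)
We argue directly from the definition: $A$ is dynamically thick precisely when it meets every set of the form $R_S(y,V)$, where $(Y,S)$ is minimal, $y \in Y$ and $V \subseteq Y$ is nonempty and open. Equivalently, by duality, $A$ meets every dynamically syndetic set. So fix such $(Y,S)$, $y$ and $V$. The plan is to find an index $i$ for which $R_{T_i}(x_i,U_i) \cap R_S(y,V)$ is syndetic, and then use thickness of $H_i$ to intersect it.

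First apply \cref{thm_quantitative_strengthening_of_type_1_example} to the disjoint collection $(X_i,T_i)$ and to $(Y,S)$, $V$. This produces an index $i \in \N$ with the following property: for every nonempty open $U \subseteq X_i$ there is $s$ such that, for all $(x,y') \in X_i \times Y$, the set $R_{T_i}(x,U) \cap R_S(y',V)$ is syndetic with gaps bounded by $s$. Specialize to $U = U_i$ and to the point $(x_i, y)$. Then $P \defeq R_{T_i}(x_i,U_i) \cap R_S(y,V)$ is syndetic with gap at most $s$.

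Since $H_i$ is thick, it contains an interval of length greater than $s$, and hence $P \cap H_i \neq \emptyset$. Any element of $P \cap H_i$ lies in $R_{T_i}(x_i,U_i) \cap H_i \subseteq A$ and also in $R_S(y,V)$. So $A \cap R_S(y,V) \neq \emptyset$, and since $(Y,S)$, $y$ and $V$ were arbitrary, $A$ is dynamically thick.

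The real work sits in \cref{thm_quantitative_strengthening_of_type_1_example}, which we may quote. What matters is that its bound $s$ is uniform over all points, so it covers the particular point $(x_i,y)$ and not just generic ones. The only remaining subtlety is this: $i$ is chosen depending on $(Y,S)$ and $V$ only, which is allowed, and the specific $U_i$, $x_i$ and $H_i$ then enter after $i$ is fixed. The argument therefore has no circularity.
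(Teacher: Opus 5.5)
Your proof is correct and is essentially the paper's argument: the paper states that the result follows immediately from \cref{thm_quantitative_strengthening_of_type_1_example}. You spell out that deduction: choose $i$ from $(Y,S)$ and $V$, take $U=U_i$ and the point $(x_i,y)$, and note that $H_i$ contains an interval longer than the uniform gap bound $s$, so it meets $R_{T_i}(x_i,U_i)\cap R_S(y,V)$.
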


\begin{proof}
    To see that $A \in \dthick$, we must show that for all minimal $(Y,S)$, all nonempty, open $V \subseteq Y$, and all $y \in Y$, the set $A \cap R_S(y,V)$ is nonempty.  This follows immediately from \cref{thm_quantitative_strengthening_of_type_1_example}.
\end{proof}

\subsection{Dynamically thick sets from distal points}
\label{sec_dy_thick_from_distal}

In this subsection, we offer a simultaneous generalization of the examples in \cref{ex:type2-special} and \cref{rem:combinatorial_primes} by making use of distal points (recall the definition from \cref{sec_top_dynamics}).

\begin{lemma}
\label{lemma_orbit_along_ps_is_somewhere_dense}
    Let $(X,T)$ be a minimal system and $x \in X$.  For all piecewise syndetic sets $P \subseteq \N$, the set $T^P x = \{T^n x: n \in P\}$ is somewhere dense, ie., $(\overline{T^P x})^\circ \neq \emptyset$.
\end{lemma}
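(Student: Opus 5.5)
Write $P = S \cap H$ with $S$ syndetic, say $S \cup (S-1) \cup \cdots \cup (S-N) = \N$, and $H$ thick. The plan is to show that $\overline{T^S x}$ already has enough mass near points of the form ``limits along $H$'' that a Baire category argument gives an open set inside $\overline{T^P x}$. Concretely, since $H$ is thick, choose $h_k \in \N$ such that the interval $\{h_k, h_k+1, \ldots, h_k + k\} \subseteq H$. Passing to a subsequence, assume $T^{h_k} x \to y$ for some $y \in X$.

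The key step is the following claim: the closed set $K \defeq \overline{T^P x}$ satisfies
\[
K \cup T^{-1}K \cup \cdots \cup T^{-N} K \supseteq \overline{T^{\N} y} = X.
\]
To see this, fix $m \in \N$. Since $S$ has gaps at most $N$, there is $j \in \{0,\ldots,N\}$ with $m + j + h_k \in S$ for infinitely many $k$, and $j$ can be chosen independently of $k$ after passing to a further subsequence depending on $m$. For large $k$ we have $m + j \leq k$, so $m+j+h_k \in H$ as well, and hence $T^{m+j} T^{h_k} x \in T^P x$. Letting $k \to \infty$ gives $T^j(T^m y) \in K$, that is, $T^m y \in T^{-j} K$. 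Since the set on the left is closed and contains the orbit of $y$, which is dense by minimality, it contains all of $X$. I expect this claim to be the main point requiring care, but it is routine once one insists that $j$ depend only on $m$.

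By the Baire category theorem (or, more simply, since a finite union of closed sets with empty interior has empty interior), some $T^{-j}K$ has nonempty interior. It remains to transfer interior from $T^{-j}K$ back to $K$. Let $W = (T^{-j}K)^\circ \neq \emptyset$. Then $T^j W \subseteq K$, but $T^j$ need not be open. The cleanest way around this is to use the standard fact that for a minimal system $(X,T)$ and a nonempty open $W$, the set $T^j W$ has nonempty interior. This holds because minimal maps are semi-open: $X = \bigcup_{n=0}^{M} T^{-n} W$ for some $M$ by compactness and minimality, so $X = T^j X = \bigcup_{n} T^j T^{-n} W$. Replacing $W$ by a suitable nonempty open set $W' \subseteq W$ with $T^{j} W'$ somewhere dense then gives $T^jW'\subseteq K$ with nonempty interior, since $K$ is closed.

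The main obstacle is this semi-openness step. An alternative that avoids it is to argue directly: if $K$ had empty interior, then so would each $T^{-j}K$, because $T^{-j}(\cdot)$ of a nowhere dense closed set is nowhere dense for minimal $T$. This is the same semi-open property, namely that $T$ maps nonempty open sets to sets with nonempty interior. It can be proved via Baire: $X = T^j X = \bigcup_{n \le M} T^{j} T^{-n} W$, which is a finite union of closed-up images, so one of the sets $\overline{T^{j-n}(T^{-j} \cdots)}$ has interior, followed by a short bookkeeping argument. I would cite this as a standard property of minimal systems if it is not convenient to reprove it.
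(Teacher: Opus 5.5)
Your argument is correct and is essentially the paper's proof: you cover $X$ by finitely many closed sets $T^{-j}\overline{T^P x}$, use Baire to find one with nonempty interior, and push that interior forward using semi-openness of minimal maps, which the paper also just cites (Kolyada--Snoha--Trofimchuk). The only cosmetic difference is that the paper gets the covering directly from the density of $T^{G}x$ for the thick set $G=\bigcup_{i=1}^k(P-i)$, whereas you go through a limit point $y$.

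Do rely on that citation, though, because your sketched self-contained proof of semi-openness does not work as written. After simplifying, $X=\bigcup_n T^jT^{-n}W$ is just a union of the sets $T^iW$ ($0\le i\le j$) and the open sets $T^{-i}W$. So Baire tells you nothing about $T^jW$, and no ``suitable $W'$'' is actually produced.
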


\begin{proof}
    Since the set $P$ is piecewise syndetic, there exists $k \in \N$ such that $H \defeq \bigcup_{i=1}^k (P-i)$ is thick.  Since $(X,T)$ is minimal, the set $T^H x$ is dense.  We see that
    \[X = \overline{\bigcup_{i=1}^k T^{-i} T^{P}x} = \bigcup_{i=1}^k T^{-i} \overline{T^{P}x}.\]
    By the Baire Category Theorem, there exits $i \in \{1, \ldots, k\}$ such that the set $T^{-i} \overline{T^{P}x}$ has nonempty interior.  By semiopenness of minimal maps \cite[Thm 2.4]{kolyada_snoha_trofimchuk_2001}, applying $i$ many times the map $T$, the set $\overline{T^{P}x}$ has nonempty interior, as desired.
\end{proof}

By considering a periodic system and an infinite product of periodic systems of prime cardinality, the following theorem generalizes the examples in both \cref{ex:type2-special} and \cref{rem:combinatorial_primes}, respectively.

\begin{theorem}
\label{prop_description_of_dy_thick}
    Let $(X, T)$ be a minimal system and $U_1, U_2, \ldots$ be nonempty, open subsets of $X$. Let $x \in X$ be a distal point and $H_1, H_2, \ldots \subseteq \N$ be thick sets. The set
    \[
        A \defeq \bigcup_i \big(R(x,U_i) \cap H_i \big)
    \]
    is dynamically thick if and only if the set $\bigcup_{i=1}^\infty U_i$ is dense in $X$.
\end{theorem}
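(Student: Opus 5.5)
The proof has two directions. Necessity is a short dynamical argument. For sufficiency I will verify the dual condition directly: $A$ must meet every set $R_S(y,V)$ coming from a minimal system. Distality of $x$ controls the $X$-coordinate, and the thick sets $H_i$ are handled through minimal left ideals of $\beta\N$.

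\emph{Necessity.} Suppose $\bigcup_i U_i$ is not dense. Then there is a nonempty open $W\subseteq X$ with $W\cap U_i=\emptyset$ for all $i$. Since $(X,T)$ is minimal, $R(x,W)$ is dynamically syndetic. Moreover,
\[
A\cap R(x,W)\subseteq \bigcup_i R(x,U_i\cap W)=\emptyset .
\]
So $A$ misses a dynamically syndetic set and is not dynamically thick, by duality.

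\emph{Sufficiency: setup.} Assume $\bigcup_i U_i$ is dense. Fix a minimal system $(Y,S)$, a point $y\in Y$, and a nonempty open $V\subseteq Y$; I must find $n\in A$ with $S^n y\in V$. The first step uses distality. Let $u$ be any minimal idempotent of $\beta\N$. The points $x$ and $T^u x$ are proximal, so $T^u x=x$. Hence $(x,S^u y)$ is a uniformly recurrent point of $X\times Y$ that is proximal to $(x,y)$. Let $Z$ be its orbit closure, a minimal joining of $X$ and $Y$. Then $Z\cap(X\times V)$ is nonempty and open in $Z$. By semi-openness of factor maps between minimal systems (as in \cite[Thm 2.4]{kolyada_snoha_trofimchuk_2001}), its projection to $X$ has nonempty interior. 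By density, this interior meets some $U_i$. After shrinking to open sets with closures inside $U_i$ and $V$, I get that $Z$ meets $U_i\times V$.

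\emph{Sufficiency: the thick set $H_i$.} Next I must produce a time in $H_i\cap R(x,U_i)\cap R(y,V)$. By \cite[Thm. 4.48]{hindman_strauss_book_2012}, choose a minimal left ideal $L_i\subseteq \overline{H_i}$. Run the construction above with $u$ an idempotent in $L_i$. Then every $p\in L_i$ with $(T^px,S^py)\in U_i\times V$ gives $R((x,y),U_i\times V)\in p$. Since $H_i\in p$ as well, the intersection $R((x,y),U_i\times V)\cap H_i$ is nonempty, and any element of it lies in $A\cap R(y,V)$, as required. Such a $p$ exists provided $\{T^p(x,y):p\in L_i\}$, which is the minimal set $Z_i$ through $(x,S^uy)$, meets $U_i\times V$.

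\emph{Main obstacle.} The step I expect to be hardest is a circularity: the index $i$ is chosen from the joining $Z$, but $Z$ depends on the idempotent $u$, which I want to take in $L_i$. To break it, I would first try to show that the projection of $Z_u\cap(X\times V)$ to $X$ has interior containing a fixed open set, independent of the minimal idempotent $u$. For this I would use that the sets $Z_u=\overline{\mathcal O(x,S^uy)}$ are all minimal joinings over the same distal point $x$, together with the isomorphisms $L\to L'$, $p\mapsto p+q$, between minimal left ideals. If that fails, my fallback is characterization \eqref{item:intro_complement_thick_dt_characterization_v2} of \cref{mainthm_dt_characterizations}. Given a piecewise syndetic set $P$, \cref{lemma_orbit_along_ps_is_somewhere_dense} makes $\overline{T^Px}$ have interior $W$. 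I pick $i$ with $U_i\cap W\neq\emptyset$, choose $F\subseteq R(x,U_i)\cap H_i$ finite, and use distality of $x$ to show that $P-F$ is thick.
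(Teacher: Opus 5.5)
Your necessity argument is correct and is the paper's. The sufficiency argument has a genuine gap, and it is the one you flag yourself. The index $i$ comes from the minimal set through $(x,S^u y)$, but the time in $H_i$ requires $u$ in a minimal left ideal $L_i\subseteq\overline{H_i}$ that depends on $i$. Neither proposed fix is carried out. The fallback via \cref{mainthm_dt_characterizations} puts all the work into the unexplained claim that distality of $x$ makes $P-F$ thick, and it is not clear how to choose $F$.

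The missing idea is small, and you already have the tools for it. Do not take $u$ to be an arbitrary minimal idempotent; take one with $S^u y=y$. Such a $u$ exists in every minimal left ideal because $y$ is uniformly recurrent, by \cite[Thm.~19.23]{hindman_strauss_book_2012}, the same fact used in \cref{lemma_metrizability_doesnt_matter}. Your distality argument then gives $T^u x=x$, so $(T\times S)^u(x,y)=(x,y)$. Hence $(x,y)$ itself is uniformly recurrent under $T\times S$. This is Furstenberg's theorem \cite[Thm.~9.11]{furstenberg_book_1981}, which is exactly what the paper invokes.

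Consequently the orbit closure $Z$ of $(x,y)$ is minimal. For every minimal idempotent $u$, the point $(x,S^u y)=(T\times S)^u(x,y)$ lies in $Z$, so all your sets $Z_u$ coincide with $Z$. This is precisely the independence from $u$ you hoped to prove in your first fix.

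With this, the detour through $L_i$ is unnecessary. Your semi-openness argument for $Z\to X$ gives $i$ with $Z\cap(U_i\times V)\neq\emptyset$. Density of the orbit of $(x,y)$ in $Z$ makes $R_{T\times S}((x,y),U_i\times V)=R_T(x,U_i)\cap R_S(y,V)$ nonempty, and uniform recurrence makes it syndetic. It therefore meets the thick set $H_i$, producing an element of $A\cap R_S(y,V)$.

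If you want to keep your ultrafilter formulation, note that $\{(T\times S)^p(x,y) : p\in L_i\}$ is a closed invariant subset of the minimal set $Z$. Hence it equals $Z$, and the required $p\in L_i$ exists for the same $i$.

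The paper's proof is this argument, except that it finds $i$ using \cref{lemma_orbit_along_ps_is_somewhere_dense} applied to the syndetic set $R_S(y,V)$, rather than semi-openness of the projection $Z\to X$. Both routes work once $(x,y)$ is known to be uniformly recurrent.
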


\begin{proof}
    If the set $\bigcup_{i=1}^\infty U_i$ is not dense in $X$, then it is disjoint from a nonempty, open set $V \subseteq X$. The set $A$ is clearly disjoint from the set $R(x,V)$, whereby $A$ is not dynamically thick.

    Suppose that $\bigcup_{i=1}^\infty U_i$ is dense. Let $(Y,S)$ be a minimal system, $y \in Y$, and $V \subseteq Y$ be nonempty and open.  Since the set $R_S(y,V)$ is syndetic, by \cref{lemma_orbit_along_ps_is_somewhere_dense}, the set $T^{R_S(y,V)} x$ is somewhere dense. Therefore, there exists $i \in \N$ such that $T^{R_S(y,V)} x \cap U_i \neq \emptyset$.  It follows that the set $R_{T \times S} ((x,y), U_i \times V)$ is nonempty. Since $x$ is a distal point, by \cite[Thm. 9.11]{furstenberg_book_1981}, the point $(x,y)$ is uniformly recurrent under $T \times S$.  Since $U_i \times V$ is open, we have that $R_{T \times S} ((x,y), U_i \times V)$ is (dynamically) syndetic. Therefore, the set $R_{T \times S} ((x,y), U_i \times V) \cap H_i$ is nonempty, which implies that $A \cap R_S(y,V)$ is nonempty, as desired.
\end{proof}

\subsection{A general form for dynamically thick sets: a proof of \texorpdfstring{\cref{mainthm_structure_for_dy_thick_sets}}{Theorem D}}

\label{sec_general_form_of_dthick}

In this subsection, we draw on a future result, \cref{thm:relations_C_dPS_PS-intro}, to describe the form of any dynamically thick set.  (For the astute reader concerned about circular logic: while \cref{cor_weak_structure_result} relies on \cref{thm:relations_C_dPS_PS-intro}, we do not invoke \cref{cor_weak_structure_result} anywhere in the paper.) We recall the definition of robustly syndetic collections from the introduction.

\begin{definition}
\label{def_robust_syndetic}
A collection $\collection$ of subsets of $\N$ is \emph{robustly syndetic} if for all dynamically syndetic sets $A \subseteq \N$, there exists $B \in \collection$ such that $A \cap B$ is syndetic.
\end{definition}

The dynamically thick sets described so far all take the form $\bigcup_{B \in \collection} (B \cap H_B)$, where $\collection$ is a robustly syndetic collection of sets and $(H_B)_{B \in \collection}$ is a collection of thick sets.  Indeed, in \cref{lemma_more_general_example_of_dy_thick_not_IP}, the collection $\{R_{T_i}(x_i,U_i) \ | \ i \in \N\}$ is robustly syndetic by virtue of the disjointness of the collection of systems $(X_i, T_i)$ for $i \in \N$.  In \cref{prop_description_of_dy_thick}, the collection $\{R_{T}(x,U_i) \ | \ i \in \N\}$ is robustly syndetic by virtue of the fact that $x$ is a distal point and $\bigcup_{i=1}^\infty U_i$ is dense in $X$.  \cref{mainthm_structure_for_dy_thick_sets} shows that every dynamically thick set has an underlying robustly syndetic collection.

\begin{proof}[Proof of \cref{mainthm_structure_for_dy_thick_sets}]
    Suppose that $\collection$ is robustly syndetic and that, for each $B \in \collection$, the set $H_B \subseteq \N$ is thick such that \eqref{eqn_A_contains_family_cap_thick} holds. To see that $A \in \dthick$, let $C \in \dsyndetic$.  There exists $B \in \collection$ such that $B \cap C$ is syndetic. Thus, the set $B \cap C \cap H_B$ is nonempty, whereby $A \cap C \neq \emptyset$, as desired.

    Conversely, suppose that $A$ is dynamically thick.  For all $S \in \dsyndetic$, we will define a thick set $G_S \subseteq \N$ and a set $B_S \subseteq \N$ in such a way that the collection $\collection \defeq \{ B_S \ | \ S \in \dsyndetic\}$ is robustly syndetic and
    \[A \supseteq \bigcup_{S \in \dsyndetic} \big( B_S \cap G_S \big).\]
    This suffices to reach the conclusion: the collection $\collection \cup \{A\}$ is robustly syndetic, and setting $H_A \defeq \N$ and, for $B = B_S \in \collection$, setting $H_B \defeq G_{S}$, we see that equality in \eqref{eqn_A_contains_family_cap_thick} holds.

    Let $S \in \dsyndetic$. The set $A \cap S$ is an intersection of a dynamically thick set and a dynamically syndetic set and, hence, by \cref{thm:relations_C_dPS_PS-intro}, is piecewise syndetic.  There exist $\ell \in \N$ and finite, disjoint intervals $I_1$, $I_2$, \dots of consecutive positive integers satisfying: a) $\lim_{i \to \infty} |I_i| \to \infty$ and, for all $i \in \N$, b) $\max I_i  + i< \min I_{i+1}$; and c) the set $A \cap S$ has nonempty intersection with all subintervals of $I_i$ of length at least $\ell$.  
    
    Define $G_S \defeq \bigcup_{i=1}^\infty I_i$, and note that it is thick.  Define
    \begin{align}
    \label{eq:B_S}
        B_S \defeq (A \cap G_S) \cup (\N \setminus G_S).
    \end{align}
    Since the set $\N \setminus G_S$ is a disjoint union of finite intervals whose lengths tend to infinity, the set
    \begin{align}
    \label{eq:B_ScapS}
        B_S \cap S = (A \cap G_S \cap S) \cup \big((\N \setminus G_S) \cap S \big) \text{ is syndetic.}
    \end{align}
    Indeed, the set $S$ has nonempty intersection with any interval of length $\ell' \in \N$.  Therefore, on $G_S$, the set $B_S \cap S$ has nonempty intersection with any interval of length at least $\ell$, while on $\N \setminus G_S$, the set $B_S \cap S$ has nonempty intersection with any interval of length $\ell'$.
    
    Now \eqref{eq:B_ScapS} shows that the collection $\collection \defeq \{B_S \ | \ S \in \dsyndetic\}$ is robustly syndetic, and \eqref{eq:B_S} shows that $A \supseteq \bigcup_{S \in \dsyndetic} (B_S \cap G_S)$, as desired.
\end{proof}

It would be interesting and useful to improve \cref{cor_weak_structure_result} by saying more about the robustly syndetic collection $\collection$.  We discuss this further in \cref{quest_robust_synd_family}.

\section{The splitting problem and \texorpdfstring{$\sigma$}{sigma}-compactness}
\label{sec:sigma-compact}

The motivating question for this section is: can a set of pointwise recurrence be partitioned into two disjoint sets of pointwise recurrence?  We frame this question as one concerning compactness, beginning with definitions and examples in \cref{sec:compact-sigma-compact-first}.  We show in \cref{sec_partition_in_dual_compact_families} that this framework explains splitting results for a number of well-known families.  Finally,  we prove \cref{theorem_dy_syndetic_is_not_sigma_compact} in \cref{sec_dcsyndetic_not_sigma_compact}, showing that our motivating question cannot be answered in this way.

\subsection{Compact and \texorpdfstring{$\sigma$}{sigma}-compact families}
\label{sec:compact-sigma-compact-first}

The following definition is shown to be equivalent to the one given in the introduction in \cref{prop:sigma-compact-equivalent}.

\begin{definition}
\label{def:compact}
\label{def:uniformity}
Let $\familytwo$ be a family of subsets of $\N$.
\begin{enumerate}
    \item The family $\familytwo$ is \emph{compact} if for all $B \in \familytwo^*$, there exists $M \in \N$ such that $B \cap [M] \in \familytwo^*$.

    \item The family $\familytwo$ is \emph{$\sigma$-compact} if it is the union of countably many compact families.
\end{enumerate}
\end{definition}

Before giving some examples of compact and $\sigma$-compact families, we move to explain the terminology.  For a family $\familytwo$ of subsets of $\N$, define
\[
    X_{\familytwo} \defeq \big\{1_A \in \{0, 1\}^{\N} \ \big| \ A \in \familytwo\big\}.
\]
Thus, the set $X_{\familytwo}$ is a subset of the compact metric space $\{0, 1\}^{\N}$.

\begin{proposition}
\label{prop:sigma-compact-equivalent}
    A family $\familytwo$ is compact (resp. $\sigma$-compact) if and only if the set $X_{\familytwo}$ is compact (resp. $\sigma$-compact).
\end{proposition}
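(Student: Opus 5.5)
The plan is to first settle the compact case, since the $\sigma$-compact case follows at once. A family $\familytwo$ is $\sigma$-compact in the sense of \cref{def:compact} if it is a countable union of compact families $\familytwo_n$, and then $X_{\familytwo} = \bigcup_n X_{\familytwo_n}$ is a countable union of compact sets. Conversely, suppose $X_{\familytwo} = \bigcup_n K_n$ with each $K_n$ compact. The upward closure of a compact set $K\subseteq\{0,1\}^{\N}$ (the set of $1_B$ with $B \supseteq A$ for some $1_A \in K$) is again compact. Indeed, it is the projection to the first coordinate of the closed set
\[
  \{(1_B,1_A) \ | \ B\supseteq A\} \cap \big(\{0,1\}^{\N}\times K\big).
\]
Since $\familytwo$ is upward closed, we may therefore replace each $K_n$ by the family it generates. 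This writes $\familytwo$ as a countable union of families $\familytwo_n$ with compact $X_{\familytwo_n}$, and the compact case then applies to each $\familytwo_n$.

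For the compact case, the key observation is that $X_{\familytwo}$ is upward closed, so the complement of $X_{\familytwo}$ is downward closed. The condition $B \in \familytwo^*$ says exactly that $\N\setminus B \notin \familytwo$, that is, $1_{\N\setminus B}\notin X_\familytwo$.

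($\Rightarrow$) Assume $\familytwo$ is compact. We show that the complement of $X_\familytwo$ is open. Let $C\notin\familytwo$ and set $B=\N\setminus C\in\familytwo^*$. Choose $M$ with $B\cap[M]\in\familytwo^*$; then $\N\setminus(B\cap[M]) \notin \familytwo$. Any $D$ whose indicator agrees with $1_C$ on $\{1,\dots,M\}$ satisfies $D\subseteq C\cup\{n \ge M\} = \N\setminus(B\cap[M])$. By upward closure of $\familytwo$, $D\notin\familytwo$. This gives a basic neighborhood of $1_C$ disjoint from $X_\familytwo$, so $X_\familytwo$ is closed and hence compact.

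($\Leftarrow$) Assume $X_\familytwo$ is compact, hence closed. Let $B\in\familytwo^*$, so $C=\N\setminus B\notin\familytwo$. By closedness there is $M$ such that every $D$ agreeing with $C$ on the first $M$ coordinates lies outside $\familytwo$. In particular, $D=C\cup\{n\ge M\}$ (adjusting $M$ by one to match the indexing of $[M]$) is not in $\familytwo$. Its complement is $B\cap[M]$, so $B\cap[M]\in\familytwo^*$.

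The only real care needed is the bookkeeping of $[M]=\{0,\dots,M-1\}$ against the coordinates $\N$. There are also degenerate families to check: $\emptyset$ gives an empty $X_\familytwo$, which is compact, and $\mathcal P(\N)$ gives $\familytwo^*=\emptyset$, so the condition is vacuous. The main obstacle is the $\sigma$-compact converse, namely making sure the compact pieces can be taken upward closed; the projection argument above handles this.
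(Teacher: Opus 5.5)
Your proof is correct. In the compact case it is essentially the paper's argument in different dress. The paper proves ``$X_{\familytwo}$ compact $\Rightarrow$ $\familytwo$ compact'' by contradiction: it extracts a convergent subsequence from witnesses $A_M \in \familytwo$ with $B \cap [M] \cap A_M = \emptyset$. It proves the converse by showing that a limit of points of $X_{\familytwo}$ has support meeting every $B \in \familytwo^*$, and then uses $(\familytwo^*)^* = \familytwo$. You instead work with the open complement of $X_{\familytwo}$ and the single test set $(\N \setminus B) \cup \{n \geq M\}$, which avoids sequences and makes the role of upward closure explicit.

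The more substantive difference is in the $\sigma$-compact case. The paper reduces to the compact case only by noting $X_{\bigcup_N \familytwo_N} = \bigcup_N X_{\familytwo_N}$. That handles the implication from a $\sigma$-compact family to a $\sigma$-compact $X_{\familytwo}$. For the converse, however, the pieces of a decomposition $X_{\familytwo} = \bigcup_n K_n$ into compact sets need not be upward closed, so they are not a priori of the form $X_{\familytwo_n}$. Your observation fills exactly this step: the upward closure of a compact set is compact (as the projection of a closed subset of $\{0,1\}^{\N} \times K$) and stays inside $X_{\familytwo}$. Your write-up is therefore more complete than the paper's one-line reduction, which leaves this step implicit.
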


\begin{proof}
Since $X_{\bigcup_{N=1}^\infty \familytwo_N} = \bigcup_{N = 1}^\infty X_{\familytwo_N}$, it suffices to show that $\familytwo$ is compact if and only if $X_{\familytwo}$ is compact.

Suppose $X_{\familytwo}$ is a compact subset of $\{0, 1\}^{\N}$. Assume for the sake of a contradiction that $\familytwo$ is not a compact family. Thus, there exists $B \in \familytwo^*$ such that for every $M \in \N$, there exists $A_M \in \familytwo$ such that
\begin{align}
\label{eqn_f_not_compact}
    B \cap [M] \cap A_M = \emptyset.
\end{align}

Since $X_{\familytwo}$ is compact, the sequence $(1_{A_M})_{M \in \N} \subseteq X_{\familytwo}$ has a subsequence that converges to some point $1_A \in X_{\familytwo}$. By the product topology on $\{0,1\}^{\N}$ and \eqref{eqn_f_not_compact}, we see that $B \cap A = \emptyset$, in contradiction with the fact that $A \in \familytwo$ and $B \in \familytwo^*$. 

Conversely, suppose that $\familytwo$ is a compact family. We will show that $X_{\familytwo}$ is compact by showing that it is closed. Let $(x_k)_{k \in \N} \subseteq X_{\familytwo}$ be a convergent sequence with limit point $x \in \{0,1\}^{\N}$. We will show that $x \in X_{\familytwo}$ by showing that for all $B \in \familytwo^*$, the set $\supp(x) \cap B$ is nonempty.

Let $B \in \familytwo^*$.  Since $\familytwo$ is compact, there exists $M \in \N$ such that for all $k \in \N$,
\[
    B \cap [M] \cap \supp(x_k) \neq \emptyset.
\]
Since $\lim_{k \to \infty}x_k = x$ and $B \cap [M]$ is a finite set, we see that $B \cap [M] \cap \supp(x) \neq \emptyset$. In particular, the set $B \cap \supp(x)$ is nonempty, as was to be shown.
\end{proof}

Next we provide some examples of compact and $\sigma$-compact families.

\begin{example}
\label{ex_syndetic_is_sigma_compact}
    For $N \in \N$, denote by $\syndetic_N$ the subfamily of syndetic sets that have nonempty intersection with every interval in $\N$ of length $N$.  The family $\syndetic_N$ is compact.  Indeed, given $B \in \syndetic_N^*$, the set $B$ is thick.  If $M \in \N$ is large so that $B \cap [M]$ contains an interval of length at least $N$, then $B \cap [M] \in \syndetic_N^*$.  We see that the family of syndetic sets, $\syndetic$, is equal to $\bigcup_{N=1}^\infty \syndetic_N$, whereby it is $\sigma$-compact.
\end{example}

\begin{example}
\label{ex_ip_k_sets_are_compact}
    Let $N \in \N$. Denote by $\IP_N$ the upward closure of the collection of sets of the form
    \[\FS(x_i)_{i=1}^N \defeq \left\{ \sum_{f \in F} x_f \ \middle | \ \emptyset \neq F \subseteq \{1, \ldots, N\} \right\}, \quad x_1, \ldots, x_N \in \N.\]
    As in the previous example, the family $\IP_N^*$ is easily seen from the definition to be compact.  The dual of the family $\IP_0 \defeq \bigcap_{N = 1}^\infty \IP_N$ is $\sigma$-compact, since $\IP_0^* = \bigcup_{N=1}^\infty \IP_N^*$.  The family $\IP_0^*$ is important in quantitative strengthenings of recurrence theorems and appears again in this paper in \cref{sec_open_quests_dps_sets}.
\end{example}

\begin{example}
\label{example:set-topological-partition}
    The family of sets of return times in minimal systems is $\sigma$-compact. More precisely, let $\familytwo$ be the upward closure of collection of sets of the form
    \begin{equation}\label{eq:sigma_compact_topological}
        \big\{n \in \N \ \big| \ U \cap T^{-n} U \neq \emptyset \big\},
    \end{equation}
    where $(X, T)$ is a minimal system and $U \subseteq X$ is a nonempty, open set. The family $\familytwo$ is $\sigma$-compact, as shown in \cite[Prop. 5.8]{Boshernitzan-Glasner-tworecurrence}. The argument is short, so we give it here.
    
    It is a fact \cite[Thm. 2.4]{bergelson_mccutcheon_1998} that $A \in \familytwo$ if and only if there exists a syndetic set $S \subseteq \N$ such that $A \supseteq S-S$. Thus, using the notation from \eqref{ex_syndetic_is_sigma_compact}, if for $N \in \N$ we define
    \[
        \familytwo_N \defeq \upclose \big\{ S - S \ \big| \ S \in \syndetic_N \big\},
    \]
    we have that $\familytwo = \bigcup_{N=1}^{\infty} \familytwo_N$.  We have only to show that for all $N \in \N$, the family $\familytwo_N$ is compact. Let $N \in \N$ and $B \in \familytwo_N^*$.  We must show that there exists $M \in \N$ such that for all $A \in \familytwo_N$, the set $A \cap B \cap [M]$ is nonempty.

    Assume for the sake of a contradiction that no such $M$ exists.  Thus, for all $M \in \N$, there exists $S_M \in \syndetic_N$ such that
    \begin{align}
    \label{eqn_sm_limit}
        (S_M - S_M) \cap B \cap [M] = \emptyset.
    \end{align}
    Since $\syndetic_N$ is compact, passing to a subsequence of $(S_M)_{M \in \N}$, there exists $S \in \syndetic_N$ such that $\lim_{M \to \infty} 1_{S_M} = 1_S$.  Since $S \in \syndetic_N$, the set $S-S$ is a member of $\familytwo_N$.  It follows from \eqref{eqn_sm_limit} that $(S-S) \cap B = \emptyset$, contradicting the fact that $B \in \familytwo_N^*$.
\end{example}

\begin{example}
\label{example:set-measurable-partition}
    In analogy to \cref{example:set-topological-partition}, the family of sets of return times in measure preserving systems is $\sigma$-compact. More precisely, the upward closure $\familytwo$ of the collection of sets of the form
    \begin{equation}\label{eq:sigma_compact_measurable}
        R(E,E) \defeq \big\{n \in \N \ \big| \ \mu(E \cap T^{-n} E)  > 0 \big\}, 
    \end{equation}
    where $(X, \mu, T)$ is a measure preserving system (see \cref{sec_dcps_and_set_recurrence}) and $E \subseteq X$ is a set of positive measure, is $\sigma$-compact.  This is most readily seen as a consequence of a result of Forrest \cite{Forrest_recurrence-in-dynamical-systems}: for all sets $B \subseteq \N$ of measurable recurrence (i.e. for all $B \in \familytwo^*$) and all $\delta > 0$, there exists $M \in \N$ such that for all sets $E$ with measure at least $\delta$, the set $B \cap R(E,E) \cap [M]$ is nonempty. For $N \in \N$, defining
    \[
        \familytwo_N \defeq \upclose \big\{R(E,E) \subseteq \N \ \big| \ \text{$(X,\mu,T)$ is a measure preserving system, $E \subseteq X$, $\mu(E) > 1/N$} \big\},
    \]
    we see that $\familytwo_N$ is compact and $\familytwo = \bigcup_{N =1}^\infty \familytwo_N$ is $\sigma$-compact.
\end{example}

While the previous two examples concern set recurrence, the next one is about pointwise recurrence.
\begin{example}
    Bohr$_0$ sets and  Nil$_k$-Bohr$_0$ sets are special dynamically central syndetic sets for which the systems in consideration are minimal rotations on compact abelian groups and minimal $k$-step nilsystems, respectively. (See \cite{Host-Kra_nilbohr} for a detailed definition of Nil$_k$-Bohr$_0$ sets.)
    It can be shown that these families are $\sigma$-compact using the results in \cite[Prop. 1.4]{Le-interpolation-first} and \cite[Lemma 3.3]{Le_interpolation_for_nilsequences}, respectively.
\end{example}

\subsection{The splitting problem in dual compact families}
\label{sec_partition_in_dual_compact_families}

The following result offers, under a mild assumption on the family $\family \classcap \family^*$, a solution to the splitting problem in families whose duals are $\sigma$-compact.

\begin{proposition}
\label{prop:sigma-compact-implies-partition-to-two}
    Let $\family$ be a family of subsets of $\N$ such that that all members of $\family \classcap \family^*$ are infinite.  If $\family^*$ is $\sigma$-compact, then for all $A \in \family$, there exists a disjoint partition $A = A_1 \cup A_2$ with $A_1, A_2 \in \family$.
\end{proposition}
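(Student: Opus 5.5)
We follow the interval-block strategy sketched in \cref{sec:sigma-compact-intro}, using compactness of the pieces of $\family^*$ to choose the cut points. Write $\family^* = \bigcup_{N=1}^\infty \familytwo_N$ with each $\familytwo_N$ compact; replacing $\familytwo_N$ by $\familytwo_1 \cup \cdots \cup \familytwo_N$ (a finite union of compact families is compact, since $(\familytwo \cup \familytwo')^* = \familytwo^* \cap \familytwo'^*$ and one takes the larger $M$) we may assume $\familytwo_1 \subseteq \familytwo_2 \subseteq \cdots$. Fix $A \in \family$. By duality, $A_1 \in \family$ is equivalent to $A_1 \cap C \neq \emptyset$ for every $C \in \family^*$, so it suffices to ensure that $A_1$ and $A_2$ each meet every member of every $\familytwo_N$.

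The key observation is that, for each $N$ and each $m \in \N$, the set $A \setminus [m]$ belongs to $\familytwo_N^*$. Indeed, for $C \in \familytwo_N \subseteq \family^*$ we have $A \cap C \in \family \classcap \family^*$, which by hypothesis is infinite, so $(A \setminus [m]) \cap C \neq \emptyset$. Compactness of $\familytwo_N$ then yields $M$ such that $(A \setminus [m]) \cap [M] \in \familytwo_N^*$, i.e., the finite block $A \cap ([M] \setminus [m])$ meets every member of $\familytwo_N$.

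We then build $0 = M_0 < M_1 < M_2 < \cdots$ inductively: given $M_{k-1}$, apply the observation with $m = M_{k-1}$ and $N = k$ to get $M_k$ such that $A \cap ([M_k] \setminus [M_{k-1}])$ meets every member of $\familytwo_k$. Set $A_1 \defeq \bigcup_{k \text{ odd}} A \cap ([M_k]\setminus[M_{k-1}])$ and $A_2 \defeq A \setminus A_1$, the union over even $k$. These sets are disjoint with union $A$.

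Finally, given $C \in \family^*$, pick $N$ with $C \in \familytwo_N$. By monotonicity, $C \in \familytwo_k$ for all $k \geq N$, so $C$ meets the $k$th block for every $k \geq N$. Choosing one odd and one even such $k$ shows that $C$ meets both $A_1$ and $A_2$. Hence $A_1, A_2 \in (\family^*)^* = \family$. The only delicate point is the infinitude step above, which is exactly where the hypothesis on $\family \classcap \family^*$ is used; without it, the tail $A \setminus [m]$ could fail to lie in $\familytwo_N^*$, and the compactness argument could not be started. The rest is routine.
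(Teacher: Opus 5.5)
Your proof is correct and is essentially the paper's argument. You make the compact pieces $\familytwo_N$ increasing, use the infinitude of members of $\family \classcap \family^*$ to see that deleting finitely many elements of $A$ keeps it in each $\familytwo_N^*$, extract by compactness a finite block lying in $\familytwo_N^*$, and give alternate blocks to $A_1$ and $A_2$. The only difference is cosmetic: you take the blocks to be intervals $A \cap ([M_k]\setminus[M_{k-1}])$, which makes $A_1 \cup A_2 = A$ automatic, whereas the paper chooses arbitrary finite sets $A_N' \subseteq A \setminus (A_1' \cup \cdots \cup A_{N-1}')$ and then appends the leftover elements to $A_1$.
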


\begin{proof}
    Suppose $\family^*$ is $\sigma$-compact. There exists $\family^* = \bigcup_{N=1}^{\infty} \familytwo_N$ where each family $\familytwo_N$ is compact. By taking finite unions, we may assume without loss of generality that $\familytwo_1 \subseteq \familytwo_2 \subseteq \cdots$.
    
    Let $A \in \family$. Since $\familytwo_1 \subseteq \family^*$, the set $A$ belongs to $\familytwo_1^*$. Since $\familytwo_1$ is compact, there exists a finite set $A_1' \subseteq A$ such that $A_1' \in \familytwo_1^*$. Since all members of $\family \classcap \family^*$ are infinite, for all $B \in \family^*$, the set $A \cap B$ is infinite, whereby
    \[
        \big(A \setminus A_1' \big) \cap B \neq \emptyset. 
    \]
    Since $B \in \family^*$ was arbitrary, we see that $A \setminus A_1' \in \family$. Repeating the argument for $A \setminus A_1'$ in place of $A$, we get a finite set $A_2' \subseteq A \setminus A_1'$ such that $A_2' \in \familytwo_2^*$. By the same argument as before, the set $A \setminus (A_1' \cup A_2')$ is a member of $\family$, allowing us to continue on as before.
    
    Repeating ad infinitum, we get a sequence $A_1'$, $A_2'$, \dots of disjoint, finite subsets of $A$ such that for all $N \in \N$, the set $A_N' \in \familytwo_N^*$.  Define $A_1 \defeq A_1' \cup A_3' \cup \cdots$ and $A_2 \defeq A_2' \cup A_4' \cup \cdots$.  By appending any elements in $A \setminus (A_1 \cup A_2)$ to $A_1$, we may assume that $A = A_1 \cup A_2$.
    
    We need only to show that $A_1$ and $A_2$ are members of $\family$.  Indeed, since $\family^* = \bigcup_{N=1}^{\infty} \familytwo_N$, we see that $\family = (\family^*)^* = \bigcap_{N=1}^{\infty} \familytwo_N^*$.  Since $\familytwo_1^* \supseteq \familytwo_2^* \supseteq \cdots$, by the construction of the $A_i$'s, both $A_1$ and $A_2$ belong to $\bigcap_{N=1}^{\infty} \familytwo_N^*$.
\end{proof}

\begin{remark}
    In view of \cref{prop:sigma-compact-implies-partition-to-two} and the examples of $\sigma$-compact families presented in \cref{sec:compact-sigma-compact-first}, every set in the following  families can be partitioned into two sets in the same family:
    \begin{enumerate}
        \item sets of topological recurrence (dual of sets of the form in \eqref{eq:sigma_compact_topological}),
        \item sets of measurable recurrence (dual of sets of the form in \eqref{eq:sigma_compact_measurable}), 
        \item sets of Bohr recurrence (dual of Bohr$_0$ sets),
        \item sets of pointwise recurrence for $k$-step nilsystems (dual of Nil$_k$-Bohr$_0$ sets).
    \end{enumerate}
    These facts are well known to experts, but the framing in terms of compactness is, as far as we know, new.
\end{remark}

\subsection{Proof of \texorpdfstring{\cref{theorem_dy_syndetic_is_not_sigma_compact}}{Theorem E}}
\label{sec_dcsyndetic_not_sigma_compact}

We show in this section that the families $\dsyndetic$ and $\dcsyndetic$ are not $\sigma$-compact.

\begin{lemma}
\label{thm_suff_condition_for_dual_not_sigma_compact}
    Let $\family$ be a family of subsets of $\N$.  If
    \begin{align}
        \label{eqn_key_property_to_refute_sigma_compactness_one}
        \begin{gathered}
            \text{there exist $B_1, B_2, \ldots \in \family$, disjoint, such that for all $B \in \family$} \\ \text{with $B \subseteq \bigcup_{i=1}^\infty B_i$, there exists $i \in \N$ such that $|B \cap B_i| = \infty$,}
        \end{gathered}
    \end{align}
    then the family $\family^*$ is not $\sigma$-compact.
\end{lemma}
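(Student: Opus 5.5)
We argue by contradiction: suppose $\family^* = \bigcup_{N=1}^\infty \familytwo_N$ with each $\familytwo_N$ compact. As in the proof of \cref{prop:sigma-compact-implies-partition-to-two}, we take finite unions and assume $\familytwo_1 \subseteq \familytwo_2 \subseteq \cdots$; a finite union of compact families is compact, since a set in the dual of the union lies in each dual and we may take the maximum of the finitely many $M$'s. Then $\family = \bigcap_N \familytwo_N^*$ and $\familytwo_1^* \supseteq \familytwo_2^* \supseteq \cdots$. The aim is to build a set $B \in \family$ with $B \subseteq \bigcup_i B_i$ and $|B \cap B_i| < \infty$ for every $i$, contradicting \eqref{eqn_key_property_to_refute_sigma_compactness_one}.

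The construction uses compactness on each $B_i$ separately. For each $i$, the set $B_i$ lies in $\family \subseteq \familytwo_i^*$. By compactness of $\familytwo_i$ there is $M_i$ with $B_i' \defeq B_i \cap [M_i] \in \familytwo_i^*$, and $B_i'$ is finite. Set $B \defeq \bigcup_{i=1}^\infty B_i'$. Then $B \subseteq \bigcup_i B_i$. Since the $B_i$ are disjoint, $B \cap B_i = B_i'$, which is finite for every $i$.

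It remains to check that $B \in \family$. Fix $N$. For every $i \geq N$ we have $B \supseteq B_i' \in \familytwo_i^* \subseteq \familytwo_N^*$, and duals are upward closed, so $B \in \familytwo_N^*$. Since $N$ was arbitrary, $B \in \bigcap_N \familytwo_N^* = (\bigcup_N \familytwo_N)^* = \family$. This contradicts \eqref{eqn_key_property_to_refute_sigma_compactness_one}, so $\family^*$ is not $\sigma$-compact.

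The only step that needs care is the reduction to an increasing sequence of compact families, that is, checking that finite unions of compact families are compact. This follows because $(\familytwo \cup \familytwo')^* = \familytwo^* \cap \familytwo'^*$ and $B \cap [\max(M,M')]$ is upward of both $B\cap[M]$ and $B\cap[M']$. Disjointness of the $B_i$ is what makes each $B \cap B_i$ finite.
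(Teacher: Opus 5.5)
Your proof is correct and is essentially the paper's argument run on the other side of the duality. The paper builds $A = (\N\setminus\bigcup_i B_i)\cup\bigcup_i (B_i\setminus\{1,\ldots,\varphi_i(B_i)\})$, uses the hypothesis to get $A\in\family^*$, and uses compactness, through the bounded function $\varphi_i$, to get $A\notin\familytwo_j$ for every $j$; its complement $\N\setminus A$ is exactly a set of your form $\bigcup_i (B_i\cap[M_i])$ with $M_i=\varphi_i(B_i)+1$. Your version applies the definition of compactness directly and is slightly cleaner, and the monotone reduction is unnecessary, since $B\supseteq B_N'\in\familytwo_N^*$ already gives $B\in\familytwo_N^*$ for each $N$.
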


\begin{proof}
    Suppose property \eqref{eqn_key_property_to_refute_sigma_compactness_one} holds, and suppose for a contradiction that $\family^* = \bigcup_{i=1}^\infty \familytwo_i$, where $\familytwo_1$, $\familytwo_2$, \dots are compact families.

    Let $i \in \N$.  We claim that there exists a function $\varphi_i: \class \to \N$ such that for all $A \in \classtwo_i$ and for all $B \in \class$,
    \[
        \min (A \cap B) \leq \varphi_i(B).
    \]
    Indeed, fix $B \in \class$, and consider the function $X_{\classtwo_i} \to \N$ defined by $1_A \mapsto \min (A \cap B)$.  It is locally constant since, more generally, the function $X_{\class} \times X_{\class^*} \to \N$ defined by $(1_A,1_B) \mapsto \min (A \cap B)$ is locally constant. Since $\classtwo_i$ is compact, the function $X_{\classtwo_i} \to \N$ is bounded from above.  We define $\varphi_i(B)$ to be an upper bound.

    Define the set
    \[
        A \defeq \left( \N \setminus \bigcup_{i=1}^\infty B_i \right) \cup \bigcup_{i=1}^\infty \big( B_i \setminus \{1, \ldots, \varphi_i(B_i) \} \big).
    \]
    We claim that $A \in \class^*$.  Indeed, if $B \in \class$, then either $B \cap ( \N \setminus \bigcup_{i=1}^\infty B_i ) \neq \emptyset$, in which case $A \cap B \neq \emptyset$, or $B \subseteq \bigcup_{i=1}^\infty B_i$.  In the latter case, by \eqref{eqn_key_property_to_refute_sigma_compactness_one}, there exists $i \in \N$ such that $|B \cap B_i| = \infty$.  It follows that $B \cap (B_i \setminus \{1, \ldots, \varphi_i(B_i) \}) \neq \emptyset$, whereby $A \cap B \neq \emptyset$.

    Since $A \in \class^*$, there exists $j \in \N$ such that $A \in \classtwo_j$.  It follows that $\min (A \cap B_j) \leq \varphi_j(B_j)$. But since the $B_j$'s are disjoint, we see from the definition of $A$ that
    \[
        A \cap B_j = B_j \setminus \{1, \ldots, \varphi_j(B_j) \}
    \]
    whereby $\min (A \cap B_j) > \varphi_j(B_j)$, a contradiction.
\end{proof}

The following lemma gives a sufficient condition to satisfy the property in \eqref{eqn_key_property_to_refute_sigma_compactness_one}.  Recall that all difference sets are computed in $\N$.

\begin{lemma}
\label{lemma_suff_condition_for_key_property_for_dual_not_sigma_compact}
    Let $\class$ be a family of subsets of $\N$.  If
    \begin{align}
        \label{eqn_suff_condition_for_key_prop_one}
        \text{for all $B \in \class$, there exists $n \in \N$ such that $|B \cap (B-n)| = \infty$,}
    \end{align}
    and if
        \begin{align}
        \label{eqn_suff_condition_for_key_prop_two}
        \begin{gathered}
            \text{there exist $B_1, B_2, \ldots \in \class$, disjoint, such that} \\ \lim_{d \to \infty} \min \bigcup_{\substack{i, j = 1 \\ \max(i,j) > d}}^\infty (B_i - B_j) = \infty,
        \end{gathered}
    \end{align}
    then the property in \eqref{eqn_key_property_to_refute_sigma_compactness_one} holds.
\end{lemma}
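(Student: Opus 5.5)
We take the sets $B_1, B_2, \ldots \in \class$ supplied by \eqref{eqn_suff_condition_for_key_prop_two} and show directly that they witness \eqref{eqn_key_property_to_refute_sigma_compactness_one}. Fix $B \in \class$ with $B \subseteq \bigcup_{i=1}^\infty B_i$, and suppose for a contradiction that $|B \cap B_i| < \infty$ for every $i \in \N$. The goal is to contradict \eqref{eqn_suff_condition_for_key_prop_one} applied to $B$.

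By \eqref{eqn_suff_condition_for_key_prop_one}, there exists $n \in \N$ such that $B \cap (B - n)$ is infinite. Equivalently, there are infinitely many $b \in B$ with $b + n \in B$. For each such $b$, write $b \in B_j$ and $b + n \in B_i$, which is possible since $B \subseteq \bigcup B_i$. The indices are unique because the $B_i$'s are disjoint. Then $n = (b+n) - b$ and $b = (b+n) - n$, so $n \in B_i - B_j$ in the sense of the paper's difference sets: $n$ lies in $B_i - b$, and $b \in B_j$.

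The key step is to show that the indices $\max(i,j)$ are unbounded along these infinitely many $b$. Suppose instead they were all at most some $d$. Then all the infinitely many elements $b$ and $b+n$ lie in $B \cap (B_1 \cup \cdots \cup B_d)$. This set is finite, since each $B \cap B_k$ is finite by assumption, which is a contradiction. Hence for every $d$ there is a pair $(i,j)$ with $\max(i,j) > d$ and $n \in B_i - B_j$. This gives
\[
\min \bigcup_{\max(i,j)>d} (B_i - B_j) \leq n \quad \text{for all } d,
\]
which contradicts the limit being $\infty$ in \eqref{eqn_suff_condition_for_key_prop_two}. Therefore some $B \cap B_i$ is infinite, which is \eqref{eqn_key_property_to_refute_sigma_compactness_one}.

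The only delicate point is the convention that difference sets are computed in $\N$. Here $n$ is a positive element of $B_i - B_j$ (in the paper's notation $B_i - b = \{m : m + b \in B_i\}$), so the pairing of indices must be oriented correctly. Otherwise the argument is a pigeonhole step and should pose no real obstacle.
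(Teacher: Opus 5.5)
Your proof is correct and is essentially the paper's argument recast as a contradiction. The paper first uses the limit hypothesis to fix $d$ with $B_i \cap (B_j - n) = \emptyset$ whenever $\max(i,j) > d$, so that $B \cap (B-n) \subseteq \bigcup_{i \le d} (B \cap B_i)$ and pigeonhole gives an infinite $B \cap B_i$; you instead assume every $B \cap B_i$ is finite, deduce that the indices $\max(i,j)$ must be unbounded, and reach a contradiction with the same limit.
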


\begin{proof}
    Suppose the properties in \eqref{eqn_suff_condition_for_key_prop_one} and \eqref{eqn_suff_condition_for_key_prop_two} hold.  We will show that the property in \eqref{eqn_key_property_to_refute_sigma_compactness_one} holds for the same sets $B_1$, $B_2$, \dots.

    Let $B \in \class$ be such that $B \subseteq \bigcup_ {i=1}^\infty B_i$.  By \eqref{eqn_suff_condition_for_key_prop_one}, there exists $n \in \N$ such that $|B \cap (B-n)| = \infty$.  Writing $B = B \cap \bigcup_{i=1}^\infty B_i$, we see that
    \[B \cap (B-n) = \bigcup_{i,j = 1}^\infty \big( B \cap B_i \cap (B-n) \cap (B_j - n)\big).\]
    By \eqref{eqn_suff_condition_for_key_prop_two}, there exists $d \in \N$ such that if $\max(i,j) > d$, then $B_i \cap (B_j - n) = \emptyset$.  Therefore,
        \[B \cap (B-n) = \bigcup_{i,j = 1}^d \big( B \cap B_i \cap (B-n) \cap (B_j - n)\big) \subseteq \bigcup_{i=1}^d \big( B \cap B_i \big).\]
    By assumption, the set on the left hand side is infinite, so there must exist an $i \in \{1, \ldots, d\}$ such that $B \cap B_i$ is infinite.  This verifies \eqref{eqn_key_property_to_refute_sigma_compactness_one}.
\end{proof}

The result in the following lemma is well known.  We include a proof for completeness.

\begin{lemma}
\label{lem:piecewise_syndetic_shift}
    If $B \subseteq \N$ is piecewise syndetic, then there exists $n \in \N$ such that the set $B \cap (B - n)$ is piecewise syndetic.
\end{lemma}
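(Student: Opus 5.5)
The plan is to pass to a central subset of a translate of $B$ and then use the dynamical description of central sets from \cref{lem:central_characterization}, since a return-time set $R(x,U)$ with $x \in U$ visibly recurs to itself.

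First, because $B$ is piecewise syndetic, there is $N \in \N$ such that $B \cup (B+1) \cup \cdots \cup (B+N)$ is thick. As in the proof of \cref{lem:central_characterization}, thick sets are central and $\central$ is partition regular, so some translate $B+i$ with $0 \leq i \leq N$ is central. By \cref{lem:central_characterization}, there are $D \in \dcsyndetic$ and a thick set $H$ with $B + i \supseteq D \cap H$. By \cref{thm:simple_equivalent_dcS}, we may shrink $D$ and assume $D = R(x,U)$ for some minimal system $(X,T)$, some $x \in X$, and some clopen $U \ni x$.

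Second, I will find the shift $n$. Put $W \defeq U \cap T^{-n}U$ for an $n$ with $T^n x \in U$; such $n$ exist because $R(x,U)$ is syndetic, hence nonempty. Then $x \in W$, so $W$ is an open neighbourhood of $x$ and
\[R(x,W) = R(x,U) \cap (R(x,U) - n) = D \cap (D - n)\]
is dynamically central syndetic, and in particular syndetic. Next observe that $H \cap (H-n)$ is still thick, since any long interval in $H$ yields a slightly shorter interval in $H \cap (H-n)$. Hence
\[
(D \cap H) \cap \big((D \cap H) - n\big) = \big(D \cap (D-n)\big) \cap \big(H \cap (H-n)\big)
\]
is the intersection of a syndetic set with a thick set, and so it is piecewise syndetic.

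Third, I translate back. Since $B + i \supseteq D \cap H$, the set $(B+i) \cap ((B+i) - n)$ is piecewise syndetic. Subtracting $i$ shows that $B \cap (B-n)$ contains $\big((B+i)\cap((B+i)-n)\big) - i$ up to finitely many elements near $0$, which is harmless. Since $\PS$ is translation invariant, $B \cap (B - n)$ is piecewise syndetic.

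The only step needing care is getting from $B$ to a central translate. That step is already carried out in the proof of \cref{lem:central_characterization}, so I expect no real obstacle. The remaining steps are bookkeeping with the translation conventions in $\N$.
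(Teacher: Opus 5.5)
Your proof is correct, but it takes a different and much heavier route than the paper. The paper's argument is elementary. Write $B = S \cap H$ with every interval of length $N$ meeting $S$. Then $\bigcup_{i=1}^N (B-i) \supseteq \bigcap_{i=1}^N (H-i)$, so $B \cap \bigcup_{i=1}^N (B-i) \supseteq S \cap H \cap \bigcap_{i=1}^N (H-i)$ is piecewise syndetic, and partition regularity of $\PS$ yields some $n \in \{1, \ldots, N\}$.

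You instead pass to a central translate $B+i$ using partition regularity of $\central$. You then invoke \cref{lem:central_characterization}, which rests on the Huang--Shao--Ye theorem and ultimately on Auslander--Ellis. Finally you use that $R(x,U) \cap (R(x,U)-n) = R(x, U \cap T^{-n}U)$ is again the set of return times of $x$ to one of its neighbourhoods.

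The paper's argument is self-contained and gives the explicit bound $n \leq N$ in terms of the gap size of $S$. Yours proves more: $B \cap (B-n)$ is piecewise syndetic for \emph{every} $n \in R(x,U)$, so the set of admissible shifts contains a dynamically central syndetic set.

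Two small remarks. You do not need \cref{thm:simple_equivalent_dcS}: the definition of $\dcsyndetic$ already gives $R(x,U) \subseteq D$ with $U$ an open neighbourhood of $x$, and clopenness plays no role. Also, the translation back is exact, since $\big((B+i) \cap ((B+i)-n)\big) - i = B \cap (B-n)$ with no exceptional elements near $0$.
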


\begin{proof}
    Suppose $B \subseteq \N$ is piecewise syndetic. There exists a syndetic set $S \subseteq \N$ and a thick set $H \subseteq \N$ such that $B = S \cap H$.  There exists $N \in \N$ such that $\bigcup_{i=1}^N (S-i) \supseteq \N$, and so $\bigcup_{i=1}^N (B-i) \supseteq \bigcap_{i=1}^N (H-i)$. It follows that the set 
    \[
        B \cap \bigcup_{i=1}^N (B-i) \supseteq S \cap H \cap \bigcap_{i=1}^N (H-i)
    \]
    is piecewise syndetic.  Since the family $\PS$ is partition regular, there exists $n \in \{1, \ldots, N\}$ such that the set $B \cap (B-n)$ is piecewise syndetic.
\end{proof}

\begin{proof}[Proof of \cref{theorem_dy_syndetic_is_not_sigma_compact}]
    We will show that $\dthick$ and $\dcthick$ satisfy the conditions in \cref{lemma_suff_condition_for_key_property_for_dual_not_sigma_compact} and hence in \eqref{eqn_key_property_to_refute_sigma_compactness_one}. It will follow by \cref{thm_suff_condition_for_dual_not_sigma_compact} that the families $\dthick^* = \dsyndetic$ and $\dcthick^* = \dcsyndetic$ are not $\sigma$-compact.
    
    To see that \eqref{eqn_suff_condition_for_key_prop_one} holds for $\dthick$ and $\dcthick$, let $B \in \dthick$ or $B \in \dcthick$. By \cref{thm_dy_c_thick_is_ps}, the set $B$ is piecewise syndetic. Thus, by \cref{lem:piecewise_syndetic_shift}, there exists $n \in \N$ for which the set $B \cap (B-n)$ is piecewise syndetic and hence infinite.

    We will show that property \eqref{eqn_suff_condition_for_key_prop_two} in \cref{lemma_suff_condition_for_key_property_for_dual_not_sigma_compact} holds for the family $\dthick$. Since $\dthick \subseteq \dcthick$, it will follow that $\dcthick$ also satisfies \eqref{eqn_suff_condition_for_key_prop_two}. Let $\{T_{i,j} \ | \ i, j \in \N\}$ be a doubly-indexed family of thick subsets of $\N$ that are well separated, in the sense that
    \begin{align}
    \label{eqn_very_well_separated_thick_sets}
        \lim_{d \to \infty} \min \bigcup_{\substack{ i, j, k, \ell \in \N \\ (i,j) \neq (k, \ell) \\ \max(i,j,k,\ell) > d}} \min \big(T_{i,j} - T_{k,\ell} \big) = \infty,
    \end{align}
    where the notation $(i,j)$ denotes an element of $\N^2$.
    Let $p_1, p_2, \ldots$ be an increasing enumeration of the primes.  For $i \in \N$, define
    \[B_i = \bigcup_{j = i}^\infty \big(T_{i,j} \cap (p_j \N + 1) \big).\]
    The sets $B_1$, $B_2$, \dots are disjoint, and by \cref{rem:combinatorial_primes}, each is dynamically thick.  To verify the limit in \eqref{eqn_suff_condition_for_key_prop_two}, note that for all $i, k \in \N$,
    \[B_i - B_k = \bigcup_{\substack{j=i\\ \ell = k}}^\infty \Big ( \big( T_{i,j} \cap (p_j \N + 1) \big) - \big( T_{k,\ell} \cap (p_\ell \N + 1) \big) \Big) \subseteq \bigcup_{\substack{j=i\\ \ell = k}}^\infty \big( T_{i,j} - T_{k,\ell} \big).\]
    For all $d \in \N$, considering the cases $i=k$ and $i \neq k$ separately, we see that
    \[\min \bigcup_{\substack{i,k = 1 \\ \max(i,k) > d}}^\infty \big( B_i - B_k\big) \geq \min \bigg( \bigcup_{\substack{ i, j, k, \ell \in \N \\ (i,j) \neq (k, \ell) \\ \max(i,j,k,\ell) > d}} \min \big(T_{i,j} - T_{k,\ell} \big), p_{d} \bigg).\]
    The limit in \eqref{eqn_suff_condition_for_key_prop_two} follows now from the limit in \eqref{eqn_very_well_separated_thick_sets} and the fact that $\lim_{d \to \infty} p_d = \infty$, as desired.
\end{proof}

\begin{remark}
Using \cref{lemma_suff_condition_for_key_property_for_dual_not_sigma_compact}, one can show that the families 
\[
    \IP^*, \IP_0^*, \central^*, \PS^*, \dthick, \dcthick, \thick, \IP, \IP_0, \central, \PS 
\]
are not $\sigma$-compact. However, members of the duals of the first seven families $\IP = (\IP^*)^*$, $\IP_0 = (\IP_0^*)^*$, $\central = (\central^*)^*$, $\PS = (\PS^*)^*$, $\dsyndetic = \dthick^*$, $\dcsyndetic = \dcthick^*$, $\syndetic = \thick^*$ can be partitioned into two sets belonging to the same family. This demonstrates that $\sigma$-compactness of $\family^*$ is sufficient, but not necessary, for every member of $\family$ to be partitioned into two members of $\family$.
\end{remark}

\section{Dynamically piecewise syndetic sets}
\label{sec_dps_sets}

In this section, we introduce and study the family of dynamically piecewise syndetic sets, $\dPS$, and the family of dynamically central piecewise syndetic sets, $\dcPS$; recall their definitions from the top of \cref{sec_dy_pws_sets}. The main result, \cref{thm:relations_C_dPS_PS-intro}, describes the relationships between these families and other established families.
We deduce from this a number of properties of dynamically piecewise syndetic sets in \cref{sec_dcps_and_set_recurrence}.

\subsection{Members of syndetic, idempotent filters are central}

Before treating dynamically piecewise syndetic sets, we will show that members of syndetic, idempotent filters are central along any thick set.  This fact -- which we will have immediate need for in the next subsection -- is a consequence of \cref{intro_mainthm_full_chars_of_dcsyndetic}, but the argument we give here is far more elementary.

In what follows, for a family $\family$ on $\N$, we denote by $\overline{\family}$ the set $\bigcap_{A \in \family} \overline{A}$ in $\beta \N$.

\begin{lemma}
\label{lemma_syndetic_contained_in_minimal}
    For all syndetic filters $\filter$ on $\N$ and all minimal left ideals $L \subseteq \beta \N$, we have $L \cap \overline{\filter} \neq \emptyset$.
\end{lemma}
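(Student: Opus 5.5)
The plan is a compactness argument in $\beta\N$. Because $\overline{\filter} = \bigcap_{A \in \filter} \overline{A}$ and $L$ is compact (minimal left ideals are compact, as noted in \cref{sec_topology_and_ultrafilters}), it suffices to show that the family $\{\overline{A} \cap L \ | \ A \in \filter\}$ of closed subsets of $L$ has the finite intersection property. Since $\filter$ is a filter, any finite intersection $\overline{A_1} \cap \cdots \cap \overline{A_k} \cap L$ equals $\overline{A_1 \cap \cdots \cap A_k} \cap L$, and $A_1 \cap \cdots \cap A_k \in \filter$. So the whole statement reduces to a single claim: for every syndetic set $A \subseteq \N$ and every minimal left ideal $L$, we have $\overline{A} \cap L \neq \emptyset$.

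To prove this claim, I will use that $A$ is syndetic: there is $N$ with $\N = A \cup (A-1) \cup \cdots \cup (A-N)$. Fix any $q \in L$. Since $\N \in q$ and $q$ is an ultrafilter (hence partition regular), some $A - i$ with $0 \le i \le N$ belongs to $q$. By definition of the sum of families, $A - i \in q$ means exactly that $A \in i + q$, interpreting $i + q$ as the principal ultrafilter at $i$ added to $q$. In the case $i = 0$, we simply have $A \in q$.

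The left ideal property gives $i + q = \lambda_1^i q \in L$, since $L$ is invariant under $\lambda_1$. Hence $i + q \in \overline{A} \cap L$. This establishes the claim, and the finite intersection property then yields a point of $L \cap \overline{\filter}$ by compactness.

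I expect no real obstacle. The only point needing care is the bookkeeping identity $A - i \in q \iff A \in i + q$, which follows directly from the definition $B \in \family + \familytwo \iff B - \familytwo \in \family$ with $\family$ the principal ultrafilter at $i$.
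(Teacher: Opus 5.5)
Your proof is correct and follows the paper's argument: the filter property gives the finite intersection property for $\{\overline{A} \cap L \mid A \in \filter\}$, and compactness of $L$ finishes. The only difference is that you prove directly, via $A - i \in q \Rightarrow i + q \in \overline{A} \cap L$, that a syndetic set meets every minimal left ideal, whereas the paper cites this fact from Hindman--Strauss (Thm.~4.48).
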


\begin{proof}
    We will use the following well-known fact (\cite[Thm. 4.48]{hindman_strauss_book_2012}): for all syndetic sets $A \subseteq \N$ and all minimal left ideals $L \subseteq \beta \N$, we have $\overline{A} \cap L \neq \emptyset$.

    Let $\filter$ be a syndetic filter on $\N$, and let $L \subseteq \beta \N$ be a minimal left ideal.  Consider the set $\{\overline{A} \cap L \ | \ A \in \filter\}$.  By the fact from the previous paragraph, the set consists of closed, nonempty subsets of $L$.  Since $\filter$ is a filter, the set has the finite intersection property.  Since $L$ is compact, we have that $\overline{\filter} \cap L$ is nonempty, as desired.
\end{proof}

\begin{theorem}
\label{thm_sifs_are_contained_in_idempotents}
    For all syndetic, idempotent filters $\filter$ on $\N$ and all minimal left ideals $L \subseteq \beta \N$, there exists an idempotent ultrafilter in $L$ containing $\filter$.
\end{theorem}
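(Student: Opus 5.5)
Fix a syndetic, idempotent filter $\filter$ and a minimal left ideal $L \subseteq \beta \N$. The plan is to mimic the proof of \cref{thm_equiv_forms_of_max_idempot_filter}, but restrict attention to $L$. Specifically, I will show that the set
\[
    K \defeq \overline{\filter} \cap L = \bigcap_{A \in \filter} \overline{A} \cap L
\]
is a nonempty, compact subsemigroup of $(\beta \N, +)$. An idempotent in $K$ is then an idempotent ultrafilter in $L$ containing $\filter$.

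First, $K$ is nonempty by \cref{lemma_syndetic_contained_in_minimal}, which uses that $\filter$ is syndetic. Next, $K$ is closed: each $\overline{A}$ is clopen and $L$ is compact, as noted in \cref{sec_topology_and_ultrafilters}. Hence $K$ is compact, being a closed subset of the compact space $\beta \N$.

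For closure under addition, take $q_1, q_2 \in K$. Since $\filter \subseteq q_1$ and $\filter \subseteq q_2$, idempotency and \cref{lemma_filter_sums_containment} \eqref{item_filter_sums_containment} give
\[
    \filter \subseteq \filter + \filter \subseteq q_1 + q_2,
\]
so $q_1 + q_2 \in \overline{\filter}$. Moreover, $q_1 + q_2 \in L$ because $L$ is a left ideal and $q_2 \in L$. Thus $q_1 + q_2 \in K$. (Note that $\beta\N + q_2 \subseteq L$ follows from $L$ being a left ideal.)

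Finally, I will apply \cite[Thm. 2.5]{hindman_strauss_book_2012}: a compact Hausdorff right-topological semigroup has an idempotent. The right-topological structure restricts to $K$, since right translations are continuous on $\beta\N$. This yields $p \in K$ with $p + p = p$, so $p$ is an idempotent ultrafilter in $L$ containing $\filter$.

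I expect no real obstacle. The only point needing care is nonemptiness of $K$, which is exactly where syndeticity of $\filter$ enters through \cref{lemma_syndetic_contained_in_minimal}. Everything else is formal.
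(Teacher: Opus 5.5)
Your proposal is correct and matches the paper's proof: both take $L \cap \overline{\filter}$, get nonemptiness from \cref{lemma_syndetic_contained_in_minimal}, check that it is a compact subsemigroup (via $\filter \subseteq \filter + \filter \subseteq q_1 + q_2$ and the left-ideal property of $L$), and apply \cite[Thm. 2.5]{hindman_strauss_book_2012}. Your version is slightly cleaner because you work with the given $L$ directly. The paper instead first picks a minimal $p \supseteq \filter$ and writes $L = \beta\N + p$.
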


\begin{proof}
    Let $\filter$ be a syndetic, idempotent filter on $\N$. By \cref{lemma_syndetic_contained_in_minimal}, there exists a minimal ultrafilter $p \in \beta \N$ such that $\filter \subseteq p$.  Define $L = \beta \N + p$.  Since $p$ is minimal, the set $L$ is a minimal left ideal, and hence subsemigroup, of $(\beta \N, +)$.  By \cite[Thm. 2.6]{hindman_strauss_book_2012}, the set $L$ is compact.  Therefore, the set $L$ is a compact subsemigroup of $(\beta \N, +)$ containing $p$.

    It follows from the proof of \cref{thm_equiv_forms_of_max_idempot_filter} that the set $\overline{\filter}$ is a compact subsemigroup of $(\beta \N, +)$.  It clearly contains $p$.

    Define $X = L \cap \overline{\filter}$.  Since both $L$ and $\overline{\filter}$ are compact subsemigroups of $(\beta \N, +)$ containing $p$, so is $X$.  By \cite[Thm. 2.5]{hindman_strauss_book_2012}, there exists an idempotent ultrafilter $q \in X$.  Since $X \subseteq L$, we see that $q$ is minimal.  Since $X \subseteq \overline{\filter}$, we see that $\filter \subseteq q$.  Thus, we have shown that $\filter$ is contained in a minimal, idempotent ultrafilter in $L$, as desired.
\end{proof}

\begin{theorem}
\label{cor_central_syndetic_is_strongly_central}
    Let $A \subseteq \N$ belong to a syndetic, idempotent filter.  For all minimal left ideals $L \subseteq \beta \N$, there exists an idempotent $p \in L$ such that $A \in p$.  In particular, for all thick sets $H \subseteq \N$, the set $A \cap H$ is central.
\end{theorem}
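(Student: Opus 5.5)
The first assertion is essentially \cref{thm_sifs_are_contained_in_idempotents}. Let $\filter$ be a syndetic, idempotent filter with $A \in \filter$, and let $L \subseteq \beta \N$ be a minimal left ideal. By \cref{thm_sifs_are_contained_in_idempotents}, there is an idempotent ultrafilter $p \in L$ with $\filter \subseteq p$, and hence $A \in p$. Since $p$ lies in a minimal left ideal and is idempotent, it is a minimal idempotent ultrafilter.

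For the second assertion, fix a thick set $H \subseteq \N$. I will use the fact already cited in the proof of \cref{thm_dct_combo_characterizations}, namely \cite[Thm. 4.48]{hindman_strauss_book_2012}: there exists a minimal left ideal $L \subseteq \overline{H}$. Applying the first assertion to this $L$ gives an idempotent $p \in L$ with $A \in p$. Since $p \in L \subseteq \overline{H}$, we also have $H \in p$. Because $p$ is a filter, $A \cap H \in p$.

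Finally, $p$ is a minimal idempotent ultrafilter. By the Bergelson--Hindman--Weiss characterization recalled in \cref{sec_ip_and_central_sets}, every member of a minimal idempotent ultrafilter is central. Hence $A \cap H$ is central.

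There is no real obstacle here. The only step needing care is choosing the minimal left ideal inside $\overline{H}$ before invoking \cref{thm_sifs_are_contained_in_idempotents}, since that theorem lets us prescribe $L$ in advance.
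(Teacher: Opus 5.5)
Your proposal is correct and follows essentially the same route as the paper. You apply \cref{thm_sifs_are_contained_in_idempotents} to get an idempotent $p \in L$ containing the filter; then, for a thick $H$, you take a minimal left ideal $L \subseteq \overline{H}$ (Hindman--Strauss, Thm.~4.48), so that $A \cap H$ lies in a minimal idempotent ultrafilter and is therefore central.
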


\begin{proof}
    Denote the syndetic, idempotent filter containing $A$ by $\family$. Let $L \subseteq \N$ be a minimal left ideal. By \cref{thm_sifs_are_contained_in_idempotents}, there exists an idempotent $p \in L$ such that $\family \subseteq p$.  In particular, we have that $A \in p$.

    If $H \subseteq \N$ is thick, by \cite[Thm. 4.48]{hindman_strauss_book_2012}, there exists a minimal left ideal $L \subseteq \overline{H}$.  By the reasoning in the previous paragraph, there exists an idempotent $p \in L$ containing $A$.  Therefore, $A \cap H \in p$, whereby the set $A \cap H$ is central.
\end{proof}

\begin{remark}
    In \cite{bergelson_hindman_strauss_2012}, a set $A \subseteq \N$ is called \emph{strongly central} if it has the property that for all minimal left ideals $L \subseteq \beta \N$, there exists an idempotent $p \in L$ containing $A$, and it is called \emph{very strongly central} if there exists a minimal system $(X,T)$, a nonempty, open set $U \subseteq X$, and a point $x \in \overline{U}$ such that $R(x,U) \subseteq A$.  It is a consequence of \cref{cor_central_syndetic_is_strongly_central} that members of syndetic, idempotent filters are strongly central.  In fact, a set is a member of a syndetic, idempotent filter if and only if it is very strongly central; this follows from \cref{intro_mainthm_full_chars_of_dcsyndetic}.  An example of a strongly central, but not very strongly central, set is given in \cite[Thm. 2.17]{bergelson_hindman_strauss_2012}.
\end{remark}

\subsection{Relations with established families and proof of \texorpdfstring{\cref{thm:relations_C_dPS_PS-intro}}{Theorem F}}

\label{sec_dcps_def_and_first_results}

In this subsection, we prove \cref{thm:relations_C_dPS_PS-intro} by combining simple properties of the families $\dcPS$ and $\dPS$ in Lemmas \ref{lemma_dcps_dps_are_pr} and \ref{lem:all_translates_are_dps}; abstract family algebra in Lemmas \ref{prop:general_abstract_intersection_filter_nonsense} and \ref{cor_shifted_class_equalities}; and the characterization of dynamically central syndetic sets in \cref{intro_mainthm_full_chars_of_dcsyndetic}.

\begin{lemma}
\label{lemma_dcps_dps_are_pr}
    The families $\dcPS$ and $\dPS$ are partition regular, and their duals, $\dcPS^*$ and $\dPS^*$, are filters.
\end{lemma}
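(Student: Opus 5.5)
The plan is to recognize both families as instances of the construction in \cref{lemma_class_cap_is_pr} and then apply that lemma directly. By definition, $\dPS$ consists of the sets $B \cap C$ with $B \in \dsyndetic$ and $C \in \dthick$. Since $\dthick = \dsyndetic^*$, we may write $\dPS = \dsyndetic \classcap \dsyndetic^*$. Similarly, $\dcPS$ consists of the sets $B \cap C$ with $B \in \dcsyndetic$ and $C \in \dcthick = \dcsyndetic^*$, so $\dcPS = \dcsyndetic \classcap \dcsyndetic^*$.

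There is one bookkeeping point: $\classcap$ was defined as a family, meaning the upward closure of the collection of intersections. The collection $\{B \cap C\}$ with $B$ in a family and $C$ in another family is already upward closed. Indeed, if $D \supseteq B \cap C$, then $D = (B \cup D) \cap (C \cup D)$, and both $B \cup D$ and $C \cup D$ stay in their respective families because those families are upward closed. So the definition of $\dPS$ (resp.\ $\dcPS$) given at the start of \cref{sec_dy_pws_sets} coincides exactly with $\family \classcap \family^*$ for $\family = \dsyndetic$ (resp.\ $\dcsyndetic$).

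With this identification in hand, \cref{lemma_class_cap_is_pr} applied to $\family = \dsyndetic$ and to $\family = \dcsyndetic$ gives both conclusions at once. The families $\dPS$ and $\dcPS$ are partition regular, and their duals are filters; the latter also follows from the partition regularity by fact \eqref{item_pr_filter_duality} in \cref{sec_filter_pr_and_ufs}.

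The only step needing care is the identification of duals, namely that $\dthick$ and $\dcthick$ are exactly the duals of $\dsyndetic$ and $\dcsyndetic$. This is immediate from the definitions: a set is dynamically thick precisely when it meets every $R(x,U)$, hence every dynamically syndetic set, and likewise in the centered case. The paper already records this duality at the start of \cref{sec_dyn_synd_facts}. The argument has no real obstacle; it is an application of the abstract family algebra.
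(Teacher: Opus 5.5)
Your proposal is correct and is essentially the paper's proof: write $\dPS = \dsyndetic \classcap \dsyndetic^*$ and $\dcPS = \dcsyndetic \classcap \dcsyndetic^*$ using the dualities $\dthick = \dsyndetic^*$ and $\dcthick = \dcsyndetic^*$. Then apply \cref{lemma_class_cap_is_pr}, and get the filter property of the duals from partition regularity. One small correction that changes nothing: the paper defines $\classcap$ as the collection of intersections itself, not its upward closure, but you showed that collection is already upward closed, so the two readings agree.
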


\begin{proof}
    That the families $\dcPS$ and $\dPS$ are partition regular follows immediately by combining their definitions with \cref{lemma_class_cap_is_pr}.  It follows then from the discussion in \cref{sec_filter_pr_and_ufs} that the families $\dcPS^*$ and $\dPS^*$ are filters.
\end{proof}

\begin{lemma}
\label{lem:all_translates_are_dps}
If $A \in \dPS$, then for all $n \in \N$, $A - n$ and $A + n$ are also in $\dPS$.
\end{lemma}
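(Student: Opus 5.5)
Write $A = B \cap C$ with $B \in \dsyndetic$ and $C \in \dthick$, as in the definition of $\dPS$. The plan is to translate each factor separately and use the translation properties already established for the two families. Translation commutes with intersection: $(B \cap C) - n = (B-n) \cap (C-n)$ and $(B \cap C) + n = (B+n) \cap (C+n)$, where all sets are taken in $\N$. So it is enough to check that each translated factor still lies in its family.

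For the first factor, \cref{lemma_translates_of_dsyndetic_sets} \eqref{item_all_translates_are_ds} gives $B - n \in \dsyndetic$ for all $n \in \N$. The ``Moreover'' clause of the same lemma gives the same for $B + n$.

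For the second factor, \cref{lemma_translates_of_dthick_sets} \eqref{item_all_translates_are_dt} gives $C - n \in \dthick$. Its ``Moreover'' clause gives $C + n \in \dthick$. It is worth seeing why this clause holds, since it is the one point that needs care. For $C+n$, take any $D \in \dsyndetic$. Then $D - n \in \dsyndetic$, so by \cref{lemma_intersection_of_ds_and_dt_is_infinite} the set $C \cap (D-n)$ is infinite. In particular it contains some $m \in \N$ with $m + n \in D$, and then $m + n \in (C+n) \cap D$. For $C - n$, the argument in the proof of \cref{lemma_translates_of_dthick_sets} uses $D+n$ in the same way. That lemma needs infinitude rather than mere nonemptiness so that the chosen element survives truncation to $\N$.

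Combining the two factors, $A - n = (B-n)\cap(C-n)$ and $A + n = (B+n)\cap(C+n)$ are both intersections of a dynamically syndetic set with a dynamically thick set, so both lie in $\dPS$.

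No real obstacle is expected. The only subtle step is the $+n$ translate of a dynamically thick set, and that is already covered by the ``Moreover'' clause of \cref{lemma_translates_of_dthick_sets}.
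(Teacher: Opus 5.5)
Your proof is correct and is essentially the paper's argument: the paper observes that $\dsyndetic$ and $\dthick$ are invariant under both positive and negative translates and then invokes \cref{lemma_idempotent_classcap_is_idempotent} to transfer this invariance to $\dPS = \dsyndetic \classcap \dthick$, which is exactly your factor-by-factor computation with $(B \cap C) \pm n = (B \pm n) \cap (C \pm n)$. One small aside: your remark that the $C-n$ case needs infinitude is not right, because every element of $D+n$ already exceeds $n$, so nonemptiness of $C \cap (D+n)$ suffices; this does not affect your proof.
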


\begin{proof}
Because the families $\dsyndetic$ and $\dthick$ are translation invariant (under positive and negative shifts, by \cref{lemma_translates_of_dsyndetic_sets,lemma_translates_of_dthick_sets}), it follows by \cref{lemma_idempotent_classcap_is_idempotent} that the family $\dPS = \dsyndetic \classcap \dthick$ is translation invariant.
\end{proof}

\begin{lemma}
\label{prop:general_abstract_intersection_filter_nonsense}
    If $\filter$ is a syndetic, idempotent filter and $\filtertwo$ is a $\central^*$, idempotent filter, then $\filter \familycap \filtertwo$ is a syndetic, idempotent filter.
\end{lemma}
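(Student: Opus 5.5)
Two things must be checked: that $\filter \familycap \filtertwo$ is an idempotent filter, and that all of its members are syndetic. Both $\filter$ and $\filtertwo$ are filters, so by fact \eqref{item_filter_containment} of \cref{sec_filter_pr_and_ufs} the family $\filter \familycap \filtertwo$ is a filter. Idempotency follows from \cref{lemma_idempotent_classcap_is_idempotent} \eqref{item_class_cap_of_idempotent_is_idempotent}, since $\filter$ and $\filtertwo$ are idempotent. (If either family is empty, the convention $\emptyset \classcap \family = \family$ makes the statement trivial; I assume both are proper.)

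The real content is syndeticity. Let $A \in \filter$ and $B \in \filtertwo$; I want to show that $A \cap B$ is syndetic. I will use the characterization of syndeticity via minimal left ideals: a set $C$ is syndetic if and only if $\overline{C}$ meets every minimal left ideal $L \subseteq \beta\N$ (cf. \cite[Thm. 4.48]{hindman_strauss_book_2012}). So fix a minimal left ideal $L$. By \cref{thm_sifs_are_contained_in_idempotents}, applied to the syndetic, idempotent filter $\filter$, there is an idempotent ultrafilter $p \in L$ with $\filter \subseteq p$. Since $p$ is a minimal idempotent, every member of $p$ is central (Bergelson--Hindman--Weiss). Now $B \in \filtertwo \subseteq \central^*$, so $B$ meets every central set, and hence meets every member of $p$. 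Because $p$ is an ultrafilter, this forces $B \in p$. Together with $A \in p$, we get $A \cap B \in p$, so $\overline{A \cap B} \cap L \ni p$ is nonempty. As $L$ was arbitrary, $A \cap B$ is syndetic.

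The only step requiring care is the equivalence "$C$ is syndetic if and only if $\overline{C}$ meets every minimal left ideal." One direction, syndetic implies meeting every minimal left ideal, is the fact already quoted in the proof of \cref{lemma_syndetic_contained_in_minimal}. For the converse, I would argue as follows. If $C$ is not syndetic, then $\N \setminus C$ is thick, so by \cite[Thm. 4.48]{hindman_strauss_book_2012} some minimal left ideal $L$ satisfies $L \subseteq \overline{\N\setminus C}$. That $L$ is then disjoint from $\overline{C}$. This closes the argument. Note that idempotency of $\filtertwo$ is needed only for the idempotency conclusion, not for syndeticity.
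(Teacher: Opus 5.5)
Your proof is correct and is essentially the paper's argument. The paper fixes a thick set $H$ and applies \cref{cor_central_syndetic_is_strongly_central} to see that $A \cap H$ is central; that result is itself proved by taking an idempotent containing $\filter$ in a minimal left ideal $L \subseteq \overline{H}$, via \cref{thm_sifs_are_contained_in_idempotents}. Since $B \in \central^*$ meets $A \cap H$, the paper concludes that $A \cap B$ meets every thick set, whereas you test against all minimal left ideals and upgrade to $B \in p$, which is the same reasoning dualized through \cite[Thm. 4.48]{hindman_strauss_book_2012}.
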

\begin{proof}
    Since $\filter$ and $\filtertwo$ are filters, the family $\filter \familycap \filtertwo$ is a filter.  Thus, to see that it is a syndetic filter, it suffices to show that for all $A \in \filter$ and $B \in \filtertwo$, the set $A \cap B$ is syndetic.  Let $A \in \filter$, $B \in \filtertwo$, and $H \subseteq \N$ be thick.  Since the set $A$ belongs to a syndetic, idempotent filter, by \cref{cor_central_syndetic_is_strongly_central}, the set $A \cap H$ is central. Since $B \in \central^*$, the set $A \cap H \cap B$ is nonempty.  Because $H$ was an arbitrary thick set, we see that the set $A \cap B$ is syndetic, as desired. Finally, that $\filter \classcap \filtertwo$ is idempotent follows immediately from \cref{lemma_idempotent_classcap_is_idempotent} and the idempotency of $\filter$ and $\filtertwo$.
\end{proof}

\begin{lemma}
\label{cor_central_class_equalities}
\label{cor_shifted_class_equalities}
Let $\familytwo$ be a filter on $\N$.
\begin{enumerate}
    \item
    \label{item_central_star_class_cap}
    If $\filtertwo$ is a $\central^*$, idempotent filter, then $\filtertwo \subseteq \dcPS^*$.

    \item
    \label{item_syndetic_trans_inv_star_class_cap}
    If $\filtertwo$ is a syndetic, translation-invariant filter, then $\filtertwo \subseteq \dPS^*$.
\end{enumerate}
\end{lemma}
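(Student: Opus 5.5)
Both parts reduce to the same pattern. Recall that $\dcPS = \dcsyndetic \classcap \dcthick$ and $\dPS = \dsyndetic \classcap \dthick$. By \cref{lemma_class_cap_is_pr}, applied with $\family = \dcsyndetic$ (so $\family^* = \dcthick$), a set $G$ lies in $\dcPS^*$ if and only if $G \cap B \in \dcsyndetic$ for all $B \in \dcsyndetic$. Similarly, $G \in \dPS^*$ if and only if $G \cap B \in \dsyndetic$ for all $B \in \dsyndetic$. So in each case the plan is: fix $G \in \filtertwo$ and a dynamically (central) syndetic set $B$, and show $G \cap B$ is again dynamically (central) syndetic, using the idempotent-filter characterizations.

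For \eqref{item_central_star_class_cap}, let $G \in \filtertwo$ and $B \in \dcsyndetic$. By \cref{intro_mainthm_full_chars_of_dcsyndetic} \eqref{intro_item_member_of_sif}, $B$ belongs to some syndetic, idempotent filter $\filter$. By \cref{prop:general_abstract_intersection_filter_nonsense}, $\filter \classcap \filtertwo$ is a syndetic, idempotent filter, and it contains $B \cap G$. Applying \cref{intro_mainthm_full_chars_of_dcsyndetic} again, $B \cap G \in \dcsyndetic$, which is what we need.

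For \eqref{item_syndetic_trans_inv_star_class_cap}, let $G \in \filtertwo$ and $B \in \dsyndetic$. By \cref{mainthm_ds_equivalents} \eqref{item_intro_translate_belongs_to_sif}, there is $n$ such that $B - n$ lies in a syndetic, idempotent filter $\filter$. We would like to combine $\filter$ with $\filtertwo$ via \cref{prop:general_abstract_intersection_filter_nonsense}, and this requires two inputs.
\begin{enumerate}
\item $\filtertwo$ is a $\central^*$ family. By \cref{lemma_condition_on_subfamily_of_ps_star}, $\filtertwo \subseteq \PS^*$, and since central sets are piecewise syndetic, $\PS^* \subseteq \central^*$.
\item $\filtertwo$ is idempotent. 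This holds because translation-invariant families are idempotent.
\end{enumerate}
Hence $\filter \classcap \filtertwo$ is a syndetic, idempotent filter. Translation invariance of $\filtertwo$ gives $G - n \in \filtertwo$, so $(B-n) \cap (G-n) = (B \cap G) - n$ belongs to $\filter \classcap \filtertwo$. By \cref{mainthm_ds_equivalents} \eqref{item_intro_translate_belongs_to_sif}, $B \cap G$ is then dynamically syndetic, as required.

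The main point needing care is the shift in part \eqref{item_syndetic_trans_inv_star_class_cap}. The characterization of $\dsyndetic$ only places a translate $B - n$ in a syndetic, idempotent filter, so translation invariance of $\filtertwo$ is exactly what lets $G$ be shifted by the same $n$. The inclusion $\PS^* \subseteq \central^*$ is routine.
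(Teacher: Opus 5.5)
Your proof is correct and follows the paper's argument. Part (i) is identical. In part (ii) the only difference is cosmetic: you apply the translate characterization of $\dsyndetic$ from \cref{mainthm_ds_equivalents} and \cref{prop:general_abstract_intersection_filter_nonsense} directly to $(B \cap G) - n$, whereas the paper gets $A - n \in \dcsyndetic$ from \cref{lemma_translates_of_dsyndetic_sets}, applies part (i) to that translate, and then translates back.
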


\begin{proof}
    \eqref{item_central_star_class_cap} \ Suppose that $\filtertwo$ is a $\central^*$, idempotent filter. To see that $\filtertwo \subseteq \dcPS^*$, it suffices by \cref{lemma_class_cap_is_pr} to show that for all $A \in \dcsyndetic$ and $B \in \filtertwo$, the set $A \cap B \in \dcsyndetic$.  Let $A \in \dcsyndetic$ and $B \in \filtertwo$. By \cref{intro_mainthm_full_chars_of_dcsyndetic}, the set $A$ belongs to a syndetic, idempotent filter $\filter$.  By \cref{prop:general_abstract_intersection_filter_nonsense}, the family $\filter \classcap \filtertwo$ is a syndetic, idempotent filter.  Since $A \cap B \in \filter \classcap \filtertwo$, we have by \cref{intro_mainthm_full_chars_of_dcsyndetic} that $A \cap B \in \dcsyndetic$, as desired.

    \eqref{item_syndetic_trans_inv_star_class_cap} \ Suppose that $\filtertwo$ is a syndetic, translation-invariant filter. By \cref{lemma_condition_on_subfamily_of_ps_star}, the family $\PS^*$ contains all syndetic, translation-invariant filters, so we have that $\filtertwo \subseteq \PS^* \subseteq \central^*$.  Note that $\filtertwo$ is idempotent because it is translation invariant.

    To see that $\filtertwo \subseteq \dPS^*$, it suffices by \cref{lemma_class_cap_is_pr} to show that for all $A \in \dsyndetic$ and $B \in \filtertwo$, the set $A \cap B \in \dsyndetic$.  Let $A \in \dsyndetic$ and $B \in \filtertwo$. By \cref{lemma_translates_of_dsyndetic_sets}, there exists $n \in \N$ such that $A - n \in \dcsyndetic$. Since $B - n \in \filtertwo - n \subseteq \filtertwo \subseteq \central^*$ and $\filtertwo$ is idempotent, it follows from \eqref{item_central_star_class_cap} and \cref{lemma_class_cap_is_pr} that
    \[
        (A \cap B) - n = (A - n) \cap (B - n) \in \dcsyndetic \classcap \filtertwo = \dcsyndetic.
    \]
    Adding $n$, by \cref{lemma_translates_of_dsyndetic_sets}, the set $A \cap B \in \dsyndetic$, as was to be shown.
\end{proof}

We are ready to prove \cref{thm:relations_C_dPS_PS-intro}.
\begin{proof}[Proof of \cref{thm:relations_C_dPS_PS-intro}]

By \cref{lem:central_characterization} and the fact that $\thick \subseteq \dthick \subseteq \dcthick$, we have that
    \begin{align*}
        \central = \dcsyndetic \classcap \thick &\subseteq \dcsyndetic \classcap \dcthick = \dcPS
    \end{align*}
    By \cref{rem:combinatorial_primes}, there is a set $A \in \dthick$ which is not an IP set. Since every central set is an IP set, we see that the set $A \in \dcPS$ but $A \not \in \central$. Thus, the containment $\central \subseteq \dcPS$ is proper.

 By \cref{lemma_condition_on_subfamily_of_ps_star}, the family $\PS^*$ is a syndetic, translation-invariant filter.  Since $\PS^* \subseteq \central^*$ and translation-invariance implies idempotency, we have that $\PS^*$ is also a $\central^*$, idempotent filter.  By \cref{cor_shifted_class_equalities}, we see that $\PS^* \subseteq \dcPS^*$ and $\PS^* \subseteq \dPS^*$.  It follows that $\dcPS \subseteq \PS$ and $\dPS \subseteq \PS$.
The piecewise syndetic set $2\N - 1$ is not $\dcPS$ and so the inclusion $\dcPS \subseteq \PS$ is proper.

To show that $\PS \subseteq \dPS$, we will show that $\dPS^*$ is a syndetic, translation-invariant filter and appeal to \cref{lemma_condition_on_subfamily_of_ps_star} to see that $\dPS^* \subseteq \PS^*$, whereby $\PS \subseteq \dPS$.  Translation invariance of $\dPS^*$ follows from the positive translation invariance of $\dPS$ shown in \cref{lem:all_translates_are_dps}. That $\dPS^*$ is a filter is shown in \cref{lemma_dcps_dps_are_pr}.  That every member of $\dPS^*$ is syndetic follows from the fact that $\thick \subseteq \dthick \subseteq \dPS$, whereby the family $\dPS^*$ is syndetic.

Combining all aforementioned inclusions, we obtain
\[
    \central \subsetneq \dcPS \subsetneq \dPS = \PS,
\]
as desired.
\end{proof}

\subsection{Applications of \texorpdfstring{\cref{thm:relations_C_dPS_PS-intro}}{Theorem F}}

\label{sec_dcps_and_set_recurrence}

In this subsection, we deduce a number of properties of dynamically piecewise syndetic sets from \cref{thm:relations_C_dPS_PS-intro}.

\subsubsection{Translates and dilates}

The following theorems describe how the families $\dcPS$ and $\dPS$ behave under translations and dilations.  While the conclusions are basic, we were not able to provide an argument for them that avoids the use of the relatively difficult facts in \cref{thm:relations_C_dPS_PS-intro}.

\begin{theorem}
\label{lemma_translates_of_dps_sets}
    Let $A \subseteq \N$ be dynamically piecewise syndetic. The sets
    \begin{align}
    \label{eqn_translates_of_dps_set_to_central}
        \big\{ n \in \N \ \big| \ A-n \text{ is central} \big\} \text{ and } \big\{ n \in \N \ \big| \ A+n \text{ is central} \big\}
    \end{align}
    are dynamically syndetic. Moreover, 
    \begin{align}
        \label{eqn_relationship_between_dps_and_c_classes}
        \dPS = \bigcup_{n \in \N} (\central - n) = \bigcup_{n \in \N} (\central + n).
    \end{align}   
\end{theorem}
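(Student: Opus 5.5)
The plan is to use the equality $\dPS = \PS$ from \cref{thm:relations_C_dPS_PS-intro} to replace the dynamically thick part of $A$ by a genuinely thick set, and then combine the translation results for dynamically syndetic sets with the dynamical description of central sets. Let $A \in \dPS$. By \cref{thm:relations_C_dPS_PS-intro}, $A \in \PS$. By \cref{lem:central_characterization}, $\PS = \dsyndetic \classcap \thick$, so there exist $B \in \dsyndetic$ and a thick set $H$ with $A \supseteq B \cap H$.

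For the first claim, consider $D^- \defeq \{ n \in \N \mid B - n \in \dcsyndetic\}$. By \cref{lemma_translates_of_dsyndetic_sets} \eqref{item_some_translates_are_dcs}, $D^-$ is dynamically syndetic. For $n \in D^-$ we have
\[A - n \supseteq (B-n) \cap (H-n),\]
where $B-n \in \dcsyndetic$ and $H-n$ is thick, since thickness is translation invariant. By the second equality in \cref{lem:central_characterization}, $\central = \dcsyndetic \classcap \thick$, so $A-n$ is central. Hence the first set in \eqref{eqn_translates_of_dps_set_to_central} contains $D^-$. Since $\dsyndetic$ is upward closed, that set is dynamically syndetic.

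The second set is handled the same way. Use the ``$A+n$'' version of \cref{lemma_translates_of_dsyndetic_sets}: the set $D^+ \defeq \{n \mid B+n \in \dcsyndetic\}$ is dynamically syndetic. For $n \in D^+$, note $A + n \supseteq (B+n) \cap (H+n)$ and $H + n$ is thick, so again $A+n$ is central.

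For \eqref{eqn_relationship_between_dps_and_c_classes}, I interpret $\central \pm n$ up to upward closure, as is implicit for families in the paper.

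\emph{Inclusion $\supseteq$.} Central sets are piecewise syndetic, so $\central \subseteq \PS = \dPS$. By \cref{lem:all_translates_are_dps}, $\dPS$ is invariant under translations by $\pm n$. Therefore every translate of a central set, and every superset of one, lies in $\dPS$.

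\emph{Inclusion $\subseteq$.} Let $A \in \dPS$. Dynamically syndetic sets are nonempty, so we may pick $n \in D^-$. Then $C \defeq A - n$ is central and $A \supseteq C + n$, which places $A$ in (the upward closure of) $\central + n$. Similarly, picking $n \in D^+$ makes $C' \defeq A+n$ central with $A = C' - n$, which places $A$ in $\central - n$.

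The only substantive step is the reduction $A \supseteq B \cap H$ with $H$ thick. This needs the hard direction $\dPS \subseteq \PS$ of \cref{thm:relations_C_dPS_PS-intro}. Without it, translating a $\dcsyndetic \classcap \dthick$ set only yields a $\dcPS$ set, which need not be central. Everything else is bookkeeping with the cited lemmas.
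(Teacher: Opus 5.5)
Your proof is correct and follows the same route as the paper. You pass from $\dPS$ to $\PS = \dsyndetic \classcap \thick$ via \cref{thm:relations_C_dPS_PS-intro}, turn the dynamically syndetic part into a dynamically central syndetic set along a dynamically syndetic set of shifts, and conclude with $\central = \dcsyndetic \classcap \thick$. The only difference is that the paper does the translation step by hand for $B = R(x,U)$, using an invertible model and $R_{T^{-1}}(x,U)$ for the $A+n$ case, whereas you cite \cref{lemma_translates_of_dsyndetic_sets} and its $A+n$ version, which contain that argument.
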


\begin{proof}
    Let $A \in \dPS$.  By \cref{thm:relations_C_dPS_PS-intro}, the set $A$ is piecewise syndetic.  By \cref{lem:central_characterization}, there exists a minimal system $(X,T)$, a point $x \in X$, a nonempty, open set $U \subseteq X$, and a thick set $H \subseteq \N$ such that $A \supseteq R(x,U) \cap H$.  For all $n \in R(x,U)$, we see that the set $A - n \supseteq R(T^n x, U) \cap (H-n)$, which, by \cref{lem:central_characterization}, is central.  Thus, the set in \eqref{eqn_translates_of_dps_set_to_central} contains $R(x,U) \in \dsyndetic$, as was to be shown.

    To arrive at the same result when $A-n$ is replaced by $A+n$, begin by writing $A \supseteq R(x,U) \cap H$ in the same way, noting that by \cite[Lemma 3.1]{glasscock_le_2024}, we may assume that the system $(X,T)$ is invertible. We claim that the set $R_{T^{-1}}(x,U)$ is contained in the set in \eqref{eqn_translates_of_dps_set_to_central} with $A-n$ replaced by $A+n$.  Since the system $(X,T^{-1})$ is minimal (see, for example, \cite[Lemma 2.7]{glasscock_koutsogiannis_richter_2019}), this will demonstrate that the set in \eqref{eqn_translates_of_dps_set_to_central} is dynamically syndetic.
    
    Suppose $n \in R_{T^{-1}}(x,U)$ so that $T^{-n} x \in U$.  It is quick to check that $R(T^{-n}x,U) \cap \{n+1, n+2, \ldots\} \subseteq R(x,U) + n$.  Since $R(T^{-n}x,U) \in \dcsyndetic$, it follows by \cref{lemma_ds_dcs_modifyable_on_a_finite_set} that the set $R(T^{-n}x,U) \cap \{n+1, n+2, \ldots\} \in \dcsyndetic$. It follows by \cref{lem:central_characterization} that the set
    \[A + n \supseteq \big(R(x,U) + n\big) \cap \big(H + n\big) \supseteq \big(R(T^{-n}x,U) \cap \{n+1, n+2, \ldots\}\big) \cap \big(H + n\big),\]
    is central, as desired.

    Finally, that \eqref{eqn_relationship_between_dps_and_c_classes} holds follows immediately from \eqref{eqn_translates_of_dps_set_to_central} for $A-n$ and $A+n$.
\end{proof}

\begin{remark}
The conclusions of \cref{lemma_translates_of_dps_sets} formulated in terms of the family algebra developed in \cref{sec_combinatorics} read: $\dPS \subseteq \{\N\} + \dPS$ and $\dPS \subseteq \dsyndetic + \central$, respectively. In particular, it shows $\dPS \subseteq \dsyndetic + \dcPS$. In analogy to \cref{lemma_translates_of_dsyndetic_sets} \eqref{item_dcs_translates_of_dcs_are_dcs}, we would like to show that $\dcPS \subseteq \dcsyndetic + \dcPS$, that is, that if $A$ is dynamically central piecewise syndetic, then the set in \eqref{eqn_translates_of_dt_set} is dynamically central syndetic.  We were not able to show this.  It should be easier to determine whether or not the family $\dcPS$ is idempotent: $\dcPS \subseteq \dcPS + \dcPS$. See \cref{quest_is_dcps_idempotent} below for more discussion.
\end{remark}

\begin{theorem}
\label{lemma_dilates_of_dps_sets}
    Let $A \subseteq \N$ and $k \in \N$. If $A$ is dynamically central piecewise syndetic, then so are the sets $kA$ and $A/k$. If $A$ is dynamically piecewise syndetic, then so is the set $kA$.
\end{theorem}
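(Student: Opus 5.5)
The plan is to handle the two families separately. For $\dPS$ we use \cref{thm:relations_C_dPS_PS-intro} ($\dPS = \PS$); for $\dcPS$ we use dilation results for the two intersected families.

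\textbf{The $\dPS$ claim.} Let $A \in \dPS$. By \cref{thm:relations_C_dPS_PS-intro}, $A$ is piecewise syndetic, so it suffices to show that $kA$ is piecewise syndetic. Write $A \supseteq S \cap H$ with $S$ syndetic (gaps at most $N$) and $H$ thick. Then $kS$ meets every interval of length $kN$, and $kH$ contains arbitrarily long progressions $k\{m, \ldots, m+L\}$. It follows that
\[
    kA \cup (kA - 1) \cup \cdots \cup (kA - kN)
\]
contains arbitrarily long intervals. Hence $kA \in \PS = \dPS$.

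\textbf{The $\dcPS$ claims.} Let $A \in \dcPS$, and write $A = B \cap C$ with $B \in \dcsyndetic$ and $C \in \dcthick$.
\begin{itemize}
    \item For the dilate $kA$, we have $kA = (kB) \cap (kC)$ because multiplication by $k$ is injective. By the companion-paper fact quoted in the proof of \cref{lem:dilates_of_dT_dcT}, $kB \in \dcsyndetic$. By \cref{lem:dilates_of_dT_dcT}, $kC \in \dcthick$. Hence $kA \in \dcPS$.
    \item For the quotient $A/k$, we have $A/k = (B/k) \cap (C/k)$ because preimages commute with intersections. Again $B/k \in \dcsyndetic$ by the companion-paper fact, and $C/k \in \dcthick$ by \cref{lem:dilates_of_dT_dcT}. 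Hence $A/k \in \dcPS$.
\end{itemize}

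\textbf{Expected obstacles.} The only possible subtlety is whether the definition of $\dcPS$ is exactly the collection of such intersections or its upward closure. Because $\dcsyndetic \classcap \dcthick$ is a family, upward closure is automatic. For supersets, if $A \supseteq B \cap C$, then $kA \supseteq k(B\cap C)$ and $A/k \supseteq (B\cap C)/k$, and upward closure finishes the argument.

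The $\dPS$ case, by contrast, cannot run through the same route. Dilates of dynamically thick sets need not be dynamically thick, as $k\N$ shows, which is why the statement omits $A/k$ for $\dPS$. For this reason we do not attempt a direct dynamical argument there; routing through $\PS$ via \cref{thm:relations_C_dPS_PS-intro} is the key step.
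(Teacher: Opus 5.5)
Your proof is correct and is essentially the paper's. The $\dcPS$ claims follow from $kA = kB \cap kC$ and $A/k = (B/k) \cap (C/k)$ together with the dilation results for $\dcsyndetic$ and $\dcthick$, and the $\dPS$ claim goes through $\dPS = \PS$; you also verify that $kA$ is piecewise syndetic, which the paper takes as evident.

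One small correction to your closing remark: $A/k$ is omitted for $\dPS$ not because $kC$ may fail to be dynamically thick, but because that claim is false outright. For example, $A = k\N + 1$ is syndetic (hence in $\dPS$), while $A/k = \emptyset$ for $k \geq 2$.
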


\begin{proof}
    Suppose $A \in \dcPS$, and write $A = B \cap C$ where $B \in \dcsyndetic$ and $C \in \dcthick$. We see that $kA = kB \cap kC$ and $A/k = B/k \cap C/k$, both of which are $\dcPS$ sets in view of \cite[Lemma 3.4]{glasscock_le_2024} and \cref{lem:dilates_of_dT_dcT}.

    Suppose $A \in \dPS$. By \cref{thm:relations_C_dPS_PS-intro}, $A \in \PS$  and so $kA \in \PS = \dPS$ as desired.
\end{proof}

\subsubsection{Stability under non-piecewise syndetic changes and a proof of \cref{thm:remove_nonPS_from_dynthick}}
\label{sec_stability}

The next theorem -- from which \cref{thm:remove_nonPS_from_dynthick} immediately follows -- says that we can remove a non-piecewise syndetic set from a set in any family studied in this paper and get a set in the same family.

\begin{theorem}
\label{cor:largest_family_satisfies_cap}
    Let $E \subseteq \N$.  Any of the following conditions is equivalent to the set $E$ being not piecewise syndetic:
    \begin{enumerate}
        \item
        for all $A \in \dsyndetic$, the set $A \setminus E \in \dsyndetic$;

        \item \label{item:remove_nonps_dynamical_thick}
        for all $A \in \dthick$, the set $A \setminus E \in \dthick$;

        \item
        for all $A \in \PS$, the set $A \setminus E \in \PS$;

        \item for all $A \in \PS^*$, the set $A \setminus E \in \PS^*$.
    \end{enumerate}
    
    Moreover, if the set $A$ is not piecewise syndetic, then
    \begin{enumerate}
    \setcounter{enumi}{4}
        \item
        for all $A \in \dcsyndetic$, the set $A \setminus E \in \dcsyndetic$;

        \item \label{item:remove_nonps_pointwise_recurrence}
        for all $A \in \dcthick$, the set $A \setminus E \in \dcthick$;

        \item
        for all $A \in \dcPS$, the set $A \setminus E \in \dcPS$;

        \item for all $A \in \dcPS^*$, the set $A \setminus E \in \dcPS^*$.
    \end{enumerate}
\end{theorem}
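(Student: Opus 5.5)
The plan is to get everything from one piece of family algebra. By \cref{lemma_class_cap_is_pr}, for any family $\family$ and any set $D$, the condition ``$D \cap B \in \family$ for all $B \in \family$'' is equivalent to $D \in (\family \classcap \family^*)^*$. The same holds with $\family$ replaced by $\family^*$, by $\family \classcap \family^*$, or by its dual. Apply this with $D \defeq \N \setminus E$, noting that $A \setminus E = A \cap D$.

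Take $\family = \dsyndetic$. Then $\family \classcap \family^* = \dsyndetic \classcap \dthick = \dPS$, and this equals $\PS$ by \cref{thm:relations_C_dPS_PS-intro}. So each of conditions (1) and (2) is equivalent to $D \in \PS^*$. Next take $\family = \PS = \syndetic \classcap \thick$. This family is partition regular, and $\PS \classcap \PS^* = \PS$: one inclusion holds since a family is contained in its $\classcap$ with anything, and the other because $\PS^*$ is a filter of $\PS$ sets by \cref{lemma_condition_on_subfamily_of_ps_star}, so $\PS^* \subseteq \PS$ and hence $\PS \classcap \PS^* \subseteq \PS \classcap \PS = \PS$. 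The last step uses that intersecting a $\PS$ set with a $\PS^*$ set gives a $\PS$ set; I need to check this directly rather than citing it. Granting it, conditions (3) and (4) are also each equivalent to $D \in \PS^*$. Finally, $D \in \PS^*$ means exactly that $\N \setminus D = E \notin \PS$, using fact (3) on duals. This proves the four equivalences.

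For the ``moreover'' part (whose hypothesis should read ``if $E$ is not piecewise syndetic''), take $\family = \dcsyndetic$. Then $\family \classcap \family^* = \dcPS$, so conditions (5)--(8) are each equivalent to $D \in \dcPS^*$. It therefore suffices to show $\PS^* \subseteq \dcPS^*$. This was established inside the proof of \cref{thm:relations_C_dPS_PS-intro}, via \cref{cor_shifted_class_equalities}\eqref{item_central_star_class_cap} applied to the $\central^*$, translation-invariant (hence idempotent) filter $\PS^*$. Equivalently, $\dcPS \subseteq \PS$. Since $E$ is not piecewise syndetic, $D \in \PS^* \subseteq \dcPS^*$, and all four conditions follow.

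The main obstacle is not in this argument. The real content is $\dPS = \PS$ and $\dcPS \subseteq \PS$, both already available from \cref{thm:relations_C_dPS_PS-intro}. What remains is careful bookkeeping: confirming that $\PS \classcap \PS^* = \PS$, and that the correct version of \cref{lemma_class_cap_is_pr} is used for each of the dual families in (2), (4), (6), and (8).
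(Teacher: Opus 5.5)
Your argument is the paper's proof: apply \cref{lemma_class_cap_is_pr} to $D=\N\setminus E$, first with $\family=\dsyndetic$ (using $\dPS=\PS$) and then with $\family=\dcsyndetic$, and combine with $\PS^*\subseteq\dcPS^*$. You are also right that the hypothesis of the second part should concern $E$, not $A$.

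One justification is wrong, though. The equality $\PS\classcap\PS=\PS$ is false, because $\PS$ is not a filter. For example, $2\N$ and the odd numbers are both piecewise syndetic but have empty intersection; in fact $\PS\classcap\PS=\mathcal{P}(\N)$. So the monotonicity step $\PS\classcap\PS^*\subseteq\PS\classcap\PS$ gives you nothing.

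The identity $\PS\classcap\PS^*=\PS$ that you wanted is nevertheless true, by partition regularity of $\PS$. If $P\in\PS$ and $Q\in\PS^*$, then $P\setminus Q\subseteq\N\setminus Q\notin\PS$, so $P\cap Q\in\PS$.

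The detour is unnecessary anyway. With $\family=\dsyndetic$ you have $\family\classcap\family^*=\dPS=\PS$, so the third and fourth items of \cref{lemma_class_cap_is_pr} give conditions (3) and (4) directly. This is how the paper obtains all four equivalences at once.
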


\begin{proof}
    Note that $E \subseteq \N$ is not piecewise syndetic if and only if $\N \setminus E \in \PS^*$.
    By \cref{thm:relations_C_dPS_PS-intro}, we have that $\PS^* = \dPS^* \subseteq \dcPS^*$.  That the first four statements are equivalent to $\N \setminus E$ being a member of $\PS^*$ then follows immediately from \cref{lemma_class_cap_is_pr}.  If $\N \setminus E \in \PS^*$, then $\N \setminus E \in \dcPS^*$.  The final four statements follow since, by \cref{lemma_class_cap_is_pr}, they are all equivalent to $\N \setminus E$ being a member of $\dcPS^*$.
\end{proof}

\subsubsection{Polynomial multiple recurrence, Brauer configurations, and a proof of \cref{main_thm_brauer}}
\label{sec_sets_of_poly_rec_and_brauer}

The results in this section use the abstract set algebra in \cref{cor_central_class_equalities} together with \cref{thm:relations_C_dPS_PS-intro} to upgrade Theorems 5.3 and 5.5 in \cite{glasscock_le_2024}.  Similarly to the proofs of those theorems, denote by $\filtertwo$ the upward closure of the family of all subsets of $\N$ of the form
\begin{align}
    \label{eqn_poly_return_set}
    \polyret \big(\mu, T_i, p_{i,j}, E \big)_{\substack{i = 1, \ldots, k \\ j = 1, \ldots, \ell}} \defeq \left\{ n \in \N \ \middle | \ \mu\left( \bigcap_{j=1}^{\ell} \left( \prod_{i=1}^{k} T_i^{p_{i,j}(n)} \right)^{-1} E \right) > 0 \right\},
\end{align}
where $k, \ell \in \N$, the tuple $(X, \mu, T_1, \ldots, T_k)$ is an invertible, commuting probability measure preserving system, the set $E \subseteq X$ satisfies $\mu(E) > 0$, and $p_{i, j} \in \Q[n]$ satisfies $p_{i,j}(\Z) \subseteq \Z$ and $p_{i,j}(0) = 0$ for all $1 \leq i \leq k$, $1 \leq j \leq \ell$.  The family $\filtertwo$ is comprised of sets that contain the times of returns of a set of positive measure in a probability measure space under polynomial iterates of a finite collection of invertible, commuting measure preserving transformations.

\begin{theorem}
\label{thm_dcps_is_poly_mult_comm_rec}
    Every dynamically central piecewise syndetic set is a set of polynomial multiple measurable recurrence for commuting transformations.
\end{theorem}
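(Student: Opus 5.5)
The plan is to show that $\filtertwo \subseteq \dcPS^*$, where $\filtertwo$ is the upward closure of the polynomial return-time sets in \eqref{eqn_poly_return_set}. This suffices by duality. If $\filtertwo \subseteq \dcPS^*$, then $\dcPS = (\dcPS^*)^* \subseteq \filtertwo^*$. Moreover, $\filtertwo^*$ is exactly the family of sets of polynomial multiple measurable recurrence for commuting transformations, since a set belongs to $\filtertwo^*$ precisely when it meets every set of the form $\polyret(\mu, T_i, p_{i,j}, E)$. To get the inclusion, I will apply \cref{cor_central_class_equalities} \eqref{item_central_star_class_cap}. It states that any $\central^*$, idempotent filter is contained in $\dcPS^*$. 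So I need to check three properties of $\filtertwo$: it is a filter, it is idempotent, and every member is $\central^*$. These are the ingredients used for Theorem 5.3 in \cite{glasscock_le_2024}, so I expect to import them more or less directly.

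\emph{Filter.} Let $\polyret_1$ and $\polyret_2$ be defined from two systems $(X,\mu,T_1,\dots,T_k)$ with set $E$ and $(Y,\nu,S_1,\dots,S_{k'})$ with set $E'$. Pass to the product system $(X\times Y,\mu\otimes\nu)$ with the commuting transformations $T_i\times \mathrm{id}$ and $\mathrm{id}\times S_{i'}$, and the set $E\times E'$. The corresponding return set is $\polyret_1\cap\polyret_2$, because $(\mu\otimes\nu)(C\times D)=\mu(C)\nu(D)$. Upward closure is built into the definition of $\filtertwo$, so $\filtertwo$ is a filter.

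\emph{Idempotency.} Take $R=\polyret(\mu,T_i,p_{i,j},E)$ and $m\in R$. Put $E_m=\bigcap_j \big(\prod_i T_i^{p_{i,j}(m)}\big)^{-1}E$, which has positive measure. The polynomials $q_{i,j}(n)=p_{i,j}(n+m)-p_{i,j}(m)$ again vanish at $0$ and are integer-valued. Write $\polyret'$ for the return set of $E_m$ under the family $\{p_{i,j}\}\cup\{q_{i,j}\}$, using commutativity and a common indexing. I expect $\polyret' \subseteq R-m$. So $R-m\in\filtertwo$ for every $m\in R$, which gives $R \subseteq R-\filtertwo$ and hence $R-\filtertwo\in\filtertwo$. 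This is routine bookkeeping.

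\emph{$\central^*$ membership.} This is the substantive step and the main obstacle. Every member of $\filtertwo$ must meet every central set, that is, every member of a minimal idempotent ultrafilter. I would take it from \cite{glasscock_le_2024}, where it follows from the IP (indeed IP$^*$) polynomial multiple recurrence theorem of Bergelson--McCutcheon. That theorem shows the sets in \eqref{eqn_poly_return_set} are IP$^*$ sets. Since central sets are IP sets, IP$^*\subseteq\central^*$, and the claim follows. If only a weaker form is available there, I would use the IP-polynomial Szemer\'edi theorem instead. It gives positivity of the limit along IP-rings of the measures in \eqref{eqn_poly_return_set}, and this again yields $\filtertwo\subseteq\IP^*\subseteq\central^*$.

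With these three properties in hand, \cref{cor_central_class_equalities} \eqref{item_central_star_class_cap} gives $\filtertwo\subseteq\dcPS^*$, and the duality step above finishes the proof.
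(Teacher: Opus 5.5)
Your proposal follows the paper's proof: show $\filtertwo \subseteq \dcPS^*$ via \cref{cor_central_class_equalities}~\eqref{item_central_star_class_cap} and dualize. The paper simply cites \cite[Lemma 5.2]{glasscock_le_2024} for $\filtertwo$ being an $\IP^*$ (hence $\central^*$), idempotent filter, while you correctly re-derive the filter and idempotency parts and import only the $\IP^*$ property from Bergelson--McCutcheon.

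The one step you omit is invertibility. $\filtertwo$ is built from \emph{invertible} systems, so $\dcPS \subseteq \filtertwo^*$ only gives recurrence for invertible commuting transformations. As the paper does, you still need to pass to the measurable natural extension to cover the non-invertible case in the statement.
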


\begin{proof}
First we show any $\dcPS$ set is a set of polynomial multiple measurable recurrence for \emph{invertible} commuting transformations, that is, that $\dcPS \subseteq \familytwo^*$.
By \cite[Lemma 5.2]{glasscock_le_2024}, the family $\familytwo$ is a $\IP^*$, idempotent filter and so is a $\central^*$, idempotent filter. Therefore, by \cref{cor_central_class_equalities}, we have that $\familytwo \subseteq \dcPS^*$.  The dual statement is $\dcPS \subseteq \familytwo^*$, as desired.

To remove the invertibility assumption, we just use the measurable natural extension (cf. \cite[Lemma 7.11]{Bergelson-McCutcheon-IPpolynomialSzemeredi}) as in the proof of \cite[Theorem C]{glasscock_le_2024}.
\end{proof}

Central sets are known to contain an abundance of combinatorial configurations (cf. \cite[Ch. 14]{hindman_strauss_book_2012}). The following result demonstrates that dynamically central piecewise syndetic sets also contain some combinatorial configurations.  (Recall from \cref{thm:relations_C_dPS_PS-intro} that central sets are dynamically central piecewise syndetic.)

\begin{theorem}
\label{thm:Brauer_and_dcPS}
    Let $A \subseteq \N$ be dynamically central piecewise syndetic. For all $k \in \N$ and all $p_1, p_2, \ldots, p_k \in \Q[x]$ with $p_i(\Z) \subseteq \Z$ and $p_i(0) = 0$, there exist $x, y \in \N$ such that
    \begin{align}
        \label{eqn_poly_brauer_in_dcps}
        x, y, x + p_1(y), \ldots, x + p_k(y) \in A.
    \end{align}
\end{theorem}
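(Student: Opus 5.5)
The plan mirrors the proof of \cref{thm_dcps_is_poly_mult_comm_rec}: find a $\central^*$, idempotent filter whose members encode the Brauer configuration, and then use \cref{cor_central_class_equalities} to absorb it into $\dcPS^*$. Write $A = B \cap C$ with $B \in \dcsyndetic$ and $C \in \dcthick$. By \cref{intro_mainthm_full_chars_of_dcsyndetic}, $B$ belongs to a syndetic, idempotent filter $\filter$. Since $\filter \subseteq \filter + \filter$, we have $B - \filter \in \filter$. The set $B \cap (B - \filter)$ therefore lies in $\filter$ and hence in $\dcsyndetic$. Intersecting with $C$, the set
\[
    A_0 \defeq A \cap (B - \filter)
\]
is again dynamically central piecewise syndetic. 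For each $x \in A_0$ we have $x \in A$ and $B - x \in \filter$.

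Next fix such an $x$ and consider the set
\[
    D_x \defeq \big\{ y \in \N \ \big| \ x + p_i(y) \in B \text{ for all } 1 \le i \le k \big\}.
\]
The key input, imported from \cite[Theorem 5.5]{glasscock_le_2024} or its proof, is that for a member $E = B - x$ of a syndetic, idempotent filter, the set $\{ y \mid p_i(y) \in E \text{ for all } i \}$ contains a member of $\familytwo$, the polynomial multiple return-time filter of \eqref{eqn_poly_return_set}. The argument would realize $E \supseteq R(z,U)$ with $z \in U$ in a minimal system, which is possible by \cref{intro_mainthm_full_chars_of_dcsyndetic}, and then apply polynomial recurrence for the point $z$. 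If the literal statement there differs, I would instead aim to show that $D_x$ is a $\central^*$ set, or at least lies in some $\central^*$, idempotent filter. By \cite[Lemma 5.2]{glasscock_le_2024}, $\familytwo$ is an $\IP^*$, idempotent filter, hence a $\central^*$, idempotent filter. Then \cref{cor_central_class_equalities}\eqref{item_central_star_class_cap} gives $\familytwo \subseteq \dcPS^*$.

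Granting this, the set $D_x$ belongs to $\dcPS^*$, so $D_x \cap A \neq \emptyset$ because $A \in \dcPS$. Choosing $y \in D_x \cap A$ gives $x, y \in A$ and $x + p_i(y) \in B$. The remaining gap is upgrading $x + p_i(y) \in B$ to $x + p_i(y) \in A$, that is, also landing in $C$. To close it I would choose $x$ more carefully: take $x$ such that both $B - x \in \filter$ and $C - x$ is still of pointwise-recurrence type, using $\dcthick + \dcthick \subseteq \dcthick$ from \cref{lemma_translates_of_dthick_sets}. Alternatively, and more simply, I would replace $C$ by a central set inside $A$, though this is unavailable in general. Failing both, I would apply the $\familytwo$-argument to the full intersection $A$ via the filter $\filter \classcap \familytwo$ from \cref{prop:general_abstract_intersection_filter_nonsense}.

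I expect this last step, controlling the shifted points $x + p_i(y)$ inside the $\dcthick$ component and not just the $\dcsyndetic$ component, to be the main obstacle. The most promising route is to choose $x$ so that the relevant polynomial shifts of $A$ jointly form a $\dcPS$ set, mimicking the Brauer proof in \cite{glasscock_le_2024} with $\dcsyndetic$ replaced by $\dcPS$ throughout. The needed translation behaviour of $\dcPS$ there is exactly what \cref{cor_central_class_equalities} supplies.
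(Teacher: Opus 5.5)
There is a genuine gap, and it appears before the step you flag. Your ``key input'' is false. With $k=1$ and $p_1(y)=y$, the set $\{y \mid p_1(y)\in E\}$ is $E$ itself. So you are asserting that every member of a syndetic, idempotent filter lies in $\familytwo$; by \cref{intro_mainthm_full_chars_of_dcsyndetic}, that means every dynamically central syndetic set lies in $\familytwo$. But $\familytwo$ is an $\IP^*$ filter, and $\dcsyndetic$ sets need not even be $\central^*$. To see this, take a minimal, non-distal system, proximal points $z\neq z'$, and disjoint open sets $U\ni z$, $V\ni z'$. Then $R(z,U)$ is dynamically central syndetic but misses the central set $R(z,V)$. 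So $\dcsyndetic\not\subseteq\familytwo$. Your fallback, placing $D_x$ in a $\central^*$, idempotent filter, fails for the same reason: in this case $D_x=B-x$ is just some dynamically central syndetic set, and nothing makes it $\central^*$. Pointwise return sets are not measurable return sets, so there is no reason for the set of admissible $y$ to be $\dcPS^*$. Separately, even granting such a $y$, nothing places $x+p_i(y)$ in $C$. The property $\dcthick+\dcthick\subseteq\dcthick$ concerns translates of $C$ and gives no control over the specific points $x+p_i(y)$. None of the remedies you list is actually an argument.

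The missing idea is to not split $A$ at all and to reverse the roles of $x$ and $y$. By \cref{thm:relations_C_dPS_PS-intro}, $A$ is piecewise syndetic, so $d^*(A)>0$. The Furstenberg correspondence principle applied to $A$ itself gives a system $(X,\mu,T)$ and a set $E$ with $\mu(E)>0$ and $\mu(T^{-n_0}E\cap\cdots\cap T^{-n_k}E)\le d^*\big((A-n_0)\cap\cdots\cap(A-n_k)\big)$. With $p_0\equiv 0$, the set $\{y \mid \mu(E\cap T^{-p_1(y)}E\cap\cdots\cap T^{-p_k(y)}E)>0\}$ belongs to $\familytwo$. By the inclusion $\familytwo\subseteq\dcPS^*$ that you correctly derived (this is \cref{thm_dcps_is_poly_mult_comm_rec}), it therefore lies in $\dcPS^*$ and contains some $y\in A$. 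Then $A\cap(A-p_1(y))\cap\cdots\cap(A-p_k(y))$ has positive upper Banach density, and any $x$ in it works. Here $y$ is a recurrence time taken from $A$ and $x$ is found by density inside $A$. All of $x, y, x+p_i(y)$ then land in $A$ at once, and the $B$/$C$ bookkeeping never arises.
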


\begin{proof}
    By \cref{thm:relations_C_dPS_PS-intro}, the set $A$ has positive upper Banach density, i.e. $d^*(A) > 0$, because it is piecewise syndetic. By the Furstenberg Correspondence Principle (\cite[Thm. 1.1]{Furstenberg-ErgodicBehavior}, see also \cite[Thm. 3.2.5]{mccutcheon_1999}), there is a probability measure preserving system $(X, \mu, T)$ and $E \subseteq X$ with $\mu(E) \geq d^*(A) > 0$ such that for all $n_1, \ldots, n_k \in \Z$,
    \begin{align*}
        \mu \big(T^{-n_1} E \cap \cdots \cap T^{-n_k} E \big) \leq d^* \big( (A-n_1) \cap \cdots \cap (A-n_k) \big).
    \end{align*}
    By \cref{thm_dcps_is_poly_mult_comm_rec}, setting $p_0 \equiv 0$, there is $y \in A$ for which
    \[
        0 < \mu \big(E \cap T^{-p_1(y)} E \cap \cdots \cap T^{-p_k(y)} E \big) \leq d^* \big( A \cap (A-p_1(y)) \cap \cdots \cap (A-p_k(y)) \big).
    \]
    Therefore, there exist (many) $x \in A$ for which \eqref{eqn_poly_brauer_in_dcps} holds, as desired.
\end{proof}

\begin{proof}[Proof of \cref{main_thm_brauer}]

\cref{main_thm_brauer} follows from 
Theorems \ref{thm_dcps_is_poly_mult_comm_rec} and \ref{thm:Brauer_and_dcPS}.
\end{proof}

\section{Open questions}
\label{sec:open_questions}

The open questions and directions in this section are organized into two categories: those that concern sets of pointwise recurrence and dynamically thick sets, and those that concern dynamically piecewise syndetic sets.

\subsection{Dynamically thick sets and sets of pointwise recurrence}
\label{sec_dy_thick_sets}

From the definitions, we can see that if $A \subseteq \N$ is a dynamically thick set, then for all dynamically syndetic sets $B \subseteq \N$, there exists $n \in A$ such that the set $B-n$ is dynamically central syndetic.  Does the converse hold, that is, does this property characterize dynamical thickness?

\begin{question}
\label{quest_alt_char_of_dcthick}
    Is it true that a set $A \subseteq \N$ is dynamically thick if and only if for all $B \in \dsyndetic$, there exists $n \in A$ such that $B-n \in \dcsyndetic$?
\end{question}

If we denote by $\dsyndetic - \dcsyndetic$ the family of sets of the form $B - \dcsyndetic$ where $B \in \dsyndetic$, then \cref{quest_alt_char_of_dcthick} asks whether or not $\dthick = (\dsyndetic - \dcsyndetic)^*$.  The equivalent, dual form of this equality is $\dsyndetic = \dsyndetic - \dcsyndetic.$ Thus, an equivalent form of \cref{quest_alt_char_of_dcthick} asks whether or not a set $A \subseteq \N$ is dynamically syndetic if and only if there exists $B \in \dsyndetic$ such that $B - \dcsyndetic \subseteq A$.\\

It is a simple exercise to partition a thick set into a disjoint union of two thick sets.  The same task for dynamically thick sets or sets of pointwise recurrence does not appear to be so simple.

\begin{question}
\label{ques:partition_into_two_pointwise_recurrence}
    Can every dynamically thick set (resp. set of pointwise recurrence) be partitioned into two disjoint dynamically thick sets (resp. sets of pointwise recurrence)?
\end{question}

There is a positive answer to the analogue of \cref{ques:partition_into_two_pointwise_recurrence} for sets of topological recurrence (\cref{example:set-topological-partition}), sets of measurable recurrence (\cite{Forrest_recurrence-in-dynamical-systems}, \cref{example:set-measurable-partition}), sets of Bohr recurrence \cite[Prop. 1.4]{Le-interpolation-first}, and sets of pointwise recurrence for nilsystems \cite[Lemma 3.3]{Le_interpolation_for_nilsequences}. Though not explicitly written in these terms, the proofs of all of these results follow by demonstrating the $\sigma$-compactness of the dual families, as defined and described in \cref{sec:sigma-compact}.  Since the family of dynamically central syndetic sets is not $\sigma$-compact (\cref{theorem_dy_syndetic_is_not_sigma_compact}), a new strategy must be devised to answer \cref{ques:partition_into_two_pointwise_recurrence}.\\

We showed in \cref{cor_weak_structure_result} that all dynamically thick sets take the form $\bigcup_{B \in \family} (B \cap H_B)$, where $\family$ is a robustly syndetic collection of sets (recall the definition from \cref{sec_general_form_of_dthick}) and $(H_B)_{B \in \family}$ is a collection of thick sets.  This result would be of more use were we able to say more about the family $\family$.

\begin{question}
\label{quest_robust_synd_family}
    Is it true that for all dynamically thick sets $A \subseteq \N$, there exists a countable, robustly syndetic collection $\family$ of subsets of $\N$ and, for all $B \in \family$, a thick set $H_B \subseteq \N$, such that
    \begin{align}
        \label{eqn_enhanced_dt_form}
        A = \bigcup_{B \in \family} \big( B \cap H_B \big)?
    \end{align}
\end{question}

A positive answer to \cref{quest_robust_synd_family} would strengthen \cref{cor_weak_structure_result} and would open the door to applications.  As an example, we show in the following lemma how a positive answer could be used to give a positive answer to \cref{ques:partition_into_two_pointwise_recurrence}.

\begin{lemma}
\label{lemma_solution_to_partition_problem}
    If the answer to \cref{quest_robust_synd_family} is positive, then every dynamically thick set can be partitioned into two disjoint dynamically thick sets.
\end{lemma}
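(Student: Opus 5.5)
Assume $A$ is dynamically thick, and use the positive answer to \cref{quest_robust_synd_family} to write $A = \bigcup_{n \in \N} (B_n \cap H_n)$, where $\{B_n\}_{n\in\N}$ is a countable robustly syndetic collection and each $H_n$ is thick. If the collection is finite, list its members with repetition so that the index set is still $\N$; repeating a member does not affect robust syndeticity. The plan is to split each thick set into two disjoint thick pieces, in a way that keeps the pieces belonging to different indices from overlapping. Then each half of $A$ keeps the same form, $\bigcup_n (B_n \cap H_n^{(k)})$, with the same robustly syndetic collection. By the easy direction of \cref{mainthm_structure_for_dy_thick_sets}, each half is then dynamically thick. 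That direction only needs $A' \supseteq \bigcup (B\cap H_B)$ with the $H_B$ thick: for $C\in\dsyndetic$ pick $B$ with $B\cap C$ syndetic, and then $B \cap C \cap H_B \neq \emptyset$.

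For the splitting, enumerate the pairs $(n,k) \in \N\times\{1,2\}$ in a sequence in which each pair appears infinitely often. Build pairwise disjoint finite intervals $J_1 < J_2 < \cdots$ inductively. At stage $m$, if the scheduled pair is $(n,k)$, choose $J_m \subseteq H_n$ of length at least $m$ lying to the right of all previously chosen intervals; this is possible because $H_n$ is thick. Set $H_n^{(k)} \defeq \bigcup\{J_m : \text{stage } m \text{ is scheduled for } (n,k)\}$. Each $H_n^{(k)}$ is a subset of $H_n$ and contains arbitrarily long intervals, so it is thick. Moreover, all the sets $H_n^{(k)}$ are pairwise disjoint, since the $J_m$ are.

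Now define
\[
A_1 \defeq \bigcup_{n}\big(B_n\cap H_n^{(1)}\big), \qquad A_2 \defeq \bigcup_n \big(B_n\cap H_n^{(2)}\big).
\]
Both are contained in $A$. They are disjoint, because any $x \in A_1 \cap A_2$ would have to lie in some $H_n^{(1)} \cap H_{n'}^{(2)}$, and that set is empty. Both are dynamically thick by the argument above. Finally, put the leftover points $A\setminus(A_1\cup A_2)$ into $A_1$. Dynamical thickness is upward closed, so this keeps $A_1$ dynamically thick and produces a partition $A = A_1 \sqcup A_2$.

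The step that needs care is the disjointness across different indices $n \neq n'$. Splitting each $H_n$ separately would not give it, since the sets $B_n \cap H_n$ can overlap. The single global sequence of disjoint intervals handles this. Countability is used exactly here, to schedule all pairs $(n,k)$ in one $\N$-indexed sequence. With uncountably many indices, such a sequence would not exist.
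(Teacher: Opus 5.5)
Your proof is correct and takes essentially the same approach as the paper. The paper likewise enumerates the countable robustly syndetic collection, extracts pairwise disjoint intervals from the thick sets $H_i$ by a diagonal choice of indices to form disjoint thick sets $H_i^{(1)}, H_i^{(2)} \subseteq H_i$, and concludes via the easy direction of \cref{mainthm_structure_for_dy_thick_sets} after appending $A \setminus (A_1 \cup A_2)$ to $A_1$; your handling of a finite collection and your note that only the containment $A' \supseteq \bigcup (B \cap H_B)$ is needed are small clarifications the paper leaves implicit.
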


\begin{proof}
    Suppose the answer to \cref{quest_robust_synd_family} is positive.  Let $A \in \dthick$, and write the set $A$ in the form given in \eqref{eqn_enhanced_dt_form}. Enumerate $\family$ as $\{B_1, B_2, \ldots\}$. For ease of notation, for each $i \in \N$, write $H_i$ instead of $H_{B_i}$. By passing to subsets of the $H_i$'s, we may assume without loss of generality that $H_i = \bigcup_{j=1}^{\infty} I_{i, j}$ where $I_{i,j}$ are finite intervals in $\N$ satisfying $\lim_{j \to \infty} |I_{i,j}| = \infty$ and, for all $i, j \in \N$, $\max I_{i,j} < \min I_{i, j+1}$.

    It is a simple exercise to choose positive integers
    \[n^{(1)}_1 < n^{(1)}_2 < n^{(2)}_1 < n^{(1)}_3 < n^{(2)}_2 < n^{(3)}_1 < n^{(1)}_4 < n^{(2)}_3 < n^{(3)}_2 < n^{(4)}_1 < \cdots\]
    so that the intervals $I_{i, n^{(i)}_j}$, $i, j \in \N$, are disjoint.  For $i \in \N$, define two thick sets
    \[
        H^{(1)}_i = \bigcup_{j=1}^{\infty} I_{i, n^{(i)}_{2j-1}} \quad \text{ and } \quad  H^{(2)}_i = \bigcup_{j=1}^{\infty} I_{i, n^{(i)}_{2j}}.
    \]
    Let
    \[  
        A_1 = \bigcup_{i=1}^{\infty}  \big(B_i \cap H^{(1)}_i \big) \quad \text{ and } \quad A_2 = \bigcup_{i=1}^{\infty}  \big(B_i \cap H^{(2)}_i \big).
    \]
    By construction, the sets $A_1$ and $A_2$ are disjoint subsets of $A$.  By appending $A \setminus (A_1 \cup A_2)$ to $A_1$, we get that $A = A_1 \cup A_2$.  Moreover, both $A_1$ and $A_2$ have the form in \cref{cor_weak_structure_result}, and so they are dynamically thick.
\end{proof}

In the event of a negative answer to \cref{quest_robust_synd_family}, one could weaken the requirement on the collection $\family$ while keeping enough to salvage the reasoning in \cref{lemma_solution_to_partition_problem}.  Such a modification to \cref{quest_robust_synd_family} could be: \emph{Is there is a collection $\family$ with the property that $\{ 1_B \ | \ B \in \family\}$ is a $\sigma$-compact subset of $\{0,1\}^{\N}$ and, for all $B \in \family$, a thick set $H_B \subseteq \N$ such that the map $\family \to \{0,1\}^{\N}$ given by $B \mapsto H_B$ is continuous such that $A$ has the form in \eqref{eqn_enhanced_dt_form}?}

\subsection{Dynamically piecewise syndetic sets}
\label{sec_open_quests_dps_sets}

It was shown in \cref{thm:relations_C_dPS_PS-intro} that family of dynamically central piecewise syndetic sets sits between the families of central and piecewise syndetic sets:
\[\central \subseteq \dcPS \subseteq \PS.\]
Central sets are known to have an abundance of combinatorial configurations \cite[Ch. 14]{hindman_strauss_book_2012}. In \cref{thm:Brauer_and_dcPS}, we show that every $\dcPS$ set contains ``Brauer''-type polynomial configurations $x, y, x + p_1(y), \ldots, x + p_k(y)$. On the other hand, \cref{rem:combinatorial_primes} shows that $\dcPS$ sets (in fact, dynamically thick sets) need not be IP sets. This naturally begs the question: to what extent do $\dcPS$ sets contain finite and infinite combinatorial configurations?  The following is a specific example of a question along these lines.  Recall the definition of an IP$_0$ set from \cref{ex_ip_k_sets_are_compact}.

\begin{question}
\label{ques:pointwise_recurrence_IP_0}
    Must every dynamically central piecewise syndetic set be an IP$_0$ set?
\end{question}

It follows from \cref{thm:Brauer_and_dcPS} that $\dcPS$ sets are IP$_2$ sets -- they contain configurations of the form $\{x, y, x+y\}$ -- but we were not able to iterate in order to find higher-order finite sums.  A simpler variant of \cref{ques:pointwise_recurrence_IP_0} is: \emph{Must dynamically thick sets contain configurations of the form $\{x,y,z, x+y,x+z,y+z, x+y+z\}$?} The dual form of this line of questioning is formulated as Question 6.7 in \cite{glasscock_le_2024}.

A positive answer to \cref{ques:pointwise_recurrence_IP_0} would combine with deep results of Furstenberg and Katznelson to give more context to the result in \cref{thm_dcps_is_poly_mult_comm_rec}, which says that a general family of times of set returns in ergodic theory is $\dcPS^*$.
A corollary of a result of Furstenberg and Katznelson \cite[Thm. 10.3]{furstenberg_katznelson_1985} (see the discussion and derivation in \cite[Sec. 7.1]{bergelson_glasscock_2020}) gives that a narrower family of times of set returns is IP$_0^*$ and hence, by a positive answer to \cref{ques:pointwise_recurrence_IP_0}, $\dcPS^*$. Thus, Furstenberg and Katznelson's result would go part of the way toward explaining the result in \cref{thm_dcps_is_poly_mult_comm_rec}.  To recover the full result in \cref{thm_dcps_is_poly_mult_comm_rec} via this line of reasoning would require an answer to a long-standing open question: \emph{Can the results of Bergelson and McCutcheon (eg. \cite[Thm. 7.12]{Bergelson-McCutcheon-IPpolynomialSzemeredi}) be upgraded from IP$^*$ to IP$_0^*$?}\\

Finite sums structure is related to idempotency, which is addressed in following question.

\begin{question}
    \label{quest_is_dcps_idempotent}
    Is the family of dynamically central piecewise syndetic sets idempotent, that is,
    \[\dcPS \subseteq \dcPS + \dcPS?\]
\end{question}

Note that the families $\central$ and $\dcsyndetic$ are idempotent. Were the family $\dcthick$ idempotent (a question asked in an equivalent form in \cite[Question 6.5]{glasscock_le_2024}), then by \cref{lemma_idempotent_classcap_is_idempotent,lemma_translates_of_dsyndetic_sets}, we would have a positive answer to \cref{quest_is_dcps_idempotent}.

Combining \cref{thm:relations_C_dPS_PS-intro,lemma_translates_of_dps_sets}, we have that
\[\dcPS \subseteq \dPS \subseteq \dsyndetic + \dcPS \subseteq \dPS + \dcPS.\]
This is superficially close to demonstrating a positive answer to \cref{quest_is_dcps_idempotent}.  In fact, it is natural to guess that perhaps $\dcPS \subseteq \dcsyndetic + \dcPS$.  Note, however, that the proof of $\dcPS \subseteq \dPS$ relies on the shift-punch machinery in \cite{glasscock_le_2024}.  It would be interesting and potentially useful to find a proof of this ``simple-looking'' fact that avoids this heavy machinery.

\bibliographystyle{abbrv}
\bibliography{dynthick}

\bigskip
\bigskip
\footnotesize
\noindent
Daniel Glasscock\\
\textsc{University of Massachusetts Lowell}\par\nopagebreak
\noindent
\href{mailto:daniel_glasscock@uml.edu}
{\texttt{daniel{\_}glasscock@uml.edu}}

\bigskip
\noindent
Anh N. Le\\
\textsc{University of Denver}\par\nopagebreak
\noindent
\href{mailto:daniel_glasscock@uml.edu}
{\texttt{anh.n.le@du.edu}}

\end{document}